\theoremstyle{plain} 
\newtheorem{theorem}{Theorem}[section]
\newtheorem{lemma}[theorem]{Lemma}
\newtheorem{proposition}[theorem]{Proposition}
\newtheorem{corollary}[theorem]{Corollary}
\newtheorem{remark}[theorem]{Remark}
\newcommand{\Mat}{{\rm Mat}}
\title{The Capelli Eigenvalue Problem for Quantum Groups}
\author{Gail Letzter}
\address{Gail Letzter, Mathematics Research Group, National Security Agency}
\email{ gletzter@verizon.net}
\author{Siddhartha Sahi}
\address{Siddhartha Sahi, Department of Mathematics, Rutgers University}
\email{sahi@math.rutgers.edu}
\author{Hadi Salmasian}
\address{Hadi Salmasian, Department of Mathematics and Statistics, University of Ottawa}
\email{ hadi.salmasian@uottawa.ca}
\begin{document}
\maketitle
\begin{abstract} 

We introduce and study quantum Capelli operators inside 
 newly  constructed  quantum Weyl algebras
 associated to three families of symmetric pairs (\cite{LSS}). 
  Both the center of a particular quantized enveloping algebra and the Capelli operators act semisimply on the   polynomial part of these quantum Weyl algebras.   We show how to transfer well-known properties of the  center arising from the theory of quantum symmetric pairs to the Capelli operators.  Using this information, we provide a natural realization of  Knop-Sahi interpolation polynomials as functions that produce eigenvalues for quantum Capelli operators. 
 \end{abstract}
\tableofcontents

\section{Introduction} In the mid 1980's, Macdonald introduced a new  family of parametrized orthogonal polynomials, referred to as Macdonald polynomials. These polynomials can be viewed as generalizations of the orthogonal polynomials that appear as 
zonal spherical functions for real and $p$-adic symmetric spaces (\cite{M}).  About a decade later, Knop (\cite{Kn}) and Sahi (\cite{S})  defined parameterized versions  of interpolation   polynomials (in Type A) 
with close connections to Macdonald polynomials.  For certain parameters, the Knop-Sahi interpolation  polynomials  can be viewed as $q$-analogs of polynomials that produce eigenvalues for Capelli operators (\cite{KS1991},\cite{KS1993},\cite{S2}).  Knop-Sahi   interpolation  polynomials  are referred to by a number of other names in the literature including interpolation Macdonald polynomials (\cite{Ol}),  quantum Capelli polynomials (\cite{Kn}), and shifted Macdonald polynomials (\cite{O}).

Drinfeld and Jimbo discovered quantized enveloping algebras, which are Hopf algebra deformations of universal enveloping algebras of Lie algebras,  around the same time as Macdonald initiated the study 
of   his new family of orthogonal polynomials.
Shortly afterwards,  the quest began for realizing Macdonald polynomials as zonal sperical functions on quantum symmetric spaces.
 This realization was ultimately carried out in a series of papers (\cite{DS}, \cite{L2003},\cite{L2004},  \cite{N}, \cite{NDS}, \cite{NS}) where the theory relies on definitions of quantum symmetric pairs  using
special coideal subalgebras.  They were constructed first for classical Lie algebras using  generators via $L$-functionals and solutions to  reflection equations  (\cite{M}, \cite{NS})
and then,  in general, via expressions derived from the Drinfeld-Jimbo generators (\cite{L1999}, \cite{L2002}, \cite{K}).  

In this paper, we complete another part of the quantum picture, namely finding a natural realization of the Knop-Sahi interpolation  polynomials as functions that produce eigenvalues for quantum Capelli operators.  The crucial piece in the story is identifying quantum Capelli operators, which are invariants with respect to a particular quantized enveloping algebra, inside three families of newly  constructed  quantum Weyl algebras $\mathscr{PD}_{\theta}$ (\cite{LSS}).  
 In other words, we formulate and solve the Capelli eigenvalue problem in the quantum case, thus providing 
quantum analogs of results in \cite{S2}.  
Similar results  for Capelli operators and their eigenvalues have been obtained in the Lie superalgebra setting (\cite{ASS}, \cite{SS2016}, \cite{SSS2020}, \cite{SSS2021}). 
 We also show that these two quantum realizations, one for Macdonald polynomials and the other for Knop-Sahi interpolation polynomials, are closely connected.  In particular, 
we use results from the Macdonald polyomial setting concerning central elements of the quantized enveloping algebra in order to establish basic properties of quantum Capelli operators.

The quantum Weyl algebra $\mathscr{PD}_{\theta}$ is associated  to one of the following three  (infinitesimal) symmetric pairs
 $(\mathfrak{g}, \mathfrak{k})$ where $\mathfrak{k}$ is the Lie subalgebra fixed by the involution $\theta$  of $\mathfrak{g}$:
\begin{itemize}
\item $(\mathfrak{g}, \mathfrak{k})=(\mathfrak{gl}_n, \mathfrak{so}_n)$ 
\item  $(\mathfrak{g}, \mathfrak{k})=(\mathfrak{gl}_{2n}, \mathfrak{sp}_{2n})$
\item  $(\mathfrak{g}, \mathfrak{k})=(\mathfrak{gl}_n\oplus \mathfrak{gl}_n, \mathfrak{gl}_n)$
\end{itemize}
We refer to the first pair as Type AI, the second as Type AII, and the third as the Type A diagonal case, thus matching the language used in the classification of symmetric pairs via Satake diagrams. 
These correspond to the Jordan algebras of Hermitian matrices over $\mathbb{R}$,  $\mathbb{H}$, and $\mathbb{C}$ in Type AI, Type AII, and the Type A diagonal case respectively. Indeed, in each case, the   Riemannian symmetric space $G/K$ can be identified naturally with the open subset of positive definite Hermitian matrices. 
We focus on these three families  precisely because  the Jordan algebra 
setting  is central in  Kostant and Sahi's study 
of Capelli identities, Capelli operators, and interpolation polynomials (\cite{KS1991}, \cite{KS1993}, \cite{S2}).

As explained in \cite{LSS}, the quantum Weyl algebra $\mathscr{PD}_{\theta}$ is a deformation of the twisted tensor product of  two algebras, the polynomial part $\mathscr{P}_{\theta}$ and the constant coefficient differential part $\mathscr{D}_{\theta}$. The polynomial part  $\mathscr{P}_{\theta}$ is 
the algebra of quantized functions on the space of $n\times n$ symmetric matrices for Type AI, $n\times n$ skew symmetric matrices for Type AII, and all $n\times n$ matrices in the Type A diagonal case.  The algebra $\mathscr{D}_{\theta}$ is isomorphic to the opposite algebra of $\mathscr{P}_{\theta}$.

Let $\mathcal{B}_{\theta}$ denote the right coideal subalgebra of $U_q(\mathfrak{g})$ which is a quantum analog of $U(\mathfrak{k})$ inside of $U(\mathfrak{g})$ as defined in  \cite{L2002} (see also \cite{K}).  Recall that there is a second root system, called the 
restricted root system, associated to a symmetric pair $\mathfrak{g}, \mathfrak{k}$. 
For  each of the symmetric pairs under consideration in this paper, the restricted root system $\Sigma$ is of type $A_{n-1}$.  Denote by  $\Lambda^+_{\Sigma}$ the set of partitions of length $n$ realized as weights associated to $\Sigma$.  Note that these partitions defined by $\Sigma$ form a subset of the partitions defined by the root system of $\mathfrak{g}$.  Thus with respect to this inclusion,  elements of $\Lambda^+_{\Sigma}$ are also weights for $\mathfrak{g}$.

Let $M={\rm Mat}_{n}$ in type AI, $M={\rm Mat}_{2n}$ in type AII, and  $M={\rm Mat}_n\times{\rm Mat}_n$ in the diagonal case.  Initially, see 
\cite{LSS},    $\mathscr{P}_{\theta}$ is defined as  a  subalgebra of $\mathcal{B}_{\theta}$-invariants inside the quantized function algebra  $\mathcal{O}_q(M)$ on $M$.  Moreover, $\mathscr{P}_{\theta}$ inherits a left $U_q(\mathfrak{g})$-module structure from $\mathcal{O}_q(M)$. 
 As  a left $U_q(\mathfrak{g})$-module, $\mathscr{P}_{\theta}$ is multiplicity free and is isomorphic to a  direct sum of simple highest weight modules $L(2\lambda)$ where $\lambda\in \Lambda^+_{\Sigma}$.  The algebra $\mathscr{P}_{\theta}$ can be viewed as a subalgebra of    the quantum homogeneous space $\mathcal{O}_q[G/K]$, which in turn is a subalgebra of $\mathcal{O}_q[G]$, where $G,K$ is the symmetric pair of Lie groups   associated to $\mathfrak{g}, \mathfrak{k}$.   The algebra $\mathscr{D}_{\theta}$ also admits a decomposition into left  $U_q(\mathfrak{g})$-modules that  is isomorphic to a direct sum of the $L^*(2\lambda)$, for  $\lambda\in \Lambda^+_{\Sigma}$ where $L^*(2\lambda)$ is a left $U_q(\mathfrak{g})$-module dual of $L(2\lambda)$.

The quantum Weyl algebra  $\mathscr{PD}_{\theta}$ inherits the structure of a left $U_q(\mathfrak{g})$-module via its construction that is compatible with the module actions on the subalgebras $\mathscr{P}_{\theta}$ and $\mathscr{D}_{\theta}$.  Thus $\mathscr{PD}_{\theta}$ is  isomorphic to the direct sum of modules of the form $L(2\lambda)\otimes L^*(2\mu)$ where $\lambda, \mu$ both run over partitions in $\Lambda^+_{\Sigma}$. Note that 
 $L(2\lambda)\otimes L^*(2\lambda) \cong {\rm End}\ L(2\lambda)$. Let $C_{\lambda}$ be the vector corresponding to the identity in   $L(2\lambda)\otimes L^*(2\lambda)$ via this isomorphism. The space of left $U_q(\mathfrak{g})$-invariants of  $L(2\lambda)\otimes L^*(2\mu)$ is zero if $\lambda \neq \mu$ and equal to the one-dimensional space spanned by  $C_{\lambda}$ for  $\lambda = \mu$. The quantum Capelli operators are the elements 
 $C_{\lambda}, \lambda\in \Lambda^+_{\Sigma}.$  

One can define an action of $\mathscr{PD}_{\theta}$ on $\mathscr{P}_{\theta}$ in a manner similar to the action of  the classical Weyl algebra on its polynomial subalgebra.    Since $\mathscr{P}_{\theta}$ is a $U_q(\mathfrak{g})$-module, elements of  $U_q(\mathfrak{g})$ also act on $\mathscr{P}_{\theta}$.    These actions lead to $U_q(\mathfrak{g})$-module maps from $\mathscr{PD}_{\theta}$ and $U_q(\mathfrak{g})$ into ${\rm End}\ \mathscr{P}_{\theta}$ where the module action for $\mathscr{P}_{\theta}$ is the left action and the module action for $U_q(\mathfrak{g})$ is the (left) adjoint action.  
Let $\mathcal{F}(U_q(\mathfrak{g}))$ denote the locally finite subalgebra of $U_q(\mathfrak{g})$ with respect to the adjoint action.  
We define a $U_q(\mathfrak{g})$-module map  that takes certain elements of  $\mathcal{F}(U_q(\mathfrak{g}))$, including most of the center  of $U_q(\mathfrak{g})$, to the quantum Weyl algebra that is compatible with the maps into ${\rm End}\ \mathscr{P}_{\theta}$. 
  This enables us to establish the following connection between the center and the Capelli operators.

\medskip
\noindent
{\bf Theorem A.}
 {\it  There is an isomorphism between  a  polynomial subring $Z$ of the center of $U_q(\mathfrak{g})$ and the algebra generated by the quantum 
Capelli operators so that the action of the two agree on $\mathscr{P}_{\theta}$.}

 \medskip
 In \cite{LSS0}, quantum Weyl algebras over $m\times n$ matrices where $m\neq n$ are studied.  These quantum Weyl algebras
come equipped with a left action of $U_q(\mathfrak{gl}_m)$ and a right action of $U_q(\mathfrak{gl}_n)$.  A key tool in \cite{LSS0} is a mapping of most of  the locally finite subalgebras of $U_q(\mathfrak{gl}_m)$ and of $U_q(\mathfrak{gl}_n)$ to subalgebras of the quantum Weyl algebra using very similar arguments to the ones found here. 

   By \cite{L}, there is a Harish-Chandra type map from the center to a subalgebra of the  Cartan subalgebra of $U_q(\mathfrak{g})$ so that the image of the center $Z(U_q(\mathfrak{g}))$ consists of invariants 
 with respect to a  dotted  action of the Weyl group for the restricted root system.  Note that elements of $Z(U_q(\mathfrak{g}))$ act semisimply on $\mathscr{P}_{\theta}$ with eigenspaces corresponding to simple modules $L(2\lambda)$, $\lambda\in \Lambda^+_{\Sigma}$.  The image of a central element inside of the Cartan subalgebra can be viewed as an eigenvalue  function: the eigenvalue for a central element $z$ on $L(2\lambda)$ is obtained by evaluating the image of $z$ under this Harish-Chandra map at $\lambda$,  or more precisely, at $q^{\lambda}$ (see Section \ref{section:dd} for a definition of this type of evaluation).

  It follows from Theorem A that
the eigenvalue functions  for quantum Capelli operators live inside the same subalgebra of the Cartan part and, moreover,    inherit the desired Weyl group invariance property from the center.  Write $\mathcal{E}_{\lambda}$ for the eigenvalue function associated to the Capelli operator $C_{\lambda}$.  We shall see that there is an interpretation for this eigenvalue function as a polynomial in $n$ variables.  The explicit map from $Z$ to the quantum Capelli operators of Theorem A  ensures that the degree of  $\mathcal{E}_{\lambda}$ is equal to $|\lambda|$. 

 Let  $\mathbb{C}(a,g)[x_1, \dots, x_n]$ be the polynomial  ring in $n$ variables over the field $\mathbb{C}(a,g)$ where $a$ and $g$ are two independent parameters. 
Given a partition $\mu=\mu_1\geq \mu_2\geq\mu_n\geq 0$ and a polynomial $P(x_1,\dots, x_n)$ in $\mathbb{C}(a,g)[x_1, \dots, x_n]$, set $P(a^{\mu}) = P(a^{\mu_1}, \dots, a^{\mu_n})$.  Knop-Sahi interpolation polynomials introduced in \cite{Kn} and \cite{S}, also called shifted Macdonald polynomials in the later paper \cite{O},  are a family of polynomials $P^*_{\lambda}(x; a,g)$ indexed by partitions ${\lambda}$ and contained in this  polynomial ring.  In addition, they     satisfy both an invariance condition and a vanishing condition.  
In particular, the element $P^*_{\lambda}( x;a,g)$ in 
 $\mathbb{C}(a,g)[x_1, \dots, x_n]$ is the unique (up to nonzero scalar) polynomial in the $x_1, \dots, x_n$ of degree $|\lambda|$  such that
\begin{itemize}
\item $P^*_{\lambda}( x;a,g)$ is symmetric viewed as a polynomial in the $n$ terms $x_1g^{-1}, \dots, x_ng^{-n}$
\item $P^*_{\lambda}(a^{\mu};a,g) = 0$  for each partition $\mu\neq \lambda$ with $|\mu|\leq |\lambda|$ and $P^*_{\lambda}(a^{\lambda};a,g) \neq 0$.
\end{itemize}
These polynomials are defined in a slightly different manner -- although it is easy to  convert from one definition to another --  in each of \cite{Kn},\cite{S}, and \cite{O}.  See Section \ref{section:KS} for more details and context.

Let $H_{2\lambda}$ denote the highest weight generating vector for the copy of $L(2\lambda)$ inside of $\mathscr{P}_{\theta}$.  The algebra $\mathscr{P}_{\theta}$ has a natural degree function which turns it into a graded algebra. Using a careful analysis involving the relations of $\mathscr{PD}_{\theta}$, 
we show that $C_{\lambda}\cdot H_{2\mu}=0$ for all  $H_{2\mu}$, $\mu\neq \lambda$,
 of degree less than or equal to that of $C_{\lambda}$. Moreover, $C_{\lambda}\cdot H_{2\lambda}\neq 0$. 
 These results are used to show that  the polynomials $\mathcal{E}_{\lambda}$ satisfy the vanishing  property, as well as a nondegeneracy condition, of the Knop-Sahi interpolation polynomials.  
This leads to 
our main result involving the Knop-Sahi interpolation  polynomials $P^*_{\lambda}({x;a,g})$. 

\medskip
\noindent
{\bf Theorem B.} {\it For each $\lambda\in \Lambda^+_{\Sigma}$, the polynomial  $\mathcal{E}_{\lambda}$  is equal to the
 polynomial $P_{\lambda}^*(x;a,g)$ (up to a normalization scalar) where  }
\begin{itemize}
\item $(a,g) = (q^4, q^2)$ {\it in Type AI}
\item $(a,g) =(q^2,q^4)$ {\it in Type AII},
\item $(a,g) = (q^2, q^2)$ {\it in the Type A diagonal case.}
\end{itemize}

It should be noted that the parameters obtained in Theorem B are precisely the same parameters as those for the realization of Macdonald polynomials as zonal spherical functions. This is due both to the connection between   Macdonald polynomials and Knop-Sahi interpolation polynomials as well as the relationship between eigenspaces of $\mathscr{P}_{\theta}$ and quantum zonal spherical functions. For more details on this connection, see Remark \ref{remark:finalremark}.

An  original motivation for this paper  was to understand and extend Bershtein's results on quantum Capelli operators in \cite{B}.
 One can view the results in  \cite{B} as a quantum analog of the classical setup relying on a Hermitian symmetric pair of Type AIII. By \cite{KS1991}, Section 1, the classical versions of Bershtein's approach and the  Type A diagonal case of this paper produce the same Capelli operator eigenvalues.  This paper shows that the same happens in the quantum setting. 
 Indeed,  the eigenvalues for the quantum Capelli operators in the Type A diagonal case of Theorem B match those in \cite{B}.  
Note that  the fixed Lie subalgebra $\mathfrak{k}$ in \cite{B} contains the entire Cartan subalgebra and so the corresponding symmetric pair is maximally compact. 
An advantage of the approach here is that the symmetric pairs  are  explicitly defined so that they are    in maximally split form.    This  allows us to use the theory of quantum symmetric pairs and  symmetric spaces as developed in \cite{L1999},\cite{L2002},\cite{L2003}, \cite{L2004} and  
\cite{L}.

The remainder of this paper is organized as follows. Section 2 sets notation for root systems, quantized enveloping algebras,  the three types of symmetric pairs and their quantum versions. In Section 3, we give  generators and relations for 
 the quantized function algebras $\mathcal{O}_q({\rm Mat}_N)$ on  $N\times N$ matrices and for the three quantum homogeneous spaces $\mathscr{P}_{\theta}$. In Section 4, we obtain detailed descriptions of highest weight vectors inside $\mathscr{P}_{\theta}$ with respect to the action of $U_q(\mathfrak{g})$ and use them to write down explicit module decompositions of the quantum homogeneous spaces $\mathscr{P}_{\theta}$. 

 In Section 5, we describe the quantum Weyl algebras $\mathscr{PD}_{\theta}$ of \cite{LSS} in terms of generators and relations.  We  then define a $U_q(\mathfrak{g})$ equivariant action of $\mathscr{PD}_{\theta}$ on $\mathscr{P}_{\theta}$ that resembles the action for the classical Weyl algebra on the polynomial subalgebra.   Using the relations for $\mathscr{PD}_{\theta}$, we analyze the action of the quantum Weyl algebra on the highest weight vectors $H_{2\lambda}, \lambda\in \Lambda_{\Sigma}^+,$ inside of $\mathscr{P}_{\theta}$.   This information is used to determine the action of the Capelli operators $C_{\lambda}$ on highest weight vectors $H_{2\mu}$ needed for the proof of Theorem B.

Section 6 shows how to identify certain elements $u$ of the Cartan subalgebra of $U_q(\mathfrak{g})$ with elements $a$ of  $\mathscr{PD}_{\theta}$ so that  $u$ and $a$ agree with respect to their action on  $\mathscr{P}_{\theta}$.   The elements of the Cartan subalgebra that have this property are part of the locally finite subalgebra $\mathcal{F}(U_q(\mathfrak{g}))$ of $U_q(\mathfrak{g})$.  The locally finite subalgebra is a well-studied object for quantized enveloping algebras of semisimple Lie algebras.  In Section 7, we give a complete description of $\mathcal{F}(U_q(\mathfrak{g}))$ by translating the results for the locally finite part of 
$U_q(\mathfrak{sl}_N)$ to the $\mathfrak{gl}_N$ setting. This allows us to define a mapping $\Upsilon$ on almost all of $\mathcal{F}(U_q(\mathfrak{g}))$ into $\mathscr{PD}_{\theta}$ which is compatible with the action on $\mathscr{P}_{\theta}$. 

Section 8 is devoted to the center of $U_q(\mathfrak{g})$ and the image of a special subalgebra $Z$ of the center (as in Theorem A) to $\mathscr{PD}_{\theta}$ via $\Upsilon$. In order to understand the center, we state and review specific results from \cite{L} for the center of $U_q(\mathfrak{sl}_N)$ and use them as a guideline for understanding the center of $U_q(\mathfrak{gl}_N)$ and the related subalgebra $Z$. 
Generators for the image of $Z$ inside ${\rm End}\ \mathscr{P}_{\theta}$ are identified  and the degree of their image inside of $\mathscr{PD}_{\theta}$ are determined.  Using the standard Harish-Chandra map and a restricted version based on those in \cite{L}, we explain  the dotted restricted Weyl group invariance for central elements.  We further show that both $Z$ and its image under the restricted Harish-Chandra map are isomorphic to a polynomial ring in $n$ variables.  

Quantum Capelli operators, are defined in Section 9.  Comparing the degree of quantum Capelli operators with the degree of the  image of generators of $Z$ under $\Upsilon$  yields Theorem A (Theorem \ref{theorem:center_and_capelli}).  Eigenvalue functions associated to quantum Capelli operators are introduced in Section \ref{section:van} and their relation to the image of central elements of $Z$ under the restricted Harish-Chandra map is explained.  We show that these eigenvalue functions  satisfy the defining properties of Knop-Sahi polynomials by combining facts about the center of $U_q(\mathfrak{g})$ with structural properties of $\mathscr{PD}_{\theta}$. This establishes  Theorem B (Theorem \ref{thm:KSpoly}).

\medskip\noindent
{\bf Acknowledgements.} The authors extend special thanks to the referee for a careful reading and many insightful suggestions.  A preliminary version of these results was presented by G. Letzter at the Workshop on Symmetric Spaces, Their Generalizations, and Special Functions, University of Ottawa, 2022, which was sponsored by grants from the Fields Institute and the NSF. 
The research of S. Sahi was partially supported by NSF grants DMS-1939600, DMS-2001537, and Simons foundation grant 509766.  The research of  H. Salmasian  was partially supported by an NSERC Discovery Grant (RGPIN-2018-04004).

\section{Quantized enveloping algebras}
\subsection{Roots and weights}\label{section:roots-and-weights} Let $N$ be a positive integer
and let $\epsilon_1, \dots, \epsilon_N$ denote a fixed  orthonormal basis for $\mathbb{R}^N$ with respect to the standard inner product $(\cdot,\cdot)$.     Let $\Phi_N$ denote the root system of Type $A_{N-1}$ with positive simple roots $\alpha_i =\epsilon_i-\epsilon_{i+1}$ for $i=1, \dots, {N-1}$.  Write $\omega_1, \dots, \omega_{N-1}$ for the fundamental weights associated to $\Phi_N$. Set $P_N=\sum_{i=1}^{N-1}\mathbb{Z}\omega_i$ equal to the weight lattice and $Q_N=\sum_{i=1}^{N-1}\mathbb{Z}\alpha_i$ equal to the root lattice.  Write $P_N^+=\sum_{i=1}^{N-1}\mathbb{N}\omega_i$ for the subset of (non-negative) dominant integral weights and let $Q_N^+$ denote the subset  $\sum_{i=1}^{N-1}\mathbb{N}\alpha_i$ of the root lattice.  

Let $\Lambda_N$ denote the set of vectors $\lambda=\lambda_1\epsilon_1+\cdots + \lambda_N\epsilon_N$ where each $\lambda_i\in \mathbb{Z}$ and $\lambda_1\geq \lambda_2\geq\cdots \geq \lambda_N$.      Set $\Lambda_N^+$ equal to the subset of $\Lambda_N$ consisting of those weights $\lambda\in \Lambda_N$ satisfying $\lambda_1\geq \cdots \geq \lambda_N\geq 0$.  In other words, elements in $\Lambda^+_N $ correspond to the set of partitions $(\lambda_1,\dots, \lambda_N)$.  Recall that $\epsilon_1+\cdots + \epsilon_N$ is orthogonal to each $\alpha_i$ with respect to the given inner product.  Moreover, it is straightforward to check that the fundamental weight $\omega_i$ satisfies
\begin{align}\label{omega} {\omega}_i = \epsilon_1 + \cdots + \epsilon_i -{{i}\over{N}}(\epsilon_1+\cdots + \epsilon_N)
\end{align}
for each $i=1, \dots, N-1$.
 Set $\hat{\omega}_i= \epsilon_1 +\cdots + \epsilon_i$ for $i=1, \dots, N$ and so the above equality becomes $\omega_i = \hat{\omega}_i-{{i}\over{N}}\hat{\omega}_N$. We refer to the weights $\hat{\omega}_1, \dots, \hat{\omega}_N$ as the fundamental partitions associated to the root system $\Phi_N$.   Note that $\Lambda_N^+$ is the $\mathbb{N}$-linear span of the fundamental partitions $\hat{\omega}_1, \dots, \hat{\omega}_{N}$. Note further that $P_N^+\subset \Lambda^+_N+\mathbb{N}(-\hat{\omega}_N
 /N)$ and, as stated above, $P_N^+$ is the $\mathbb{N}$-linear span of the fundamental weights $\omega_i, i=1, \dots, N-1$.

Let $w_0$ denote the longest element of the  Weyl group in type $A_{N-1}$. We have $w_0\alpha_i = -\alpha_{N-i}$ and $w_0\epsilon_j = \epsilon_{N-j}$ for  $i=1, \dots, N-1$ and $j=1, \dots, N$.  It follows that the 
 set $w_0\Lambda_N^+$ is the $\mathbb{N}$-linear span of the elements $w_0\hat{\omega}_i=\epsilon_{N-i+1}+\cdots + \epsilon_N$ for $i=1, \dots, N$. Moreover, for each $i=1,\dots,N-1$,  the image of the fundamental weight $\omega_i$ under $w_0$ is $w_0{\omega}_i =\epsilon_{N-i+1}+\cdots + \epsilon_N -{{i}\over{N}}(\epsilon_1+\cdots \epsilon_N)$.  So
$w_0P_N^+$ is the $\mathbb{N}$-linear span of $w_0{\omega}_1,\dots, w_0\omega_{N-1}$ which in turn equals the $\mathbb{N}$-linear span of  $-\omega_{N-1}, \cdots. -\omega_1.$
 Hence $w_0P^+_N =-P^+_N$.
 
 Let $ \mathfrak{gl}_N$ denote the complex general linear Lie algebra consisting of $N\times N$ matrices and let $\mathfrak{sl}_N$ be the Lie subalgebra equal to the subspace of $N\times N$ matrices with trace $0$.   Recall that $\Phi_N$ is the root system for $\mathfrak{sl}_N$ and the finite-dimensional simple $\mathfrak{sl}_N$-modules are parameterized by their highest weights which are elements of $P_N^+$.  For $\mathfrak{gl}_N$, one uses $\Lambda_N$ to parameterize the finite-dimensional simple $\mathfrak{gl}_N$-modules.  More generally, the relationship between $\omega_i$ and $\hat{\omega}_i$ in (\ref{omega}) will help us translate results from the $\mathfrak{sl}_N$ setting to that of $\mathfrak{gl}_N$.  
Sometimes it will be useful to just consider the first $N-1$ partitions. To do this, we set
 $\hat\Lambda^+_N$ equal to the $\mathbb{N}$-linear span of the first $N-1$ partitions $\hat{\omega}_1,\dots, \hat{\omega}_{N-1}$.
Note that $\hat\Lambda^+_N + \mathbb{N}\hat{\omega}_N=\Lambda^+_N$. Similarly, $w_0\hat{\Lambda}^+_N$ is equal to the $\mathbb{N}$-linear span of the 
 $N-1$ partitions $w_0\hat{\omega}_{i},i=1,\dots, N-1$.

The root system for $\mathfrak{gl}_N\oplus \mathfrak{gl}_N$ is just the disjoint union $\Phi^{(1)}_N \cup \Phi^{(2)}_N$ of two copies of the  root system for $\mathfrak{gl}_N$ and the corresponding Weyl group is just the direct product of two copies of the Weyl group for $\Phi_N$ .  Write $\epsilon_1, \dots, \epsilon_N$ for the orthonormal basis for the first copy of $\mathfrak{gl}_N$ and $\epsilon_{N+1}, \dots, \epsilon_{2N}$ for the second copy.  Set $\hat{\omega}_i =  \epsilon_1 +\cdots + \epsilon_i$  and $\hat{\omega}_{i+N} =  \epsilon_{1+N} +\cdots + \epsilon_{i+N}$ for $i=1, \dots, N$. The longest Weyl group element is $w_0\times w_0$ which simply acts as $w_0$ on each copy of $\Phi_N$.  Other notions are extended from $\mathfrak{gl}_N$ to $\mathfrak{gl}_N\oplus \mathfrak{gl}_N$ in a similar fashion. Sometimes we denote weights for $\mathfrak{gl}_N\oplus \mathfrak{gl}_N$ using a single symbol, say $\lambda$.  In other instances, we use the sum  $\gamma \oplus \gamma'$ to represent the weight $\gamma_1\epsilon_1+\cdots + \gamma_N\epsilon_N+\gamma'_{1}\epsilon_{1+N}+\cdots + \gamma'_{N}\epsilon_{2N}$.

\subsection{The quantized enveloping algebra}\label{section:qea}
Write $e_1, \dots, e_{N-1},$ $ f_1, \dots, f_{N-1},$ $ h_{\epsilon_1},\cdots, h_{\epsilon_N}$ for the standard Chevalley  generators for $\mathfrak{gl}_N$.  For the direct sum $\mathfrak{gl}_N\oplus \mathfrak{gl}_N$, we write $e_i, f_i, h_{\epsilon_j}$ where $i\in \{1, \dots, N-1\}\cup\{N+1,\dots, 2N-1\}$ and $j\in \{1,\dots, 2N\}$ for the standard generators.   Here, the $e_i, f_i, h_{\epsilon_j}$, $i\leq N-1, j\leq  N$ generate the first copy of $\mathfrak{gl}_N$ while 
$e_i, f_i, h_{\epsilon_j}$, $i,j\geq N+1$ generate the second copy.  

Let $q$ be an indeterminate.   The quantized enveloping algebra $U_q(\mathfrak{gl}_N)$   is an algebra over $\mathbb{C}(q)$ generated by $K_{\epsilon_1}^{\pm 1}, \dots, K_{\epsilon_N}^{\pm 1}$,   $E_1, \dots, E_{N-1},$ $ F_1, \dots, F_{N-1}$ subject to the algebra relations as stated in \cite{N} (see also \cite{KS}, Section 10).   Note that $K_{\epsilon_1}\cdots K_{\epsilon_N}$ is a central element of $U_q(\mathfrak{gl}_N)$. 
Set  $K_i = K_{\epsilon_i}K_{\epsilon_{i+1}}^{-1}$ for $i=1, \dots, N-1$.   
The subalgebra of $U_q(\mathfrak{gl}_N)$ generated by $$K_1^{\pm 1}, \dots, K_{N-1}^{\pm 1}, E_1, \dots, E_{N-1}, F_1, \dots, F_{N-1}$$ is the quantized enveloping algebra $U_q(\mathfrak{sl}_N).$

Given an integer linear combination $\beta = \sum_{i=1}^N\beta_j\epsilon_j$, write $K_{\beta}$ for the product $K_{\epsilon_1}^{\beta_1}\cdots K_{\epsilon_N}^{\beta_N}$. The algebra $U_q(\mathfrak{gl}_N)$ is a  Hopf algebra with coproduct $\Delta$, counit $\epsilon$, and antipode $S$ defined on generators by 
\begin{itemize}
\item $\Delta(E_i) = E_i\otimes 1 + K_i\otimes E_i$, $\epsilon(E_i) = 0$ and $S(E_i) = -K_i^{-1}E_i$
\item $\Delta(F_i) = F_i\otimes K_i^{-1} + 1\otimes F_i$, $\epsilon(F_i) = 0$ and $S(F_i) = -F_iK_i$
\item $\Delta(K) = K\otimes K$, $\epsilon(K) = 1$ and $S(K) = K^{-1}$
\end{itemize}
for $i=1, \dots, N-1$ and for all $K=K_{\beta}, \beta\in \sum_j{\mathbb{Z}}\epsilon_j$.  
 It follows from the defining relations  for $U_q(\mathfrak{gl}_N)$ that 
\begin{align}\label{adKaction}
 K_{\beta}E_iK_{\beta}^{-1} = q^{(\beta,\alpha_i)}E_i \quad {\rm and}\quad  K_{\beta}F_iK_{\beta}^{-1}=q^{-(\beta,\alpha_i)}F_i 
\end{align} for $i=1, \dots, N-1$ and all $ \beta\in \sum_j{\mathbb{Z}}\epsilon_j$.  The subalgebra $U_q(\mathfrak{sl}_N)$ is also a Hopf algebra using the same coproduct, couinit, and antipode.  Given an arbitrary element 
$u$ in one of these Hopf algebras, we express the coproduct of $u$ by $\Delta(u) = \sum u_{(1)}\otimes u_{(2)}$.

We also consider the quantized enveloping algebra of $\mathfrak{gl}_N\oplus \mathfrak{gl}_N$.  Note that $$U_q(\mathfrak{gl}_N\oplus \mathfrak{gl}_N) \cong U_q(\mathfrak{gl}_N)\otimes U_q(\mathfrak{gl}_N)$$ as Hopf algebras. Write $E_i, F_i, K_{\epsilon_j}^{\pm 1}$, $1\leq i\leq N-1, 1\leq  j\leq N$ for the generators of the first copy of  $U_q(\mathfrak{gl}_N)$ and 
$E_i, F_i, K_{\epsilon_j}^{\pm 1}$, $N+1\leq i\leq 2N-1, N+1\leq j\leq 2N$ for the generators of the second copy.  Formally, we may identify $E_i$ with $E_i\otimes 1$ and $E_{i+N}$  with $1\otimes E_{i}$ for $1\leq i\leq N-1$ with similar identifications for the $F_i$ and $K_{\epsilon_j}^{\pm 1}$.

Let $U^0(\mathfrak{gl}_N)$ denote the subalgebra of $U_q(\mathfrak{gl}_N)$ generated by $K_{\epsilon_1}^{\pm 1}, \dots, K_{\epsilon_N}^{\pm 1}$.  Similarly, we write
$U^0(\mathfrak{gl}_N\oplus \mathfrak{gl}_N)$ for the subalgebra of $U_q(\mathfrak{gl}_N\oplus \mathfrak{gl}_N)$ generated by $K_{\epsilon_1}^{\pm 1}, \dots, K_{\epsilon_{2N}}^{\pm 1}$. Write $U^0(\mathfrak{sl}_N)$ for $U^0(\mathfrak{gl}_N)\cap U_q(\mathfrak{sl}_N)$ and note that $U^0(\mathfrak{sl}_N)$  is just the Laurent polynomial ring generated by $  K_i^{\pm 1}, i=1,\dots N-1$.

Let $U^+(\mathfrak{gl}_N)$ denote the subalgebra of $U_q(\mathfrak{gl}_N)$ generated by $E_1, \dots, E_{N-1}$.  Similarly, let $U^+(\mathfrak{gl}_N\oplus\mathfrak{gl}_N)$ denote the subalgebra of $U_q(\mathfrak{gl}_N\oplus \mathfrak{gl}_N)$ generated by $E_i,$ for $i=1, \dots N-1$ and $i=N+1, \dots, 2N-1$. 
We frequently drop $\mathfrak{gl}_N$ or $\mathfrak{gl}_N\oplus\mathfrak{gl}_N$ from the notation and simply write $U^+$ for when the associated Lie algebra can be understood from context. 
Define the subalgebra $U^-(\mathfrak{gl}_N)$  in the same way with each $E_i$ replaced by $F_i$ and similarly write $U^-$ when the associated Lie algebra is understood from contexs.
 
In the study of $U_q(\mathfrak{sl}_N)$, it is sometimes necessary to pass to  the simply connected quantized enveloping algebra  $\check U_q(\mathfrak{sl}_N)$.   This Hopf algebra is an extension of $U_q(\mathfrak{sl}_N)$ obtained by enlarging $U^0(\mathfrak{sl}_N)$ to $\check U^0(\mathfrak{sl}_N)$ 
 where $\check U^0(\mathfrak{sl}_N)$ is the group algebra generated by the $K_{\mu}$ where $\mu$ is in the weight lattice $P_N$ (see for example \cite{Jo}, 3.2.10).  In another words,    $\check U_q(\mathfrak{sl}_N)$ is the Hopf algebra generated by $U_q(\mathfrak{sl}_N)$ and $\check U^0(\mathfrak{sl}_N)$ where the generators of $U_q(\mathfrak{sl}_N)$ and the elements  $K_{\beta}, \beta\in P_N$ satisfy  (\ref{adKaction}).
 
 Recall that the augmentation ideal of a Hopf algebra $H$, denoted by $H_+$,  is the kernel of the counit $\epsilon$.  Given a subset $M$ of $H$, we write $M_+$ for the intersection of $M$ with $H_+$.  For example, we write $(U_q(\mathfrak{gl}_N))_+$ for the augmentation ideal of $U_q(\mathfrak{gl}_N)$.  Similarly, we denote the intersection of $U^+$ with the augmentation ideal of $U_q(\mathfrak{gl}_N)$ by $U^+_+$.

We write $L(\lambda)$ for the simple $U_q(\mathfrak{gl}_N)$-module of highest weight $\lambda$.  In other words, $L(\lambda)$ is generated by a highest weight vector $v_{\lambda}$ such that $E_iv_{\lambda} =0$ for all $i=1, \dots, N-1$ and $K_{\beta}v_{\lambda} = q^{(\beta,\lambda)}v_{\lambda}$ for all weights $\beta$.  Recall that $L(\lambda)$ is finite-dimensional viewed as a $U_q(\mathfrak{sl}_N)$-module  if and only if $\lambda \in P_N^+$ and is finite-dimensional viewed as a $U_q(\mathfrak{gl}_N)$-module if and only if  $\lambda\in \Lambda_N$.   These notions extend to the setting of $U_q(\mathfrak{gl}_N\oplus \mathfrak{gl}_N)$ in a straightforward manner and we will similarly denote highest weight modules by $L(\lambda)$ where here $\lambda$ is understood to be a weight for $\mathfrak{gl}_N\oplus \mathfrak{gl}_N$.

\subsection{The adjoint action}\label{section:lfs}

 Given a Hopf algebra $H$ and a two-sided $H$-bimodule $M$,  the bimodule $M$ admits an  $H$-module structure via the adjoint action defined by
 \begin{align*}
 ({\rm ad}\ a) m = \sum a_{(1)}mS(a_{(2)})
 \end{align*} for all $a\in H$ and $m\in M$.   
The locally finite part $\mathcal{F}_H(M)$  of $M$ is the submodule consisting of those elements that generate a finite module with respect to the 
adjoint action of $H$.  More precisely, 
\begin{align*}
\mathcal{F}_H(M) = \{m \in M| \ \dim[({\rm ad}\ H)m]<\infty \}.
\end{align*}
When $M$ is also a  Hopf algebra, the locally finite part $\mathcal{F}_H(M)$ is a subalgebra of $M$ (\cite{JL}). 
We typically drop $H$ from the notation $\mathcal{F}(M)$ when $H$ can be understood from context. 

For $H=U_q(\mathfrak{gl}_N)$, $U_q(\mathfrak{sl}_N)$ or $\check U_q(\mathfrak{sl}_N)$,  the adjoint action is determined by the following formulas:
 \begin{align*} 
&({\rm ad}\ E_i)\cdot m =E_im-K_imK_i^{-1}E_i\cr  &({\rm ad}\ F_i)\cdot m = F_imK_i-mF_iK_i \cr  &({\rm ad}\ K)\cdot m = KmK^{-1}
\end{align*}
for all $i=1, \dots, N-1$, $K=K_{\beta}$ for all weights $\beta$ with $K_{\beta}$ in the specified quantized enveloping algebra, and $m\in M$.  Note that these formulas carry over easily to $H=U_q(\mathfrak{gl}_N\oplus \mathfrak{gl}_N)$.

\subsection{Three symmetric pair families}\label{section:three-families}
We are interested in quantum homogeneous spaces associated to three families of symmetric pairs $\mathfrak{g}, \mathfrak{k}$ where $\mathfrak{k}$ is the Lie subalgebra fixed by the involution $\theta$. Here,  $\mathfrak{g} = \mathfrak{gl}_n$ for the first family, $\mathfrak{g} = \mathfrak{gl}_{2n}$ for the second family, and   $\mathfrak{g}=\mathfrak{gl}_n\oplus \mathfrak{gl}_n$ for the third family.  
Below, we describe the involution for each family and then define the Drinfeld-Jimbo type generators for the  associated quantum analog $\mathcal{B}_{\theta}$ of $U(\mathfrak{g}^{\theta})$.

For each of the three families, we associate an $R$-matrix $R_{\mathfrak{g}}$ and 
a solution $J$  to the reflection equation 
$R_{\mathfrak{g}}J_1R_{\mathfrak{g}}^{t_1} J_2 = J_2 R_{\mathfrak{g}}^{t_1}J_1R_{\mathfrak{g}}$ where $J$ is an $N\times N$ matrix, $J_1 = J\otimes I$ and $J_2 = I\otimes J$, $I$ is the $N\times N$ identity matrix and  $R_{\mathfrak{g}}^{t_1}$ denotes  the transpose in the first column.  These $R$-matrices are closely related to   the following   matrix $R$ 
in $\Mat_N\times \Mat_N$ defined by \begin{align}\label{Rmatrix2}
R
&= \sum_{1\leq i\leq N} 
qe_{ii}\otimes e_{ii} +\sum_{1\leq i<j\leq N}(e_{ii}\otimes e_{jj} + e_{jj}\otimes e_{ii})
+ (q-q^{-1})\sum_{1\leq j<i \leq N} 
e_{ij}\otimes e_{ji}
\end{align}
where $\Mat_N$ is the space of $N\times N$ matrices and the $e_{ij}$ are  matrix units.
 When $R_{\mathfrak{g}}$ is the matrix $R$, then $(R^{t_1})^{ij}_{kl} = r^{kj}_{il}.$ 
 
\medskip
\noindent
{\bf Type AI}:  $\mathfrak{g} = \mathfrak{gl}_n$ and $\theta$ is defined by $\theta(e_i) = -f_i$, 
$\theta(f_i) = -e_i$ and $\theta(h_{\epsilon_j}) = -h_{\epsilon_j}$ each $i=1, \dots, n-1$ and $j=1, \dots, n$.   Hence $\mathfrak{k}$ is generated by $f_i-e_i$ for $i=1, \dots, n-1$.
Passing to the quantum case,   $\mathcal{B}_{\theta}$  is generated by $F_i-E_iK_i^{-1}$, for $i=1, \dots, n-1$.   In this case, $R_{\mathfrak{g}}=R$ as defined in (\ref{Rmatrix2}) with $N=n$. The associated solution to the reflection equation is $J = I_n$, the $n\times n$ identity matrix.

\medskip
\noindent
{\bf Type AII:} $\mathfrak{g} = \mathfrak{gl}_{2n}$ and $\theta$ is defined by \begin{itemize}
\item $\theta(e_i) = e_i, \theta(f_i) = f_i, \theta(h_i) = h_i$ for $i=1, 3, \dots, 2n-1.$
\item $\theta(f_i) = -[e_{i-1}, [e_{i+1}, e_{i}]]$  for $i=2, 4,\dots, 2n-2$.
\item $\theta(h_{\epsilon_{2i-1}}) = -h_{\epsilon_{2i}}$ 
 for $i=1, \dots, n$.
\end{itemize}
Hence $\mathfrak{k}$ is generated by $e_i,f_i, h_i, $ for $i=1, 3, \dots, 2n-1$ and $f_{i}-[e_{i-1}, [e_{i+1}, e_{i}]]$ for $i=2, 4,\dots, 2n-2$.
 Passing to the quantum case, 
$\mathcal{B}_{\theta}$ is generated by 
\begin{itemize}
\item $K_i^{\pm 1}, E_i,F_i$ for $i=1, 3, \dots, 2n-1.$
\item $B_i= F_i -q^3(({\rm ad}\ E_{i-1}E_{i+1})\cdot E_i)K_i^{-1}= F_i-q^3[E_{i-1}, [E_{i+1}, E_i]_q]_qK_i^{-1}$ for $i=2, 4,\dots, 2n-2$ where 
 $[a,b]_q$ denotes  the $q$-commutator $ab-qba$ of $a$ and $b$.
\end{itemize}
In this case, $R_{\mathfrak{g}}=R$ as defined in (\ref{Rmatrix2}) with $N=2n$. The associated solution to the reflection equation is 
$J= \sum_{k=1}^n(e_{2k-1, 2k}-qe_{2k,2k-1})$.  

\medskip
\noindent
{\bf Type A diagonal case}:   $\mathfrak{g} = \mathfrak{gl}_n\oplus \mathfrak{gl}_n$ and $\theta$ is defined by
$\theta(f_i) =- e_{n+i}, \ \theta(f_{n+i}) =- e_i,$ and $ \theta(h_{\epsilon_j}) = -h_{\epsilon_{n+j}}$
for $i=1, \dots, n-1$ and $j=1, \dots, n$ and so $\mathfrak{k}$ is generated by $f_i-e_{n+i}$, $ f_{n+i}-e_i$, and $h_{\epsilon_j}-h_{\epsilon_{n+j}}$ for $ i=1, \dots, n-1$ 
and $j=1,\dots, n$.
Passing to the quantum case, the corresponding quantum symmetric pair coideal subalgebra $\mathcal{B}_{\theta}$ is generated by 
\begin{align*}
B_i = F_i -qE_{n+i}K_{i}^{-1}, \quad B_{n+i}= F_{n+i} -qE_{i}K_{n+i}^{-1},{\rm \quad and\quad}(K_{\epsilon_j}^{- 1}K_{\epsilon_{n+j}})^{\pm 1}
\end{align*}
for $i=1, \dots, n-1$ and $j=1, \dots, n$.  In this case, $R_{\mathfrak{g}}$ is the $4n^2\times 4n^2$ block diagonal matrix with diagonal  $(R, I_{n^2}, I_{n^2}, R)$ were $R$ is the matrix defined by (\ref{Rmatrix2}) with $N=n$.  The associated solution to the reflection equation is $
J=\sum_{k=1}^n(e_{k,n+k} + e_{n+k,k})$.  We often drop the phrase Type A and simply refer to this family  as the diagonal case or diagonal type.

\medskip

 It should be noted that the quantum analog $\mathcal{B}_{\theta}$ can also be defined using $R$-matrices along with  the solutions $J$ to the reflection equation (see \cite{N}, \cite{NS}.)
The fact that the different approaches to defining $\mathcal{B}_{\theta}$ yields the same coideal subalgebra follows from the uniqueness result proved in \cite{L1999}, Sections 5 and 6 (see also \cite{L2002}, Theorem 7.3). 

Note that this uniqueness result is up to isomorphism via a Hopf algebra automorphism of $U_q(\mathfrak{g})$.  
Thus one can introduce parameters both in the solutions $J$ to the reflection equation (see for example \cite{NS}, Section 3) and in the  Drinfeld-Jimbo generators (see for example  \cite{LSS}, Section 5.1).  There is a one-to-one correspondence between the two sets of parameters, thus matching the choice of coideal subalgebra $\mathcal{B}_{\theta}$ in terms of   Drinfeld-Jimbo generators and  a reflection equation solution $J$ as in the above examples. This correspondence can be found in \cite{LSS},  Section 5.  (See especially the set-up in Section 5.1, the description of invariant elements in Section 5.2, their use in connecting  the parameters in Lemmas 5.1 and 5.2,  as well as the discussion at the end of the Section 5.2.)

\subsection{Restricted root systems}\label{section:restricted-root-system}
Write $\Phi$ for the root system generated by the set of positive simple roots for $\mathfrak{g}$. By Section \ref{section:roots-and-weights},  for Types AI and AII,   the set of positive simple roots is $\{\alpha_1, \dots, \alpha_{N-1}\}$ and the root system $\Phi=\Phi_N$ is of type $A_{N-1}$ where $N=n$ in Type AI and $N=2n$ in Type AII.  For the diagonal case, the set of positive roots is $\{\alpha_1, \dots, \alpha_{n-1}\}\cup\{\alpha_{n+1},\dots, \alpha_{2n-1}\}$ where $\{\alpha_1,\dots, \alpha_{n-1}\}$ and $\{\alpha_{n+1}, \dots, \alpha_{2n-1}\}$ each separately generate a root system of Type A$_n$ and together generate a root system $\Phi=\Phi^{(1)}_n\cup \Phi^{(2)}_n$.  

Note that $\theta$ induces an involution, which we also call $\theta$,  on the root system $\Phi$. More generally, $\theta$ can be extended to an involution on $\sum_{i=1}^{N}\mathbb{Z}\epsilon_i$ where $N=n$ in Type AI, and  $N=2n$ in Type AII and the diagonal case.  Set $\tilde{\beta}=(\beta-\theta(\beta))/2$ for each $\beta\in \sum_{i=1}^N\mathbb{Z}\epsilon_i$.    The set of all $\tilde{\beta} $ where $ \beta$ runs over elements in $ \Phi$, forms another root system,  called the restricted root system, which we denote by $\Sigma$.  For all three families under consideration, the root system $\Sigma$ is of type A$_{n-1}$.  We denote the simple roots for $\Sigma$ by $\alpha_1^{\Sigma}, \dots, \alpha_{n-1}^{\Sigma}$ and explain below how these are related to the simple roots for $\Phi$.

 The restricted root system is contained in a  vector space spanned by orthonormal basis vectors that live inside the set $\{\tilde{\beta}|\ \beta \in \sum_{i=1}^n\mathbb{Z}\epsilon_i\}$.  We denote this orthonormal basis 
 by $\epsilon_{1}^{\Sigma}, \dots, \epsilon_{n}^{\Sigma}$.
Write $\eta_1, \dots, \eta_{n-1}$ for the fundamental weights associated to $\Sigma$ and write $\hat{\eta}_1, \dots, \hat{\eta}_{n}$ for the fundamental partitions.  Let $P_{\Sigma}$ denote the weight 
lattice and $P^+_{\Sigma}$ denote the dominant integral weights defined by the root system  $\Sigma$.  In other words $P_{\Sigma}$ (resp. $P^+_{\Sigma}$) is 
just the set of $\mathbb{Z}$-linear (resp.\,$\mathbb{N}$-linear) span of the fundamental weights  $\eta_1, \dots, \eta_{n-1}$.  Let   $\Lambda^+_{\Sigma}$ denote the $\mathbb{N}$-linear span of the fundamental partitions $\hat{\eta}_i, i=1, \dots, n$. Define $\hat{\Lambda}^+
_{\Sigma}$ in a way similar to $\hat{\Lambda}^+_N$.  In particular, $\hat{\Lambda}^+
_{\Sigma}$ is the $\mathbb{N}$-linear span of $\hat{\eta}_1, \dots, \hat{\eta}_{n-1}$.
We identify the $\epsilon^{\Sigma}_i$, $\eta_i$ and $\hat{\eta}_i$ with elements of  $\sum_{i=1}^n\mathbb{Q}\epsilon_i$ below for each of the three families under consideration. With respect to this identification, it is straightforward to check that the longest element  of the Weyl group for $\Phi$ also acts as the longest element in the restricted Weyl group $W_{\Sigma}$.   
Abusing notation slightly, we refer to $w_0$ as the longest element of $W_{\Sigma}$ with the understanding that $w_0$ is the restriction of the longest element of the Weyl group for $\Phi$  to restricted weights.  In particular, $w_0\hat{\eta}_i = \epsilon^{\Sigma}_{n-i+1} + \cdots +\epsilon^{\Sigma}_n$ for each $i$ and for each of the three families.

\medskip
\noindent
{\bf Type AI:} For Type AI, $\tilde{\alpha}_i = (\alpha_i-\theta(\alpha_i))/2 = \alpha_i$ for each $i=1, \dots, n-1$.  Hence $\alpha_i^{\Sigma} = \tilde{\alpha}_i=\alpha_i$ for $i=1, \dots, n-1$  and the set of positive simple roots for the root system $\Sigma$ is just  the set $ \{\alpha_1,\dots, \alpha_{n-1}\}$.   
A similar straightforward computation yields $\tilde{\epsilon}_i = \epsilon_i$ for each $i$.  In other words, the restricted root system $\Sigma$ in this case equals $\Phi$.  Thus $\epsilon^{\Sigma}_j=\epsilon_j$, $\hat{\eta}_j=\hat{\omega}_j$, and ${\eta}_k={\omega}_k$ for $j=1, \dots, n$ and $k=1,\dots, n-1$.    Note further that $\tilde{\omega}_k=(\omega_k-\theta(\omega_k))/2 = \omega_k$ and so $\eta_k$ equals $\tilde{\omega}_k$,  the restricted weight associated to $\omega_k$.  Similarly, $\hat{\eta}_k =(\hat{\omega}_k- \theta({\hat{\omega}}_k))/2$, the restricted weight associated to $\hat{\omega}_k$, for each $k=1, \dots, n$.

\medskip
\noindent
{\bf  Type AII:} For Type AII, $\tilde{\alpha}_{2i-1} = 0$ for   $i=1, \dots, n$  and  $\tilde{\alpha}_{2j} = (\alpha_{2j-1}+2\alpha_{2j}+\alpha_{2j+1})/2$ for  $j=1, \dots, n-1$. The set of positive simple roots for $\Sigma$ is $ \{\tilde{\alpha}_{2j}|\ j=1, \dots, n-1\}$ and $\alpha_j^{\Sigma} = \tilde{\alpha}_{2j}$ for $j=1, \dots, n-1$.  Note that the inner product on $\Phi$ needs to be scaled differently for the restricted root system.  This is because 
\begin{align*}(\tilde{\alpha}_{2j}, \tilde{\alpha}_{2j}) = (\alpha_{2j-1} + 2\alpha_{2j} + \alpha_{2j+1}, \alpha_{2j-1} + 2\alpha_{2j} + \alpha_{2j+1})/4 = 1.
\end{align*}
Hence the inner product for $\Sigma$ takes the form $(\cdot, \cdot)_{\Sigma} = 2(\cdot, \cdot)$ where $(\cdot, \cdot)$ is the usual Cartan inner product for the root system of $\mathfrak{gl}_{2n}$. In other words, $(\cdot, \cdot)_{\Sigma}$ is the normalization of $(\cdot, \cdot)$ chosen so that $(\tilde{\alpha}_{2j}, \tilde{\alpha}_{2j}) = 2$ for each $j$.  Now $\tilde{\epsilon}_{2j} =\tilde{\epsilon}_{2j-1}= (\epsilon_{2j-1} + \epsilon_{2j})/2$ and so $2(\tilde{\epsilon}_{2i}, \tilde{\epsilon}_{2j}) =\delta_{ij}$. It follows that the corresponding set of orthonormal vectors is 
$\{\tilde{\epsilon}_{2}, \tilde{\epsilon}_{4}, \dots, \tilde{\epsilon}_{2n}\}$ and hence $\epsilon^{\Sigma}_i = \tilde{\epsilon}_{2i}$ for $ i=1 \dots, n$. The fundamental partitions associated to this restricted root system take the form  
\begin{align*}\hat{\eta}_r =\epsilon^{\Sigma}_1+\cdots \epsilon^{\Sigma}_r=\tilde{\epsilon}_{2} + \cdots + \tilde{\epsilon}_{2r}=(\epsilon_{1} + \epsilon_{2} + \cdots +\epsilon_{2r})/2=\hat{\omega}_{2r}/2 \end{align*}  for $r=1, \dots, n$. It follows that
the fundamental weights are \begin{align*}\eta_r = \hat{\eta}_r-({{r}/{n}})\hat{\eta}_n=(\epsilon_{1} + \epsilon_{2} + \cdots +\epsilon_{2r})/2 -({{r}/{n}})(\epsilon_{1} + \epsilon_{2} + \cdots +\epsilon_{2n})/2 = {\omega}_{2r}/2
\end{align*} for $r=1, \dots, n-1$.  
Note also that
\begin{align*}(\hat{\omega}_{2j}-\theta(\hat{\omega}_{2j}) )/2=( \tilde{\epsilon}_1+\cdots +\tilde{\epsilon}_{2j}) =2( \tilde{\epsilon}_2  + \tilde{\epsilon}_{4} + \cdots + \tilde{\epsilon}_{2j} )= 2\hat{\eta}_j
\end{align*}
for each $j=1, \dots, n$. Similarly, 
\begin{align*}(\hat{\omega}_{2j-1}-\theta(\hat{\omega}_{2j-1}) )/2=( \tilde{\epsilon}_1+\cdots +\tilde{\epsilon}_{2j-1}) =2( \tilde{\epsilon}_2  + \tilde{\epsilon}_{4} + \cdots + \tilde{\epsilon}_{2j-2} )+\tilde{\epsilon}_{2j}
\end{align*}
for  $j=1, \dots, n$. Hence $(\hat{\omega}_{1}-\theta(\hat{\omega}_{1}) )/2=\hat{\eta}_1$ and 
\begin{align*}
(\hat{\omega}_{2j-1}-\theta(\hat{\omega}_{2j-1}) )/2= \hat{\eta}_j+\hat{\eta}_{j-1}
\end{align*}
for each $j=2, \dots, n$. It follows that $(\omega_1 -\theta(\omega_1))/2 = \eta_1$, $(\omega_{2j}-\theta(\omega_{2j}))/2 =2 \eta_j$ and $(\omega_{2j-1}-\theta(\omega_{2j-1}))/2 = \eta_j+\eta_{j-1}$ for $j=2, \dots, n$.

\medskip
\noindent
{\bf Type A diagonal case:} 
For the diagonal case, we have $\tilde{\alpha}_i = (\alpha_i+\alpha_{n+i}) /2=\tilde{\alpha}_{n+i}$. Thus the set of positive simple roots for the root system 
$\Sigma$ is $\{\tilde{\alpha}_1, \cdots, \tilde{\alpha}_{n-1}\}$ and $\alpha_i^{\Sigma} = \tilde{\alpha}_i$ for $i=1, \dots, n-1$. 
Note that $(\tilde{\alpha}_i,\tilde{\alpha}_i) = ((\alpha_i+\alpha_{n+i})/2, (\alpha_i+\alpha_{n+i})/2) = 1$. Hence, the inner product for the restricted root system in the diagonal case is $(\cdot, \cdot)_{\Sigma} = 2(\cdot, \cdot)$. Here, the corresponding orthonormal basis is $\{\tilde{\epsilon}_1, \dots, \tilde{\epsilon}_n\}$
where $\tilde{\epsilon}_i = (\epsilon_i+\epsilon_{n+i})/2$ for each $i$. In particular, we have $\epsilon^{\Sigma}_i = \tilde{\epsilon}_i$ for $i=1, \dots, n$.
The fundamental partitions  in this case are 
\begin{align*}\hat{\eta}_r 
=\epsilon^{\Sigma}_1+\cdots + \epsilon^{\Sigma}_r
=\tilde{\epsilon}_{1} +\cdots + \tilde{\epsilon}_r=({\epsilon}_{1} +\epsilon_{1+n}+\cdots +{\epsilon}_r+\epsilon_{r+n})/2 
\end{align*} for $r=1, \dots, n$. It follows that the fundamental weights associated to $\Sigma$ satisfy 
\begin{align*}\eta_r =\hat{\eta}_r-({r}/{n})\hat{\eta}_n = (\hat{\omega}_r+\hat{\omega}_{r+n})/2 -({r}/{n})(\hat{\omega}_n+\hat{\omega}_{2n})/2=\omega_r/2+\omega_{r+n}/2
\end{align*} 
for $r=1, \dots, n-1$.
We have 
\begin{align*}(\hat{\omega}_{j}-\theta(\hat{\omega}_{j}) )/2=\tilde{\epsilon}_1+\cdots +\tilde{\epsilon}_{j}= \hat{\eta}_j
\end{align*}
and, similarly, $(\hat{\omega}_{n+j}-\theta(\hat{\omega}_{n+j}))/2=\hat{\eta}_j$.  Thus $\eta_j = \tilde{\omega}_j= \tilde{\omega}_{n+j}$ for $j=1,\dots, n$.

\bigskip

A finite-dimensional simple highest weight module is called spherical if it contains a nonzero $\mathcal{B}_{\theta}$ invariant vector, i.e., a vector $v$ such that $x\cdot v=\epsilon(x)v$ for $x\in\mathcal B_\theta$.
Note that $2\Lambda^+_{\Sigma}=\{2\lambda|\ \lambda \in \Lambda^+_{\Sigma}\}$ is a subset of $\Lambda_N^+$ where $N=n$ in Type AI and $N=2n$ in Type AII. Moreover, given $\gamma\in \Lambda_N^+$, the module  $L(\gamma)$ is spherical if and only if 
$\gamma =2\lambda+s\hat{\eta}_n$ for some $\lambda\in \hat{\Lambda}^+_{\Sigma}$ and  $s\in \{0,1\}$ 
  in Type AI and $\gamma=2\lambda+2s\hat{\eta}_{2n}$ for $\lambda\in \hat{\Lambda}^+_{\Sigma}$ and $s\in \mathbb{N}$ in Type AII (\cite{N}, (3.12)).   (Note that we stick with $\mathbb{N}$ instead of $\mathbb{Z}$ since we are considering functions on matrices rather than symmetric spaces $G/K$.)
  The extra assumption in type AI is because $2\Lambda^+_{\Sigma}$ already contains the even nonnegative multiples of $\hat{\eta}_n$ while all terms of the form $\mathbb{N}\hat{\eta}_n$ show up in the description of spherical modules.

Now consider the diagonal case. For $\gamma,\gamma'\in \Lambda^+_n$, $L({\gamma\oplus \gamma'})$ is spherical  if and only if  $\gamma=\gamma'$.   This  follows from the classification of spherical modules using $\mathfrak{sl}_n$ instead of $\mathfrak{gl}_n$ (see \cite{L2002}, Section 7) along with the fact that  weight vectors admitting a trivial $K_{\epsilon_i}K_{\epsilon_{i+n}}^{-1}$ action for each $i=1, \dots,n$ must have weights of the form $\gamma\oplus \gamma$. Note that the set of $\gamma\oplus \gamma$ with $\gamma\in \Lambda_n^+$ is precisely $ 2\Lambda^+_{\Sigma}$. Thus $L({\gamma\oplus \gamma'})$ is spherical if and only if $\gamma\oplus \gamma'=2\lambda$ for some $\lambda\in \Lambda^+_{\Sigma}$.

\section{Quantized function algebras}
\subsection{Quantized matrix functions}\label{section:qmf}
Let $\Mat_N$ denote the space of $N\times N$ matrices with basis $e_{ij}$, $1\leq i,j \leq N$. The matrix $R$  defined by (\ref{Rmatrix2})
can be written as  $R = \sum_{i,j,k,l} r^{ij}_{kl} e_{ik}\otimes e_{jl}$
where
\begin{itemize}
\item $r^{ii}_{ii} = q,$ $r_{ij}^{ij} = 1$
for all $i,j, $ with $i\neq j$.
\item 
$r_{ji}^{ij} = (q-q^{-1})$
for all $j<i$.
\item $r^{ij}_{kl} = 0$ for all other choices of $i,j,k,l$.
\end{itemize}
We can view $R$ as the matrix in $\Mat_N\times \Mat_N$ with $(i,k)\times (j,l)$ entry $r^{ij}_{kl}$. Given a matrix $A \in \Mat_N\times \Mat_N$, write $A^{t_s}$ for the transpose in the first term when $s=1$ and the second when $s=2$. For example, 
$(R^{t_1})^{ij}_{kl} = r^{kj}_{il}.$

The quantized function algebra $\mathcal{O}_q(\Mat_N)$ is the bialgebra over $\mathbb{C}(q)$ generated by $t_{ij}, 1\leq i,j\leq N$ where the $t_{ij}$ satisfy the following relations
\begin{itemize}
\item[(i)]$t_{ki}t_{kj} = qt_{kj}t_{ki}$, $t_{ik}t_{jk} = qt_{jk}t_{ik}$ ($i<j$)
\item[(ii)] $t_{il}t_{kj}  = t_{kj}t_{il}, t_{ij}t_{kl} -t_{kl}t_{ij} = (q-q^{-1})t_{kj}t_{il}$ ($i<k;j<l$)
\end{itemize}
Set $T=(t_{ij})$, the $N\times N$ matrix with $ij$ entry equal to $t_{ij}$ and set $T_1=T\otimes I$ and $T_2 = I\otimes T$ where $I$ is the $N\times N$ identity matrix.   These relations can be written in matrix form as $RT_1T_2 = T_2T_1R$, or equivalently, as the set of equations
 \begin{align*}\sum_{j,k}r_{jk}^{ld} t_{ja}t_{kb} = \sum_{j,k}t_{dk}t_{lj}r^{jk}_{ab}.
\end{align*}
for all $a,b,d,l$.
A straightforward computation shows that  the map $\iota$ defined by 
\begin{align}\label{iota} \iota(t_{ij}) = t_{ji} 
\end{align} 
for all $i,j=1,\dots, n$ defines an algebra automorphism of $\mathcal{O}_q({\rm Mat}_N)$.  

The algebra  $\mathcal{O}_q(\Mat_N)$ admits the structure of a $U_q(\mathfrak{gl}_N)$-bimodule  algebra where the left action is determined by 
 \begin{align*}
 &E_{k} \cdot t_{ij} =\delta_{i-1,k}t_{i-1,j},
 \quad 
F_{k} \cdot t_{ij} =\delta_{ik}  t_{i+1,j},
\quad K_{\epsilon_r} \cdot t_{ij} = q^{\delta_{ir}} t_{ij}
\end{align*} 
and the 
 right action by \begin{align*} &t_{ij} \cdot E_k=\delta_{jk}t_{i,j+1},\quad
 t_{ij}\cdot F_k=\delta_{j-1,k} t_{i,j-1}, \quad
t_{ij} \cdot K_{\epsilon_r}= q^{\delta_{jr}}t_{ij}
\end{align*} 
for $r= 1, \dots, N$, $i,j=1, \dots, N$ and $k=1, \dots, N-1$. Here, we are using the notation $t_{uv} = 0$ for $u\in \{0,N+1\}$ or $v\in \{0, N+1\}$.

Let $\mathcal{O}_q(\Mat_N)^{op}$ denote the bialgebra with the same coalgebra structure and opposite algebra structure of the bialgebra $\mathcal{O}_q(\Mat_N)$.   Write $\partial_{ij}, 1\leq i,j\leq N$ for the generators of $\mathcal{O}_q(\Mat_N)^{op}$ so that the algebra map defined by $t_{ij}\mapsto \partial_{ij}$ is an anti-automorphism.  The algebra $\mathcal{O}_q(\Mat_N)^{op}$ is also a $U_q(\mathfrak{gl}_N)$-bimodule algebra  with action defined by 
 \begin{align*}
 &E_{k} \cdot \partial_{ij} =-\delta_{ik}q^{-1}\partial_{i+1,j},
 \quad 
F_{k} \cdot \partial_{i,j} =-\delta_{i-1,k}  q\partial_{i-1,j},
\quad K_{\epsilon_r} \cdot \partial_{ij} = q^{-\delta_{ir}} \partial_{ij}
\end{align*} 
and right action  defined by 
\begin{align*} &\partial_{ij} \cdot E_k=-\delta_{j-1,k}q\partial_{i,j-1},\quad
 \partial_{ij}\cdot F_k=-\delta_{jk} q^{-1}\partial_{i,j+1}, \quad
\partial_{ij} \cdot K_{\epsilon_r}= q^{-\delta_{jr}}\partial_{ij}
\end{align*} 
for $r= 1, \dots, N$, $i,j=1, \dots, N$ and $k=1, \dots, N-1.$ Just as for the $t_{uv}$, we are using the notation $\partial_{uv} = 0$ for $u\in \{0,N+1\}$ or $v\in \{0, N+1\}$.

Let $\mathcal{O}_q(\Mat_N)\otimes \mathcal{O}_q(\Mat_N)$ denote the algebra generated by two copies of $\mathcal{O}_q(\Mat_N)$.  The first is generated by $t_{ij}, 1\leq i,j\leq N$ and the second is generated by $t_{i+N,j+N}, 1\leq i,j\leq N$ and $t_{ij}$ commutes with $t_{k+N,l+N}$ for all 
$i,j,k,l\in \{1, \dots, N\}$.  Formally, $t_{ij}$ can be identified with $t_{ij}\otimes 1$ and $t_{i+N,j+N}$ with $1\otimes t_{ij}$. The algebra $\mathcal{O}_q(\Mat_N)\otimes \mathcal{O}_q(\Mat_N)$ is a $U_q(\mathfrak{gl}_N\oplus \mathfrak{gl}_N)\cong U_q(\mathfrak{gl}_N)\otimes U_q(\mathfrak{gl}_N)$-bimodule algebra. Here,  the left action is given by  $(g\otimes h)\cdot (a\otimes b) = (g\cdot a)\otimes (h\cdot b)$ for all $g\otimes h\in U_q(\mathfrak{gl}_N)\otimes U_q(\mathfrak{gl}_N)$ and $a\otimes b\in \mathcal{O}_q(\Mat_N)\otimes \mathcal{O}_q(\Mat_N)$. The right action is defined in the same way with the action on the right instead of the left.   Similar notions apply for the algebra $\mathcal{O}_q(\Mat_N)^{op}\otimes \mathcal{O}_q(\Mat_N)^{op}$.

Consider the  three families as described in Section \ref{section:three-families}.  Set $\mathscr{P}=\mathcal{O}_q(\Mat_N)$ in Type AI with $N=n$  and in Type AII with $N=2n$.  For the diagonal case, set $\mathscr{P}= \mathcal{O}_q(\Mat_n)\otimes \mathcal{O}_q(\Mat_n)$.  Similarly, set $\mathscr{D}=\mathcal{O}_q(\Mat_N)^{op}$ in Type AI with $N=n$  and in Type AII with $N=2n$.  For the diagonal case, set $\mathscr{D}= \mathcal{O}_q(\Mat_n)^{op}\otimes \mathcal{O}_q(\Mat_n)^{op}$.

\subsection{Functions on homogeneous spaces}  \label{section:qhs}

Let $\mathfrak{g}, \mathfrak{k}$ be a symmetric pair corresponding to one of the three  families described in Section \ref{section:three-families}.  Recall that $J$ is the associated solution to the reflection equation. Write $J_{r,s}$ for the coefficient of $e_{rs}$ in $J$.  Define elements $x_{ij}$ and $d_{ij}$ by
  \begin{align*}x_{ij}= \sum_{r,s}t_{ir}J_{r,s}t_{js} {\rm \ and \ }d_{ij} = \sum_{r,s}q^{-2\hat{s}}\partial_{ir}J_{r,s}\partial_{js}
  \end{align*}
for $1\leq i,j\leq N$ where $N=n$ in Type AI, $N=2n$ in Type AII and diagonal type, $\hat{s} =  s$ in Types AI and AII,  and  for the diagonal type   $\hat{s} =s$ for $s\leq n$, and   $\hat{s} = s - n$  when $s\geq n+1$. 

 Let $\mathscr{P}_{\theta}$ be the subalgebra of $\mathscr{P}$ generated by the $x_{ij}, 1\leq i,j\leq N$, and let 
 $\mathscr{D}_{\theta}$ be the subalgebra of $\mathscr{D}$   generated by the $d_{ij}, 1\leq i,j\leq N$.  As explained in \cite{N} and \cite{LSS}, the quantum homogeneous space $\mathcal{O}_q[G/K]$ associated to $\mathfrak{g}, \mathfrak{k}$ is generated by $\mathscr{P}_{\theta}$ and powers of quantum determinants.  
  Set 
 $X=(x_{ij})_{1\leq i,j\leq N}$ and 
 $D=(d_{ij})_{1\leq i,j\leq N}$. Define $X_1, X_2, D_1,$ and $D_2$ in the same way as $T_1, T_2$ (i.e. $X_1 = X\otimes I$, etc.).  The following theorem summarizes  properties of $\mathscr{P}_{\theta}$ from \cite{LSS}.
 \begin{theorem}\label{theorem:polypart}
The relations satisfied by the generators $x_{ij}$  of $\mathscr{P}_{\theta}$ are determined by the following equations:
 \begin{itemize} 
 \item[(i)] The matrix relation $R_{\mathfrak{g}}X_1R_{\mathfrak{g}}^{t_1}X_2 = X_2R_{\mathfrak{g}}^{t_1}X_1R_{\mathfrak{g}}$.
  \item[(ii)] The linear relations $x_{ij} = \gamma x_{ji}$ for $i<j$ and $x_{ab} = 0$ for all $(a,b)\in \mathcal{S}$ where
 \begin{itemize}
 \item[$\bullet$]  $\gamma = q$ in Type AI, $\gamma=-q^{-1}$ in Type AII, $\gamma =1$ in the diagonal type
 \item[$\bullet$] $\mathcal{S}$ is the empty set in Type AI, $\mathcal{S}=\{(i,i),i=1, \dots, 2n\}$ in Type AII, and $\mathcal{S}=\{(i,j), 1\leq i,j\leq n\}\cup\{(i,j), n+1\leq i,j\leq 2n\}$ in diagonal type.
 \end{itemize}
  \end{itemize}
  Moreover,  $\mathscr{P}_{\theta}$ is a  left $U_q(\mathfrak{g})$-module  and (trivial) right $\mathcal{B}_{\theta}$-module subalgebra of $\mathscr{P}$.
  \end{theorem}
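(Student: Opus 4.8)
The plan is to read off every assertion from the defining formula $x_{ij}=\sum_{r,s}t_{ir}J_{r,s}t_{js}$, using only the $RTT$-relations for $\mathcal O_q(\Mat_N)$ and the reflection equation for $J$; this is in essence the argument of \cite{LSS}, so one may also simply quote that reference. For part (i), note that as a matrix identity $X=TJT^{t}$, with the convention that the entries of $T$ stand to the left of the entries of $T^{t}$. Substituting the $RTT$-relation $R_{\mathfrak g}T_1T_2=T_2T_1R_{\mathfrak g}$ and a transposed (``crossing'') form of it into the two sides of the asserted identity, and then using the reflection equation $R_{\mathfrak g}J_1R_{\mathfrak g}^{t_1}J_2=J_2R_{\mathfrak g}^{t_1}J_1R_{\mathfrak g}$ for $J$, one obtains $R_{\mathfrak g}X_1R_{\mathfrak g}^{t_1}X_2=X_2R_{\mathfrak g}^{t_1}X_1R_{\mathfrak g}$ by a purely formal manipulation of $R$-matrices --- the standard computation showing that $TJT^{t}$ solves the reflection equation whenever $T$ solves $RTT$ and $J$ solves the reflection equation (cf.\ \cite{N}).

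For the linear relations in (ii), relabelling the summation indices gives $x_{ji}=\sum_{r,s}J_{s,r}\,t_{js}t_{ir}$, and one then inserts the explicit $J$ of each type and moves the two $t$-factors past each other using relations (i)--(ii) of $\mathcal O_q(\Mat_N)$. In Type AI, $J=I_n$ and $x_{ij}=\sum_r t_{ir}t_{jr}=q\sum_r t_{jr}t_{ir}=qx_{ji}$ for $i<j$; in Type AII, $J=\sum_k(e_{2k-1,2k}-qe_{2k,2k-1})$ and the same bookkeeping yields $x_{ij}=-q^{-1}x_{ji}$ for $i<j$ together with $x_{ii}=0$; in the diagonal case $J=\sum_k(e_{k,n+k}+e_{n+k,k})$, the generators $t_{ab}$ with $a,b$ in one block are absent from $\mathscr P$, which forces $x_{ab}=0$ for $(a,b)\in\mathcal S$, and $x_{ij}=x_{ji}$ because the two tensor factors of $\mathscr P$ commute. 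To see that (i) and (ii) are a \emph{complete} set of relations, let $\mathcal A_\theta$ be the abstract $\mathbb C(q)$-algebra on generators subject to (i)--(ii); it is $\mathbb N$-graded (these relations are homogeneous, of degrees $2$ and $1$) and there is a graded surjection $\mathcal A_\theta\twoheadrightarrow\mathscr P_\theta$. One bounds $\mathcal A_\theta$ from above by a PBW / Bergman diamond-lemma argument: fix an ordering of the surviving generators and check that modulo (i)--(ii) every monomial rewrites to an ordered one, so $\dim(\mathcal A_\theta)_d$ is at most the number of ordered monomials of degree $d$, i.e.\ the dimension of the degree-$d$ component of the classical coordinate ring of symmetric (resp.\ skew-symmetric, resp.\ arbitrary) $n\times n$ matrices. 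Since $\mathscr P_\theta\cong\bigoplus_{\lambda\in\Lambda^+_\Sigma}L(2\lambda)$ as a $U_q(\mathfrak g)$-module has exactly that Hilbert series (the dimensions $\dim L(2\lambda)$ are $q$-independent), the surjection is an isomorphism.

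For the module statements, apply $\Delta(E_k)=E_k\otimes1+K_k\otimes E_k$ and the analogous coproducts for $F_k,K_{\epsilon_r}$ to $x_{ij}=\sum_{r,s}t_{ir}J_{r,s}t_{js}$ and use the explicit left action on the $t$'s: one finds that $E_k\cdot x_{ij},F_k\cdot x_{ij},K_{\epsilon_r}\cdot x_{ij}$ are $\mathbb C(q)$-linear combinations of the $x_{uv}$ (e.g.\ $E_k\cdot x_{ij}=\delta_{i-1,k}x_{i-1,j}+q^{\delta_{ik}-\delta_{i,k+1}}\delta_{j-1,k}x_{i,j-1}$), so the left $U_q(\mathfrak g)$-action on $\mathscr P$ preserves $\mathscr P_\theta$, and since $\mathscr P$ is a $U_q(\mathfrak g)$-module algebra, so is $\mathscr P_\theta$. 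For the trivial right $\mathcal B_\theta$-action one checks on the generators of $\mathcal B_\theta$ that $x_{ij}\cdot b=\epsilon(b)x_{ij}$ --- this is exactly the property the reflection-equation solution $J$ was engineered to produce; for example in Type AI both $x_{ij}\cdot F_k$ and $x_{ij}\cdot E_kK_k^{-1}$ equal $qt_{ik}t_{j,k+1}+t_{i,k+1}t_{jk}$, so $x_{ij}\cdot(F_k-E_kK_k^{-1})=0$. Finally, the set of elements of $\mathscr P$ annihilated up to $\epsilon$ by $\mathcal B_\theta$ is a subalgebra: if $a_1\cdot b=\epsilon(b)a_1$ and $a_2\cdot b=\epsilon(b)a_2$ for all $b$, then $(a_1a_2)\cdot b=\sum(a_1\cdot b_{(1)})(a_2\cdot b_{(2)})=\sum\epsilon(b_{(1)})a_1(a_2\cdot b_{(2)})=a_1(a_2\cdot b)=\epsilon(b)a_1a_2$ since $\epsilon$ is an algebra map. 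Hence $\mathscr P_\theta$, being generated by such elements, is a trivial right $\mathcal B_\theta$-module subalgebra of $\mathscr P$.

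I expect the main obstacle to be the completeness of the relations: one must either verify that the $q$-deformed rewriting system for $\mathcal A_\theta$ is confluent (all diamond-lemma overlap ambiguities must resolve, and in Types AII and the diagonal case the quadratic relations interact with the extra linear relations), or else import the module decomposition $\mathscr P_\theta\cong\bigoplus L(2\lambda)$ --- which is itself part of the structure theory of \cite{LSS}. Everything else reduces to finite, if intricate, manipulations of $R$-matrices and coproducts.
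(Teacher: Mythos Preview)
The paper does not prove this theorem; it explicitly states that the result ``summarizes properties of $\mathscr{P}_{\theta}$ from \cite{LSS}'' and simply quotes it. Your proposal correctly identifies \cite{LSS} as the source and sketches the argument that reference carries out: the reflection-equation computation for the quadratic relations, the direct verification of the linear relations from the explicit $J$, and the module-algebra checks on generators. Your honest flagging of the completeness step as the genuine difficulty is apt; in \cite{LSS} this is handled via the PBW basis (stated here as Lemma~\ref{lemma:PBW}), and your Hilbert-series alternative is equivalent in spirit, though you should be careful that the decomposition $\mathscr{P}_\theta\cong\bigoplus_{\lambda}L(2\lambda)$ is established in this paper only \emph{after} Theorem~\ref{theorem:polypart}, so invoking it here risks circularity unless you import it independently from \cite{N} or \cite{LSS}.
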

  In the diagonal case, it turns out that the matrix relations for $\mathscr{P}_{\theta}$ reduce to $R\hat{X}_1\hat{X}_2 = \hat{X}_2\hat{X}_1R$ where $\hat{X}=(x_{i,j+n})_{1\leq i,j\leq n}$ and $R$ is the matrix defined by (\ref{Rmatrix2}) with $N=n$ (\cite{LSS}, Lemma 5.12).  In particular, $\mathscr{P}_{\theta}\cong \mathcal{O}_q(\Mat_n)$ as algebras via the map that sends $x_{i,j+n}$ to $t_{ij}$ all $1\leq i,j\leq n$.   
For the other two families, Types AI and AII, explicit relations are given in \cite{LSS}, Lemma 5.8.  Here we provide some of these relations in special cases that will be needed later in this paper.  In particular, for Type AI we have  
  \begin{align} \label{xreln1} x_{en}x_{nn} = q^2x_{nn}x_{en} \quad{\rm and\quad} x_{an}x_{en} = qx_{en}x_{an}
  \end{align}
  for all $1\leq a<e<n$.  Similarly, for Type AII, we have 
    \begin{align} \label{xreln2} x_{a,2n}x_{e,2n} = qx_{e,2n}x_{a,2n}
  \end{align}
  for $1\leq a<e<2n$.
  More generally, it follows from \cite{LSS}, Lemma 5.8 that the defining relations are $q$-analogs of commutativity relations and take the form 
  \begin{align}\label{quadformula} x_{ij}x_{kl} = q^sx_{kl}x_{ij} + \sum_{\{a,b,c,d\}=\{i,j,k,l\}}(q-q^{-1})wx_{ab}x_{cd}\end{align}
  for some integer $s$ and some element $w\in \mathbb{C}[q,q^{-1}]$ where  neither $x_{ij}x_{kl}$ nor $x_{kl}x_{ij}$  appear in the final sum of the right hand side.  It follows from the formulas for the relations satisfied by the $t_{ij}$ and the fact that $\mathscr{P}_{\theta}\cong \mathcal{O}_q(\Mat_n)$ as algebras in the diagonal case, that (\ref{quadformula}) holds for the diagonal family as well.

The following result, also from \cite{LSS}, holds for $\mathscr{D}_{\theta}$ and shows that as an algebra, $\mathscr{D}_{\theta}$ is isomorphic to $\mathscr{P}_{\theta}^{op}$. 
 In analogy to the situation for $\mathscr{P}_{\theta}$,  the map sending $d_{i,j+n}$ to $\partial_{ij}$ for $1\leq i,j\leq n$ defines an isomorphism of $\mathscr{D}_{\theta}$ onto $\mathcal{O}_q(\Mat_n)^{op}$ for the diagonal family. 

 \begin{theorem}\label{theorem:iso}
The relations satisfied by the generators $d_{ij}$  of $\mathscr{D}_{\theta}$ are determined by the following equations:
 \begin{itemize} 
 \item[(i)] The matrix relation $R_{\mathfrak{g}}D_2R_{\mathfrak{g}}^{t_1}D_1 =D_1R_{\mathfrak{g}}^{t_1}D_2R_{\mathfrak{g}}$. \item[(ii)] The linear relations $d_{ij} = \gamma d_{ji}$ for $i<j$ and $d_{ab} = 0$ for all $(a,b)\in \mathcal{S}$ where
 \begin{itemize}
 \item[$\bullet$]  $\gamma = q^{-1}$ in Type AI, $\gamma=-q$ in Type AII, $\gamma =1$ in the diagonal type
 \item[$\bullet$]$\mathcal{S}$ is the empty set in Type AI, $\mathcal{S}=\{(i,i),i=1, \dots, 2n\}$ in Type AII, and $\mathcal{S}=\{(i,j), 1\leq i,j\leq n\}\cup\{(i,j), n+1\leq i,j\leq 2n\}$ in diagonal type.
 \end{itemize}
  \end{itemize}
  Moreover,  $\mathscr{D}_{\theta}$ is a  left $U_q(\mathfrak{g})$-module  and (trivial) right $\mathcal{B}_{\theta}$-module subalgebra of $\mathscr{D}$.
  \end{theorem}
  
  Note that the defining relations for  $\mathscr{P}_{\theta}$ are homogeneous and thus $\mathscr{P}_{\theta}$ has a natural  degree function defined by $\deg (x_{ij}) = 1$ for all $i,j$.  
Let  $\mathcal{J}$ be the filtration on $\mathscr{P}_{\theta}$ defined by $\deg$. In particular, for each $r$, we have 
 \begin{align*}\mathcal{J}_r(\mathscr{P}_{\theta}) = \{x\in \mathscr{P}_{\theta}|\ \deg(x)\leq r\}.
 \end{align*}
 Since all the relations for $\mathscr{P}_{\theta}$ are homogeneous with respect to degree, $\mathscr{P}_{\theta}$ is isomorphic to the graded algebra defined by this filtration. 
Just as for $\mathscr{P}_{\theta}$, we can define a degree function on $\mathscr{D}_{\theta}$ such that $\deg(d_{ij}) = 1$ for all $i,j$.   The resulting filtration yields a graded algebra isomorphic to $\mathscr{D}_{\theta}$.  
For each $r$, set $\mathscr{P}^r_{\theta}$ equal to the homogeneous subspace of $\mathscr{P}_{\theta}$ consisting of elements of exactly degree $r$ and $\mathscr{D}_{\theta}^r$ equal to the homogeneous subspace of $\mathscr{D}_{\theta}$ consisting of elements of exactly degree $r$. 
 
  The left $U_q(\mathfrak{g})$-module structures  can be deduced directly from the left actions of $U_q(\mathfrak{g})$ on $\mathscr{P}$ and $\mathscr{D}$.  The action of the generators of $U_q(\mathfrak{g})$ on the generators of $\mathscr{P}_{\theta}$ and $\mathscr{D}_{\theta}$ is explicitly given in \cite{LSS}, Lemma 5.4.  
As noted in \cite{LSS}, this action can be described as follows.  The vector space spanned by the generators $x_{ij}$ of $\mathscr{P}_{\theta}$ forms a simple left $U_q(\mathfrak{g})$-module generated by a highest weight vector $x_{1r}$ of weight $L(\epsilon_1+\epsilon_r)$ where $r=1$ in Type AI, $r=2$ in Type AII, and $r=n+1$ in the diagonal type. Similarly, $\sum_{i,j}\mathbb{C}(q)d_{ij}$ is a simple left $U_q(\mathfrak{g})$-module generated by the lowest weight vector $d_{1r}$ of weight $-\epsilon_1-\epsilon_r$ where $r=1$ in Type AI, $r=2$ in Type AII, and $r=n+1$ in the diagonal type.  
  Moreover, the action of the Cartan elements on $\mathscr{P}_{\theta}$ and $\mathscr{D}_{\theta}$ is determined by 
  \begin{align}\label{Kaction} K_{\epsilon_s}\cdot x_{ij} = q^{\delta_{is} + \delta_{js}}x_{ij} \quad \quad K_{\epsilon_s}\cdot d_{ij} = q^{-\delta_{is}-\delta_{js}}d_{ij}
  \end{align}
  for all $i,j,s$ in Types AI and AII and 
    \begin{align}\label{Kaction2} K_{\epsilon_s}\cdot x_{i,j+n} = q^{\delta_{is} + \delta_{j+n,s}}x_{i,j+n} \quad \quad K_{\epsilon_s}\cdot d_{i,j+n} = q^{-\delta_{is}-\delta_{j+n,s}}d_{i,j+n}
  \end{align}
   for all $i,j,s$ in the diagonal setting.

 Note that the isomorphisms $\mathscr{P}_{\theta}\cong \mathcal{O}_q(\Mat_n)$ and 
 $\mathscr{D}_{\theta}\cong \mathcal{O}_q(\Mat_n)^{op}$ in the diagonal setting as described above  are actually $U_q(\mathfrak{gl}_n)$-bimodule isomorphisms where the left action of $U_q(\mathfrak{gl}_n)$ on $\mathcal{O}_q(\Mat_n)$ (resp. $\mathcal{O}_q(\Mat_n)^{op}$) is the same as the action of the first copy of $U_q(\mathfrak{gl}_n)$ inside 
  $U_q(\mathfrak{gl}_n\oplus\mathfrak{gl}_n)$ on $\mathscr{P}_{\theta}$ (resp. $\mathscr{D}_{\theta}$).  Similarly, the right action for $\mathscr{P}_{\theta}$ (resp. $\mathscr{D}_{\theta})$ goes over to the action of the second copy of $U_q(\mathfrak{gl}_n)$ on $\mathcal{O}_q(\Mat_n)$ (resp. $\mathcal{O}_q(\Mat_n)^{op}$). (See \cite{LSS}, Lemma 5.12 for details.)

  It is well-known that the algebra $\mathcal{O}_q(\Mat_N)$ admits a PBW basis using monomials in the $t_{ij}$.  The next result from \cite{LSS} shows that the same is true for $\mathscr{P}_{\theta}$.  Using the fact that $x_{ij}\mapsto d_{ij}$ defines an antiautomorphism from $\mathscr{P}_{\theta}$ to $\mathscr{D}_{\theta}$, the next lemma also holds for $\mathscr{D}_{\theta}$ with each $x_{ij}$ term replaced by $d_{ij}$.
  
  \begin{lemma} \label{lemma:PBW} The following monomials form a basis for $\mathscr{P}_{\theta}$ where $N=n$:
\begin{itemize}
\item[(i)] Type AI: $$x_{11}^{m_{11}}x_{12}^{m_{12}}\cdots x_{1n}^{m_{1n}}x_{22}^{m_{22}}x_{23}^{m_{23}}\cdots x_{2n}^{m_{2n}}\cdots x_{n-1,n-1}^{m_{n-1,n-1}}x_{n-1,n}^{m_{n-1,n}}x_{nn}^{m_{nn}}$$
\item[(ii)] Type AII: 
$$x_{12}^{m_{12}}x_{13}^{m_{13}}\cdots x_{1,2n}^{m_{1,2n}}x_{23}^{m_{23}}x_{24}^{m_{24}}\cdots x_{2,2n}^{m_{2,2n}}\cdots x_{2n-1,2n}^{m_{2n-1,2n}}$$
\item[(iii)] Diagonal type: $$
x_{1,n+1}^{m_{11}}x_{12}^{m_{12}}\cdots x_{1,2n}^{m_{1n}}x_{2,n+1}^{m_{21}}x_{2,n+2}^{m_{22}}\cdots x_{2,2n}^{m_{2n}}\cdots x_{n,n+1}^{m_{n1}}\cdots x_{n,2n}^{m_{nn}}$$
\end{itemize}
as each $m_{ij}$ runs over nonnegative integers.  Moreover, we also get a basis if the order of the monomials in the terms above are reversed. \end{lemma}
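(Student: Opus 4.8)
The statement is a PBW-type basis theorem for $\mathscr{P}_\theta$ (and, by the antiautomorphism $x_{ij}\mapsto d_{ij}$, for $\mathscr{D}_\theta$). The plan is to reduce the problem to the known PBW basis for $\mathcal{O}_q(\Mat_N)$ in the diagonal case and to a Bergman diamond-lemma argument in Types AI and AII. For the diagonal case there is nothing to do beyond invoking the isomorphism $\mathscr{P}_\theta\cong\mathcal{O}_q(\Mat_n)$ of Theorem \ref{theorem:polypart} (via $x_{i,j+n}\mapsto t_{ij}$): the listed monomials are exactly the images of the standard row-lexicographic PBW monomials in the $t_{ij}$ for $\mathcal{O}_q(\Mat_n)$, which are well known to form a basis, and reversing the order gives the column-lexicographic basis, equally classical. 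So the substance is Types AI and AII.

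For Types AI and AII, first I would fix the spanning set: use the linear relations $x_{ij}=\gamma x_{ji}$ ($i<j$) and $x_{ab}=0$ for $(a,b)\in\mathcal S$ from Theorem \ref{theorem:polypart}(ii) to eliminate all generators except the ones appearing in the claimed monomials — in Type AI these are $x_{ij}$ with $i\le j$, in Type AII the $x_{ij}$ with $i<j$. It then suffices to show the quadratic relations among the surviving generators, which by \eqref{quadformula} all have the shape $x_{ij}x_{kl}=q^s x_{kl}x_{ij}+(q-q^{-1})\sum w\,x_{ab}x_{cd}$, constitute a confluent rewriting system that straightens any word into the prescribed (row-lexicographic) order, with the leading term of each relation being the out-of-order product $x_{ij}x_{kl}$. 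Concretely: (1) spanning — every monomial reduces to a linear combination of ordered monomials by repeatedly applying the straightening rules, and this terminates because each application strictly decreases a suitable monomial order (e.g. the total number of inversions, using that the correction terms on the right of \eqref{quadformula} involve strictly ``smaller'' index multisets in the sense forced by the explicit relations of \cite{LSS}, Lemma 5.8, or a degree-lexicographic order refined appropriately); (2) independence — verify the diamond/overlap condition, i.e. that the only ambiguities are the cubic overlaps $x_{ij}x_{kl}x_{mp}$ and these resolve the same way regardless of which adjacent pair one straightens first.

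The cleanest route for step (2) is probably \emph{not} to check all cubic overlaps by hand but to use a dimension count: $\mathscr{P}_\theta$ is, by Theorem \ref{theorem:polypart}, a flat deformation — its defining relations are homogeneous, so $\mathscr{P}_\theta$ is graded and each graded piece $\mathscr{P}_\theta^r$ is a quotient of the span of the ordered monomials of degree $r$. On the other hand $\mathscr{P}_\theta$ is, as a $U_q(\mathfrak g)$-module, multiplicity-free isomorphic to $\bigoplus_{\lambda\in\Lambda^+_\Sigma}L(2\lambda)$ (stated in the Introduction), so $\dim_{\mathbb C(q)}\mathscr{P}_\theta^r$ equals the dimension of the degree-$r$ part of the corresponding classical coordinate ring — symmetric matrices (Type AI) or skew-symmetric matrices (Type AII) — which is exactly the number of ordered monomials of degree $r$. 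Matching these two counts forces the ordered monomials to be linearly independent, completing the argument; the reversed-order statement follows by the same count applied to the opposite order (or by applying $\iota$ of \eqref{iota}).

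**Main obstacle.** The genuinely delicate point is the termination/confluence of the straightening in Types AI and AII: the relations \eqref{quadformula} are not simple $q$-commutations but carry lower-order correction terms $\sum (q-q^{-1})w\,x_{ab}x_{cd}$, and one must be sure (i) that a monomial order exists making every such relation order-decreasing, and (ii) that the correction terms, after re-expanding the $x_{ab}$ that happen to be out of order via $x_{ij}=\gamma x_{ji}$, never reintroduce the product one just removed. This is exactly where the explicit form of the relations in \cite{LSS}, Lemma 5.8 (and the special cases \eqref{xreln1}, \eqref{xreln2}) is needed, and it is the only place where a nontrivial bookkeeping computation is unavoidable; everything else is either the diagonal-case isomorphism or the multiplicity-free dimension count. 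If one prefers to avoid even that, an alternative is to cite that $\mathscr{P}_\theta$ is a known flat quadratic (Poincaré–Birkhoff–Witt) algebra from \cite{LSS}, in which case the only content left is identifying the displayed monomials as a PBW basis in the chosen order, which is immediate from the shape of the leading terms.
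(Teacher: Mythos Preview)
The paper does not prove this lemma; it is simply quoted as a result from the companion paper \cite{LSS} (see the sentence preceding the lemma: ``The next result from \cite{LSS} shows that the same is true for $\mathscr{P}_\theta$''). So there is no proof in the paper to compare against, and your proposal supplies what the paper omits.

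Your outline is essentially sound. The diagonal case is immediate from the algebra isomorphism $\mathscr{P}_\theta\cong\mathcal{O}_q(\Mat_n)$, exactly as you say. For Types AI and AII your spanning argument via the straightening rules coming from \eqref{quadformula} is correct, and the dimension-count route for independence is the cleanest way to close the argument. Two cautions, however. First, be careful about logical order: the multiplicity-free decomposition $\mathscr{P}_\theta\cong\bigoplus_{\lambda\in\Lambda^+_\Sigma}L(2\lambda)$ is established in this paper only in Section~4, \emph{after} the PBW lemma; you should check (or argue directly) that the ingredients used there---the decomposition of $\mathcal{O}_q(\Mat_N)$, the inclusion $\mathscr{P}_\theta\subseteq\mathscr{P}^{\mathcal{B}_\theta}$, and the explicit construction of the $H_{2\lambda}$---do not rely on the PBW basis, or else give an independent flatness/specialization argument for the lower bound $\dim\mathscr{P}_\theta^r\ge\dim(\text{classical ring})^r$. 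Second, the map $\iota$ of \eqref{iota} is an automorphism of $\mathcal{O}_q(\Mat_N)$, not obviously of $\mathscr{P}_\theta$ (it does not send $x_{ij}$ to $x_{ji}$), so it cannot be invoked for the reversed-order statement; instead just observe that the same spanning-plus-dimension argument works verbatim for the opposite total order on the generators.
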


\subsection{Module structure}\label{section:modules}

It is well-known that $\mathcal{O}_q(\Mat_N)$ admits a decomposition as $U_q(\mathfrak{gl}_N)$-bimodules 
  \begin{align}\label{decompO}
\mathcal{O}_q(\Mat_N) \cong \bigoplus_{\lambda\in \Lambda^+_N}L(\lambda)\otimes L(\lambda)^*
 \end{align}
 where $L(\lambda)$ is a left $U_q(\mathfrak{gl}_N)$-module and $L(\lambda)^*$ is a right $U_q(\mathfrak{gl}_N)$-module.
By restriction, we can also view all these modules as $U_q(\mathfrak{sl}_N)$-modules.  Note that $\mathcal{O}_q(\Mat_N)$ contains a nontrivial bi-invariant submodule  with respect to the action of 
$U_q(\mathfrak{sl}_N)$.  In particular, the submodule of $\mathcal{O}_q(\Mat_N)$ consisting of $U_q(\mathfrak{sl}_N)$-bi-invariants corresponds to
$$ \bigoplus_{m\in \mathbb{N}}L(m\hat{\omega}_N)\otimes L(m\hat{\omega}_N)^*.$$
Decomposition (\ref{decompO}) implies
  \begin{align}\label{decompO2}
\mathcal{O}_q(\Mat_N) \otimes \mathcal{O}_q(\Mat_N)\cong\bigoplus_{(\lambda, \lambda')\in \Lambda^+_N\times \Lambda^+_N}L(\lambda\oplus \lambda')\otimes L(\lambda\oplus\lambda')^*
 \end{align}
 as $U_q(\mathfrak{gl}_N\oplus \mathfrak{gl}_N)$-bimodules.   Moreover, the submodule of $\mathcal{O}_q(\Mat_N) \otimes \mathcal{O}_q(\Mat_N)$ consisting of $U_q(\mathfrak{sl}_N\oplus \mathfrak{sl}_N)$-bi-invariants is 
$$ \bigoplus_{m,m'\in \mathbb{N}}L(m\hat{\omega}_N \oplus m'\hat{\omega}_N)\otimes L(m\hat{\omega}_N \oplus m'\hat{\omega}_N)^*.$$

Setting $N=n$ in Type AI and the diagonal case and $N=2n$ in Type AII, we can read off of  (\ref{decompO}) and (\ref{decompO2}) the $U_q(\mathfrak{g})$-module decomposition of $\mathscr{P}$.
  It follows from the description of spherical weights in Section \ref{section:restricted-root-system} that the right $\mathcal{B}_{\theta}$-invariants $\mathscr{P}^{\mathcal{B}_{\theta}}$ of $\mathscr{P}$ admits the following decomposition as left $U_q(\mathfrak{g})$-modules:
 \begin{align*} 
\bigoplus_{\lambda\in \Lambda^+_{\Sigma}, s\in \{0,1\}} &L(2\lambda+s\hat{\omega}_n) {\rm \  in\  Type\  AI,}\cr
\bigoplus_{\lambda\in \Lambda^+_{\Sigma}}&L(2\lambda){\rm \  in\ Type\ AII, \ and}\cr
\bigoplus_{\lambda\in \Lambda^+_{\Sigma}}&L(2\lambda)=\bigoplus_{\gamma\in \Lambda^+_n} L(\gamma\oplus\gamma){\rm \  in\  the\ diagonal\ type.\ }
 \end{align*}
By \cite{VSS} Lemma 5.3, the fact that $\mathscr{P}_{\theta}$ is generated by right $\mathcal{B}_{\theta}$-invariant elements ensures that $\mathscr{P}_{\theta}$ is a subalgebra and submodule of 
$\mathscr{P}^{\mathcal{B}_{\theta}}$.  We see in Section \ref{section:detailed} that $\mathscr{P}^{\mathcal{B}_{\theta}}$ agrees with 
$\mathscr{P}_{\theta}$ in both Type AII and the diagonal case while it is slightly larger in Type AI.

\section{Detailed module decompositions}\label{section:detailed}

\subsection{Quantum determinants}\label{section:q-det}
There is a quantum analog of the determinant, denoted by $\det_q(T)$, which  is a central element in $\mathcal{O}_q(\Mat_N)$.   The quantum determinant  can be expressed explicitly in terms of the $t_{ij}$ as
\begin{align}\label{qdetdef}{\det}_q (T)= \sum_{s\in \mathcal{S}_n} (-q)^{l(s)} t_{s(1),1}\cdots t_{s(N),N}
\end{align}
and satisfies $\iota(\det_q(T)) = \det_q(T)$  where $\iota$ is the antiautomorphism defined in Section \ref{section:qmf} (see (1.26) of \cite{N}).
It is straightforward to check that the quantum determinant $\det_q(T)$ satisfies $\det_q(T)\cdot K_i = \det_q(T)$, 
$\det_q(T)\cdot F_i = 0$ and $\det_q(T)\cdot E_i=0$ for $i=1, \dots, N-1$. Hence $\det_q(T)$ is right invariant with respect to the action of $U_q(\mathfrak{sl}_N)$.   The same is true with respect to the left action of $U_q(\mathfrak{sl}_N)$ and can be easily verified with a similar computation using the fact that $\iota(\det_q(T)) = \det_q(T)$.
Hence $\det_q(T)$ is both a right and left invariant element with respect to the action of $U_q(\mathfrak{sl}_N)$.   However, the same is not true upon passing to $U_q(\mathfrak{gl}_N)$.  Indeed, we have
\begin{align}\label{kaction}
{\det}_q(T)\cdot K_{\epsilon_i} = K_{\epsilon_i}\cdot {\det}_q(T) = q{\det}_q(T).
\end{align} 
Let $\det_q(T')$ be the quantum determinant defined using the elements $t_{i+N,j+N}$ instead of the $t_{ij}$ viewed as elements of $\mathcal{O}_q(\Mat_N)\otimes \mathcal{O}_q(\Mat_N)$.  The properties for $\det_q(T)$ carry over to $\det_q(T')$ where each subscript $i$ is replaced by $i+N$.

Note that the weight of $\det_q(T)$ with respect to the action of $U_q(\mathfrak{gl}_N)$ is $\hat{\omega}_N = \epsilon_1+\cdots +\epsilon_N$.  Hence by (\ref{decompO}), the submodule of 
$U_q(\mathfrak{sl}_N)$-bi-invariants inside $\mathcal{O}_q(\Mat_N)$ is the polynomial ring $\mathbb{C}(q)[\det_q(T)]$.  Similarly,
 the submodule of $\mathcal{O}_q(\Mat_N) \otimes \mathcal{O}_q(\Mat_N) $  consisting of $U_q(\mathfrak{sl}_N\oplus \mathfrak{sl}_N)$-bi-invariants is the polynomial ring
 $\mathbb{C}(q)[\det_q(T), \det_q(T')]$.

\begin{lemma}\label{lemma:intersection}The intersection of $\mathbb{C}(q)[\det_q(T)]$ and $\mathscr{P}_{\theta}$ is $\mathbb{C}(q)[(\det_q(T))^2]$ in Type AI and equals 
$\mathbb{C}(q)[\det_q(T)]$ in type AII.   The intersection 
of $\mathbb{C}(q)[\det_q(T),\det_q(T')]$ and $\mathscr{P}_{\theta}$ is $\mathbb{C}(q)[\det_{q}(T)\det_q(T')]$ in the diagonal case.
\end{lemma}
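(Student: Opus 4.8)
The plan is to argue module-theoretically, reducing both assertions to the question of which one-dimensional $U_q(\mathfrak{g})$-modules occur as constituents of $\mathscr{P}_\theta$.

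First I would observe that, under the left $U_q(\mathfrak{g})$-action, the polynomial ring $\mathbb{C}(q)[\det_q(T)]$ is the module direct sum $\bigoplus_{m\in\mathbb{N}}\mathbb{C}(q)\det_q(T)^m$, and that each $\mathbb{C}(q)\det_q(T)^m$ is a one-dimensional submodule isomorphic to $L(m\hat{\omega}_N)$: this follows from the formulas $\det_q(T)\cdot E_i=\det_q(T)\cdot F_i=0$ and $\det_q(T)\cdot K_{\epsilon_i}=q\det_q(T)$ recalled in Section \ref{section:q-det}, the coproduct formulas, and the fact that the $\mathfrak{gl}_N$-module $L(m\hat{\omega}_N)$ is one-dimensional. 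Since $L(m\hat{\omega}_N)$ is one-dimensional, (\ref{decompO}) shows that the entire $L(m\hat{\omega}_N)$-isotypic component of $\mathcal{O}_q(\Mat_N)$ is one-dimensional, hence equals $\mathbb{C}(q)\det_q(T)^m$. As $\mathcal{O}_q(\Mat_N)$ is a semisimple left $U_q(\mathfrak{g})$-module (it is a direct sum of finite-dimensional simple modules $L(\mu)$) and $\mathscr{P}_\theta$ is a submodule, $\mathscr{P}_\theta$ is the direct sum of its intersections with the isotypic components of $\mathcal{O}_q(\Mat_N)$; therefore
\[
\mathscr{P}_\theta\cap\mathbb{C}(q)[\det_q(T)]=\bigoplus_{m\in S}\mathbb{C}(q)\det_q(T)^m,\qquad S=\{m\in\mathbb{N}:\ L(m\hat{\omega}_N)\text{ occurs in }\mathscr{P}_\theta\}.
\]
Using the decomposition $\mathscr{P}_\theta\cong\bigoplus_{\lambda\in\Lambda^+_\Sigma}L(2\lambda)$, this identifies $S=\{m:\ m\hat{\omega}_N\in 2\Lambda^+_\Sigma\}$, and it remains to read off $S$ in each type from Section \ref{section:restricted-root-system}.

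For Type AI I would use that $N=n$ and $\Lambda^+_\Sigma=\Lambda^+_n$, the $\mathbb{N}$-span of $\hat{\omega}_1,\dots,\hat{\omega}_n$; then $2\Lambda^+_\Sigma$ is the set of partitions with all parts even, and since $m\hat{\omega}_n=(m,\dots,m)$ this gives $S=2\mathbb{N}$ and hence $\mathbb{C}(q)[\det_q(T)^2]$. For Type AII, $N=2n$ and $\hat{\eta}_r=\hat{\omega}_{2r}/2$, so $\hat{\omega}_{2n}=2\hat{\eta}_n\in 2\Lambda^+_\Sigma$, whence $m\hat{\omega}_{2n}\in 2\Lambda^+_\Sigma$ for every $m$ and $S=\mathbb{N}$, giving $\mathbb{C}(q)[\det_q(T)]$. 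For the diagonal case I would run the same argument with $U_q(\mathfrak{g})=U_q(\mathfrak{gl}_n\oplus\mathfrak{gl}_n)$ and $\mathscr{P}=\mathcal{O}_q(\Mat_n)\otimes\mathcal{O}_q(\Mat_n)$: by (\ref{decompO2}) the one-dimensional submodule $\mathbb{C}(q)\det_q(T)^m\det_q(T')^{m'}$ is exactly the $L(m\hat{\omega}_n\oplus m'\hat{\omega}_n)$-isotypic component, so in view of $\mathscr{P}_\theta\cong\bigoplus_{\gamma\in\Lambda^+_n}L(\gamma\oplus\gamma)$ (recall from Section \ref{section:restricted-root-system} that $2\Lambda^+_\Sigma=\{\gamma\oplus\gamma:\gamma\in\Lambda^+_n\}$) the monomial $\det_q(T)^m\det_q(T')^{m'}$ lies in $\mathscr{P}_\theta$ precisely when $m\hat{\omega}_n\oplus m'\hat{\omega}_n=\gamma\oplus\gamma$ for some $\gamma$, i.e. when $m=m'$. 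Since $\det_q(T)$ and $\det_q(T')$ commute,
\[
\mathscr{P}_\theta\cap\mathbb{C}(q)[\det_q(T),\det_q(T')]=\bigoplus_{m\in\mathbb{N}}\mathbb{C}(q)\bigl(\det_q(T)\det_q(T')\bigr)^m=\mathbb{C}(q)[\det_q(T)\det_q(T')].
\]

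The main obstacle is the reduction step rather than the type-by-type bookkeeping: one must be certain that $\mathscr{P}$ contains no vectors in the $L(m\hat{\omega}_N)$-isotypic block besides the determinant powers, which is exactly where one uses the one-dimensionality of $L(m\hat{\omega}_N)$ as a $\mathfrak{gl}_N$-module (equivalently, the precise shape of (\ref{decompO})) together with semisimplicity, allowing $\mathscr{P}_\theta$ to be split along isotypic components; the other essential input is the known decomposition $\mathscr{P}_\theta\cong\bigoplus_{\lambda\in\Lambda^+_\Sigma}L(2\lambda)$. Once these are in place, all that remains is the elementary comparison of $m\hat{\omega}_N$ with the lattices $2\Lambda^+_\Sigma$ computed in Section \ref{section:restricted-root-system}.
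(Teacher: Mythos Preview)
Your isotypic-component reduction is clean and correct: since $L(m\hat{\omega}_N)$ is one-dimensional, its full isotypic block in $\mathscr{P}$ is $\mathbb{C}(q)\det_q(T)^m$, so the intersection is governed entirely by which $L(m\hat{\omega}_N)$ occur in $\mathscr{P}_\theta$. The problem is the input you use to decide this. You invoke the decomposition $\mathscr{P}_\theta\cong\bigoplus_{\lambda\in\Lambda^+_\Sigma}L(2\lambda)$, but in the paper this is (\ref{Ptheta-decomp}), stated in Section~\ref{subsection:explicit-module} \emph{after} the present lemma and explicitly derived \emph{from} it: the text reads ``The decompositions of Section~\ref{section:modules} combined with Lemma~\ref{lemma:intersection} ensure that \ldots''. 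Concretely, the paper uses Lemma~\ref{lemma:intersection} to know that $H_n$ (namely $\det_q(T)^2$, $\det_q(T)$, or $\det_q(T)\det_q(T')$) lies in $\mathscr{P}_\theta$, and this is what feeds into the construction of all the highest weight vectors $H_{2\mu}$ and hence into (\ref{Ptheta-decomp}). So your argument is circular as written.

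The circularity is not a technicality that can be patched by rearranging citations. In Type~AI the decomposition of $\mathscr{P}^{\mathcal{B}_\theta}$ from Section~\ref{section:modules} contains all $L(2\lambda+s\hat{\omega}_n)$, including odd $s$; it is precisely the content of Lemma~\ref{lemma:intersection} (via the parity-of-right-indices argument) that excludes the odd pieces from $\mathscr{P}_\theta$. In Types~AII and diagonal you need the \emph{inclusion} $L(m\hat{\omega}_N)\subset\mathscr{P}_\theta$, which the paper obtains by exhibiting $\det_q(T)$ (respectively $\det_q(T)\det_q(T')$, realized as $\det_q(X)$) inside $\mathscr{P}_\theta$ --- again the substance of the lemma, drawing on \cite{N}, Remark~4.12, rather than on any prior module decomposition. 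To repair your approach you would have to establish (\ref{Ptheta-decomp}) independently of this lemma; otherwise you must argue directly, as the paper does, using the explicit form of the $x_{ij}$ in terms of the $t_{ij}$ and the spherical-module classification for the exclusion steps.
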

\begin{proof} Consider Type AI.  Note that any element in $\mathscr{P}_{\theta}$ can be written as a linear combination of monomials, say $t_{i_1,j_1}\cdots t_{i_m,j_m}$.  Moreover, by the form of the elements $x_{ab}$, each  right  index $j_k$ must appear an even number of times in a particular monomial.   Thus examining $\det_q(T)$, we see that $\det_q(T)\notin \mathscr{P}_{\theta}$ since each right  index $j_k$ shows up exactly once in $t_{s(1),1}\cdots t_{s(N),N}$ where $N=n$ in Type AI and $N=2n$ in Type AII.  The same holds for $(\det_q(T))^m$ for $m$ odd.  On other hand, by Remark 4.12 of \cite{N}, $(\det_q(T))^2\in \mathscr{P}_{\theta}$. Hence the first assertion for Type AI 
holds. For Type AII, note that Remark 4.12 of \cite{N} also shows that $\det_q(T)\in \mathscr{P}_{\theta}$ in this case.  This completes the proof of the  first assertion of the lemma.

For the diagonal case, note that $\det_q(T)\det_q(T')$ is right invariant with respect to action of $U_q(\mathfrak{sl}_n\oplus \mathfrak{sl}_n)$ as well as with respect to the action of $K_{\epsilon_i}K_{\epsilon_{n+i}}^{-1}$ for $i=1, \dots, n$ where here we have $N=n$. 
Since $\mathcal{B}_{\theta}$ is a subalgebra of the algebra generated by both $U_q(\mathfrak{sl}_n\oplus \mathfrak{sl}_n)$ and the $K_{\epsilon_i}K_{\epsilon_{n+i}}^{-1}$ for $i=1, \dots, n$, it follows that 
$\det_q(T)\det_q(T')$ is an element of $\mathscr{P}^{\mathcal{B}_{\theta}}$.  Now $\det_q(T)\det_q(T')$ has weight $\hat{\omega}_n\oplus
\hat{\omega}_n$  with respect to the left action of $U_q(\mathfrak{gl}_n\oplus \mathfrak{gl}_n)$
and, moreover, generates a trivial left $U_q(\mathfrak{sl}_n\oplus \mathfrak{sl}_n)$-module.  Hence from the description of $\mathscr{P}^{\mathcal{B}_{\theta}}$ in the diagonal case given in Section \ref{section:modules}, we see that $(\det_q(T)\det_q(T'))^m$ is a basis vector for the one dimensional left module $L(m\hat{\omega}_n\oplus m\hat{\omega}_n)$.  Thus $\mathscr{P}^{\mathcal{B}_{\theta}}\cap   \mathbb{C}(q)[\det_q(T),\det_q(T')]= \mathbb{C}(q)[\det_q(T)\det_q(T')]$.  Since $\mathscr{P}_{\theta}\subseteq \mathscr{P}^{\mathcal{B}_{\theta}}$, we also have that the intersection $\mathscr{P}_{\theta}\cap   \mathbb{C}(q)[\det_q(T),\det_q(T')]$ is a subset of $ \mathbb{C}(q)[\det_q(T)\det_q(T')]$.

Now
consider the element $\det_q(X)$ defined by replacing each $t_{ij}$ in the definition of $\det_q(T)$ by $x_{ij}$, again in the diagonal case.
Recall that $\mathscr{P}_{\theta}$ is isomorphic as an algebra and $U_q(\mathfrak{gl}_n)$-bimodule to $\mathcal{O}_q(\Mat_n)$ (see the discussions following Theorems \ref{theorem:polypart}  and \ref{theorem:iso}).  It follows that $\det_q(X)$ is an element of $\mathscr{P}^{\mathcal{B}_{\theta}}$ invariant with respect to the left action of $U_q(\mathfrak{sl}_n\oplus \mathfrak{sl}_n)$. Moreover, 
it is straightforward to see that the weight of $\det_q(X)$ is $\hat{\omega}_n\oplus\hat{\omega}_n$.  By the previous paragraph, 
 $\det_q(X)$ must be a nonzero scalar multiple of 
 $\det_q(T)\det_q(T')$.  This guarantees the inclusion  $\mathbb{C}(q)[\det_q(T)\det_q(T')] \subseteq \mathscr{P}_{\theta}$ which combined with the previous paragraph yields  the desired equality. \end{proof}

Set $H_{n}=\det_q(T)\det_q(T')$ in the diagonal case,  $H_n = \det_q(T)^2$ in Type AI, and $H_n=\det_q(T)$ in  Type AII.  By Lemma \ref{lemma:intersection}, $H_n\in \mathscr{P}_{\theta}$.  Moreover, since $\det_q(T)$ is a central element in $\mathcal{O}_q({\rm Mat}_n)$  in Type AI, is a central element in  $\mathcal{O}_q({\rm Mat}_{2n})$ in Type AII, and is a central element of  $\mathcal{O}_q(\Mat_n) \otimes \mathcal{O}_q(\Mat_n) $ in the diagonal setting, 
 we must have $H_{n}$ is central in $\mathscr{P}_{\theta}$.

\subsection{Chains of algebras}\label{section:explicit}
Consider the chain of 
quantized enveloping algebras 
  \begin{align*} 
  U_q(\mathfrak{g}_1)\subset 
  U_q(\mathfrak{g}_2) \subset \cdots \subset U_q(\mathfrak{g}_{n}) 
  \end{align*}
  where 
  $\mathfrak{g}_r = \mathfrak{gl}_{r}$ in Type AI, $\mathfrak{g}_r = \mathfrak{gl}_{2r}$ in Type AII, and
  $\mathfrak{g}_r = \mathfrak{gl}_{r}\oplus \mathfrak{gl}_{r}$ in the diagonal case.  This means that $U_q(\mathfrak{g}_n)= 
  U_q(\mathfrak{gl}_{n})$ in Type AI, $U_q(\mathfrak{g}_n)= 
  U_q(\mathfrak{gl}_{2n})$ in Type AII, and   $U_q(\mathfrak{g}_n)=  U_q(\mathfrak{gl}_{n}\oplus \mathfrak{gl}_{n})$ in the diagonal case. Here $U_q(\mathfrak{g}_r)$ is identified with the subalgebra of $U_q(\mathfrak{g_n})$ generated by 
  $E_{i}, F_{i}, K_{\epsilon_{j}}$ where 
  \begin{itemize}
  \item $i\in \{1,\dots, r-1\}$ and $j\in \{1, \dots, r\}$ in Type AI
  \item $i\in \{1,\dots, 2r-1\}$ and $j\in \{1, \dots, 2r\}$ in Type AII
  \item  $i\in \{1, \dots,r-1\}\cup\{n+1, \dots, n+r-1\}$, $j\in \{1, \dots, r\}\cup \{n+1, \dots, n+r\}$ in the diagonal case.
  \end{itemize} 
  Note that $U_q(\mathfrak{g}_1)$ is just a commutative Laurent polynomial ring over $\mathbb{C}(q)$ in Type AI and the diagonal case.   In Type AI, this Laurent polynomial ring is generated by $K_{\epsilon_1}$, in the diagonal case, it is generated by 
  $K_{\epsilon_1}$ and $K_{\epsilon_{2}}$. In Type AII, $U_q(\mathfrak{g}_1)$ is the  quantized enveloping algebra of $\mathfrak{gl}_2$ generated by $E_1,F_1,$ and $K_{\epsilon_1}^{\pm 1}, K_{\epsilon_2}^{\pm 1}$.
    
  Similarly, we have a chain  of quantized function algebras  
  \begin{align*} 
\mathscr{P}(\mathfrak{g}_1) \subset \mathscr{P}(\mathfrak{g}_2)\subset \cdots \subset \mathscr{P}(\mathfrak{g}_n) =\mathscr{P}\end{align*} where 
$\mathscr{P}(\mathfrak{g}_r)\cong \mathcal{O}_q(\Mat_r)$ in Type AI, $\mathscr{P}(\mathfrak{g}_r)\cong \mathcal{O}_q(\Mat_{2r})$ in Type AII, and 
$\mathscr{P}(\mathfrak{g}_r)\cong \mathcal{O}_q(\Mat_r)\otimes \mathcal{O}_q(\Mat_r)$ in the diagonal case. 
Moreover, 
this isomorphism is an equality for $r=n$ and so  $\mathscr{P}(\mathfrak{g}_{n}) = \mathscr{P}$.
For $r<n$, 
$\mathscr{P}(\mathfrak{g}_r)$  is equal to  the subalgebra of $\mathscr{P}$ generated by 
\begin{itemize}
\item $t_{ij}$ for $1\leq i,j\leq r$ in Type AI
\item $t_{ij}$ for $1\leq i,j\leq 2r$ in Type AII
\item $t_{ij}$ for  $1\leq i,j\leq r$ and $n+1\leq i,j\leq n+r$ in the diagonal case.
\end{itemize}
Note that 
$\mathscr{P}(\mathfrak{g}_r)$  is a $U_q(\mathfrak{g}_r)$-bimodule and, moreover, this bimodule structure is compatible with the $U_q(\mathfrak{g})$-bimodule structure on $\mathscr{P}$.

 Set $\mathcal{B}_{\theta}^r = \mathcal{B}_{\theta}\cap U_q(\mathfrak{g}_r)$ for $r=1, \dots, n$.   Note that this gives us a chain of subalgebras  \begin{align*}
 \mathcal{B}_{\theta}^1 \subset \mathcal{B}_{\theta}^2 \subset \cdots \subset \mathcal{B}_{\theta}^n=\mathcal{B}_{\theta}.
 \end{align*}
For each $r$, it is straightforward to see that  $\theta$ restricts to an involution on $\mathfrak{g}_r$ with fixed Lie subalgebra $\mathfrak{k}_r = \mathfrak{g}_r^{\theta}$.  Thus $\mathcal{B}^r_{\theta}$ for the right coideal subalgebra of $U_q(\mathfrak{g}_r)$ that is a quantum analog of $U(\mathfrak{k}_r)$.  Note that $\mathcal{B}_{\theta}^r$ belongs to the same family as $\mathcal{B}_{\theta}$ with the only difference being the rank of the underlying Lie algebra $\mathfrak{g}_r$. 

 Using  $\mathcal{B}_{\theta}^r, \mathscr{P}(\mathfrak{g}_r)$, and $U_q(\mathfrak{g}_r)$, one can define the quantized function algebra  $\mathscr{P}_{\theta}(\mathfrak{g}_r)$  generated by elements $x(r)_{ij}$ 
 where $1\leq i,j\leq r$ in Type AI, $1\leq i,j\leq 2r$ in Type AII, and $1\leq i,j-n\leq r$ or $1\leq i-n,j\leq r$ in the diagonal case. Here, we use the notation $x_{ij}$ for the generators when $r=n$ (i.e. $x(n)_{ij} = x_{ij}$). 
 Note that the difference between $x(r)_{ij}$ and $x_{ij}$ has to do with which $t_{kl}$ are involved in the expression of these elements in terms of elements of $\mathscr{P}$.  For example, in Type AI, 
 \begin{align*}
 x(r)_{ij} = \sum_{k=1}^{r }t_{ik}t_{jk}
 \end{align*}
 while 
 \begin{align*}
 x_{ij} = \sum_{k=1}^{n}t_{ik}t_{jk}.
 \end{align*}
 Nevertheless, we see from the next lemma that this distinction is not  important.

 \begin{lemma} \label{lemma:mappings} For each $r,s$ with $1\leq r<s\leq n$, the map $\psi_{r,s}:\mathscr{P}_{\theta}(\mathfrak{g}_r)\rightarrow \mathscr{P}_{\theta}(\mathfrak{g}_s)$ defined by $\psi_{r,s}(x(r)_{ij}) = x(s)_{ij}$ induces an algebra embedding which preserves the left $U_q(\mathfrak{g}_r)$-module and (trivial) right $\mathcal{B}_{\theta}^r$-module structures.  
 \end{lemma}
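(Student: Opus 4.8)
The plan is to check three things in turn: that $\psi_{r,s}$ is a well-defined algebra homomorphism, that it is injective, and that it respects the left $U_q(\mathfrak{g}_r)$-action and the (trivial) right $\mathcal{B}_\theta^r$-action. For well-definedness I would use the presentations provided by Theorem~\ref{theorem:polypart}: both $\mathscr{P}_\theta(\mathfrak{g}_r)$ and $\mathscr{P}_\theta(\mathfrak{g}_s)$ are presented by generators $x(r)_{ij}$, resp.\ $x(s)_{ij}$, subject to the reflection-equation matrix relation for $R_{\mathfrak{g}_r}$, resp.\ $R_{\mathfrak{g}_s}$, together with the linear relations $x_{ij}=\gamma x_{ji}$ and $x_{ab}=0$ for $(a,b)\in\mathcal{S}$, where $\gamma$ and the shape of $\mathcal{S}$ depend only on the type and not on the rank. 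The crucial observation is that, for $r<s$, the matrix $R_{\mathfrak{g}_r}$ is exactly the submatrix of $R_{\mathfrak{g}_s}$ indexed by the first block of indices (those $\le r$ in Type AI, $\le 2r$ in Type AII, the two rank-$r$ corners in the diagonal case), and that a nonzero matrix coefficient of $R_{\mathfrak{g}_s}$ can only relate index pairs to themselves, so it never couples a first-block index to one outside the first block. Granting this, replacing every $x(r)_{ij}$ by $x(s)_{ij}$ carries each defining relation of $\mathscr{P}_\theta(\mathfrak{g}_r)$ to a relation that holds in $\mathscr{P}_\theta(\mathfrak{g}_s)$: the linear relations for rank $r$ are literally among those for rank $s$, and the matrix relation for rank $r$ is obtained by writing out the rank-$s$ matrix relation entrywise and retaining only the entries with first-block indices. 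Most concretely, one reads this off the straightening relations (\ref{quadformula}), each of which involves only the generators whose two indices partition the set $\{i,j,k,l\}$ at hand, with coefficients extracted from first-block entries of the $R$-matrix and hence independent of the rank. Therefore $\psi_{r,s}$ extends to a well-defined algebra homomorphism. In the diagonal case this is immediate from the isomorphism $\mathscr{P}_\theta(\mathfrak{g}_m)\cong\mathcal{O}_q(\Mat_m)$ of \cite{LSS}, Lemma 5.12, under which $\psi_{r,s}$ becomes the standard embedding $\mathcal{O}_q(\Mat_r)\hookrightarrow\mathcal{O}_q(\Mat_s)$.

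Injectivity I would obtain from the PBW bases of Lemma~\ref{lemma:PBW}. Since $\psi_{r,s}$ is an algebra map sending generators to generators, it takes the basis monomial $\prod x(r)_{ij}^{m_{ij}}$ listed there to $\prod x(s)_{ij}^{m_{ij}}$, which is again one of the PBW monomials of $\mathscr{P}_\theta(\mathfrak{g}_s)$ --- namely the one in which every generator indexed outside the first block occurs to the power $0$ --- and distinct monomials are sent to distinct monomials. Thus $\psi_{r,s}$ maps a basis of $\mathscr{P}_\theta(\mathfrak{g}_r)$ onto a $\mathbb{C}(q)$-linearly independent subset of $\mathscr{P}_\theta(\mathfrak{g}_s)$, so it is injective, i.e.\ an algebra embedding.

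For equivariance, recall that the span $V(m)$ of the generators $x(m)_{ij}$ is a simple $U_q(\mathfrak{g}_m)$-submodule of $\mathscr{P}_\theta(\mathfrak{g}_m)$, and that by \cite{LSS}, Lemma 5.4 the action of the generators $E_i,F_i,K_{\epsilon_j}$ of $U_q(\mathfrak{g}_m)$ on the $x(m)_{ij}$ is given by formulas involving only the indices of the generator acted on, the relevant simple-root index, and the weight data in (\ref{Kaction}) --- none of which depends on the rank $m$. Hence $\psi_{r,s}$ intertwines the action of each generator of $U_q(\mathfrak{g}_r)$ on $V(r)$ with the action of that same element on $V(s)$, and since $V(r)$ is a $U_q(\mathfrak{g}_r)$-submodule a short induction on the word length of $u$ gives $\psi_{r,s}(u\cdot w)=u\cdot\psi_{r,s}(w)$ for all $u\in U_q(\mathfrak{g}_r)$ and $w\in V(r)$. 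Because $\psi_{r,s}$ is an algebra map, $\mathscr{P}_\theta(\mathfrak{g}_r)$ is a $U_q(\mathfrak{g}_r)$-module algebra generated by $V(r)$, and the coproduct on $U_q(\mathfrak{g}_r)$ is the restriction of that on $U_q(\mathfrak{g}_s)$, the set of $a\in\mathscr{P}_\theta(\mathfrak{g}_r)$ with $\psi_{r,s}(u\cdot a)=u\cdot\psi_{r,s}(a)$ for all $u$ is a subalgebra containing $V(r)$, hence all of $\mathscr{P}_\theta(\mathfrak{g}_r)$; this is the left-module claim. The right-module claim needs no computation: by Theorem~\ref{theorem:polypart} the algebra $\mathscr{P}_\theta(\mathfrak{g}_s)$ is a trivial right $\mathcal{B}_\theta^s$-module, hence a trivial right $\mathcal{B}_\theta^r$-module because $\mathcal{B}_\theta^r\subseteq\mathcal{B}_\theta^s$, while $\mathscr{P}_\theta(\mathfrak{g}_r)$ is a trivial right $\mathcal{B}_\theta^r$-module, so $\psi_{r,s}$ automatically preserves that structure.

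The only step with genuine content is the identification of $R_{\mathfrak{g}_r}$ as a corner of $R_{\mathfrak{g}_s}$ compatibly with the reflection-equation matrix relations, so that the rank-$r$ presentation sits inside the rank-$s$ presentation. For Type AII and the diagonal case the block form of $R_{\mathfrak{g}}$ makes this transparent; for Type AI it is exactly the locality of the relations displayed in (\ref{quadformula}). Everything downstream of that step is formal.
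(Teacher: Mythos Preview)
Your proof is correct and follows essentially the same approach as the paper: both argue well-definedness from the fact that the linear relations and the quadratic relations (\ref{quadformula}) involve only generators whose indices lie in the four-element set $\{i,j,k,l\}$ and hence are rank-independent, and both deduce equivariance from the rank-independence of the action formulas. Your argument is in fact slightly more complete than the paper's, since you explicitly verify injectivity via the PBW bases of Lemma~\ref{lemma:PBW}, whereas the paper's proof leaves that step implicit.
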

 \begin{proof} Note that the relations for both algebras are given by Theorem \ref{theorem:polypart}.  Moreover, there are two types of relations: linear and quadratic.  The linear relations take the form $x_{ab}=0$ for various conditions on $a,b$ and $x_{ij} = \gamma x_{ji}$ for an appropriate scalar $\gamma$ and all $i,j$. Clearly these agree for the two algebras.   Hence $x(r)_{ab}=0$ if and only if $x(s)_{ab}=0$ and $x(r)_{ij} = \gamma x(r)_{ji}$ if and only if 
 $x(s)_{ij} = \gamma x(s)_{ji}.$
 
By  (\ref{quadformula}), the quadratic relations correspond to $q$-analogs of commutativity relations between two generators, say $x_{ij}$ and $x_{ld}$.  Moreover, the only terms showing up in these relations are of the form $x_{ab}x_{cd}$ where $\{a,b,c,d\} = \{i,j,l,d\}$.  Now if $i,j$ and $l,d$ satisfy the necessary conditions for $x(r)_{ij}$ and $x(r)_{ld}$ to be generators of $\mathscr{P}_{\theta}(\mathfrak{g}_r)$, then 
 $x(r)_{ab}$ is also a valid generator whenever $\{a,b\} \in \{i,j,l,d\}$.  In other words, when $i,j,l,d$ are chosen so that $x(r)_{ij}, x(r)_{ld}$ are among the generators for $\mathscr{P}_{\theta}(\mathfrak{g}_r)$, then the quadratic relations involving $x(r)_{ij}$ and $x(r)_{ld}$ inside $\mathscr{P}_{\theta}(\mathfrak{g}_r)$ take exactly the same form as the 
 relations satisfied by $x_{ij}$ and $x_{ld}$ inside of $\mathscr{P}_{\theta}$.  The same holds with $r$ replaced by $s$. 
Thus the quadratic relations satisfied by the $x(r)_{ij}$ of $\mathscr{P}_{\theta}(\mathfrak{g}_r)$ agree with the relations coming from the larger algebra $\mathscr{P}_{\theta}(\mathfrak{g}_s)$ for the corresponding elements $x(s)_{ij}$.

 The module structures for both algebras can be deduced directly from the actions of $U_q(\mathfrak{g})$  and $\mathcal{B}_{\theta}$ on $\mathscr{P}$ and these actions do not depend on $r$ or $s$.   These module actions agree, which yields
 the  desired module isomorphisms.
 \end{proof}
 
 An immediate consequence of Lemma \ref{lemma:mappings} is that $\mathscr{P}_{\theta}(\mathfrak{g}_r)$ is isomorphic to a subalgebra and left $U_q(\mathfrak{g}_r)$-submodule of $\mathscr{P}_{\theta}(\mathfrak{g}_n)$ where each generator $x(r)_{ij}$ is mapped to $x_{ij}$.  Moreover, it is straightforward to see that the embeddings of this lemma are all compatible with each other and so $\psi_{s,m} \circ\psi_{r,s}=\psi_{r,m}$ for all $1\leq r<s<m\leq n$. 
 
 \subsection{Highest weight generators}
 
Let $T_{(r)}$ be the submatrix of $T$ with entries $t_{ij}$ where $1\leq i,j\leq r$ in Type AI and  $1\leq i,j\leq 2r$ in Type AII.  Similarly, let $T_{(r)}$ be the submatrix of $T$ with entries $t_{i,j}$ and  $T'_{(r)}$ be the submatrix of $T'$ with entries $t_{n+i,n+j}$ where again $1\leq i,j\leq r$.   Set 
\begin{itemize}
\item $\hat{H}_{r}= (\det_q T_{(r )})^2$ in Type AI
\item $\hat{H}_{r }= \det_q T_{({2r})}$ in Type  AII and 
\item $\hat{H}_{r }= (\det_qT_{(r)})(\det_qT'_{(r)})$ in the diagonal case. 
\end{itemize}
Note that $\hat{H}_{r} \in \mathscr{P}(\mathfrak{g}_r)$. Moreover, by Lemma \ref{lemma:intersection}, we have $\hat{H}_{r }\in \mathscr{P}_{\theta}(\mathfrak{g}_r)$. Set $H_{r} = \psi_{r,n}(\hat{H}_{r})$ for 
 $r=1,\dots, n$.  

For each $r$, $\mathscr{P}_{\theta}(\mathfrak{g}_r)$ has a natural degree function compatible with the degree function (related to the filtration $\mathcal{J}$)  on $\mathscr{P}_{\theta}$.  In particular, we have  $\deg x(r)_{ij} = 1$ for all $r,i,j$.

\begin{proposition} \label{prop:weights} The elements $H_1, \dots, H_{n}$ generate a commutative subring of $\mathscr{P}_{\theta}$ that is isomorphic to a polynomial ring in these  variables.  Moreover, $H_{r}$ is a highest weight vector of weight $2\hat{\eta}_{r}$ with respect to the left action of $U_q(\mathfrak{g})$ and $H_{r}$ is a homogeneous element of degree $r$ in $ \mathcal{J}_{r}(\mathscr{P}_{\theta})$ for $r=1,\dots, n$. 
\end{proposition}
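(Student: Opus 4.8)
The plan is to transport the relevant properties of $\hat H_r$ through the algebra embedding $\psi_{r,n}\colon \mathscr P_\theta(\mathfrak g_r)\hookrightarrow\mathscr P_\theta$ of Lemma~\ref{lemma:mappings}. The first step is to record what $\hat H_r$ looks like inside the \emph{larger} algebra $\mathscr P(\mathfrak g_r)$, which by Section~\ref{section:explicit} equals $\mathcal O_q(\Mat_r)$ in Type AI, $\mathcal O_q(\Mat_{2r})$ in Type AII, and $\mathcal O_q(\Mat_r)\otimes\mathcal O_q(\Mat_r)$ in the diagonal case: there $\hat H_r$ is $(\det_q T_{(r)})^2$, $\det_q T_{(2r)}$, or $(\det_q T_{(r)})(\det_q T'_{(r)})$ respectively, i.e.\ a power of, or the product of, the quantum determinant(s) of the matrix bialgebra(s) constituting $\mathscr P(\mathfrak g_r)$. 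From this I would extract four facts: (a) $\hat H_r$ is central in $\mathscr P(\mathfrak g_r)$, since quantum determinants are; (b) $\hat H_r$ is homogeneous of $x$-degree $r$ in $\mathscr P_\theta(\mathfrak g_r)$ by a degree count (its $t$-degree is $2r$ while each $x(r)_{ij}$ has $t$-degree $2$); (c) its left $U^0(\mathfrak g_r)$-weight is $2\hat\eta_r$, using (\ref{kaction}) together with the identifications of Section~\ref{section:restricted-root-system} ($\hat\omega_r=\hat\eta_r$ in Type AI, $\hat\omega_{2r}=2\hat\eta_r$ in Type AII, $\hat\omega_r+\hat\omega_{n+r}=2\hat\eta_r$ in the diagonal case); and (d) $E_i\cdot\hat H_r=0$ for every $i$ indexing a simple root of $\mathfrak g_r$, because each $\det_q$ appearing is left $U_q(\mathfrak{sl})$-invariant (Section~\ref{section:q-det}) and $E_i$ acts as a skew-derivation, hence annihilates powers and products of elements it annihilates.

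Since $\psi_{r,n}$ is an algebra embedding preserving the left $U_q(\mathfrak g_r)$-module structure and the degree filtration, facts (a)--(d) pass to $H_r=\psi_{r,n}(\hat H_r)$: $H_r$ is a nonzero homogeneous element of $\mathscr P_\theta^r$, so $H_r\in\mathcal J_r(\mathscr P_\theta)$; $K_\beta\cdot H_r=q^{(\beta,2\hat\eta_r)}H_r$ for $\beta$ a weight of $\mathfrak g_r$; and $E_i\cdot H_r=0$ for $\alpha_i$ a simple root of $\mathfrak g_r$. To complete the weight statement I would note that the remaining Cartan generators $K_{\epsilon_s}$ act trivially on $H_r$, because $H_r$ is a polynomial in the $x_{ij}$ whose indices lie in the range relevant to $\mathfrak g_r$, and those $x_{ij}$ have trivial $K_{\epsilon_s}$-weight by (\ref{Kaction}); hence $H_r$ has full $U^0(\mathfrak g)$-weight $2\hat\eta_r$.

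For the simple roots $\alpha_i$ of $\mathfrak g$ that are \emph{not} simple roots of $\mathfrak g_r$, I would argue by weights: $E_i\cdot H_r$ is a weight vector of weight $2\hat\eta_r+\alpha_i$, and a short case-by-case inspection shows that in each family this weight has a strictly negative coordinate (at $\epsilon_{i+1}$); on the other hand every weight of $\mathscr P_\theta$, being a sum of the generator weights $\epsilon_a+\epsilon_b$, lies in $\sum_j\mathbb Z_{\ge0}\epsilon_j$. Therefore $E_i\cdot H_r=0$. Combined with the previous paragraph, $H_r$ is annihilated by every $E_i$ and is a $U^0(\mathfrak g)$-eigenvector of weight $2\hat\eta_r$, i.e.\ a highest weight vector of weight $2\hat\eta_r$, and we have already seen $H_r\in\mathcal J_r(\mathscr P_\theta)$.

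Finally, commutativity and algebraic independence. For $s\le r$ I would write $H_s=\psi_{r,n}\bigl(\psi_{s,r}(\hat H_s)\bigr)$, using the compatibility of the embeddings noted after Lemma~\ref{lemma:mappings}; both $\psi_{s,r}(\hat H_s)$ and $\hat H_r$ lie in $\mathscr P(\mathfrak g_r)$, where $\hat H_r$ is central by (a), so they commute, and applying the embedding $\psi_{r,n}$ gives $H_rH_s=H_sH_r$ (the case $r=n$ is just the centrality of $\hat H_n$ in $\mathscr P$). Now $\mathscr P_\theta$ is a domain (it is a subalgebra of $\mathscr P$, and this also follows from the PBW basis of Lemma~\ref{lemma:PBW} together with the homogeneous $q$-commutation relations (\ref{quadformula})), so every monomial $H_1^{a_1}\cdots H_n^{a_n}$ is nonzero; it is a highest weight vector of weight $2\sum_r a_r\hat\eta_r$, the highest weight property being stable under products; and since $\hat\eta_1,\dots,\hat\eta_n$ are linearly independent, distinct exponent vectors yield distinct weights, hence these monomials are $\mathbb C(q)$-linearly independent. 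Therefore $H_1,\dots,H_n$ generate a polynomial ring in $n$ variables. The main obstacle I anticipate is bookkeeping rather than conceptual: one must correctly identify $\hat H_r$ as a quantum determinant expression inside the appropriate matrix bialgebra $\mathscr P(\mathfrak g_r)$ in each of the three families, so that centrality, $U_q(\mathfrak{sl})$-invariance, weight, and degree are all genuinely in hand before being transported through $\psi_{r,n}$.
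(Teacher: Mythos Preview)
Your proof is correct and follows essentially the same route as the paper: transport centrality, degree, weight, and $U_q(\mathfrak g_r)$-invariance of $\hat H_r$ through $\psi_{r,n}$, then separate monomials by their (distinct) weights for algebraic independence. The one noteworthy variation is your handling of $E_i\cdot H_r=0$ for $\alpha_i$ not a simple root of $\mathfrak g_r$: you argue by weight positivity (the weight $2\hat\eta_r+\alpha_i$ has a negative $\epsilon$-coordinate, impossible in $\mathscr P_\theta$), whereas the paper instead uses the explicit action formula $E_k\cdot t_{ij}=\delta_{i-1,k}t_{i-1,j}$ to see that such $E_k$ annihilate every generator appearing in $H_r$; both are short and valid.
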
 
\begin{proof} Note that $\det_q(T_{(r)})$ is a central element of $\mathcal{O}_q(\Mat_{r})$ in Type AI, $\det_qT_{(2r)}$ is a central element in $\mathcal{O}_q(\Mat_{2r})$ in Type AII and $(\det_qT_{(r)})(\det_q(T'_{(r)})$ is a central element of $\mathcal{O}_q(\Mat_{r})\otimes 
\mathcal{O}_q(\Mat_{r})$ in the diagonal case.  Hence $\hat{H}_{r}$ is in the center of $\mathscr{P}(\mathfrak{g}_r)$ for $r=1,\dots,n$.  
Also $\hat{H}_{r}$ commutes with each $\psi_{m,r}(\hat{H}_m)$ for $m\leq r$.  By induction, we see that $\psi_{2,r}(\hat{H}_{1}), \dots, \psi_{r-1,r}(\hat{H}_{r-1}), \hat{H}_{r}$ generates a commutative subring of 
$\mathscr{P}_{\theta}(\mathfrak{g}_r)$.  When $r=n$, this sequence is simply $H_1, \dots, H_{n}$ and so these elements generate a commutative subring of $\mathscr{P}_{\theta}(\mathfrak{g}_n)$.


The fact that $H_{r}$ generates a one-dimensional  $U_q(\mathfrak{g}_r)$-module follows from the definition of $\hat{H}_{r}$ and properties of quantum determinants (see Section \ref{section:q-det}).
Moreover, this module is invariant with respect to the action of $U_q(\mathfrak{g}_r)\cap U_q(\mathfrak{sl}_{n})$  in Type AI, $U_q(\mathfrak{g}_r)\cap U_q(\mathfrak{sl}_{2n)})$ in Type AII.  Similarly, it  is invariant with respect to the action of $U_q(\mathfrak{g}_r)\cap U_q(\mathfrak{sl}_{n}\oplus  \mathfrak{sl}_{n})$ in the diagonal case.   In Type AI, $\hat{H}_{r}$ only contains terms $t_{kl}$ with $1\leq k,l\leq r$. Hence $H_{r}$ only contains terms $x_{ij}$ for $1\leq i,j\leq r$ in Type AI.  For Type AII, $H_r$ only contains terms $t_{kl}$ with $1\leq k,l\leq 2r$.  Hence $H_{r}$ contains terms $x_{ij}$ for $1\leq i,j\leq 2r$ in Type AII. In the diagonal case, $H_{r}$  contains terms $t_{i,j+n}$ and $t_{i+n,j}$ with $1\leq i,j\leq r$. Thus the $x_{i,j+n}$ and $x_{i+n,j}$ satisfy the same constraints. 

The formulas for the left action of $U_q(\mathfrak{gl}_N)$ on $\mathcal{O}_q(\Mat_N)$ (in Section \ref{section:qmf})
 ensure that $E_s\cdot t_{ij}=0$ and  for all $s$  with $s\geq i$. Hence, $E_s\cdot a = 0$ for all  $a\in \mathscr{P}(\mathfrak{g}_r)$ where 
 $s\geq r$ in Type AI and $s\geq 2r$ in Type AII.  In the diagonal case, we have $E_s\cdot x_{i,j+n}=0$ and $E_s\cdot x_{i+n,j} =0$ when $n\geq s\geq r$ and $s\geq i$ and 
when $2n\geq s\geq n+r$ and $s\geq i+n$. This guarantees that $E_s\cdot H_{r}=0$ for all $E_s\notin U_q(\mathfrak{g}_r)$.  But we also know that $E_s\cdot \hat{H}_{r}=0$ for all $E_s\in U_q(\mathfrak{g}_r)$ because of  the left  invariant  property of $H_{r}$ with respect to the action of the  subalgebra of $U_q(\mathfrak{g}_r)$ described above.  This proves that  $E_s\cdot H_{r}=0$  for all 
$E_s\in U_q(\mathfrak{g})$.

Consider Type AI.  Note that $\det_q(T_{(r)})^2$ has weight $2\epsilon_1+\cdots + 2\epsilon_r$ in terms of the left action of $U_q(\mathfrak{g}_r)$. Thus it is straightforward to see from the definitions of $\hat{H}_{r}$ and of the  restricted weight $2\hat{\eta}_{r}$ (see Section \ref{section:restricted-root-system}) that $\hat{H}_{r}$ has weight $2\hat{\eta}_{r}$ for each $r$.  The weight of $H_{r}$ is the same as that of $\hat{H}_{r}$, hence by the previous paragraph, $H_{r}$ is a highest weight vector of weight $2\hat{\eta}_{r}$ with respect to the left action of $U_q(\mathfrak{g})$. Now consider Type AII. In this case, $\det_q(T_{(2r)}$ has weight $\epsilon_1+\cdots + \epsilon_{2r}$ in terms of the left action of $U_q(\mathfrak{g}_r)$.  Again,  as explained in Section \ref{section:restricted-root-system}, this weight equals $2\hat{\eta}_r$.  In the   diagonal case,   $\det_q(T_{(r)}T'_{(r)})$ has weight $\epsilon_1+\cdots + \epsilon_r +\epsilon_{n+1}+\cdots +\epsilon_{n+r}$.  Using the information in Section \ref{section:restricted-root-system}, this weight  is $2\hat{\eta}_r$.

We finish the proof by arguing that the $H_1, \dots, H_{n}$ are algebraically independent and hence the ring they generate is a polynomial ring in these variables.  Suppose 
$$\sum_ma_mH_1^{m_1}\cdots H_{n}^{m_{n}} = 0$$ where $m=(m_1, \dots, m_{n})$.   Since the monomials $H_1^{m_1}\cdots H_{n}^{m_{n}}$ have distinct weights $\sum_i2m_i\hat{\eta}_i$, we can separate the monomials using the action of $U_q(\mathfrak{g})$.   Thus the above equality implies $$a_mH_1^{m_1}\cdots H_{n}^{m_{n}} =0$$ and hence $a_m=0$ each $m$.
  \end{proof}
  
We frequently write $H_{2\mu}$ for the element $H_1^{m_1}\cdots H_{n}^{m_{n}}$ for each $\mu=\sum_im_i\hat{\eta}_i$, thus labeling this element by its weight.  In particular, by the above proposition, $H_{2\mu}$ is a highest weight vector of weight $2\mu$ with respect to the action of $U_q(\mathfrak{g})$ on 
  $\mathscr{P}_{\theta}$. 

\subsection{Explicit module descriptions}\label{subsection:explicit-module}
Recall that $\mathscr{P}_{\theta}$ is a subalgebra and submodule of $ \mathscr{P}$.  The decompositions of Section \ref{section:modules}  ensure that
 as left $U_q(\mathfrak{g})$-modules, we have  the following inclusions \begin{align}\label{Ptheta2}
 \mathscr{P}_{\theta}\subseteq 
 \left\{\begin{matrix} 
&\bigoplus_{\lambda\in \Lambda^+_{\Sigma}{s\in \{0,1\}}}L(2\lambda+s\hat{\eta}_n) {\rm \  in\  Type\  AI,}\cr
&\bigoplus_{\lambda\in \Lambda^+_{\Sigma}}L(2\lambda){\rm \  in\ Type\ AII, \ and}\cr
&\bigoplus_{\lambda\in \Lambda^+_{\Sigma}}L(2\lambda)=\bigoplus_{\gamma\in \Lambda^+_n} L(\gamma\oplus\gamma){\rm \  in\  the\ diagonal\ type.\ }
 \end{matrix}\right.
 \end{align} In the next theorem, we obtain a precise decomposition of $\mathscr{P}_{\theta}$ into left $U_q(\mathfrak{g})$-modules and trivial right $\mathcal{B}_{\theta}$-modules.
 
\begin{theorem} \label{theorem:explicit-module}
We have 
\begin{align}\label{Ptheta}\mathscr{P}_{\theta}= \bigoplus_{\lambda\in \Lambda^+_{\Sigma}}(U_q(\mathfrak{g}))\cdot H_{2\lambda}
\cong \bigoplus_{\lambda\in \Lambda^+_{\Sigma}}L({2\lambda})
\end{align}
where $(U_q(\mathfrak{g}))\cdot H_{2\lambda}$ is isomorphic to the simple (left) $U_q(\mathfrak{g})$-module  generated by the highest weight vector $ H_{2\lambda}$ with weight
$2\lambda$ and is a trivial right $\mathcal{B}_{\theta}$-module.
 \end{theorem}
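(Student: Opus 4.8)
The plan is to upgrade the abstract multiplicity-free isomorphism (\ref{Ptheta-decomp}) to the asserted internal direct sum by showing that each explicit highest weight vector $H_{2\lambda}$ of Proposition \ref{prop:weights} generates precisely one of the isotypic summands. First I would record two facts about $H_{2\lambda}=H_1^{m_1}\cdots H_n^{m_n}$, where $\lambda=\sum_i m_i\hat\eta_i$: it is nonzero, since $H_1,\dots,H_n$ are algebraically independent by Proposition \ref{prop:weights}; and, again by that proposition, it is a highest weight vector of weight $2\lambda$ for the left $U_q(\mathfrak g)$-action. Hence $U_q(\mathfrak g)\cdot H_{2\lambda}$ is a nonzero cyclic submodule of $\mathscr P_\theta$ generated by a vector annihilated by every $E_i$ and of weight $2\lambda$.

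The crux is the following: in the decomposition $\mathscr P_\theta\cong\bigoplus_{\mu\in\Lambda^+_\Sigma}L(2\mu)$, any nonzero vector annihilated by all $E_i$ and of weight $2\lambda$ lies in the single summand isomorphic to $L(2\lambda)$. To prove this I would fix a $U_q(\mathfrak g)$-module isomorphism of $\mathscr P_\theta$ with $\bigoplus_\mu L(2\mu)$ and write the image of $H_{2\lambda}$ as $\sum_\mu w_\mu$ with $w_\mu\in L(2\mu)$; by equivariance each nonzero $w_\mu$ is a highest weight vector of weight $2\lambda$ inside the simple module $L(2\mu)$. Such a $w_\mu$ generates $L(2\mu)$, so $L(2\mu)$ is a quotient of the Verma module of highest weight $2\lambda$; comparing weights then gives $2\lambda-2\mu\in Q^+$, while $2\mu-2\lambda\in Q^+$ because $2\mu$ is the highest weight and $2\lambda$ is a weight of $L(2\mu)$, and therefore $\mu=\lambda$. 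Since $H_{2\lambda}\neq0$ this forces $w_\lambda\neq0$, so $U_q(\mathfrak g)\cdot H_{2\lambda}$ maps onto, hence equals, the $\lambda$-summand; in particular it is the simple module $L(2\lambda)$ with highest weight vector $H_{2\lambda}$.

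It then remains to assemble the pieces: the submodules $U_q(\mathfrak g)\cdot H_{2\lambda}$, as $\lambda$ ranges over $\Lambda^+_\Sigma$, are exactly the isotypic components in (\ref{Ptheta-decomp}), so their sum is direct and is all of $\mathscr P_\theta$. Triviality of the right $\mathcal B_\theta$-action on each $U_q(\mathfrak g)\cdot H_{2\lambda}$ is automatic, since $\mathscr P_\theta$ is a trivial right $\mathcal B_\theta$-module by Theorem \ref{theorem:polypart} and every subspace of a trivial module is a trivial submodule. I expect the only point needing care to be the uniqueness of highest weight vectors in the finite-dimensional simple modules $L(2\mu)$ — i.e. ruling out contributions from summands with $\mu\neq\lambda$; if one prefers to avoid Verma modules, one can instead argue directly that the space of $E_i$-invariant vectors of weight $2\lambda$ in $\bigoplus_\mu L(2\mu)$ is one-dimensional, spanned by the highest weight vector of the $\lambda$-summand.
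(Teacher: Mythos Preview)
Your proof is correct and follows essentially the same approach as the paper: both combine the multiplicity-free decomposition (\ref{Ptheta-decomp}) with the fact from Proposition~\ref{prop:weights} that $H_{2\lambda}$ is a nonzero highest weight vector of weight $2\lambda$, concluding that $U_q(\mathfrak g)\cdot H_{2\lambda}$ must coincide with the unique $L(2\lambda)$ summand. The paper's version is simply terser, taking for granted the uniqueness-of-highest-weight-vector step that you spell out via the weight comparison argument.
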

 \begin{proof} By Proposition \ref{prop:weights}, $H_{2\lambda}$ generates a finite-dimensional simple (left) $U_q(\mathfrak{g})$-module with highest weight $2\lambda$ for each $\lambda\in \Lambda^+_{\Sigma}$.  Thus $U_q(\mathfrak{g})\cdot H_{2\lambda}\cong L(2\lambda)$ for each $\lambda \in \Lambda_{\Sigma}$ which proves the second part of (\ref{Ptheta}).  (In the discussion below, we identify $U_q(\mathfrak{g})\cdot H_{2\lambda}\cong L(2\lambda)$, which means that we view this second part of (\ref{Ptheta}) as an equality.)
Note that
 \begin{align*}
 \bigoplus_{\lambda\in \Lambda^+_{\Sigma}}(U_q(\mathfrak{g}))\cdot H_{2\lambda}\subseteq \mathscr{P}_{\theta}.
 \end{align*}
 Hence by (\ref{Ptheta2}), we get equality here in Type AII and the diagonal type.  
 
 Now consider Type AI and a module of the form $L(2\lambda+s\hat{\eta}_n)$ with $s\neq 0$.  Note that 
 \begin{align*}L(2\lambda +s\hat{\eta}_n)
 \cong L(2\lambda)L(s\hat{\eta}_n)
\end{align*} 
 where $L(s\hat{\eta}_n)$ is a trivial one-dimensional  $U_q(\mathfrak{sl}_n)$-module inside of 
 $\mathscr{P}_{\theta}$ of weight $s\hat{\eta}_n$ with respect to the left action of $U_q(\mathfrak{g})$.  By Lemma  \ref{lemma:intersection}, $L({s\hat{\eta}_n})$ must be a  multiple of $(\det_q(T))^2$ and so $s$ is an even integer.  Thus by (\ref{Ptheta}),  \begin{align*}
\mathscr{P}_{\theta}\subseteq \bigoplus_{\lambda\in \Lambda^+_{\Sigma}}L(2\lambda) \end{align*} in Type AI 
 and the theorem follows.
 \end{proof}

 The entire construction of this section can be transferred to the differential parts $\mathscr{D}_{\theta}$ by using the antiautomorphisms sending $\mathscr{P}$ to $\mathscr{D}=\mathscr{P}^{op}$. Let $H^*_{2\mu}$ be the image of $H_{2\mu}$  via this antiautomorphism. A comparison of the action of $U_q(\mathfrak{g})$ on $\mathscr{P}$ and 
 on $\mathscr{D}$ yields that $H^*_{2\lambda}$ is a lowest weight vector of weight $-2\lambda$.  Hence, we have a similar decomposition as above for $\mathscr{D}_{\theta}$, again as left  $U_q(\mathfrak{g})$-modules and trivial right $\mathcal{B}_{\theta}$-modules
 \begin{align*} 
\mathscr{D}_{\theta} \cong \oplus_{\lambda\in \Lambda^+_{\Sigma}}(U_q(\mathfrak{g}))\cdot H^*_{2\lambda}.
\end{align*}
Note that $(U_q(\mathfrak{g}))\cdot H^*_{2\lambda}$ can be viewed as the left dual of $(U_q(\mathfrak{g}))\cdot H_{2\lambda}$.

\begin{remark}The generators of each module $L(2\lambda)$ are expressed using formulas in terms of the $t_{ij}$  for Types AI and AII in \cite{N} (see Lemma 4.10.A),  Our approach yields another concrete identification of these generators  that 
also applies to the diagonal case.  Our methods, which rely directly on quantum determinants also lead to formulas in the $t_{ij}$.  However, because these highest weight terms are elements of $\mathscr{P}_{\theta}(\mathfrak{g}_r)$ for various choices of $r$, it is easier to read off the possible $x_{ij}$ that may appear.  This will be helpful in describing and analyzing the quantum Capelli operators in Section \ref{section:defn-and-desc} of this paper.
\end{remark}

\section{Quantum Weyl algebras}
\subsection{Generators and relations} \label{section:gandr} We  associate a  quantum Weyl algebra $\mathscr{PD}_q(\Mat_N)$ with polynomials corresponding to $\mathcal{O}_q(\Mat_N)$ and constant term differentials corresponding to $\mathcal{O}_q(\Mat_N)^{op}$ as defined and studied in \cite{VSS}, \cite{B}, \cite{B2}, and \cite{LSS}.  This Weyl algebra $\mathscr{PD}_q(\Mat_N)$ is  generated by  $t_{ij}$ and $\partial_{ij}$ for $1\leq i,j\leq N$.  The algebra   $\mathcal{O}_q(\Mat_N)$ (resp. $\mathcal{O}_q(\Mat_N)^{op}$)  embeds inside $\mathscr{PD}_q(\Mat_N)$ and can be identified with the subalgebra generated by the $t_{ij}$ (resp. $\partial_{ij}$).  Moreover,    the $t_{ij}$ and $\partial_{ij}$ satisfy the following relation 
 \begin{align*} \partial_{ab}t_{ef} = \sum_{r,l,j,k} (R^{t_2})^{rl}_{ea}(R^{t_2})^{jk}_{fb}t_{rj}\partial_{lk}+\delta_{ae}\delta_{bf}
   \end{align*}
   for all $a,b,e,f$ in $\{1, \dots, N\}$. 
  The quantum Weyl algebra $\mathscr{PD}_q(\Mat_N)$ inherits the structure of a $U_q(\mathfrak{gl}_N)$-bimodule from the bimodules $\mathcal{O}_q(\Mat_N)$ and $\mathcal{O}_q(\Mat_N)^{op}.$ 
  
  We may view 
 $\mathscr{P}_{\theta}$ and $\mathscr{D}_{\theta}$ as right $\mathscr{B}_{\theta}$ invariant subalgebras of $\mathscr{PD}_q(\Mat_N)$.  However, together, they generate an algebra inside of $\mathscr{PD}_q(\Mat_N)$ that is too large to be taken for a quantum analog of the Weyl algebra with polynomial part equal to $\mathscr{P}_{\theta}$. In particular, the subset $\mathscr{P}_{\theta}\mathscr{D}_{\theta}$ consisting of sums of terms of the form $pd$ with $p\in \mathscr{P}_{\theta}$ and $d\in \mathscr{D}_{\theta}$ is strictly smaller than the subalgebra  generated by $\mathscr{P}_{\theta}$ and $\mathscr{D}_{\theta}$ (see \cite{LSS} for more details).  Instead, we use the construction of \cite{LSS} which starts with a twisted tensor product of $\mathscr{P}_{\theta}$ and $\mathscr{D}_{\theta}$ and deforms it so as to add constant terms to some of the relations. The result is the quantum Weyl algebra $\mathscr{PD}_{\theta}$ associated to $\theta$ for each of the three settings of this paper. As an algebra, 
 $\mathscr{PD}_{\theta}$ is   generated by $x_{ij}$, $d_{ij}$, $1\leq i,j\leq N$ where $N=n$ in Type AI and $N=2n$ in Type AII and the diagonal case. The algebra   $\mathscr{P}_{\theta}$ (resp. $\mathscr{D}_{\theta}$)  embeds inside $\mathscr{PD}_{\theta}$ and can be identified with the subalgebra generated by the $x_{ij}$ (resp. $d_{ij}$).  Moreover, the $x_{ij}$ and $d_{ij}$ satisfy the following relation
     \begin{align}\label{twistingreln} d_{ab}x_{ef} =\sum_{w,r,x,q,p,m,y,l}(R_{\mathfrak{g}}^{t_2})^{wr}_{xq} (R_{\mathfrak{g}}^{t_2})^{pq}_{ma}(R_{\mathfrak{g}}^{t_2})^{xy}_{fl}(R_{\mathfrak{g}}^{t_2})^{ml}_{eb}x_{pw}d_{ry} + q^{-\delta_{ef}}\delta_{ae}\delta_{bf}
  \end{align}
  for all $a,b,e,f\in {1,\dots, N}$.
  The map $\mathscr{P}_{\theta}\otimes \mathscr{D}_{\theta}$ to $\mathscr{PD}_{\theta}$ defined by multiplication is a vector space isomorphism of left $U_q(\mathfrak{g})$-modules  and (trivial) right $\mathcal{B}_{\theta}$-modules.
Relations (\ref{twistingreln}) in the diagonal case are equivalent to the simpler relations
  \begin{align}\label{reln:diag} d_{a,b+n}x_{e,f+n} = \sum_{r,l,j,k} (R^{t_2})^{rl}_{ea}(R^{t_2})^{jk}_{fb}x_{r,j+n}d_{l,k+n}+\delta_{ae}\delta_{bf}
   \end{align}
   for all $1\leq a,b,e,f\leq n$ where here we are taking into account linear relations satisfied by the $d_{ij}$ and by the $x_{ij}$. In particular, this relation combined with results from Section \ref{section:qhs} ensure that  in the diagonal case $\mathscr{PD}_{\theta}$ and $\mathscr{PD}_q(\Mat_n)$ are isomorphic as algebras via the map sending $x_{i,j+n}$ to $t_{ij}$ and $d_{i,j+n}$ to $\partial_{ij}$ for each $1\leq i,j\leq n$.  (For additional details, see \cite{LSS}.)

  The following result from \cite{LSS} gives insight into the overall form of the relations coming from  the twisting map.
  
\begin{theorem}\label{theorem:relnsWeyl}
(\cite{LSS}, Corollary 8.11) For each of the three families, the following inclusions hold for  the quantum Weyl algebra $\mathscr{PD}_{\theta}$
\begin{align*}
d_{ab}x_{ef} - q^{\delta_{af}+\delta_{ae}+\delta_{bf} +\delta_{be}} x_{ef}d_{ab}-q^{-\delta_{ef}}\delta_{ae}\delta_{bf} \in  \sum_{(e',f',a',b')>(e,f,a,b)} \mathbb{C}(q)x_{e'f'}d_{a'b'}
\end{align*}
for all $a,b,e,f\in \{1, \dots, {\rm rank}(\mathfrak{g})\}$ where
\begin{itemize}
\item $a\leq b$ and $e\leq f$ in Type AI
\item $a<b$ and $e<f$ in Type AII
\item $a\leq n<b$ and $e\leq n<f$ in  diagonal type
\end{itemize}
and $(e',f',a',b')>(e,f,a,b)$ if and only if $e'\geq e,f'\geq f, a'\geq a, b'\geq b$ and at least one of these inequalities is strict.  
\end{theorem}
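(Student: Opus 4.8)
The plan is to derive the asserted identity directly from the normal‑ordering relation (\ref{twistingreln}) by exploiting the triangular shape of $R_{\mathfrak{g}}^{t_2}$.

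First I would classify the nonzero entries of $R^{t_2}$. Unwinding $(R^{t_2})^{ij}_{kl}=r^{il}_{kj}$ together with the description of the $r^{ij}_{kl}$ in Section \ref{section:qmf}, one finds that $(R^{t_2})^{ij}_{kl}$ is nonzero only in the cases (a) $i=j=k=l$, with value $q$; (b) $i=k$, $j=l$, $i\neq j$, with value $1$; and (c) $i=j$, $k=l$, $k<i$, with value $q-q^{-1}$. In all three cases $i\geq k$ and $j\geq l$, along the diagonal $(R^{t_2})^{ij}_{ij}=q^{\delta_{ij}}$, and a strict increase ($i=j>k=l$) happens only in case (c). For Types AI and AII this is exactly what is needed since $R_{\mathfrak{g}}=R$. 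In the diagonal case the same monotonicity $i\geq k$, $j\geq l$ and the diagonal values $q^{\delta_{ij}}$ are verified entry by entry for the block‑diagonal $R_{\mathfrak{g}}$ with diagonal $(R,I_{n^2},I_{n^2},R)$ — the identity blocks contributing only $\delta_{ik}\delta_{jl}$ — or, alternatively, one works instead with the two‑factor relation (\ref{reln:diag}).

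Next I would substitute into (\ref{twistingreln}). The four factors there are $(R_{\mathfrak{g}}^{t_2})^{wr}_{xq}$, $(R_{\mathfrak{g}}^{t_2})^{pq}_{ma}$, $(R_{\mathfrak{g}}^{t_2})^{xy}_{fl}$, $(R_{\mathfrak{g}}^{t_2})^{ml}_{eb}$; in them $a,b,e,f$ appear only as subscripts, the output indices $p,w,r,y$ of $x_{pw}d_{ry}$ appear only as superscripts, and $x,q,m,l$ are the internal summation indices, each occurring once as a superscript and once as a subscript. Applying $i\geq k$ and $j\geq l$ to each factor and chaining through the internal indices yields $w\geq x\geq f$, $r\geq q\geq a$, $p\geq m\geq e$, and $y\geq l\geq b$, so every monomial $x_{pw}d_{ry}$ occurring on the right‑hand side has $p\geq e$, $w\geq f$, $r\geq a$, $y\geq b$. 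Using the linear relations of Theorems \ref{theorem:polypart} and \ref{theorem:iso} to rewrite each $x_{pw}$ and $d_{ry}$ in the index order fixed by the statement — discarding any monomial killed by the relations $x_{ab}=0$, $d_{ab}=0$ — preserves all four inequalities, and one checks that the only monomial that can reproduce $x_{ef}d_{ab}$ is the one with $p=e$, $w=f$, $r=a$, $y=b$ already (this last point uses $e\leq f$, $a\leq b$). Hence every surviving term other than the leading one lies in $\sum_{(e',f',a',b')>(e,f,a,b)}\mathbb{C}(q)x_{e'f'}d_{a'b'}$. Finally, imposing $p=e$, $w=f$, $r=a$, $y=b$ forces the internal indices to $x=f$, $q=a$, $m=e$, $l=b$ (the chains above become equalities), so the coefficient of $x_{ef}d_{ab}$ is $(R_{\mathfrak{g}}^{t_2})^{fa}_{fa}(R_{\mathfrak{g}}^{t_2})^{ea}_{ea}(R_{\mathfrak{g}}^{t_2})^{fb}_{fb}(R_{\mathfrak{g}}^{t_2})^{eb}_{eb}=q^{\delta_{af}+\delta_{ae}+\delta_{bf}+\delta_{be}}$; together with the constant term $q^{-\delta_{ef}}\delta_{ae}\delta_{bf}$ already present in (\ref{twistingreln}) this gives precisely the stated identity. (In the diagonal case several of these four factors are identity‑block entries equal to $1$, which is consistent with $\delta_{af}=\delta_{be}=0$ there.)

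The one genuinely delicate step is the index bookkeeping just described: correctly identifying, for each of the four $R_{\mathfrak{g}}^{t_2}$ factors in (\ref{twistingreln}), which superscript and subscript is external, which is an output index, and which is internal, and then checking that the monotonicity chains close up — once this combinatorial wiring is right, the triangularity of $R_{\mathfrak{g}}^{t_2}$ does the rest. A secondary, purely mechanical point is verifying that normalizing the index order via $x_{ij}=\gamma x_{ji}$, $d_{ij}=\gamma d_{ji}$ and deleting the vanishing monomials $x_{ab}=0$, $d_{ab}=0$ never yields a term $x_{e'f'}d_{a'b'}$ with $(e',f',a',b')\not>(e,f,a,b)$ apart from the leading $x_{ef}d_{ab}$.
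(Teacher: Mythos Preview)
Your argument is correct. One small remark: the paper does not actually supply its own proof of this theorem --- it is quoted from \cite{LSS}, Corollary 8.11, and used as a black box --- so there is no in-paper proof to compare against. That said, the route you take (reading off the triangularity $i\geq k$, $j\geq l$ of $(R^{t_2})^{ij}_{kl}$ from the explicit $r$-entries, chaining the four inequalities through the internal indices of (\ref{twistingreln}), and then evaluating the diagonal coefficient) is exactly the natural direct verification and is presumably what underlies the cited result. Your handling of the index reordering via $x_{ij}=\gamma x_{ji}$ and $d_{ij}=\gamma d_{ji}$ is also right: the hypotheses $e\leq f$ and $a\leq b$ (or their analogues in the other types) are precisely what guarantee that swapping indices cannot produce a monomial violating the componentwise inequality, and cannot recreate the leading term $x_{ef}d_{ab}$ except from $(p,w,r,y)=(e,f,a,b)$.
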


It is also helpful for arguments later in the paper to express these relations in special cases.  We do this in the next lemma.

\begin{lemma} \label{lemma:formulas}
In Type AI and for $a<n$,
we have
\begin{itemize}
\item[(i)] $d_{an}x_{en} =q^{1+\delta_{ae}}x_{en}d_{an} -\delta_{ae}\sum_{a'>a}q^{2+\delta_{a'n}}(q^{-2}-1)x_{a'n}d_{a'n} + \delta_{ae}$ where $e<n$.
\item[(ii)] $d_{nn}x_{ef} =q^{\delta_{nf}+2\delta_{ne}}x_{ef}d_{nn} + q^{-\delta_{ef}}\delta_{ne}\delta_{nf}$ where $e\leq f$.
\end{itemize}
In Type AII and for $a<2n,e<2n$, we have 
\begin{itemize}
\item[(iii)] $d_{a,2n}x_{e,2n} =q^{1+\delta_{ae}}x_{e,2n}d_{a,2n} -\delta_{ae}\sum_{a'>a}q^{2+\delta_{a',2n}}(q^{-2}-1)x_{a',2n}d_{a',2n} + \delta_{ae}$.
\end{itemize}
In the diagonal case, we have the following relations as given in \cite{LSS0}, Remark 3.7.4 with the adjustment $\partial_{a,b}\mapsto d_{a,b+n}$ and
$t_{a,b}\mapsto x_{a,b+n}$.  Note that the possible subscripts of both $x$ and $d$ terms are $a,b+n$  with $a=1,\dots, n$ and $b=1,\dots, n$. \begin{itemize}
\item[(iv)] $d_{c,b+n}x_{e,a+n} =x_{e,a+n} d_{c,b+n}$ if $b\neq a $ and $c\neq e$.  
\item[(v)] $d_{c,b+n}x_{c,a+n} =qx_{c,a+n} d_{c,b+n}+ \sum_{c'>c} (q-q^{-1})x_{c',a+n} d_{c',b+n}$ if $b\neq a $.
\item[(vi)] $d_{c,a+n}x_{e,a+n} = qx_{e,a+n} d_{c,a+n}+\sum_{a'>a}(q-q^{-1})x_{e,a'+n} d_{c,a'+n}$ if  $c\neq e$.
\item[(vii)]$d_{c,a+n}x_{c,a+n} = q^2x_{c,a+n} d_{c,a+n} +q \sum_{c'>c}(q-q^{-1})d_{c',a+n} d_{c',a+n}$

 $+q\sum_{a'>a}(q-q^{-1}) x_{c,a'+n} d_{c,a'+n}+1$ 
\end{itemize} 
 
\end{lemma}
\begin{proof} Consider Type AI.  Using the explicit formulas for the entries of $R$ (see Section \ref{section:qmf}), $(R^{t_2})^{ul}_{vn} = r^{un}_{vl} \neq 0$ implies that $n=l$ and $u=v$.  Similarly, $(R^{t_2})^{ul}_{nv}=r^{uv}_{nl}\neq 0$ implies that $n=u$ and $v=l$.
Moreover, if $v<n$ then $(R^{t_2})^{vn}_{vn}=r^{vn}_{vn} =1$ and $(R^{t_2})^{nv}_{nv} =r^{nv}_{nv}=1$ while if $v=n$, then $(R^{t_2})^{nn}_{nn}= q$.  

Suppose that $a$ and $e$ are both strictly less than $n$.  By the above information about the entries for $R$, we get 
\begin{align}\label{Kreln3}
d_{an}x_{en}&=(R^{t_2})^{na}_{na}(R^{t_2})^{ea}_{ea}(R^{t_2})^{nn}_{nn}(R^{t_2})^{en}_{en}x_{en}d_{an}\cr&+ \delta_{ae}\sum_{a'>a}(R^{t_2})^{na'}_{na'}(R^{t_2})^{a'a'}_{aa}(R^{t_2})^{nn}_{nn}(R^{t_2})^{en}_{en}x_{a'n}d_{a'n} +\delta_{ae}
\cr&=q^{1+\delta_{ae}}x_{en}d_{an} - \delta_{ae}\sum_{a'>a}q^{2+\delta_{a'n}}(q^{-2}-1)x_{a'n}d_{a'n} + \delta_{ae}.
\end{align} 
This proves (i).

Using (\ref{twistingreln}) and the above information about the entries for $R$, we see that  
\begin{align}\label{Kreln1}
d_{nn}x_{ef} &= (R^{t_2})^{fn}_{fn}(R^{t_2})^{en}_{en}(R^{t_2})^{fn}_{fn}(R^{t_2})^{en}_{en} x_{ef}d_{nn} + q^{-\delta_{ef}}\delta_{ne}\delta_{nf}
\cr&= q^{2\delta_{nf}+2\delta_{ne}}x_{ef}d_{nn} + q^{-\delta_{ef}}\delta_{ne}\delta_{nf}
\end{align}
for $e\leq f$.
This proves (ii).

The argument for (iii) is the same as for (i) with $n$ replaced by $2n$ everywhere.  As stated in the lemma, (iv)-(vii) are directly from \cite{LSS0}.
\end{proof}

We can define a  filtration on  $\mathscr{PD}_{\theta}$  that is compatible with the filtration $\mathcal{J}$ induced by the degree functions on $\mathscr{P}_{\theta}$ and $\mathscr{D}_{\theta}$ 
(see Section \ref{section:qhs}).  We use the same notation, namely $\mathcal{J}$, to denote this filtration on $\mathscr{PD}_{\theta}$. 
Note that multiplication induces a vector space isomorphism from $\mathscr{P}_{\theta}\otimes \mathscr{D}_{\theta}$ to the twisted tensor product of $\mathscr{P}_{\theta}$ and $\mathscr{D}_{\theta}$.  Since $\mathscr{PD}_{\theta}$  is a PBW deformation of this twisted tensor product (or one can check directly from the relations above) $\mathscr{PD}_{\theta}$ inherits a filtration from $\mathcal{J}$ on $\mathscr{P}_{\theta}$ and $\mathscr{D}_{\theta}$ via $${\mathcal{J}}_r(\mathscr{PD}_{\theta})=\sum_{u+v= r}\mathcal{J}_u(\mathscr{P}_{\theta})\mathcal{J}_v(\mathscr{D}_{\theta}).$$ It follows that 
$$\mathcal{J}_u(\mathscr{PD}_{\theta})\mathcal{J}_v(\mathscr{PD}_{\theta})\subseteq {\mathcal{J}}_{u+v}(\mathscr{PD}_{\theta})$$ for all nonnegative integers $u$ and $v$.
 Note that the filtration $\mathcal{J}$ is preserved by the action of $U_q(\mathfrak{g})$.

\subsection{Action on polynomials}\label{section:action}
Let $\mathcal{L}$ be the left ideal of 
$\mathscr{PD}_{\theta}$ generated by the elements $d_{ij}$ for $1\leq i\leq j\leq N$ where $N=n$ in Type AI and $N=2n$ in the other two cases.  Note that $\mathscr{PD}_{\theta}$ admits a direct sum decomposition
$\mathscr{PD}_{\theta} = \mathscr{P}_{\theta} \oplus \mathcal{L}.$
Let $\pi:\mathscr{PD}_{\theta}\rightarrow \mathscr{P}_{\theta}$ be the projection with kernel
$\mathcal{L}$ 
and note that the map $\pi$ is a $U_q(\mathfrak{g})$-module map.

Recall (Section \ref{section:qhs})  that  $\mathscr{P}^r_{\theta}$ equals the homogeneous space of degree $r$ with respect to the degree filtration $\mathcal{J}$.  Similarly, $\mathscr{D}^r_{\theta}$ equals the homogeneous space of degree $r$ with respect to the degree filtration $\mathcal{J}$.  Note that $\mathscr{P}^0_{\theta} = \mathbb{C}(q)$ and so
$\mathscr{P}_{\theta} = \mathbb{C}(q) \oplus \sum_{r>0}\mathscr{P}_{\theta}^r$.
 This decomposition  can be extended to $\mathscr{P}\mathscr{D}_{\theta}$ using the map $\pi$.   We have
\begin{align*}{\rm ker}\ \pi =\mathcal{L} = \sum_{r>0}\mathscr{P}_{\theta}\mathscr{D}_{\theta}^r
\end{align*} and so 
\begin{align*}
\mathscr{P}\mathscr{D}_{\theta} = \pi(\mathscr{P}\mathscr{D}_{\theta})\oplus \mathcal{L}=\mathscr{P}_{\theta}\oplus  \sum_{r>0}\mathscr{P}_{\theta}\mathscr{D}_{\theta}^r
=\mathbb{C}(q) \oplus \sum_{r>0}\mathscr{P}_{\theta}^r\oplus  \sum_{r>0}\mathscr{P}_{\theta}\mathscr{D}_{\theta}^r
\end{align*}  Write $(b)_0$ for the projection of an element in $\mathscr{P}\mathscr{D}_{\theta}$ onto $\mathbb{C}(q)$ using this direct sum decomposition.  It follows  that $(b)_0=0$ for all $b\in \sum_{r>0}\mathscr{P}_{\theta}^r\oplus   \mathcal{L}$.

Define a bilinear form $\langle \cdot, \cdot \rangle$ from $\mathscr{D}_{\theta}\times \mathscr{P}_{\theta}$ to $\mathbb{C}(q)$ by 
\begin{align}\label{inner}\langle d, p\rangle = \pi(dp)_0 
\end{align}
where $\pi(dp)_0=(\pi(dp))_0$.

\begin{lemma}\label{lemma:inv} The bilinear form $\langle \cdot, \cdot \rangle$ on $\mathscr{D}_{\theta}\times \mathscr{P}_{\theta}$ satisfies \begin{align*} \sum \langle u_{(1)}\cdot d, u_{(2)}\cdot p\rangle = \epsilon(u)\langle d,p\rangle
\end{align*}
for all $u\in U_q(\mathfrak{g})$, $d\in \mathscr{D}_{\theta}$ and $p\in \mathscr{P}_{\theta}$ and hence  is (left)  $U_q(\mathfrak{g})$ invariant.
\end{lemma}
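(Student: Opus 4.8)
The plan is to reduce the $U_q(\mathfrak g)$-invariance of $\langle\cdot,\cdot\rangle$ to the already-established $U_q(\mathfrak g)$-equivariance of the action of $\mathscr{PD}_\theta$ on $\mathscr P_\theta$, together with compatibility of that action with the projection onto scalars and the counit. First I would recall that $\mathscr{PD}_\theta$ is a left $U_q(\mathfrak g)$-module algebra and that $\mathscr P_\theta\subset\mathscr{PD}_\theta$ is a submodule, so the action map $\mathscr{PD}_\theta\otimes\mathscr P_\theta\to\mathscr P_\theta$, $a\otimes p\mapsto a\cdot p$, is a morphism of $U_q(\mathfrak g)$-modules; concretely, $u\cdot(a\cdot p)=\sum (u_{(1)}\cdot a)\cdot(u_{(2)}\cdot p)$ for all $u\in U_q(\mathfrak g)$, $a\in\mathscr{PD}_\theta$, $p\in\mathscr P_\theta$. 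Applying this with $a=d\in\mathscr D_\theta$ gives $u\cdot(d\cdot p)=\sum(u_{(1)}\cdot d)\cdot(u_{(2)}\cdot p)$, using that $\mathscr D_\theta$ is also a submodule. Now I would apply the scalar projection $(\cdot)_0$ to both sides.

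The key point is that the scalar projection $(\cdot)_0:\mathscr P_\theta\to\mathbb C(q)$ intertwines the $U_q(\mathfrak g)$-action on $\mathscr P_\theta$ with the trivial action via the counit, i.e. $(u\cdot p)_0=\epsilon(u)(p)_0$. This holds because $\mathscr P_\theta=\mathbb C(q)\oplus\sum_{r>0}\mathscr P_\theta^r$ is a decomposition into $U_q(\mathfrak g)$-submodules: the grading is preserved by the action since $U_q(\mathfrak g)$ acts by degree-preserving operators (the generators $E_i,F_i,K_\beta$ send each $x_{ij}$ to a scalar multiple of some $x_{kl}$, hence act homogeneously of degree $0$), and on the degree-zero summand $\mathbb C(q)=\mathscr P_\theta^0$ the action of $U_q(\mathfrak g)$ is the trivial one given by $\epsilon$ (as $K_\beta\cdot 1=1$, $E_i\cdot 1=F_i\cdot 1=0$). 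Therefore the projection onto $\mathscr P_\theta^0$ along $\sum_{r>0}\mathscr P_\theta^r$ is $U_q(\mathfrak g)$-equivariant with target the trivial module, which is exactly the statement $(u\cdot p)_0=\epsilon(u)(p)_0$.

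Combining these two facts: for $u\in U_q(\mathfrak g)$, $d\in\mathscr D_\theta$, $p\in\mathscr P_\theta$,
\begin{align*}
\sum\langle u_{(1)}\cdot d,\,u_{(2)}\cdot p\rangle
&=\sum\bigl((u_{(1)}\cdot d)\cdot(u_{(2)}\cdot p)\bigr)_0
=\Bigl(u\cdot(d\cdot p)\Bigr)_0
=\epsilon(u)\,(d\cdot p)_0
=\epsilon(u)\,\langle d,p\rangle,
\end{align*}
where the first equality is the definition \eqref{inner}, the second is the $U_q(\mathfrak g)$-equivariance of the module action (applied to $d\cdot p$), and the third is the equivariance of the scalar projection just established. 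This is precisely the asserted identity, and it is the defining property of left $U_q(\mathfrak g)$-invariance of the bilinear form.

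The only genuinely non-routine point is the scalar-projection step, i.e. verifying that the grading of $\mathscr P_\theta$ by degree is respected by the $U_q(\mathfrak g)$-action so that $(\cdot)_0$ is equivariant; once one observes that each generator of $U_q(\mathfrak g)$ acts on the span of the $x_{ij}$ by mapping it into itself (degree $1\to 1$) and extends as a derivation-like operator preserving total degree, this is immediate from the explicit action formulas recalled after Theorem \ref{theorem:iso} (in particular \eqref{Kaction} for the Cartan part). Everything else is formal manipulation with the module-algebra axioms and Sweedler notation, so I would keep that part brief.
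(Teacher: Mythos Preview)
Your argument is correct and follows essentially the same route as the paper: both exploit that $\mathscr{PD}_\theta$ is a $U_q(\mathfrak g)$-module algebra to expand $u\cdot(dp)$, then use that the action preserves the degree grading on $\mathscr P_\theta$ so that the projection $(\cdot)_0$ onto scalars intertwines with $\epsilon$. One small point worth making explicit: the equivariance $u\cdot(d\cdot p)=\sum(u_{(1)}\cdot d)\cdot(u_{(2)}\cdot p)$ you invoke does not follow solely from $\mathscr P_\theta$ being a submodule---you also need that the left ideal $\mathcal L$ is a $U_q(\mathfrak g)$-submodule (which it is, since the span of the $d_{ij}$ is), so that the projection $\mathscr{PD}_\theta=\mathscr P_\theta\oplus\mathcal L\to\mathscr P_\theta$ defining the action is itself equivariant; the paper avoids this step by working with the product $dp$ in $\mathscr{PD}_\theta$ directly rather than with $d\cdot p$.
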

\begin{proof}
Write $\pi(dp) = \langle d, p\rangle + a$ where $a\in\sum_{r>0}\mathscr{P}_{\theta}^r$.   Since $\mathscr{PD}_{\theta}$ is a left $U_q(\mathfrak{g})$-module, 
and $\pi$ is a $U_q(\mathfrak{g})$-module map, we have \begin{align*} 
u\cdot \pi(dp)=\pi(u\cdot dp) 
 = \sum \pi((u_{(1)}\cdot d)(u_{(2)}\cdot p) ).
\end{align*}
Since $(b)_0=0$ for all $b\in {\rm ker}\ \pi = \mathcal{L}$, we have 
\begin{align} \label{eqnbilinear1} (\sum \pi((u_{(1)}\cdot d)(u_{(2)}\cdot p) )_0=  (\sum (u_{(1)}\cdot d)(u_{(2)}\cdot p ))_0
\end{align}
On the other hand 
\begin{align*} u\cdot \pi(dp) = u\cdot  (\langle d,p\rangle +  a) =  u\cdot (\langle d,p\rangle) + u\cdot a = \epsilon(u)\langle d, p \rangle + u\cdot a
\end{align*}
since $\langle d,p\rangle$ is a scalar.  
Since the action of $U_q(\mathfrak{g})$ on $\mathscr{P}_{\theta}$ preserves degree, $\sum_{r>0}\mathscr{P}^r_{\theta}$ is a $U_q(\mathfrak{g})$-module.  Thus $u\cdot a \in   \sum_{r>0}\mathscr{P}^r_{\theta}$ and so 
\begin{align}\label{eqnbilinear2}(\epsilon(u)\langle d, p \rangle + u\cdot a)_0=\epsilon(u)\langle d, p \rangle.
\end{align}
Putting together (\ref{eqnbilinear1}) and (\ref{eqnbilinear2})    yields 
the desired property.   Thus $\langle \cdot, \cdot \rangle$ is (left)  $U_q(\mathfrak{g})$ invariant.
\end{proof}

Note that the map $\pi$ defines an action of $\mathscr{PD}_{\theta}$ on $\mathscr{P}_{\theta}$.  In particular,   the action of the element $a\in \mathscr{PD}_{\theta}$ on  $x\in\mathscr{P}_{\theta}$ yields the element $\pi(ax)$ in $\mathscr{P}_{\theta}$.
This action  of $\mathscr{PD}_{\theta}$ on $\mathscr{P}_{\theta}$ can be viewed as a map of algebras, say $\phi$, from 
$\mathscr{PD}_{\theta}$ into ${\rm End}\ \mathscr{P}_{\theta}$.  (Here we write ${\rm End}\ \mathscr{P}_{\theta}$ for  ${\rm End}_{\mathbb{C}(q)}\mathscr{P}_{\theta}$ which are endomorphisms over the scalars.  The field $\mathbb{C}(q)$ is dropped since it can be understood from context.) Given $a\in \mathscr{PD}_{\theta}$ we frequently write $\phi_a$ for $\phi(a)$ in order to make the exposition below clearer. 
Since $\mathscr{P}_{\theta}$ is a left $U_q(\mathfrak{g})$-module,  ${\rm End}\ \mathscr{P}_{\theta}$ inherits the structure of  a $U_q(\mathfrak{g})$-bimodule in the standard way.  Thus ${\rm End}\ \mathscr{P}_{\theta}$  is an   $({\rm ad\ } U_q(\mathfrak{g}))$-module via 
\begin{align*} (({\rm ad}\ u)\cdot b)(p)  =\left( \sum u_{(1)} b S(u_{(2)})\right) (p) = \sum u_{(1)} b \left(S(u_{(2)})(p)\right)
\end{align*}
for all $u\in U_q(\mathfrak{g})$, $b\in {\rm End}\ \mathscr{P}_{\theta}$ and $p\in \mathscr{P}_{\theta}$.

\begin{proposition}\label{prop:locally-finite} The map $\phi$ is a $U_q(\mathfrak{g})$-module map with respect to the left action on $\mathscr{PD}_{\theta}$ and the left adjoint action on ${\rm End}\ \mathscr{P}_{\theta}$ and so  
\begin{align}\label{phi-reln} \phi_{u\cdot a}= ({\rm ad}\ u)\cdot\phi_a
\end{align}
for all  $a\in  \mathscr{PD}_{\theta}$ and $u\in U_q(\mathfrak{g})$.   
Thus the image of $\mathscr{PD}_{\theta}$ under $\phi$ is an $({\rm ad}\ U_q(\mathfrak{g}))$-submodule of ${\rm End}\ \mathscr{P}_{\theta}$.
\end{proposition}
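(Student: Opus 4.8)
The plan is to verify the intertwining relation \eqref{phi-reln} directly from the definitions, and then derive the submodule claim as a formal consequence.

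First I would unwind what both sides of \eqref{phi-reln} mean as endomorphisms of $\mathscr{P}_\theta$. The left-hand side $\phi_{u\cdot a}$ is the action of the element $u\cdot a\in\mathscr{PD}_\theta$ on $\mathscr{P}_\theta$, obtained via the projection $\mathscr{PD}_\theta=\mathscr{P}_\theta\oplus\mathcal{L}$. The right-hand side, applied to $p\in\mathscr{P}_\theta$, equals $\sum u_{(1)}\cdot\big(a\cdot(S(u_{(2)})\cdot p)\big)$, where here $a\cdot(\,\cdot\,)$ denotes the $\mathscr{PD}_\theta$-action and $u_{(1)}\cdot,\ S(u_{(2)})\cdot$ denote the $U_q(\mathfrak g)$-action on $\mathscr{P}_\theta$. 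So the identity to prove is
\begin{align*}
(u\cdot a)\cdot p \;=\; \sum u_{(1)}\cdot\big(a\cdot(S(u_{(2)})\cdot p)\big)\qquad\text{for all }u\in U_q(\mathfrak g),\ a\in\mathscr{PD}_\theta,\ p\in\mathscr{P}_\theta.
\end{align*}

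The key step is to relate the $\mathscr{PD}_\theta$-action on $\mathscr{P}_\theta$ to left multiplication \emph{inside} $\mathscr{PD}_\theta$ followed by the projection onto $\mathscr{P}_\theta$. Write $\mathrm{pr}\colon\mathscr{PD}_\theta\to\mathscr{P}_\theta$ for the projection along $\mathcal L$, so that $b\cdot p=\mathrm{pr}(bp)$ for $b\in\mathscr{PD}_\theta$, $p\in\mathscr{P}_\theta$ (viewing $p\in\mathscr{PD}_\theta$ via the embedding $\mathscr{P}_\theta\hookrightarrow\mathscr{PD}_\theta$). Two facts make the computation go through. First, $\mathscr{PD}_\theta$ is a left $U_q(\mathfrak g)$-module \emph{algebra}, so $u\cdot(bp)=\sum(u_{(1)}\cdot b)(u_{(2)}\cdot p)$ for the multiplication in $\mathscr{PD}_\theta$; in particular $u\cdot a$ really is computed using the module structure on $\mathscr{PD}_\theta$, which restricts on the subalgebra $\mathscr{P}_\theta$ to the given module structure there. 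Second, and this is the crux, I need that $\mathcal L$ is a $U_q(\mathfrak g)$-submodule of $\mathscr{PD}_\theta$, equivalently that $\mathrm{pr}$ is a $U_q(\mathfrak g)$-module map. Granting that, the computation is a one-liner: using $\sum u_{(1)}S(u_{(2)})=\epsilon(u)1$,
\begin{align*}
\sum u_{(1)}\cdot\big(a\cdot(S(u_{(2)})\cdot p)\big)
&=\sum u_{(1)}\cdot\mathrm{pr}\big(a\,(S(u_{(2)})\cdot p)\big)
=\sum \mathrm{pr}\Big(u_{(1)}\cdot\big(a\,(S(u_{(2)})\cdot p)\big)\Big)\\
&=\sum \mathrm{pr}\big((u_{(1)}\cdot a)\,(u_{(2)}S(u_{(3)})\cdot p)\big)
=\mathrm{pr}\big((u\cdot a)\,p\big)=(u\cdot a)\cdot p,
\end{align*}
where the third equality uses that $\mathscr{PD}_\theta$ is a module algebra and the Sweedler indexing is regrouped, and the fourth uses the antipode axiom. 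Hence \eqref{phi-reln} holds.

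The main obstacle is therefore establishing that $\mathcal L$ is stable under the $U_q(\mathfrak g)$-action. Here I would argue as follows. By construction (Theorem~\ref{theorem:relnsWeyl} and the discussion around \eqref{twistingreln}), multiplication gives a $U_q(\mathfrak g)$-module isomorphism $\mathscr{P}_\theta\otimes\mathscr{D}_\theta\xrightarrow{\sim}\mathscr{PD}_\theta$, so as a left $U_q(\mathfrak g)$-module $\mathscr{PD}_\theta\cong\mathscr{P}_\theta\otimes\mathscr{D}_\theta$; and under the module-algebra structure, $\mathscr{D}_\theta$ decomposes as $\bigoplus_\lambda (U_q(\mathfrak g))\cdot H^*_{2\lambda}$ with the $d_{ij}$ spanning a submodule isomorphic to the lowest-weight module generated by $d_{1r}$. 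The left ideal $\mathcal L=\mathscr{PD}_\theta\cdot\{d_{ij}: i\le j\}$: I must check that $\{d_{ij}:1\le i\le j\le N\}$ (after imposing the linear relations of Theorem~\ref{theorem:iso}) spans a $U_q(\mathfrak g)$-submodule of $\mathscr{D}_\theta$, equivalently of the degree-one piece $\mathscr{D}_\theta^1$, and that the left multiplication operators $x\mapsto bx$ on $\mathscr{PD}_\theta$ are $U_q(\mathfrak g)$-equivariant up to the coproduct — which is exactly the module-algebra property, so that $u\cdot(bd_{ij})=\sum(u_{(1)}\cdot b)(u_{(2)}\cdot d_{ij})$ lies in $\mathscr{PD}_\theta\cdot(U_q(\mathfrak g)\cdot d_{ij})\subseteq\mathcal L$ provided the span of the $d_{ij}$ with $i\le j$ is a submodule. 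For Types AI and AII this span is the full degree-one space $\sum_{i,j}\mathbb C(q)d_{ij}$ of $\mathscr{D}_\theta$ modulo the linear relations $d_{ij}=\gamma d_{ji}$, which is a simple $U_q(\mathfrak g)$-module by the description after Theorem~\ref{theorem:iso}, so stability is automatic; in the diagonal case one uses $\mathscr{D}_\theta\cong\mathcal O_q(\Mat_n)^{op}$ and the fact that the $d_{i,j+n}$ already exhaust the generators, the others being zero. Once $\mathcal L$ is a submodule, the displayed computation finishes the proof of \eqref{phi-reln}, and the final sentence follows because $\phi_{u\cdot a}=(\mathrm{ad}\,u)\cdot\phi_a$ shows $\phi(\mathscr{PD}_\theta)$ is carried into itself by every $\mathrm{ad}\,u$.
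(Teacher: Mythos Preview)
Your argument is correct and takes a genuinely different route from the paper's. The paper verifies \eqref{phi-reln} on the Hopf algebra generators $K_\beta$, $E_i$, $F_i$ separately, using the explicit formulas for $({\rm ad}\,E_i)$ and $({\rm ad}\,F_i)$ together with the module-algebra identity $E_i\cdot(ab)=(E_i\cdot a)b+(K_i\cdot a)(E_i\cdot b)$; the fact that $\mathcal L$ is stable under $U_q(\mathfrak g)$ is used (the paper asserts $K\cdot\mathcal L=\mathcal L$ and tacitly uses $E_i\cdot\mathcal L\subseteq\mathcal L$) but not isolated as a separate lemma. Your approach instead proves the identity for arbitrary $u$ in one stroke via the Sweedler computation with the antipode axiom $\sum u_{(2)}S(u_{(3)})=\epsilon(u_{(2)})1$, after first establishing that $\mathrm{pr}$ is $U_q(\mathfrak g)$-equivariant. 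This is cleaner and more structural: it shows the result holds for any Hopf algebra $H$ acting on a module algebra $A$ with an $H$-stable left ideal defining the action on the quotient, with no appeal to a specific presentation of $H$. The paper's generator-by-generator check, on the other hand, is more concrete and avoids the coassociativity bookkeeping. Your justification that $\mathcal L$ is a submodule---observing that the generators $d_{ij}$ with $i\le j$ span the simple module $\mathscr D_\theta^1$ in each type and then invoking the module-algebra property---is exactly what is needed and makes explicit a point the paper leaves implicit.
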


\begin{proof} From the discussion preceding the proposition,  we have 
\begin{align*}\phi_a(x) = \pi(ax).  
\end{align*} Hence 
\begin{align*}
\phi_{u\cdot a}(x) &= \pi((u\cdot a)x) = \sum \pi((u_{1}\cdot a))(u_{(2)}S(u_{(3)})\cdot x)
\cr &=\sum u_{(1)}\cdot \pi(a (S(u_{(2)})\cdot x))=
(({\rm ad\ }u)\phi_a)(x)
\end{align*}
\end{proof}

\subsection{Orthogonality conditions}

Section \ref{section:gandr} asserts that, as an algebra,  $\mathscr{PD}_{\theta}$ is isomorphic to $\mathscr{PD}_q(\Mat_n)$  in the diagonal case.   Thus  the second assertion of the next result is a generalization of \cite{B},  Proposition 1 to include the other two families.

\begin{proposition}\label{prop:orthogonal} For each $r$ and $s$ with $r\neq s$, $\mathscr{D}^r_{\theta}$ is equal to the vector space dual of $\mathscr{P}^r_{\theta}$ and $\mathscr{D}^r_{\theta}$ is orthogonal to $\mathscr{P}_{\theta}^s$ with respect to the bilinear form defined by (\ref{inner}).  Moreover,    $\mathscr{P}_{\theta}$  is a faithful $\mathscr{PD}_{\theta}$-module with respect to the action defined above. 
\end{proposition}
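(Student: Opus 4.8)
The plan is to prove the two assertions in turn, the first (the duality and orthogonality) feeding into the second (faithfulness). The only genuinely hard point is the nondegeneracy of $\langle\cdot,\cdot\rangle$ in matching degree; everything else is formal bookkeeping with the degree filtration, Lemma~\ref{lemma:inv}, and Proposition~\ref{prop:locally-finite}.

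\textbf{Step 1 (degrees and orthogonality).} I would first record how $\mathscr{D}_\theta$ shifts degree under the action. Rewriting a product $dp$ (for $d\in\mathscr{D}_\theta$, $p\in\mathscr{P}_\theta$) in the normal-ordered form ``polynomials on the left, differentials on the right'' by repeated use of (\ref{twistingreln}) --- equivalently Theorem~\ref{theorem:relnsWeyl} --- every move either preserves the pair (degree in the $x$'s, degree in the $d$'s) or lowers each of the two by $1$; hence the summand of $dp$ lying in $\mathscr{P}_\theta$, namely the part of $d$-degree $0$, is concentrated in $x$-degree $\deg p-\deg d$. An easy induction on $\deg d$ turns this into $d\cdot p\in\mathscr{P}_\theta^{\,s-r}$ whenever $d\in\mathscr{D}_\theta^r$, $p\in\mathscr{P}_\theta^s$ (with $\mathscr{P}_\theta^{\,m}=0$ for $m<0$). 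Since $\langle d,p\rangle=(d\cdot p)_0$ extracts the scalar part, it vanishes as soon as $r\ne s$, giving the orthogonality assertion. Finally $\mathscr{P}_\theta$ has a finite PBW generating set and the antiautomorphism $x_{ij}\mapsto d_{ij}$ preserves degree, so $\dim\mathscr{P}_\theta^r=\dim\mathscr{D}_\theta^r<\infty$; hence once $\langle\cdot,\cdot\rangle\colon\mathscr{D}_\theta^r\times\mathscr{P}_\theta^r\to\mathbb{C}(q)$ is shown nondegenerate it realizes $\mathscr{D}_\theta^r$ as the full vector space dual of $\mathscr{P}_\theta^r$.

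\textbf{Step 2 (nondegeneracy in matching degree).} Here I would use the $U_q(\mathfrak g)$-invariance of $\langle\cdot,\cdot\rangle$ from Lemma~\ref{lemma:inv}, together with the decompositions $\mathscr{P}_\theta^r=\bigoplus_{|\lambda|=r}L(2\lambda)$ and $\mathscr{D}_\theta^r=\bigoplus_{|\lambda|=r}L^*(2\lambda)$ (valid since $U_q(\mathfrak g)$ acts by degree-preserving operators and $H_{2\lambda}\in\mathscr{P}_\theta^{|\lambda|}$). By Schur's lemma the Gram matrix of $\langle\cdot,\cdot\rangle$ on $\mathscr{D}_\theta^r\times\mathscr{P}_\theta^r$ is block diagonal relative to these isotypic decompositions, the block coupling $L^*(2\lambda)$ to $L(2\lambda)$ being a scalar $c_\lambda$ times the canonical perfect pairing between $L(2\lambda)$ and its dual; so nondegeneracy amounts to $c_\lambda\ne 0$ for all $\lambda\in\Lambda_\Sigma^+$. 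Since the lowest weight vector $H^*_{2\lambda}$ and the highest weight vector $H_{2\lambda}$ span dual one-dimensional weight spaces (their weights $-2\lambda$ and $2\lambda$ sum to zero), $c_\lambda\ne 0$ is equivalent to $\langle H^*_{2\lambda},H_{2\lambda}\rangle\ne 0$. I would prove this by induction on $|\lambda|$. If $\hat\eta_n$ does not occur in $\lambda$, then $H_{2\lambda},H^*_{2\lambda}$ lie in the rank-$(n-1)$ subalgebras and the pairing is computed there, reducing the rank. Otherwise write $H_{2\lambda}=H_nH_{2\lambda'}$, $H^*_{2\lambda}=H^*_{2\lambda'}H_n^*$ with $\lambda'=\lambda-\hat\eta_n$; because $H_n$ and $H_n^*$ span one-dimensional (trivial) $U_q(\mathfrak{sl}_N)$-modules whose $U^0$-weights $\pm 2\hat\eta_n$ are orthogonal to all simple roots of $\mathfrak g$, Proposition~\ref{prop:locally-finite} shows $\phi_{H_n^*}$ commutes with $\mathrm{ad}\,E_i$, $\mathrm{ad}\,F_i$ and is scaled by one character under the $K_{\epsilon_j}$, hence carries the highest weight vector $H_{2\lambda}$ to a highest weight vector of weight $2\lambda'$, necessarily $c\,H_{2\lambda'}$. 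Thus $\langle H^*_{2\lambda},H_{2\lambda}\rangle=\langle H^*_{2\lambda'},\,H_n^*\cdot(H_nH_{2\lambda'})\rangle=c\,\langle H^*_{2\lambda'},H_{2\lambda'}\rangle$, and induction finishes once $c\ne 0$. That last point --- applying the ``dual quantum determinant'' $H_n^*$ to $H_n$ times a highest weight vector returns a nonzero multiple of it --- is a $q$-analogue of the classical Cayley/Capelli determinant identity, checked from (\ref{twistingreln}) (from (\ref{reln:diag}) in the diagonal case, where $\mathscr{PD}_\theta\cong\mathscr{PD}_q(\Mat_n)$ and this is exactly \cite{B}, Proposition~1). \emph{This nonvanishing is the main obstacle.}

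\textbf{Step 3 (faithfulness).} Suppose $a\in\mathscr{PD}_\theta$ acts as zero on $\mathscr{P}_\theta$. Using the multiplication isomorphism $\mathscr{P}_\theta\otimes\mathscr{D}_\theta\to\mathscr{PD}_\theta$, fix a homogeneous basis $\{d_\beta\}$ of $\mathscr{D}_\theta$ and write $a=\sum_\beta p_\beta d_\beta$ with $p_\beta\in\mathscr{P}_\theta$; since $\mathcal L$ is a left ideal and $\mathscr{P}_\theta$ a subalgebra, $\phi_{p_\beta d_\beta}(p')=p_\beta\,(d_\beta\cdot p')$, so $0=\phi_a(p')=\sum_\beta p_\beta\,(d_\beta\cdot p')$ for every $p'\in\mathscr{P}_\theta$. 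If some $p_\beta\ne 0$, choose $\beta_0$ with $p_{\beta_0}\ne 0$ and $\deg d_{\beta_0}=:r_0$ minimal among such indices, and evaluate at $p'\in\mathscr{P}_\theta^{r_0}$: terms with $\deg d_\beta>r_0$ vanish because $d_\beta\cdot p'\in\mathscr{P}_\theta^{\,r_0-\deg d_\beta}=0$, and terms with $\deg d_\beta<r_0$ have $p_\beta=0$ by minimality, leaving $\sum_{\deg d_\beta=r_0}\langle d_\beta,p'\rangle\,p_\beta=0$ for all $p'\in\mathscr{P}_\theta^{r_0}$. The $d_\beta$ with $\deg d_\beta=r_0$ form a basis of $\mathscr{D}_\theta^{r_0}$, so by the nondegeneracy of Steps~1--2 there is $p'\in\mathscr{P}_\theta^{r_0}$ dual to $d_{\beta_0}$, forcing $p_{\beta_0}=0$, a contradiction. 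Hence $a=0$, so $\mathscr{P}_\theta$ is a faithful $\mathscr{PD}_\theta$-module.
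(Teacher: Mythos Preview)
Your Steps~1 and~3 are correct and match the paper's reasoning (the paper compresses Step~3 to the single remark that faithfulness ``is an immediate consequence of the main assertion''; your expansion of this is fine). The divergence is entirely in Step~2, the nondegeneracy.

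The paper does \emph{not} argue via $U_q(\mathfrak g)$-invariance and highest weight vectors. Instead it gives a direct combinatorial proof using the PBW bases of Lemma~\ref{lemma:PBW} and the triangularity in Theorem~\ref{theorem:relnsWeyl}. One checks that for the lexicographically ordered PBW monomial $x_{e_1,f_1}^{m_1}\cdots x_{e_r,f_r}^{m_r}$ and for $(a,b)\geq(e_1,f_1)$,
\[
d_{ab}\,x_{e_1,f_1}^{m_1}\cdots x_{e_r,f_r}^{m_r}\in c\,\delta_{a,e_1}\delta_{b,f_1}\,x_{e_1,f_1}^{m_1-1}\cdots x_{e_r,f_r}^{m_r}+\mathscr{P}_\theta^{m-1}+\sum_{(a',b')\geq(a,b)}\mathscr{P}_\theta^m d_{a'b'}
\]
with $c\neq 0$; iterating, the pairing of the reversed $d$-monomial $d_{a_k,b_k}^{s_k}\cdots d_{a_1,b_1}^{s_1}$ against $x_{e_1,f_1}^{m_1}\cdots x_{e_r,f_r}^{m_r}$ is nonzero iff the two index sequences coincide. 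This exhibits the Gram matrix as triangular with nonzero diagonal, so it is nondegenerate. The argument is uniform across all three families and uses nothing beyond the PBW basis and the shape of the twisting relations.

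Your Step~2, by contrast, reduces everything to the single assertion that $H_n^*\cdot(H_nH_{2\lambda'})=c\,H_{2\lambda'}$ with $c\neq 0$, and you explicitly flag this as ``the main obstacle'' without proving it. In the diagonal case you can indeed invoke \cite{B}, but for Types~AI and~AII this is new content: verifying it ``from~(\ref{twistingreln})'' amounts to a quantum Capelli-type identity that is at least as hard as the PBW computation the paper actually does, and you have not carried it out. There is also a second, smaller gap: your rank reduction presumes that $\mathscr{PD}_\theta(\mathfrak g_{n-1})$ embeds in $\mathscr{PD}_\theta(\mathfrak g_n)$ compatibly with the action on $\mathscr{P}_\theta$, but the paper only establishes this for $\mathscr{P}_\theta$ alone (Lemma~\ref{lemma:mappings}), not for the full Weyl algebra or its left ideal $\mathcal L$. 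So as written, Step~2 is incomplete, and the paper's PBW argument is the cleaner route.
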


\begin{proof}  
Note that  the relation between the bilinear form  and the action ensures that the ``moreover" part of the proposition is an immediate consequence of the main assertion.  Hence, we focus on proving the duality result. 

By Theorem \ref{theorem:relnsWeyl}  \begin{align*} 
\mathscr{D}^{r+s}_{\theta}\mathscr{P}^s_{\theta} \subseteq 
\sum_{s\geq k\geq 0}\mathscr{P}^k_{\theta}\mathscr{D}^{r+k}_{\theta}{\rm \ and \ }\mathscr{D}^r_{\theta}\mathscr{P}^{r+s}_{\theta}\subseteq 
 \sum_{s\geq k\geq 0}\mathscr{P}^{r+k}_{\theta}\mathscr{D}^k_{\theta}
 \end{align*} 
for all $r\geq 0$ and $s\geq 0$.  It follows that   $(d\cdot p)_0 = 0$ for $d\in \mathscr{D}^{r}_{\theta}, p\in \mathscr{P}^{s}_{\theta} $ with $r\neq s$.

By Lemma \ref{lemma:PBW}, the set of monomials of the form 
\begin{align*}x_{e_1,f_1}^{m_1}x_{e_2,f_2}^{m_2}\cdots x_{e_r,f_r}^{m_r}
\end{align*} form a PBW basis for $\mathscr{P}_{\theta}^m$ where $m=m_1+\cdots + m_r$ and  $(e_1,f_1)>(e_2,f_2)>\dots >(e_r,f_r)$  where here $``>"$ is the standard lexicographic ordering (from left to right)..  As explained preceding Lemma \ref{lemma:PBW}, the analogous result holds for $\mathscr{D}_{\theta}$ with 
each $x_{ij}$ replaced by $d_{ij}$. Moreover, by Lemma \ref{lemma:PBW}, we can switch the ordering of the subscripts and still get a PBW basis.  We do this below for $\mathscr{D}_{\theta}$.
Consider a sequence of ordered pairs $(e_1,f_1),\dots, (e_r,f_r)$ satisfying 
\begin{align*}
(a,b)\geq (e_1,f_1)>(e_2,f_2)>\dots >(e_r,f_r).
\end{align*} It follows from Theorem \ref{theorem:relnsWeyl} that
\begin{align*}
d_{ab}x_{e_1,f_1}^{m_1}x_{e_2,f_2}^{m_2}\cdots x_{e_r,f_r}^{m_r}\in c\ \delta_{a,e_1}\delta_{b,f_1}x_{e_1.f_1}^{m_1-1}x_{e_2,f_2}^{m_2}\cdots x_{e_r,f_r}^{m_r}+ 
\mathscr{P}_{\theta}^{m-1}+\sum_{(a',b')\geq (a,b)}  \mathscr{P}_{\theta}^{m}d_{a'b'}
\end{align*}
where $m=m_1+\cdots +m_r$ and $c$ is a nonzero scalar.
Hence given $h\in \mathscr{D}_{\theta}^{m-1}$, if
\begin{align*}
(hd_{ab}\cdot x_{e_1,f_1}^{m_1}x_{e_2,f_2}^{m_2}\cdots x_{e_r,f_r}^{m_r})_0 \neq 0
\end{align*}
then $(a,b)= (e_1,f_1)$.  Using induction, we obtain
\begin{align*}(d_{a_k,b_k}^{s_k}d_{a_{k-1},b_{k-1}}^{s_{k-1}}\cdots d_{a_1,b_1}^{s_1}\cdot x_{e_1,f_1}^{m_1}x_{e_2,f_2}^{m_2}\cdots x_{e_r,f_r}^{m_r})_0 \neq 0
\end{align*}
with  \begin{align*}(a_k,b_k)<(a_{k-1}, b_{k-1})<\cdots < (a_1,b_1)\end{align*}
if and only if $r=k, s_i=m_i, e_i=a_i, $ and $f_i = b_i$ for $i=1, \dots, r$.  This proves the desired duality result.
\end{proof}

\subsection{Action on highest weight terms}  
By Proposition \ref{prop:weights},  the highest weight vector $H_{2\mu}, \mu = \sum_im_i\hat{\eta}_i$ is homogeneous of degree 
$\sum_rm_rr$.  Note that this degree is just the size of $\mu$ viewed as a partition.  In other words, $\sum_rm_rr = \sum_r\mu_r=|\mu|$ where $\mu$ is expressed as $\sum_i\mu_i\epsilon^{\Sigma}_i$, a linear combination in terms of the orthonormal basis in the restricted root setting.  Moreover, the action of $U_q(\mathfrak{g})$ preserves the degree.  Thus for  each $\mu \in \Lambda_{\Sigma}^+$,  the  module $U_q(\mathfrak{g})\cdot H_{2\mu}$ sits inside the 
homogeneous component $\mathscr{P}_{\theta}^m$ of degree $m=|\mu|$. Recall the definition of the augmentation ideals $U^+_+$ and $U^-_+$ given in Section \ref{section:qea}.

\begin{proposition}\label{prop:mugamma} For all $\mu$ and $\gamma$ in $\Lambda_{\Sigma}^+$  with $\mu\neq \gamma$, the space  $U_q(\mathfrak{g})\cdot H_{2\mu}$ is equal to the  $U_q(\mathfrak{g})$-module dual of 
$ U_q(\mathfrak{g})\cdot H^*_{2\mu}$ and orthogonal to $U_q(\mathfrak{g})\cdot H^*_{2\gamma}$ with respect to the bilinear form defined by (\ref{inner}).  Moreover, 
$$\langle H_{2\mu}^*, H_{2\mu}\rangle \neq 0$$ whereas $$\langle E\cdot H_{2\mu}^*, H_{2\mu}\rangle =0=\langle H_{2\mu}^*, F\cdot H_{2\mu}\rangle$$ for 
all $E\in U^+_+$ and $F\in U^-_+$.
\end{proposition}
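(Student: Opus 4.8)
The plan is to reduce everything to the homogeneous-component statement already proved in Proposition \ref{prop:orthogonal}, using the fact that each $H_{2\mu}$ is homogeneous of degree $|\mu|$ and generates a simple left $U_q(\mathfrak g)$-module. First I would observe that $U_q(\mathfrak g)\cdot H_{2\mu}\subseteq \mathscr P_\theta^{|\mu|}$ and $U_q(\mathfrak g)\cdot H^*_{2\gamma}\subseteq \mathscr D_\theta^{|\gamma|}$, so the orthogonality of $U_q(\mathfrak g)\cdot H^*_{2\gamma}$ to $U_q(\mathfrak g)\cdot H_{2\mu}$ is automatic from Proposition \ref{prop:orthogonal} whenever $|\mu|\neq|\gamma|$. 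The remaining case is $|\mu|=|\gamma|$ with $\mu\neq\gamma$; here both modules sit in the same $\mathscr P_\theta^m$ (resp. $\mathscr D_\theta^m$), and I would use Lemma \ref{lemma:inv}: since $\langle\cdot,\cdot\rangle$ is $U_q(\mathfrak g)$-invariant and the pairing $\mathscr D_\theta^m\times\mathscr P_\theta^m\to\mathbb C(q)$ is nondegenerate (again Proposition \ref{prop:orthogonal}), it induces a perfect $U_q(\mathfrak g)$-module pairing between $\mathscr D_\theta^m$ and the full module $\mathscr P_\theta^m$. By the multiplicity-free decomposition $\mathscr P_\theta\cong\bigoplus_{\lambda}L(2\lambda)$ (Theorem of Section \ref{subsection:explicit-module}) together with the dual decomposition of $\mathscr D_\theta$, Schur's lemma forces the $U_q(\mathfrak g)$-submodule $U_q(\mathfrak g)\cdot H^*_{2\gamma}\cong L^*(2\gamma)$ to pair trivially with $U_q(\mathfrak g)\cdot H_{2\mu}\cong L(2\mu)$ once $\mu\neq\gamma$, and to pair by a perfect (hence nonzero) pairing when $\mu=\gamma$, establishing that $U_q(\mathfrak g)\cdot H_{2\mu}$ is the $U_q(\mathfrak g)$-module dual of $U_q(\mathfrak g)\cdot H^*_{2\mu}$.

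For the nonvanishing $\langle H^*_{2\mu},H_{2\mu}\rangle\neq 0$, the perfect-pairing argument above shows the pairing between $L^*(2\mu)$ and $L(2\mu)$ is nonzero, but I still need that it does not vanish on the specific pair of extremal vectors $(H^*_{2\mu},H_{2\mu})$. For this I would use the weight bookkeeping: by Lemma \ref{lemma:inv}, for any weight vectors $d\in\mathscr D_\theta$ of weight $\nu$ and $p\in\mathscr P_\theta$ of weight $\xi$, applying the invariance identity to $u=K_\beta$ gives $q^{(\beta,\nu+\xi)}\langle d,p\rangle=\langle d,p\rangle$, so $\langle d,p\rangle=0$ unless $\nu+\xi=0$. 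Since $H_{2\mu}$ has weight $2\mu$ and $H^*_{2\mu}$ has weight $-2\mu$, this is the unique surviving weight-compatible pair inside the copies of $L(2\mu)$ and $L^*(2\mu)$ (the highest weight space of $L(2\mu)$ and the lowest weight space of $L^*(2\mu)$ are both one-dimensional); hence the perfect pairing, restricted to $U_q(\mathfrak g)\cdot H^*_{2\mu}\times U_q(\mathfrak g)\cdot H_{2\mu}$, must already be nonzero on $(H^*_{2\mu},H_{2\mu})$, since otherwise it would vanish on the whole extremal-weight contributions and, by $U_q(\mathfrak g)$-invariance propagated via the action of lowering/raising operators, on everything.

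Finally, for $\langle E\cdot H^*_{2\mu},H_{2\mu}\rangle=0$ with $E\in U^+_+$ and the symmetric statement for $F\in U^-_+$, I would argue by weights again, reducing to $E=E_{i_1}\cdots E_{i_k}$ a monomial of positive length (since $U^+_+$ is spanned by such, as elements of $U^+_+$ have zero counit). Then $E\cdot H^*_{2\mu}$ is a sum of weight vectors of weight $-2\mu+\sum_j\alpha_{i_j}$, which is strictly larger than $-2\mu$, so $E\cdot H^*_{2\mu}+$ (weight of $H_{2\mu}$) $=\sum_j\alpha_{i_j}\neq 0$, and the weight criterion from Lemma \ref{lemma:inv} kills the pairing. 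The symmetric argument handles $F\in U^-_+$. The main obstacle I anticipate is the middle step: pinning down that the abstract perfect pairing between the isotypic copies of $L(2\mu)$ and $L^*(2\mu)$ does not happen to vanish on the chosen extremal vectors — this is where one genuinely needs the weight argument rather than soft representation theory, and care is needed to ensure the one-dimensionality of the relevant extremal weight spaces in $\mathscr P_\theta$ and $\mathscr D_\theta$, which follows from the multiplicity-freeness in \eqref{Ptheta-decomp} and Proposition \ref{prop:weights}.
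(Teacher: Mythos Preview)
Your proposal is correct and follows essentially the same approach as the paper: both reduce to Proposition~\ref{prop:orthogonal} for the nondegenerate pairing on homogeneous components, invoke Lemma~\ref{lemma:inv} to upgrade this to a $U_q(\mathfrak g)$-module duality, and then use the multiplicity-free decomposition of $\mathscr P_\theta$ and $\mathscr D_\theta$ to isolate the simple summands. The only cosmetic difference is in how invariance is applied: the paper proves $\langle E\cdot H^*_{2\mu},H_{2\mu}\rangle=0$ by directly using that $H_{2\mu}$ is a highest weight vector together with the invariance identity, whereas you extract a weight-compatibility criterion from invariance applied to $K_\beta$; these are the same argument. Your ``obstacle'' about the nonvanishing on extremal vectors is not a real difficulty: the paper handles it via the decomposition $U_q(\mathfrak g)\cdot H^*_{2\mu}=\mathbb C(q)H^*_{2\mu}\oplus U^+_+\cdot H^*_{2\mu}$ combined with the vanishing just established, which is exactly your weight argument once you observe the $-2\mu$ weight space is one-dimensional --- no propagation via raising/lowering operators is needed.
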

\begin{proof} Let $\mu\in \Lambda_{\Sigma}^+$ and set $m=|\mu|$.  Since the left action of $U_q(\mathfrak{g})$ preserves degree, both $\mathscr{P}_{\theta}^m$ and $\mathscr{D}_{\theta}^m$ are 
left $U_q(\mathfrak{g})$-modules.  By Lemma \ref{lemma:inv}, the dualities of vector spaces in Proposition \ref{prop:orthogonal} are actually dualities  of left $U_q(\mathfrak{g})$-modules. 

Note that  there is only one way to express a weight $\mu\in \Lambda^+_{\Sigma}$ as a linear combination of the 
$\hat{\eta}_r$. This means that $U_q(\mathfrak{g})\cdot H_{2\mu}$ is the unique simple module with highest  weight $2\mu$ inside the 
decomposition of $\mathscr{P}_{\theta}$, and thus inside of $\mathscr{P}^m_{\theta}$.  Similarly, $U_q(\mathfrak{g})\cdot H^*_{2\mu}$ is the unique simple module with lowest  weight $-2\mu$ inside the 
decomposition of $\mathscr{D}_{\theta}$, and thus inside of $\mathscr{D}^m_{\theta}$. 
Hence, by the previous paragraph, $U_q(\mathfrak{g})\cdot H_{2\mu}$ is equal to the  $U_q(\mathfrak{g})$-module dual of 
$ U_q(\mathfrak{g})\cdot H_{2\mu}^*$ with respect to bilinear form defined by (\ref{inner}).

Since $H_{2\mu}^*$ is a lowest weight generating vector for $U_q(\mathfrak{g})\cdot H_{2\mu}^*$, it follows that 
\begin{align*}
U_q(\mathfrak{g})\cdot H_{2\mu}^*=H_{2\mu}^* \oplus U^+_+\cdot H_{2\mu}^*.
\end{align*}
The fact that $ H_{2\mu}$ is a highest weight vector combined with the $U_q(\mathfrak{g})$ invariance of the bilinear form $\langle\cdot, \cdot \rangle$ (Lemma \ref{lemma:inv}) ensures that 
$ H_{2\mu}$ is perpendicular to  $U^+_+\cdot \mathscr{D}_{\theta}$.  The argument showing $H_{2\mu}^*$ is perpendicular to $U_+^- \cdot \mathscr{P}_{\theta}$ follows in a similar fashion. This completes the proof of the proposition.
\end{proof}

Note that by the above proposition,  the pairing $\langle H_{2\mu}^*, H_{2\mu}\rangle$ is nonzero.  It follows that  the projection, $\pi(H_{2\mu}^*H_{2\mu})$, which can be viewed as the action of $H_{2\mu}^*\in \mathscr{PD}_{\theta}$ on $H_{2\mu}\in \mathscr{P}_{\theta}$, is nonzero. In the next result, we obtain more information for when such a pairing  and related projections are possibly nonzero.

\begin{lemma} \label{lemma:mugamma} Given $\mu$ and $\gamma$ in $\Lambda_{\Sigma}^+$ such that $|\gamma|\geq |\mu|$ and $\mu\neq \gamma$, we have 
\begin{align*}\pi((U_q(\mathfrak{g})\cdot H_{2\gamma}^* )(U_q(\mathfrak{g})\cdot H_{2\mu}))=0.\end{align*}
\end{lemma}
\begin{proof} Note that for $\mu=\sum_im_i\hat{\eta}_i$, all elements of $U_q(\mathfrak{g})\cdot H_{2\mu}$ are homogeneous elements of 
$\mathscr{P}_{\theta}$ of degree $\sum_iim_i$.  The analogous assertion holds for $U_q(\mathfrak{g})\cdot H_{2\gamma}$ with $\mu$ replaced by $\gamma$.  We argue that 
\begin{align}\label{plain-eqn}
(U_q(\mathfrak{g})\cdot H_{2\gamma}^* )(U_q(\mathfrak{g})\cdot H_{2\mu})\in \mathcal{L}
\end{align}
  if $|\gamma|>|\mu|$.
 It follows from the defining relations of $\mathscr{PD}_{\theta}$ that when we move the $d_{ij}$ terms to the right past
the $x_{ij}$ terms in the expression of the left hand side of (\ref{plain-eqn}) we end up with an expression of the form 
$ \sum_ju_jv_j
$
where each $v_j$ is in $\mathscr{D}^{r_j}_{\theta}$ and each  $u_j$ is  in $\mathscr{P}^{w_j}_{\theta}$ and $r_j-w_j=|\gamma|-|\mu|$.  In other words, the relations cancel 
out the same number of $x_{ij}$ and $d_{ij}$ terms.
 If $|\gamma|$ is strictly greater than $|\mu|$, then each $v_j$ has degree at least $1$ and so 
each $u_jv_j$ is  in $\sum_{i,j}\mathscr{PD}_{\theta}d_{ij} = \mathcal{L}$.  This proves (\ref{plain-eqn}) for $|\gamma|>|\mu|$.

Now assume that  $|\gamma|=|\mu|$ but $\gamma\neq \mu$.  The same argument as in the previous paragraph yields an expression for $
(U_q(\mathfrak{g})\cdot H_{2\gamma}^* )(U_q(\mathfrak{g})\cdot H_{2\mu})$ of the form $\sum_ju_jv_j$ with each $v_j$ is in $\mathscr{D}^{r_j}_{\theta}$, each  $u_j$ is  in $\mathscr{P}^{w_j}_{\theta}$ and $r_j-w_j=0$. 
  It follows that $\pi((U_q(\mathfrak{g})\cdot H_{2\gamma}^* )(U_q(\mathfrak{g})\cdot H_{2\mu}))\in \mathbb{C}(q)$. By Proposition \ref{prop:mugamma}, the two irreducible $U_q(\mathfrak{g})$ modules $U_q(\mathfrak{g})\cdot H_{2\gamma}^* $  and $U_q(\mathfrak{g})\cdot H_{2\mu}$ with $\gamma\neq \mu$ are not dual to each other. It follows that $(U_q(\mathfrak{g})\cdot H_{2\gamma}^* )(U_q(\mathfrak{g})\cdot H_{2\mu})$ does not contain a copy of the trivial representation and hence its image under $\pi$ vanishes.  The lemma now follows.
  \end{proof}

 \section{Action of Cartan  elements}\label{section:Action}

\subsection{Special Cartan elements}
We turn our attention to understanding the action of the various elements of the Cartan subalgebra  on the generators of $\mathscr{P}_{\theta}$ and then identifying them with elements of the appropriate quantum Weyl algebra.

\begin{lemma} \label{lemma:epsilon} Let $N=n$ in Type AI, $N=2n$ in Type AII and $N\in\{n,2n\}$ in the diagonal type. 
 The element   $(K_{2\epsilon_N}-1)/(q^2-1)$ 
acts the same on $\mathscr{P}_{\theta}$ as the element $X$ in $\mathscr{PD}_{\theta}$ where 
\begin{align*} X=
\left\{\begin{matrix}&(q^3+q) x_{nn}d_{nn} + \sum_{a=1}^{n-1} x_{an}d_{an}{\rm\  in\ Type\ AI}\cr\cr
&(\sum_{a=1}^{2n-1} x_{a,2n}d_{a,2n}){\rm \ in \ Type\ AII}\cr\cr
&\sum_{a=1}^nx_{n,a+n}d_{n,a+n} {\rm \ in \  diagonal\  type\ } (N=n)
\cr\cr&\sum_{a=1}^nx_{a,2n}d_{a,2n} {\rm \ in \  diagonal\  type\ } (N=2n)
\end{matrix}
\right.
\end{align*}
In other words, $((K_{2\epsilon_N}-1)/(q^2-1))\cdot a =\pi(Xa)$ for all $a\in \mathscr{PD}_{\theta}$ where $X$ is given by the above formula depending on type.
\end{lemma}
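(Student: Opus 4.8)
The plan is to verify the identity directly on the PBW basis of $\mathscr{P}_\theta$ from Lemma \ref{lemma:PBW}, using that both $K_{2\epsilon_N}-1$ and $X$ act as derivation-like operators that are determined by their action on degree-one elements (i.e. the generators $x_{ij}$) together with a compatibility with the algebra structure. More precisely, $K_{2\epsilon_N} = K_{\epsilon_N}^2$ acts on $\mathscr{P}_\theta$ by the grading-type formula (\ref{Kaction}): $K_{\epsilon_N}\cdot x_{ij} = q^{\delta_{iN}+\delta_{jN}}x_{ij}$, and since $K_{\epsilon_N}$ is group-like it acts as an algebra automorphism. So on a monomial $x_{i_1j_1}\cdots x_{i_rj_r}$ the operator $K_{2\epsilon_N}-1$ multiplies by $q^{2d}-1$ where $d$ is the total number of indices among $i_1,j_1,\dots,i_r,j_r$ equal to $N$ (counted with multiplicity, and with the caveat that for Type AI the index $n$ can appear twice in a single $x_{nn}$). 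The first step is to record this explicitly.

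Next I would compute the action of $X = \phi_X$ on the same monomials using the definition of the $\mathscr{PD}_\theta$-action from Section \ref{section:action} (projection along $\mathcal{L}$) together with the commutation relations of Lemma \ref{lemma:formulas}. The key point is that each summand $x_{aN}d_{aN}$ (or $x_{nn}d_{nn}$, or $x_{n,a+n}d_{n,a+n}$ in the diagonal $N=2n$ case) acts as a "weighted number operator": one commutes the single $d_{aN}$ to the right past the monomial using Lemma \ref{lemma:formulas}(i)--(iii) (or Lemma \ref{lemma:formulas}(ii) for the $x_{nn}d_{nn}$ term, or relation (\ref{reln:diag}) in the diagonal case), discarding all terms that end in some $d_{a'b'}$ since those lie in $\mathcal{L}$ and kill the vacuum. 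What survives is a sum of the constant terms $\delta_{a,e}$ produced each time $d_{aN}$ passes an $x_{eN}$, multiplied by the accumulated $q$-powers, leaving behind the monomial with one factor $x_{aN}$ removed; then the leftover $x_{aN}$ puts it back, so each such term contributes a scalar times the original monomial. Summing over $a$ (and over the positions of $x_{aN}$ in the monomial) should reproduce exactly $q^{2d}-1$ after using the telescoping identity $(q^2-1)(q^{2k}+q^{2k-2}+\cdots+q^2) = q^{2k+2}-q^2$ and bookkeeping the extra factor $(q^3+q)=q(q^2+1)$ attached to $x_{nn}d_{nn}$ in Type AI (which accounts for the double-counting of the index $n$ in $x_{nn}$, since $\deg$-wise $x_{nn}$ carries weight $q^2$ under $K_{\epsilon_n}$, i.e. two copies of the index $n$).

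The cleanest way to organize the bookkeeping is induction on the degree $r$ of the monomial, peeling off the leftmost generator $x_{i_1j_1}$: write $x_{i_1j_1}M$ and use that $\phi_X(x_{i_1j_1}M) = $ (contribution of $X$ hitting $x_{i_1j_1}$ directly) $+$ $q^{(\text{shift})}x_{i_1j_1}\phi_{X'}(M)$ where $X'$ is the conjugate of $X$ by the relevant $K$; the cross terms coming from the twisting relations (\ref{twistingreln}) produce the constant $\delta$-terms that, after the $x$-factor is reinstated, give precisely the "$x_{i_1j_1}$ index equals $N$" contributions. One then checks the base case $r=1$ (the four stated formulas for $X$ acting on a single $x_{ij}$, which is a short direct computation from Lemma \ref{lemma:formulas}) and the inductive step matches the automorphism formula for $K_{2\epsilon_N}-1$. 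I expect the main obstacle to be the Type AI case: there the index $n$ genuinely appears with multiplicity — $x_{nn}$ carries "two $n$'s", $x_{an}$ carries "one $n$" — and the relations in Lemma \ref{lemma:formulas}(i) contain the extra correction sum $-\delta_{ae}\sum_{a'>a}q^{2+\delta_{a'n}}(q^{-2}-1)x_{a'n}d_{a'n}$, so when $d_{an}$ passes multiple copies of $x_{en}$ one must carefully track both the direct $\delta_{ae}$ constant terms and these secondary $x_{a'n}d_{a'n}$ terms (which themselves land in $\mathcal{L}$ and hence die, but only after one confirms they are genuinely to the right). Verifying that all these contributions assemble into the single clean scalar $q^{2d}-1$, with the coefficient $(q^3+q)$ on the $x_{nn}d_{nn}$ summand being exactly what is forced, is the delicate part; Types AII and the diagonal case are structurally simpler since there $x_{ij}d_{ij}$-type generators involve an index ($2n$, resp. $n$) that appears at most once per generator, so the number-operator interpretation is immediate and the telescoping is transparent.
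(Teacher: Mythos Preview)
Your proposal is correct and takes essentially the same approach as the paper: both verify the identity on the PBW basis of Lemma~\ref{lemma:PBW} by commuting the $d$-terms to the right using the relations of Lemma~\ref{lemma:formulas} (and (\ref{reln:diag}) in the diagonal case), tracking the constant $\delta$-terms that survive and showing the secondary terms land in $\mathcal{L}$, with the telescoping producing the scalar $(q^{2d}-1)/(q^2-1)$. The only organizational difference is that the paper processes the PBW monomial in the specific order $x_{nn}^{s_n}x_{n-1,n}^{s_{n-1}}\cdots x_{1n}^{s_1}\cdot(\text{factors with no index }N)$ rather than framing it as an abstract induction on degree; this ordering is what guarantees that the secondary terms $x_{a'n}d_{a'n}$ with $a'>e$ generated when passing $x_{en}$ commute cleanly past all later factors (which have first index $<e$ or second index $<n$) and hence genuinely end in $\mathcal{L}$---exactly the point you flagged as needing confirmation.
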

\begin{proof} 
The action of $K_{2\epsilon_N}$ on $\mathscr{P}_{\theta} $ is given on a basis for the degree $1$ space, $\mathcal{J}_1(\mathscr{P}_{\theta})$, by the formula in (\ref{Kaction}), namely,
$K_{2\epsilon_N}\cdot x_{ij} = q^{2\delta_{iN} + 2\delta_{jN}}x_{ij}$  for all valid choices of $i,j$ in Types AI and AII and 
$K_{2\epsilon_N}\cdot x_{i,j+N} = q^{2\delta_{iN} + 2\delta_{jN}}x_{i,j+N}$ in the diagonal setting. By Section 3.2,  (see also \cite{LSS}, Section 5.2) the value for $N$ is given by \begin{itemize}
\item $i,j=1,\dots,N$, $N=n$ and $i\leq j$ in Type AI
\item $i,j=1,\dots, 2n$ , $N=2n$, and  $ i<j$ in Type AII, 
\item $i,j=1,\dots, N$,  $N=n$ in the diagonal case.
\end{itemize}
Note that in the diagonal case, we are taking advantage of the fact that $x_{i,j+n}=x_{i+n,j}$ for all $i=1,\dots, n, j=1,\dots, n$ (see \cite{LSS}, Section 5.2).
For all three types, we use the description of the monomials that form a basis for $\mathscr{P}_{\theta}$ in Lemma \ref{lemma:PBW} using the reverse order described at the end of the lemma.
 
 We start with the Type AI case.  By Lemma \ref{lemma:PBW}, we can express a basis for $\mathscr{P}_{\theta}$ by considering all terms of the form
\begin{align*} x_{nn}^{s_n}x_{n-1,n}^{s_{n-1}}\cdots x_{1,n}^{s_1}x_{e_1,f_1}\cdots x_{e_b,f_b}
\end{align*}
where  $s_i\in \mathbb{N}$ for all $i=1, \dots, n$, and $e_j,f_j\in \mathbb{N}$  with $e_j\leq f_j<n$ for all  $j=1,\dots, b$.  Applying $(K_{2\epsilon_N}-1)/(q^2-1)$ to a typical basis element yields
\begin{align}\label{KKact}{{(K_{2\epsilon_N}-1)}\over{(q^2-1)}}&\cdot (x_{nn}^{s_n}x_{n-1,n}^{s_{n-1}}\cdots x_{1,n}^{s_1}x_{e_1,f_1}\cdots x_{e_b,f_b})\cr&={{(q^{4s_n+2s_{n-1}+\cdots +2s_1}-1)}\over{(q^2-1)}}(x_{nn}^{s_n}x_{n-1,n}^{s_{n-1}}\cdots x_{1,n}^{s_1}x_{e_1,f_1}\cdots x_{e_b,f_b})
\end{align}
Now let's see what happens when we consider the projection $\pi(Xx_{nn}^{s_n}x_{n-1,n}^{s_{n-1}}\cdots x_{1,n}^{s_1}x_{e_1,f_1}\cdots x_{e_b,f_b})$ where $X=(q^3+q) x_{nn}d_{nn} + \sum_{a=1}^{n-1} x_{an}d_{an}$. We proceed by evaluating each summand of $X$ and its action on $x_{nn}^{s_n}x_{n-1,n}^{s_{n-1}}\cdots x_{1,n}^{s_1}x_{e_1,f_1}\cdots x_{e_b,f_b}$.
The following two formulas are special cases of Lemma \ref{lemma:formulas} (ii):
\begin{align}\label{dd2}d_{nn}x_{nn} =q^4 x_{nn}d_{nn} + q^{-1}
\end{align}
and 
\begin{align}\label{dd3} d_{nn}x_{ef} =q^{2\delta_{fn}}x_{ef}d_{nn}
\end{align} for $e\leq f$ and $e<n$.

Starting with the first summand of $X$  applied to the first  term of the basis gives us \begin{align*}(q^3+q) x_{nn}d_{nn}x_{nn}=(q^3+q) x_{nn}(d_{nn}x_{nn})=q^4x_{nn}^2d_{nn}+q^{-1}x_{nn}
\end{align*}
where here we have used (\ref{dd2}).  Similarly, with another application of (\ref{dd2}), we have
\begin{align*}(q^3+q) x_{nn}d_{nn}x_{nn}^2&=(q^3+q) x_{nn}d_{nn}x_{nn}^2=(q^3+q) ((q^4x_{nn}^2d_{nn})x_{nn}+q^{-1}x_{nn}^2)\cr&=
(q^3+q) (q^4x_{nn}^2(d_{nn}x_{nn})+q^{-1}x_{nn}^2)\cr&=(q^3+q) (q^8x_{nn}^3d_{nn} +q^{-1}(q^4+1)x_{nn}^2).
\end{align*} By induction, repeatedly using (\ref{dd2}), we have
\begin{align*}(q^3+q)x_{nn}d_{nn}x_{nn}^{s_n}
&=(q^3+q)\left(q^{4s_n}x_{nn}x_{nn}^{s_n}d_{nn}+q^{-1}(q^{4(s_{n}-1)}+\cdots +1)x_{nn}^{s_n}\right)\cr&
=(q^3+q)\left(q^{4s_n}x_{nn}x_{nn}^{s_n}d_{nn}+q^{-1}{{(q^{4s_n}-1)}\over{(q^{4}-1)}}x_{nn}^{s_n}\right)
\end{align*}
Note that $(q^3+q)q^{-1}=q^2+1$ and so 
\begin{align*}(q^3+q)q^{-1}{{(q^{4s_n}-1)}\over{(q^{4}-1)}}={{(q^{4s_n}-1)}\over{(q^{2}-1)}}
\end{align*}
Hence 
\begin{align*}
(q^3+q)x_{nn}d_{nn}x_{nn}^{s_n}
&=(q^3+q)q^{4s_n}x_{nn}x_{nn}^{s_n}d_{nn}+{{(q^{4s_n}-1)}\over{(q^{2}-1)}}x_{nn}^{s_n}.
\end{align*}

Now consider $(q^3+q) x_{nn}d_{nn}$ applied to a basis term for $\mathscr{P}_{\theta}$ of the form 
$x_{nn}^{s_n}x_{n-1,n}^{s_{n-1}}\cdots x_{1,n}^{s_1}x_{e_1,f_1}\cdots x_{e_b,f_b}$ where each $e_j\leq f_j<n$. By (\ref{dd3}), 
\begin{align*}d_{nn}x_{n-1,n}^{s_{n-1}}\cdots x_{1,n}^{s_1}x_{e_1,f_1}\cdots x_{e_b,f_b} = q^{(2s_{n-1}+\cdots +2 s_1)} x_{n-1,n}^{s_{n-1}}\cdots x_{1,n}^{s_1}x_{e_1,f_1}\cdots x_{e_b,f_b}d_{nn}.
\end{align*}
This is an element of $\mathscr{P}_{\theta}d_{nn}$ which is  a subset of $\mathcal{L}$.  Hence 
\begin{align*}
((q^3+q) &x_{nn}d_{nn})(x_{nn}^{s_n}x_{n-1,n}^{s_{n-1}}\cdots x_{1,n}^{s_1}x_{e_1,f_1}\cdots x_{e_b,f_b}) \cr&= {{(q^{4s_n}-1)}\over{(q^{2}-1)}}x_{nn}^{s_n}x_{n-1,n}^{s_{n-1}}\cdots x_{1,n}^{s_1}x_{e_1,f_1}\cdots x_{e_b,f_b} +\mathcal{L}.
\end{align*}

We turn our attention to the other summands of $X$. Note that relation (\ref{xreln1}) in Section \ref{section:qhs} satisfied by the $x_{ij}$ ensures that   \begin{align}\label{xformd1}x_{bn}x_{nn} = q^2x_{nn}x_{bn} \quad{\rm and\quad} x_{an}x_{bn} = q^{(1-\delta_{ab})}x_{bn}x_{an}
  \end{align}  for all $a\leq b<n$. Meanwhile, by Lemma \ref{lemma:formulas} (ii),  
  \begin{align*}d_{en}x_{bn} =q^{2\delta_{bn}}x_{bn}d_{en}
  \end{align*} for $e\neq b$ and
   \begin{align}\label{den}d_{en}x_{bn} =qx_{bn}d_{en}
  \end{align} for $e\neq b$, $e<n$, $b<n$.
    Using (\ref{den}) to move $d_{en}$ to the right and then (\ref{xformd1}) to move $x_{en}$ to the right results in
\begin{align*}x_{en}d_{en}&(x_{nn}^{s_n}x_{n-1,n}^{s_{n-1}}\cdots x_{1,n}^{s_1}x_{e_1,f_1}\cdots x_{e_b,f_b})\cr&=q^{2s_n+s_{n-1}+\cdots +s_{e+1}}x_{en}x_{nn}^{s_n}x_{n-1,n}^{s_{n-1}}\cdots x_{e+1,n}^{s_{e+1}}d_{en}x_{en}^{s_e}\cdots x_{1n}^{s_1}x_{e_1,f_1}\cdots x_{e_b,f_b})\cr&=
q^{4s_n+2s_{n-1}\cdots +2s_{e+1}}x_{nn}^{s_n}x_{n-1,n}^{s_{n-1}}\cdots x_{e+1,n}^{s_{e+1}}(x_{en}d_{en})x_{en}^{s_e}\cdots x_{1n}^{s_1}x_{e_1,f_1}\cdots x_{e_b,f_b})
\end{align*}
The following formula is  from  Lemma \ref{lemma:formulas}(ii):
\begin{align}\label{formd1} d_{en}x_{en} =q^{2}x_{en}d_{en} - \sum_{a'>e}q^{2+\delta_{a'n}-\delta_{en}}(q^{-2}-1)x_{a'n}d_{a'n} + 1
\end{align}  for  $e<n$.  Note further that $d_{a'n}x_{en}^{s_e}\cdots x_{1n}^{s_1}x_{e_1,f_1}\cdots x_{e_b,f_b}\in \mathscr{P}_{\theta}d_{a'n}$ for $a'>e$ where here we are using
(\ref{dd3}) for $a'=n$ and (\ref{den}) for $e<a'<n$.  Arguing as we did for the first summand of $X$ using here (\ref{formd1}) instead of (\ref{dd2}), we get 
\begin{align*}
(x_{en}d_{en})&x_{en}^{s_e}\cdots x_{1n}^{s_1}x_{e_1,f_1}\cdots x_{e_b,f_b})=(q^{2(s_e-1)+2(s_{e}-2)+\cdots + 1})x_{en}^{s_e}\cdots x_{1n}^{s_1}x_{e_1,f_1}\cdots x_{e_b,f_b} + \mathcal{L}
\cr&={{(q^{2s_e}-1)}\over{(q^2-1)}}x_{en}^{s_e}\cdots x_{1n}^{s_1}x_{e_1,f_1}\cdots x_{e_b,f_b} + \mathcal{L}
\end{align*}
Hence 
\begin{align*}
x_{en}d_{en}&(x_{nn}^{s_n}x_{n-1,n}^{s_{n-1}}\cdots x_{1,n}^{s_1}x_{e_1,f_1}\cdots x_{e_b,f_b})\cr&=q^{4s_n+2s_{n-1}\cdots +2s_{e+1}}{{(q^{2s_e}-1)}\over{(q^2-1)}}
x_{nn}^{s_n}x_{n-1,n}^{s_{n-1}}\cdots x_{e+1,n}^{s_{e+1}}x_{en}^{s_e}\cdots x_{1n}^{s_1}x_{e_1,f_1}\cdots x_{e_b,f_b}+\mathcal{L}
\end{align*}
It follows that the sum of the coefficients of $x_{nn}^{s_n}x_{n-1,n}^{s_{n-1}}\cdots x_{1,n}^{s_1}x_{e_1,f_1}\cdots x_{e_b,f_b}$ in the projection under $\pi$ of $X$ times this term is
\begin{align*}(q^2-1)^{-1}\left(\sum_{i=1}^{n}  q^{4s_n+2s_{n-1}+\cdots +2s_{i+1}}(q^{2s_{i}}-1)
\right)
=(q^2-1)^{-1}\left( q^{4s_n+2s_{n-1}+\cdots +2s_{2}+2s_{1}}-1
\right)
\end{align*}
This agrees with the action of $(K_{2\epsilon_N}-1)/(q^2-1)$ on the  monomial term above as given in (\ref{KKact}) at the beginning of this proof.

 The argument in Type AII is exactly the same where we omit any terms involving $d_{nn}$ and $x_{nn}$ and replace $n$ with $2n$ everywhere.  Indeed, Lemma 5.1 (iii) for Type AII is basically the same as Lemma 5.1(i) for Type AI.  The only differences are replacing $n$ in Lemma 5.1(i) with  $2n$  in Type AII and insisting that all $x_{ij}$ that appear for Type AII satisfy $i<j$ (instead of $i\leq j$ for Type AI).  Also, 
 the monomials that form a basis of $\mathscr{P}_{\theta}$ in Type AII can be viewed as a subset of those for $\mathscr{P}_{\theta}$ in Type AI.  This monomial basis consists of terms of the form
 \begin{align*}x_{n-1,n}^{s_{n-1}}\cdots x_{1,n}^{s_1}x_{e_1,f_1}\cdots x_{e_b,f_b}
 \end{align*}
 where each $e_b<f_b<n$.  Using the same argument as for Type AI shows that the action of $(K_{2\epsilon_N}-1)/(q^2-1)$ on this  monomial term is the same as in the projection under $\pi$ of $X=\sum_{e=1}^{2n-1}x_{en}d_{en}$ applied to this term.  In particular, both give the following coefficient for the above monomial: $(q^2-1)^{-1}(q^{2s_{n-1}+2s_{n-2}+\cdots + 2s_1}-1)$.

Now consider the diagonal case. Using Lemma \ref{lemma:PBW}, we consider two basis for $\mathscr{P}_{\theta}$, the first associated to $N=n$ and the second to $N=2n.$  In particular, the first is straight from Lemma \ref{lemma:PBW} with the order reversed and consists of 
 all monomials of the form 
\begin{align*}x_{n,2n}^{m_{2n}}x_{n,2n-1}^{m_{2n-1}}\cdots  x_{n,n+1}^{m_{n+1}}x_{e_1,f_1+n}\cdots x_{e_b,f_b+n}
\end{align*}
where  $s_i\in \mathbb{N}$ for all $i=1, \dots, n$, and $e_j,f_j\in \mathbb{N}$  with $e_j\leq f_j<n$ for all  $j=1,\dots, b$. The second basis consists of all monomials of the form
\begin{align*} x_{n,2n}^{s_n}x_{n-1,2n}^{s_{n-1}}\cdots x_{1,2n}^{s_1}x_{e_1,f_1+n}\cdots x_{e_b,f_b+n}
\end{align*}
with  $e_i<n$ for $i=1,\dots,b$. Here, we are taking advantage of the fact that in the diagonal case, $\mathscr{P}_{\theta}$ is isomorphic as an algebra to $\mathcal{O}_q[{\rm Mat}_n]$ via the map $x_{i,j+n}\mapsto t_{ij}$ and the relations satisfied by the $t_{ij}$ (see the beginning of Section \ref{section:qmf}) allow us to choose a different ordering of the terms $x_{i,j+n}$ and get a new basis.

By (\ref{Kaction2}), it follows that $(K_{2\epsilon_{n}}-1)/(q^2-1)$  applied to the first kind of basis term is 
\begin{align*}{{(K_{2\epsilon_{n}}-1)}\over{(q^2-1)}}&\cdot x_{n,2n}^{m_{2n}}x_{n,2n-1}^{m_{2n-1}}\cdots  x_{n,n+1}^{m_{n+1}}x_{e_1,f_1+n}\cdots x_{e_b,f_b+n} \cr&={{( q^{2m_n+2m_{n-1}+\cdots 2m_1}-1)}\over{(q^2-1)}}x_{n,2n}^{m_{2n}}x_{n,2n-1}^{m_{2n-1}}\cdots  x_{n,n+1}^{m_{n+1}}x_{e_1,f_1+n}\cdots x_{e_b,f_b+n}\end{align*}
Using (\ref{Kaction2}) again and the second family of basis elements gives us
\begin{align*}{{(K_{2\epsilon_{2n}}-1)}\over{(q^2-1)}}&\cdot x_{n,2n}^{s_n}x_{n-1,2n}^{s_{n-1}}\cdots x_{1,2n}^{s_1}x_{e_1,f_1+n}\cdots x_{e_b,f_b+n}
\cr&= {{(q^{2s_n+2s_{n-1}+\cdots 2s_1}-1)}\over{(q^2-1)}}x_{n,2n}^{s_n}x_{n-1,2n}^{s_{n-1}}\cdots x_{1,2n}^{s_1}x_{e_1,f_1+n}\cdots x_{e_b,f_b+n}
\end{align*}

We show that the image under  the projection map $\pi$ of $X=\sum_{a=1}^nx_{n,a+n}d_{n,a+n}$ applied to a typical term in the first kind of basis yields the same result as applying  $(K_{2\epsilon_{n}}-1)/(q^2-1)$.
Using the relations for $\mathscr{PD}_{\theta}$ as given in Lemma \ref{lemma:formulas} (iv)-(vii), we get 
\begin{align*}
x_{n,a+n}d_{n,a+n}x_{e,f+n} \in \sum_{a'\geq a}\mathscr{P}_{\theta}d_{n,a'+n}\subset \mathcal{L} 
\end{align*}
whenever $f<n$ or $e<a$.
Hence, by induction, 
\begin{align*}
x_{n,a+n}d_{n,a+n} x_{e_1,f_1+n}\cdots x_{e_b,f_b+n} \in \sum_{a'\geq a} \mathscr{P}_{\theta}d_{n,a'+n}\subset \mathcal{L}
\end{align*}
for any choice of $(e_1,f_1), \cdots, (e_b,f_b)$ with $e_j\leq f_j<n$ each $j=1,\dots,b$.  
On the other hand,  by Lemma \ref{lemma:formulas} (v) 
\begin{align*}
\sum_{a<e}x_{n,a+n}d_{n,a+n}x_{n,e+n}= qx_{n,a+n} x_{n,e+n}d_{n,a+n}
\end{align*}
and so 
\begin{align*}
\sum_{a<e}x_{n,a+n}d_{n,a+n}x_{n,e+n}^{m_e}= \sum_{a<e}q^{m_e}x_{n,a+n}x_{n,e+n}^{m_e}d_{n,a+n}
\end{align*}
Using the relations (i) satisfied by the $x_{a,b+n}$ and derived from those satisfied by the $t_{a,b}$ given at the beginning of Section \ref{section:qmf}, 
we have 
\begin{align*}x_{n,a+n}x_{n,e+n}=qx_{n,e+n}x_{n,a+n}
\end{align*}
for $a<e$.  Hence 
\begin{align}\label{firstagain}
\sum_{a<e}(x_{n,a+n}d_{n,a+n})x_{n,e+n}^{m_e}= q^{2m_e} x_{n,e+n}^{m_e}(x_{n,a+n}d_{n,a+n}).
\end{align}

Repeatedly using relation (vii) of Lemma \ref{lemma:formulas}, we have 
\begin{align}\label{secondagain}
x_{n,a+n}d_{n,a+n}x_{n,a+n}^{m_a} = q^{2m_a}x_{n,a+n}^{m_a}d_{n,a+n} +{{(q^{2m_a}-1)}\over{(q^2-1)}}x_{n,a+n}^{m_{a}}+ \sum_{a'\geq a}\mathscr{P}_{\theta}d_{n,a'+n}.
\end{align} (Note that the argument here is very similar to the same type of calculation used in Type AI and Type AII).
Using both (\ref{firstagain}) and (\ref{secondagain}) and arguing as was done for Type AI and Type AII yields
\begin{align*}
&\sum_{a=1}^nx_{n,a+n}d_{n,a+n}x_{n,2n}^{s_n}x_{n,2n-1}^{m_{n-1}}\cdots x_{n,1+n}^{m_1}x_{e_1,f_1+n}\cdots x_{e_r,f_r+n}\cr&=\sum_{a=1}^{n}q^{2 m_n+\cdots+2m_{a+1}}{{(q^{2m_a}-1)}\over{(q^2-1)}}x_{n,2n}^{s_n}x_{n,2n-1}^{m_{n-1}}\cdots x_{n,1+n}^{m_1}x_{e_1,f_1+n}\cdots x_{e_r,f_r+n}
+\sum_a\mathscr{P}_{\theta}d_{n,a+n}
\cr&={{( q^{2m_n+2m_{n-1}+\cdots 2m_1}-1)}\over{(q^2-1)}}x_{n,2n}^{m_{2n}}x_{n,2n-1}^{m_{2n-1}}\cdots  x_{n,n+1}^{m_{n+1}}x_{e_1,f_1+n}\cdots x_{e_b,f_b+n}+\mathcal{L}.
\end{align*}
This completes the proof for the  $N=n$ case.

A similar argument shows that applying the projection map $\pi$ to $X=\sum_{a=1}^nx_{a,2n}d_{a,2n}$ times a typical basis element of the second kind yields the same result as applying $(K_{2\epsilon_{2n}}-1)(q^2-1)^{-1}$.
\end{proof}

\subsection{Relationships between Cartan elements}\label{subsection:Relationships}

In the next lemma, we write $K_{2\epsilon_r+\cdots + 2\epsilon_N}$ in terms of elements in the $({\rm ad}\ U_q(\mathfrak{g}))$-module generated by $K_{2\epsilon_{r+1}+\cdots +2\epsilon_N}$ and
$K_{2\epsilon_N}$ where $N$ is either $n$ or $2n$ depending on  $\mathfrak{g}$. This is the crucial step in showing that $K_{2\epsilon_r+\cdots +2\epsilon_N}$ acts the same as an 
 element in $\mathscr{PD}_{\theta}$ on $\mathscr{P}_{\theta}$.

A key set of tools in the proof of the next lemma  are Lusztig's braid group automorphisms $T_i$.  We use the formulas from \cite{KS}, Section 6.2.2 for $U_q(\mathfrak{sl}_N)$. The images of $E_{t},F_{t}, $ and $K_t$ under $T_s$ are 
\begin{align*}T_s(E_t) = \left\{\begin{matrix}&(-1)E_sE_t+q^{-1}E_tE_s  &a_{st}=-1\cr &E_t &a_{st}=0\end{matrix}\right.
\end{align*}
\begin{align*}T_s(F_t) = \left\{\begin{matrix}&(-1)F_tF_s+qE_sE_t &a_{st}=-1\cr &F_t &a_{st}=0\end{matrix}\right.
\end{align*}
\begin{align*}T_s(K_t)=K_sK_t^{-a_{st}}
\end{align*}
where $a_{st}$ is the $s,t$ entry in the Cartan matrix for $U_q(\mathfrak{sl}_N)$.  

Set $\beta_{s,t}= \alpha_s+\alpha_{s+1} +\cdots +\alpha_{t}$ for $r\leq s< t\leq N-1$ and write $K_{\beta_{s,t}}$ for $K_sK_{s+1}\cdots K_t$.  These automorphisms are used to define root vectors (\cite{KS}, Section 6.2.3) as follows.  
Set \begin{align}\label{rootvectors}
E_{\beta_{s,t}}=T_sT_{s+1}\cdots T_{t-1}(E_t){\rm \ and \ }F_{\beta_{s,t}}=T_sT_{s+1}\cdots T_{t-1}(F_t).
\end{align} 
Note that $E_{\beta_{s,t}}$ has weight $\beta_{s,t}$ and $F_{\beta_{s,t}}$ has weight $-\beta_{s,t}$.  These notions are extended to $s=t$ with $\beta_{s,s}=\alpha_s$, $E_{\beta_{s,s}}=E_s$,
and $F_{\beta_{s,s}}=F_s$.

\begin{lemma}\label{lemma:K_epsilon}
We have the following equalities
\begin{align}\label{eq1}
({\rm ad}\ E_{r}E_{r+1}\cdots E_{N-2}E_{N-1})K_{2\epsilon_N}
=(1-q^{-2}) (-1)^{N-1-r}E_{\beta_{r,N-1}}K_{2\epsilon_N}
\end{align}
and
\begin{align}\label{eq2}
({\rm ad}\ &F_{N-1}F_{N-2}\cdots F_{r})\cdot K_{2\epsilon_{r+1}+\cdots +2\epsilon_N}
=(1-q^{2}) (-1)^{N-1-r}F_{\beta_{r,N-1}}K_{\beta_{r,N-1}}K_{2\epsilon_{r+1}+\cdots +2\epsilon_N}
\end{align}
where 
$N=n$, $r=1, \dots, n$ in Type AI,
 $N=2n$ and $r=1,\dots, 2n$ for Type AII, and   the two options $N=n$, $r=1,\dots, n$ and $N=2n$, $r=n+1,\dots, 2n$ in the diagonal case. Moreover, 
 \begin{align*}K_{2\epsilon_r+\cdots 2\epsilon_N}&=
(q-q^{-1})q^{2}(E_{\beta_{r,N-1}}K_{2\epsilon_N})(F_{\beta_{r,N-1}} K_{\beta_{r,N-1}}K_{2\epsilon_{r+1}+\cdots + 2\epsilon_N})\cr&- (q-q^{-1})(F_{\beta_{r,N-1}}K_{\beta_{r,N-1}}K_{2\epsilon_{r+1}+\cdots + 2\epsilon_{N}})(E_{\beta_{r,N-1}} K_{2\epsilon_N}) +K_{2\epsilon_{r+1}+\cdots + 2\epsilon_N}K_{2\epsilon_N}
 \end{align*}
\end{lemma}
\begin{proof} 

 Straightforward calculations show that 
\begin{align*}({\rm ad}\ E_{N-1})\cdot K_{2\epsilon_N} = (1-q^{-2}) E_{N-1}K_{2\epsilon_N}
\end{align*} and 
\begin{align*}
({\rm ad}\ E_{N-2}E_{N-1})\cdot K_{2\epsilon_N}&=(1-q^{-2})(E_{N-2}E_{N-1}-q^{-1}E_{N-1}E_{N-2})K_{2\epsilon_N} \cr &= -T_{N-2}(E_{N-1})K_{2\epsilon_N}.
\end{align*} Now assume that 
\begin{align*}
({\rm ad}\ E_{s+1}\cdots E_{N-2}E_{N-1})K_{2\epsilon_N}&=(1-q^{-2}) (-1)^{N-2-s}T_{s+1}\cdots T_{N-2}(E_{N-1})K_{2\epsilon_N}.
\end{align*} 
It follows that 
\begin{align*}
&({\rm ad}\ E_{s}E_{s+1}\cdots E_{N-2}E_{N-1})\cdot K_{2\epsilon_N}\cr&=(1-q^{-2}) (-1)^{N-2-s}\left(E_sT_{s+1}\cdots T_{N-2}(E_{N-1})-q^{-1}T_{s+1}\cdots T_{N-2}(E_{N-1})E_sK_{2\epsilon_N}\right)
\cr&=(1-q^{-2}) (-1)^{N-1-s}T_sT_{s+1}\cdots T_{N-2}(E_{N-1})K_{2\epsilon_N}
\end{align*} 
Hence, by induction, we have
\begin{align*}
({\rm ad}\ E_{r}E_{r+1}\cdots E_{N-2}E_{N-1})K_{2\epsilon_N}&
=(1-q^{-2}) (-1)^{N-1-r}T_rT_{r+1}\cdots T_{N-2}(E_{N-1})K_{2\epsilon_N}.
\end{align*} 
Thus (\ref{eq1}) follows from this equality combined with the definition of  $E_{\beta_{r,N-1}}$ in (\ref{rootvectors}).

We have a similar result for the $F_j's$.  
In particular, we have
\begin{align*}({\rm ad}\ F_{r})\cdot K_{2\epsilon_{r+1}+\cdots +2\epsilon_N} &=F_rK_{2\epsilon_{r+1}+\cdots +2\epsilon_N} K_r-K_{2\epsilon_{r+1}+\cdots +2\epsilon_N} F_rK_r
\cr&= (1-q^{2}) F_{r} K_rK_{2\epsilon_{r+1}+\cdots +2\epsilon_N}
\end{align*} 
and
\begin{align*}({\rm ad}\ F_{r+1}F_{r})\cdot K_{2\epsilon_{r+1}+\cdots +2\epsilon_N} 
&=(1-q^{2}) ({\rm ad}\ F_{r+1})\cdot  F_{r} K_rK_{2\epsilon_{r+1}+\cdots +2\epsilon_N}
\cr&=(1-q^{2})\left(F_{r+1} F_{r} K_rK_{r+1}-F_rK_rF_{r+1}K_{r+1}\right)K_{2\epsilon_{r+1}+\cdots +2\epsilon_N}
\cr&=(1-q^{2})\left(F_{r+1} F_{r} -qF_rF_{r+1}\right)K_{r+1}K_{r}K_{2\epsilon_{r+1}+\cdots +2\epsilon_N}
\cr&=(1-q^2)(-1)T_{r}(F_{r+1})K_{\beta_{r,r+1}}K_{2\epsilon_{r+1}+\cdots +2\epsilon_N}
\end{align*}
Now assume that for $s>r+2$, we have
\begin{align*}
&({\rm ad}\ F_{s-1}\cdots F_{r+2}F_{r+1})\cdot K_{2\epsilon_{r+1}+\cdots +2\epsilon_N}
\cr&=(1-q^{2}) (-1)^{s-2-r}T_{r+1}\cdots T_{s-2}(F_{s-1})K_{\beta_{r+1,s-1}}K_{2\epsilon_{r+1}+\cdots +2\epsilon_N}
\end{align*} 
Since $T_{k+1}(F_{r})=F_r$ for $k>r$, it follows that 
\begin{align*}
&({\rm ad}\ F_{s-1}\cdots F_{r+1}F_r)\cdot K_{2\epsilon_{r+1}+\cdots +2\epsilon_N}\cr&=(1-q^2)(-1)^{s-2-r}(T_{r+1}\cdots T_{s-2}(F_{s-1})F_{r}-qF_{r}T_{r+1}\cdots T_{s-2}(F_{s-1}))K_rK_{\beta_{r+1,s-1}}K_{2\epsilon_{r+1}+\cdots +2\epsilon_N}
\cr&=(1-q^2)(-1)^{s-1-r}(T_rT_{r+1}\cdots T_{s-2}(F_{s-1}))
K_{\beta_{r,s-1}}K_{2\epsilon_{r+1}+\cdots +2\epsilon_N}
\end{align*} 
Hence, by induction, we have
\begin{align*}
({\rm ad}\ &F_{N-1}F_{N-2}\cdots F_{r})\cdot K_{2\epsilon_{r+1}+\cdots +2\epsilon_N}
\cr&=(1-q^{2}) (-1)^{N-1-r}(T_rT_{r+1}\cdots T_{N-2}(F_{N-1}))K_{\beta_{r,N-1}}K_{2\epsilon_{r+1}+\cdots +2\epsilon_N}.
\end{align*} Thus (\ref{eq2}) follows from this equality combined with the definition of $F_{\beta_{r,N-1}}$ in (\ref{rootvectors}).

Using the fact that the $T_i$ are algebra automorphisms of $U_q(\mathfrak{sl}_N)$, it follows that $E_{\beta_{r,N-1}},F_{\beta_{r,N-1}}$ and $K^{\pm 1}_{\beta_{r,N-1}}$ generate a subalgebra isomorphic to $U_q(\mathfrak{sl}_2)$. Therefore, the commutator
\begin{align*}[E_{\beta_{r,N-1}},F_{\beta_{r,N-1}}]=(q-q^{-1})^{-1}(K_{\beta_{r,N-1}}-K_{\beta_{r,N-1}}^{-1})
\end{align*}
Hence
\begin{align*}
q^{2}&(E_{\beta_{r,N-1}}K_{2\epsilon_N})(F_{\beta_{r,N-1}} K_{\beta_{r,N-1}}K_{2\epsilon_{r+1}+\cdots + 2\epsilon_N})- (F_{\beta_{r,N-1}}K_{\beta_{r,N-1}}K_{2\epsilon_{r+1}+\cdots + 2\epsilon_{N}})(E_{\beta_{r,N-1}} K_{2\epsilon_N}) 
\cr &= (E_{\beta_{r,N-1}}F_{\beta_{r,N-1}}-F_{\beta_{r,N-1}}E_{\beta_{r,N-1}})K_{\beta_{r,N-1}}K_{2\epsilon_{r+1}+\cdots + 2\epsilon_N}K_{2\epsilon_N}
\cr&=(q-q^{-1})^{-1}(K_{\beta_{r,N-1}}^2-1)K_{2\epsilon_{r+1}+\cdots + 2\epsilon_N}K_{2\epsilon_N}.
\end{align*}
Note that $2\beta_{r,N-1}=2\alpha_r+\cdots + 2\alpha_{N-1}= 2\epsilon_{r}-2\epsilon_{N}.$ Thus the above simplifies to 
\begin{align*}
(q-q^{-1})^{-1}(K_{2\epsilon_r+2\epsilon_{r+1}+\cdots + 2\epsilon_N}-K_{2\epsilon_{r+1}+\cdots + 2\epsilon_N}K_{2\epsilon_N}).
\end{align*} The final assertion of the lemma now follows.
\end{proof}

\subsection{Acting as quantum Weyl algebra elements}
Let $\psi$ denote the map from $U_q(\mathfrak{g})$ to ${\rm End}\ \mathscr{P}_{\theta}$ that agrees with the action of $U_q(\mathfrak{g})$ on $\mathscr{P}_{\theta}$.  We show that $\psi(K_{2\epsilon_r +\dots+2\epsilon_m})$ agrees with the image under $\phi$ of an element of $\mathscr{PD}_{\theta}$.  Moreover, we determine the degree of these elements using 
the degree function defined in Section \ref{section:gandr}.

\begin{proposition}\label{prop:K_epsilon}
The image  $\psi(K_{2\epsilon_r +\dots+2\epsilon_N})$ of  $K_{2\epsilon_r +\dots+2\epsilon_N}$ inside 
${\rm End}\ \mathscr{P}_{\theta}$ is equal to the image $\phi_{a_r}$  for some $a_r\in \mathscr{PD}_{\theta}$ where
\begin{itemize}
\item  
$N=n$, $r=1, \dots, n$ in Type AI
\item $N=2n$ and $r=1,\dots, 2n$ for Type AII
\item  $N=n$, $r=1,\dots, n$ and $N=2n$, $r=n+1,\dots, 2n$ in the diagonal case. 
\end{itemize} Moreover, $\deg a_r \leq 2(N-r+1)$.
\end{proposition}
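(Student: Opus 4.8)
The plan is to argue by induction on $N-r+1$, using Lemma~\ref{lemma:K_epsilon} to reduce the computation of $\psi(K_{2\epsilon_r+\cdots+2\epsilon_N})$ to that of $\psi(K_{2\epsilon_N})$ and $\psi(K_{2\epsilon_{r+1}+\cdots+2\epsilon_N})$. The base case is $r=N$: by Lemma~\ref{lemma:epsilon}, $K_{2\epsilon_N}-1$ acts on $\mathscr{P}_\theta$ the same way as an explicit element $X\in\mathscr{PD}_\theta$ which is a sum of products $x_{ij}d_{kl}$ and hence has degree $2$, so $\psi(K_{2\epsilon_N})=\phi_{a_N}$ with $a_N=1+X$ and $\deg a_N\le 2=2(N-N+1)$. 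In the diagonal case there are two independent inductions, one for each chain $N=n$, $r\le n$ and $N=2n$, $r\ge n+1$, and Lemma~\ref{lemma:epsilon} supplies both base cases.

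Before running the induction I would record two compatibility facts. First, the map $\psi\colon U_q(\mathfrak{g})\to{\rm End}\ \mathscr{P}_\theta$ is an algebra homomorphism that intertwines the adjoint actions, namely $\psi(({\rm ad}\ w)\cdot v)=({\rm ad}\ w)\cdot\psi(v)$ for all $w,v\in U_q(\mathfrak{g})$, where the adjoint action on ${\rm End}\ \mathscr{P}_\theta$ is the one described in Section~\ref{section:action}; this is verified directly on the Chevalley generators $E_i,F_i,K$ from the formulas for $\Delta$ and $S$, and then extended multiplicatively in $w$. Second, by Proposition~\ref{prop:locally-finite} the map $\phi$ is a $U_q(\mathfrak{g})$-module map, so $\phi_{w\cdot a}=({\rm ad}\ w)\cdot\phi_a$ (relation~(\ref{phi-reln})) for all $w\in U_q(\mathfrak{g})$ and $a\in\mathscr{PD}_\theta$, and $\phi$ is an algebra homomorphism.

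For the inductive step, assume $\psi(K_{2\epsilon_{r+1}+\cdots+2\epsilon_N})=\phi_{a_{r+1}}$ with $\deg a_{r+1}\le 2(N-r)$, and keep $a_N$ from the base case. By Lemma~\ref{lemma:K_epsilon}, $K_{2\epsilon_r+\cdots+2\epsilon_N}$ is a linear combination, with nonzero scalar coefficients $c_1,c_2,c_3$, of
\[
(({\rm ad}\ E_r\cdots E_{N-1})\cdot K_{2\epsilon_N})\,(({\rm ad}\ F_{N-1}\cdots F_r)\cdot K_{2\epsilon_{r+1}+\cdots+2\epsilon_N}),
\]
the same product taken in the opposite order, and $K_{2\epsilon_{r+1}+\cdots+2\epsilon_N}K_{2\epsilon_N}$. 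Applying $\psi$ and using that it is an algebra map intertwining adjoint actions, then substituting $\psi(K_{2\epsilon_N})=\phi_{a_N}$ and $\psi(K_{2\epsilon_{r+1}+\cdots+2\epsilon_N})=\phi_{a_{r+1}}$, and finally invoking relation~(\ref{phi-reln}) and the fact that $\phi$ is an algebra map, each of the three terms becomes $\phi$ of an explicit element of $\mathscr{PD}_\theta$. Writing $b=(E_r\cdots E_{N-1})\cdot a_N$ and $c=(F_{N-1}\cdots F_r)\cdot a_{r+1}$ for the left $U_q(\mathfrak{g})$-actions on $\mathscr{PD}_\theta$, this yields $\psi(K_{2\epsilon_r+\cdots+2\epsilon_N})=\phi_{a_r}$ with $a_r=c_1\,bc+c_2\,cb+c_3\,a_{r+1}a_N$.

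It remains to bound $\deg a_r$. Since $\mathcal{J}$ is an algebra filtration on $\mathscr{PD}_\theta$ whose associated graded algebra is the twisted tensor product of $\mathscr{P}_\theta$ and $\mathscr{D}_\theta$, and since this filtration is preserved by the left action of $U_q(\mathfrak{g})$, we get $\deg b\le\deg a_N\le 2$ and $\deg c\le\deg a_{r+1}\le 2(N-r)$, hence $\deg(bc),\deg(cb)\le 2(N-r+1)$ and $\deg(a_{r+1}a_N)\le 2(N-r+1)$, so $\deg a_r\le 2(N-r+1)$; this completes the induction. The main points requiring care are the two intertwining identities for $\psi$ and $\phi$, together with the compatibility of the degree filtration on $\mathscr{PD}_\theta$ with both multiplication and the $U_q(\mathfrak{g})$-action; the overall structure follows the skeleton of \cite{LSS0}, Proposition~4.2.1, with the sign and coproduct conventions of the present paper.
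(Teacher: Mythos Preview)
Your proof is correct and follows essentially the same approach as the paper: induction on $N-r+1$, with Lemma~\ref{lemma:epsilon} furnishing the base case and Lemma~\ref{lemma:K_epsilon} the inductive step, combined with the fact that $\phi(\mathscr{PD}_\theta)$ is an $({\rm ad}\ U_q(\mathfrak{g}))$-submodule algebra of ${\rm End}\ \mathscr{P}_\theta$ (Proposition~\ref{prop:locally-finite}) and that $\mathcal{J}$ is preserved by the $U_q(\mathfrak{g})$-action. If anything you are slightly more careful than the paper in explicitly recording the intertwining property of $\psi$ with the adjoint action and in naming the elements $b,c\in\mathscr{PD}_\theta$ whose degrees you bound; the paper writes expressions like ``$\deg(E_{\beta_r}K_{2\epsilon_N})\le 2$'' where strictly speaking it means the degree of the corresponding element of $\mathscr{PD}_\theta$.
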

\begin{proof}
By Lemma \ref{lemma:epsilon}, $\psi(K_{2\epsilon_N})=\phi_{a_N}$ for an appropriate element $a_N\in \mathscr{PD}_{\theta}$.  Moreover,  one sees from the formulas for $\psi(K_{2\epsilon_N})$ given in this lemma that
$\deg a_N=2 = 2(N-N+1)$.  Now assume that for some $r\leq N$, 
$\psi(K_{2\epsilon_{r+1}+\cdots +2\epsilon_N})=\phi_{a_{r+1}}$ for an  element $a_{r+1}$ of degree at most  ${2(N-r)}$  where $N=n$ in Type AI, $N=2n$ in Type AII, and $N=n$ or $N=2n$ in the diagonal case with $N-n\leq r\leq N-1$.  

By Lemma \ref{lemma:K_epsilon},  $K_{2\epsilon_{r}+ \cdots +2\epsilon_N}$ is a linear combination of  products of the form $xy$ where $x\in 
({\rm ad}\ U_q(\mathfrak{g}))\cdot K_{2\epsilon_N}$ and $y\in ({\rm ad}\ U_q(\mathfrak{g}))\cdot K_{2\epsilon_{r+1}+\cdots +2\epsilon_N}$. 
 Since $\phi(\mathscr{PD}_{\theta})$ is a subalgebra and, by Proposition \ref{prop:locally-finite}, an $({\rm ad}\ U_q(\mathfrak{g}))$-submodule of ${\rm End}\ \mathscr{P}_{\theta}$, it follows that $\psi(K_{2\epsilon_{r}+ \cdots +2\epsilon_N}) \in \phi(\mathscr{PD}_{\theta})$, and so the lemma follows by induction.

We now turn our attention to understanding the degree assertion at the conclusion of the lemma. As explained above, $\deg a_N = 2$. Now assume that 
$\deg a_{r+1} \leq 2(N-(r+1)+1)$.  It follows that  $\deg(a_{r+1}a_N) =\deg(a_{r+1}) + \deg(a_N) \leq  2(N-(r+1)+1) + 2 = 2(N-r+1).$  Since the filtration $\mathcal{J}$ is preserved by the action of $U_q(\mathfrak{g})$, we also have $\deg(E_{\beta_{r,N-1}}K_{2\epsilon_N})=2$ and $\deg(F_{\beta_{r,N-1}} K_{2\epsilon_{r+1}+\cdots + 2\epsilon_N})\leq 2(N-r). $ By Lemma \ref{lemma:K_epsilon},
\begin{align*}&(E_{\beta_{r,N-1}}F_{\beta_{r,N-1}}-F_{\beta_{r,N-1}}E_{\beta_{r,N-1}})K_{\beta_{r,N-1}}K_{2\epsilon_{r+1}+\cdots + 2\epsilon_N}K_{2\epsilon_N}\cr&=(q-q^{-1})^{-1}(K_{\beta_{r,N-1}}^2-1)K_{2\epsilon_{r+1}+\cdots + 2\epsilon_N}K_{2\epsilon_N}
\leq 2(N-r+1)\cr&= (q-q^{-1})^{-1}(K_{2\epsilon_r+2\epsilon_{r+1}+\cdots + 2\epsilon_N}-K_{2\epsilon_{r+1}+\cdots + 2\epsilon_N}K_{2\epsilon_N}).
\end{align*} Therefore, 
\begin{align*}K_{2\epsilon_r+2\epsilon_{r+1}+\cdots + 2\epsilon_N}=(q-q^{-1})(K_{2\epsilon_r+2\epsilon_{r+1}+\cdots + 2\epsilon_N}-K_{2\epsilon_{r+1}+\cdots + 2\epsilon_N})+K_{2\epsilon_{r+1}+\cdots + 2\epsilon_N}
\end{align*} and so $\deg a_r\leq 2(N-r+1)$ which equals $2(2n-r+1)$ in Type AII and equals $2(n-r+1)$ in Type AI and the diagonal case.  The final assertion of the proposition now follows by induction. \end{proof}

It will follow from later results in this paper that this inequality is actually an equality.  This is because Theorem \ref{theorem:center_and_capelli} shows that the $U_q(\mathfrak{g})$-module generated by the  image of $K_{2\epsilon_{r}+ \cdots +2\epsilon_N}$ in $\mathscr{PD}_{\theta}$ contains a central element of degree  $2(N-r+1)$.

\section{The locally finite subalgebra}

\subsection{The simply connected case}\label{subsection:simply-conn}
In \cite{JL} and \cite{JL2}, a complete description of the locally finite subalgebra of $U_q(\mathfrak{sl}_N)$ as  a direct sum of $({\rm ad}\ U_q(\mathfrak{sl}_N))$-modules is given. This result is then generalized to the simply connected quantized enveloping algebra (see Section \ref{section:qea})
in \cite{JL2} (see also \cite{Jo}, 7.1).   In particular, we have  \begin{align}\label{FdecompA}
\mathcal{F}(U_q(\mathfrak{sl}_N)) = \bigoplus_{\lambda \in -P_N^+\cap Q}({\rm ad}\ U_q(\mathfrak{sl}_N))\cdot K_{2\lambda}.
\end{align}
and  \begin{align}\label{Fdecomp}
\mathcal{F}(\check U_q(\mathfrak{sl}_N)) = \bigoplus_{\lambda \in -P_N^+}({\rm ad}\ U_q(\mathfrak{sl}_N))\cdot K_{2\lambda}.
\end{align} 
Note that  ${-P_N^+ }= \{-\lambda|\  \lambda\in P_N^+\} = \{w_0\lambda|\  \lambda\in P_N^+\}=w_0P^+_N$ which is just  the $\mathbb{N}$-linear span of the $w_0\omega_i, i=1, \dots, N-1$.  
More concretely,
\[
{-P_N^+}=\left\{
a_1\epsilon_1+\cdots+a_N\epsilon_N
\,\Big|\,
\sum_{i=1}^Na_i=0,\ a_{i+1}-a_{i}\in\mathbb N,\ a_i\in\frac{1}{N}\mathbb Z
\right\},
\]
and 
\[
-P_N^+\cap Q=\left\{
a_1\epsilon_1+\cdots+a_N\epsilon_N\,\Big|\,
a_i\in\mathbb Z,\ a_1\leq \cdots\leq a_N,\ \sum_{i=1}^Na_i=0
\right\}.
\]
As explained in Section \ref{section:roots-and-weights}, the fundamental weight $ \omega_i$ is equal to  $\hat{\omega}_i=\epsilon_{1}+\cdots + \epsilon_i$ plus a scalar multiple of $\hat{\omega}_N = \epsilon_1+\cdots + \epsilon_N$  while its image under $w_0$, namely $w_0\omega_i$, is equal to $\epsilon_{N-i}+\cdots +\epsilon_{N-1}$ plus a scalar multiple of $\hat{\omega}_N= \epsilon_1+\cdots + \epsilon_N$.
We see from (\ref{omega}) that this  scalar is $i/N$ (for both $\omega_i$ and $w_0\omega_i$) which is not an  integer and so,   the simply connected quantized enveloping algebra
$\check U_q(\mathfrak{sl}_N)$ is not a subalgebra of $U_q(\mathfrak{gl}_N)$.  However, the two algebras are closely related. For instance, we can extend $U_q(\mathfrak{gl}_N)$
 in a similar manner to the  construction of $\check U_q(\mathfrak{sl}_N)$ so that the resulting algebra contains both $\check U_q(\mathfrak{sl}_N)$ and $U_q(\mathfrak{gl}_N)$.  To do this,
we set $\mathcal C=\mathbb C[K_{\hat{\omega}_N}^{\pm1}]$ and $\check{\mathcal C}=\mathbb C[K_{\hat{\omega}_N/N}^{\pm1}]$, and define
  $\check{U}_q(\mathfrak{gl}_n)=U_q(\mathfrak{gl}_n)\otimes_\mathcal C\check{\mathcal C}$. 
The algebra $\check U_q(\mathfrak{gl}_N)$ can be given a Hopf structure by insisting that $K_{\hat{\omega}_N/N}$ satisfies the same formulas for coproduct, counit, and antipode as an element $K\in U_q(\mathfrak{gl}_N)$ (as given in Section \ref{section:qea}).  

 Recall that the subalgebra $U^0(\mathfrak{sl}_N)$ of $U_q(\mathfrak{sl}_N)$ is extended to the subalgebra $\check U^0(\mathfrak{sl}_N)$ of the simply connected quantized enveloping algebra  $\check U_q(\mathfrak{sl}_N)$.  Moreover $\check U^0(\mathfrak{sl}_N)$  is equal  to  \begin{align*}\mathbb{C}(q)[ K_{\lambda}|\ \lambda\in
 P_N].\end{align*} This is the unital $\mathbb{C}(q)$-algebra 
 generated by the elements in square brackets described with set-builder notation.  It can be viewed as a Laurent polynomial ring with generators $K^{\pm 1}_{\omega_1}, \dots, K^{\pm 1}_{{\omega}_{N-1}}$. 

As explained in Section \ref{section:qea}, $U^0(\mathfrak{gl}_N)$ is the Laurent polynomial ring with generators $K^{\pm 1}_{{\epsilon}_i}$ for $i=1,\dots, N$.   It is straightforward to see that $U^0(\mathfrak{gl}_N)$ can be viewed as the Laurent polynomial ring in $K^{\pm 1}_{\hat{\omega}_i}, i=1,\dots, N$ where $\hat{\omega}_i=\epsilon_1+\dots + \epsilon_i$.
  is the $i^{th}$ fundamental partition.
By (\ref{omega}), $\omega_i=\hat{\omega}_i-{(i/N})\hat{\omega}_N$ for $i=1,\dots, N-1$. Thus 
\[
\mathbb Z(\hat{\omega}_N/N)+\sum_{i=1}^{N-1}
\mathbb Z\omega_i
=\mathbb Z(\hat{\omega}_N/N)+
\sum_{i=1}^{N-1}
\mathbb Z\hat{\omega}_i.
\] 
Hence $\check{U}^0(\mathfrak{gl}_N)$, which is generated over $\mathbb C(q)$ by $K_{\hat{\omega}_i}^{\pm 1}\otimes 1$ and $1\otimes K_{(1/N)\hat{\omega}_N}$ 
is isomorphic to 
\[
\mathbb{C}(q)[(K_{\hat{\omega}_i}K^{-1}_{(i/N)\hat{\omega}_N})^{\pm 1},(K_{(1/N)\hat{\omega}_N})^{\pm 1}|\  i=1,\cdots, N-1].\]
Here we have dropped the tensor symbol between the $K_{\hat{\omega}_i}$ and $K_{(1/N)\hat{\omega}_N}$ for better readability. 
 Since $\hat{\omega}_1,\quad \hat{\omega}_2,\quad \dots \quad \hat{\omega}_{N-1}$ are linearly independent, the elements $K_{\hat{\omega}_i}$ for $i=1,\cdots, N-1$ are algebraically independent.  Adding $(1/N)\hat{\omega}_N$ to the list keeps the linear independence property in place.  Hence $K_{\hat{\omega}_i}K^{-1}_{(i/N)\hat{\omega}_N}, i=1,\cdots, N-1$ are also algebraically independent.  Moreover, $\check{U}^0(\mathfrak{gl}_N)$ is a free module over $\mathbb{C}(q)[(K_{\hat{\omega}_i}K^{-1}_{(i/N)\hat{\omega}_N})^{\pm 1}|\ i=1,\cdots, N-1]$ with generators $(K_{(1/N)\hat{\omega}_N})^{\pm 1}$.

It follows from (\ref{omega}) that   the map $\zeta$  from $\check U_q(\mathfrak{sl}_N)$ to $\check U_q(\mathfrak{gl}_N)$ defined by
\begin{itemize}
\item $\zeta(K_{{\omega}_i})=K_{\hat{\omega}_i} K^{-1}_{(i/N)\hat{\omega}_N}$ 
\item $\zeta(E_i)=E_i\otimes 1$
\item $\zeta(F_i)=F_i\otimes 1$
\end{itemize}
for $i=1,\dots, N-1$ defines an injective algebra homomorphism.
 Since $K_{\hat{\omega}_N}$ is in the center of $U_q(\mathfrak{gl}_N)$, and, similarly, $K_{\hat{\omega}_N/N}$ is in the center of $\check U_q(\mathfrak{gl}_N)$, we have 
\begin{align}\label{Komegaformula}
({\rm ad}\ U_q(\mathfrak{sl}_N))\cdot uK_{\hat{\omega}_N/N}^s = [({\rm ad}\ U_q(\mathfrak{sl}_N))\cdot u]K_{\hat{\omega}_N/N}^s
\end{align}  for any  $s\in \mathbb{Z}$ and any $u\in U_q(\mathfrak{gl}_N)$. Moreover, by the discussion above, $\check U_q(\mathfrak{gl}_N)$ is a free $\check U_q(\mathfrak{sl}_N)$-module with basis $K^s_{\hat{\omega}_N/N}$, $s\in \mathbb{Z}$.  Hence 
\begin{align}\label{ddsdecomp} \check U_q(\mathfrak{gl}_N) =\bigoplus_{s\in \mathbb{Z}}\check U_q(\mathfrak{sl}_N)K_{\hat{\omega}_N/N}^s
\end{align}
In what follows we set 
\[
\check{\mathcal M}_N=\{2\lambda+s\hat{\omega}_N/N\ |\
\lambda\in w_0 P^+_N,\ 
s\in\mathbb Z\}
\]
and 
\[
\mathcal M_N=\{
2\lambda+s\hat{\omega}_N/N\ |\
\lambda\in w_0 \hat{\Lambda}^+_N,\ 
s\in\mathbb Z\}.
\]
More concretely, we have
\[
\check{\mathcal M}_N=\left\{\sum_{i=1}^Na_i\epsilon_i
\ \Big|\ 
a_i\in(1/N)\mathbb Z,\ a_{i+1}-a_{i}\in2\mathbb N,\ \sum_{i=1}^N a_i=0
\right\},
\]
and
\[
{\mathcal M}_N=\left\{\sum_{i=1}^Na_i\epsilon_i
\ \Big|\ 
a_i\in(1/N)\mathbb Z,\ a_{i+1}-a_{i}\in2\mathbb N
\right\}.
\]

 \begin{theorem}\label{local-finite}The locally finite subalgebra of $\check U_q(\mathfrak{gl}_N)$ admits the following decomposition into a direct sum of 
 $({\rm ad}\ U_q(\mathfrak{gl}_N))$-modules
 \begin{align}\label{equality1}\mathcal{F}(\check U_q(\mathfrak{gl}_N)) =\bigoplus_{\mu \in \mathcal M_N}({\rm ad}\ U_q(\mathfrak{gl}_N))\cdot K_{\mu}= \bigoplus_{\lambda\in w_0P^+_N,\ s \in {\mathbb{Z}}}[({\rm ad}\ U_q(\mathfrak{gl}_N) )\cdot K_{2\lambda}]K_{\hat{\omega}_N/N}^s.\end{align}
Similarly, the locally finite subalgebra of $\check U_q(\mathfrak{gl}_N\oplus\mathfrak{gl}_N)$ can be written as 
\begin{align}\label{equality2}
\mathcal{F}(\check U_q(\mathfrak{gl}_N\oplus\mathfrak{gl}_N))
&=\quad\bigoplus_{\lambda  \in (w_0P_N^+)\times (w_0P_N^+), s,s'\in \mathbb{Z}}
 ({\rm ad}\ U_q(\mathfrak{gl}_N\oplus\mathfrak{gl}_N))\cdot K_{2\lambda+s\hat{\omega}_N/N+s'\hat{\omega}_{2N}/N}
\cr&= \quad\bigoplus_{\lambda \in (w_0P_N^+)\times (w_0P_N^+), s,s'\in \mathbb{Z}}
({\rm ad}\ U_q(\mathfrak{gl}_N\oplus\mathfrak{gl}_N))\cdot K_{2\lambda}K_{\hat{\omega}_N/N}^sK_{\hat{\omega}_{2N}/N}^{s'}.
\end{align}
Moreover, the above equalities all hold with $({\rm ad} \ U_q(\mathfrak{gl}_N))$ replaced by 
$({\rm ad}\ \check U_q(\mathfrak{gl}_N))$, 
$({\rm ad}\ U_q(\mathfrak{sl}_N))$, or $({\rm ad}\ \check U_q(\mathfrak{sl}_N))$ and $({\rm ad}\ U_q(\mathfrak{gl}_N\oplus\mathfrak{gl}_N))$ replaced by  
$({\rm ad}\ \check U_q(\mathfrak{gl}_N\oplus \mathfrak{gl}_N))$, $({\rm ad}\ U_q(\mathfrak{sl}_N\oplus\mathfrak{sl}_N))$,  or $({\rm ad}\ \check U_q(\mathfrak{sl}_N\oplus\mathfrak{sl}_N))$
\end{theorem}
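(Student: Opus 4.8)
The plan is to bootstrap from the known $\mathfrak{sl}_N$ result (\ref{Fdecomp}) and the preparatory observations already assembled in this section, reducing everything to a degree count in the associated graded algebra. First I would establish the first equality in (\ref{equality1}). The inclusion $\supseteq$ follows because each $K_{2\lambda+s\hat\omega_N}$ with $\lambda\in w_0\Lambda_N^+$ generates a finite-dimensional $({\rm ad}\,U_q(\mathfrak{sl}_N))$-module: indeed $2\lambda$ is $w_0$ applied to a dominant weight for $\mathfrak{sl}_N$, so by (\ref{Fdecomp}) and (\ref{lambdaprime}) the element $K_{2\lambda'}$ lies in $\mathcal F(\check U_q(\mathfrak{sl}_N))$, and multiplying by the central power $K_{\hat\omega_N}^{s}$ (using (\ref{Komegaformula})) only rescales weights, keeping the module finite; the discussion after (\ref{lambdaprime}) already records that the extra Cartan elements act semisimply with the same eigenspaces, so the adjoint action of all of $U_q(\mathfrak{gl}_N)$ preserves this decomposition. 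For $\subseteq$, the point is that these exhaust $\mathcal F(U_q(\mathfrak{gl}_N))$: any locally finite element has, in ${\rm gr}_{\mathcal G}U_q(\mathfrak{gl}_N)$, a leading term lying in some ${\rm gr}_{\mathcal G}U^+G^-K_\beta$, and the filtration/grading machinery from \cite{JL2} and Chapter 7 of \cite{Jo} shows such a graded piece is locally finite precisely when $\beta\in -Q^+ + (\text{the }\mathfrak{gl}_N\text{-analog of }-P^+)$, i.e. when $\beta=2\lambda+s\hat\omega_N$ up to the root lattice, which is absorbed by $({\rm ad}\,U_q(\mathfrak{sl}_N))$.

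Next I would prove the second equality in (\ref{equality1}), namely that one may replace the arbitrary exponent $s\in\mathbb Z$ by an odd exponent on $K_{\hat\omega_N}$ at the cost of enlarging the acting algebra from $U_q(\mathfrak{sl}_N)$ to $U_q(\mathfrak{gl}_N)$. Here the mechanism is (\ref{Komegaformula}): since $K_{\hat\omega_N}$ is central, $({\rm ad}\,U_q(\mathfrak{gl}_N))\cdot K_{2\lambda}K_{\hat\omega_N}^s = [({\rm ad}\,U_q(\mathfrak{gl}_N))\cdot K_{2\lambda}]K_{\hat\omega_N}^s$. Writing a general weight $2\lambda+s\hat\omega_N$ with $\lambda\in w_0\Lambda_N^+$: if $s$ is even, $K_{\hat\omega_N}^{s/2}\in U_q(\mathfrak{gl}_N)$ absorbs into $({\rm ad}\,U_q(\mathfrak{gl}_N))\cdot K_{2\lambda+2(s/2)\hat\omega_N}$ and $2\lambda+ (s)\hat\omega_N = 2(\lambda + (s/2)\hat\omega_N)$ with $\lambda+(s/2)\hat\omega_N$ still in $w_0\Lambda_N^+$ (since adding a nonnegative, or indeed any integer, multiple of $w_0\hat\omega_N=\hat\omega_N$ stays in the $\mathbb Z$-span used, and one checks the half-integer bookkeeping against the even/odd split); if $s$ is odd then after pulling out $K_{\hat\omega_N}^{s}$ the residual weight is $2\lambda$, giving the odd-$s$ summand. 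So the two right-hand sides describe the same subspace, just organized by parity of the $K_{\hat\omega_N}$-power. I would state this as a short bijection-of-index-sets argument together with directness, the latter following from weight considerations (distinct $(\lambda,s)$ give $({\rm ad}\,U_q(\mathfrak{gl}_N))$-modules with distinct extremal weights) exactly as in the $\mathfrak{sl}_N$ case.

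Then (\ref{equality2}) for $U_q(\mathfrak{gl}_N\oplus\mathfrak{gl}_N)$ is obtained by the tensor-product principle: $U_q(\mathfrak{gl}_N\oplus\mathfrak{gl}_N)\cong U_q(\mathfrak{gl}_N)\otimes U_q(\mathfrak{gl}_N)$ as Hopf algebras, the adjoint action is the tensor of the two adjoint actions, and $\mathcal F$ of a tensor product of Hopf algebras is the tensor product of the $\mathcal F$'s (an element is ad-finite iff each tensor leg is). Applying (\ref{equality1}) to each factor and distributing the direct sums gives both displayed forms, with $w_0\Lambda_N^+\times w_0\Lambda_N^+$ indexing the pairs and $(s,s')$ the two central exponents on $K_{\hat\omega_N}$ and $K_{\hat\omega_{2N}}$. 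The final sentence — that all equalities persist with $U_q(\mathfrak{gl}_N)$ replaced by $U_q(\mathfrak{sl}_N)$ (resp. the $\oplus$ versions) — is immediate from the remark after (\ref{lambdaprime}): the extra generators $K_{\epsilon_i}$ act semisimply on each $({\rm ad}\,U_q(\mathfrak{sl}_N))$-module with the same weight-space decomposition, so restricting the acting algebra does not change which elements are ad-finite nor the module decomposition, only the label on the acting algebra.

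The main obstacle I anticipate is the $\subseteq$ direction of the first equality in (\ref{equality1}), i.e. verifying that \emph{no other} weights $\beta$ contribute ad-finite elements. This requires transporting the $\mathfrak{sl}_N$ classification of locally finite modules ${\rm gr}_{\mathcal G}U^+G^-K_\beta$ — which is finite-dimensional exactly for $\beta$ in an appropriate cone — to the $\mathfrak{gl}_N$ setting, where the Cartan is a full rank-$N$ torus rather than rank $N-1$, so one must pin down precisely which $\beta\in\sum_i\mathbb Z\epsilon_i$ give finite modules and check this set is exactly $\{2\lambda+s\hat\omega_N:\lambda\in w_0\Lambda_N^+, s\in\mathbb Z\}$ modulo $Q$. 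The half-integer juggling between $\omega_i$, $\hat\omega_i$, and $\hat\omega_N/N$ (the reason $\check U_q(\mathfrak{gl}_N)$ was introduced) is where the bookkeeping is delicate and where I would be most careful, passing through $\check U_q(\mathfrak{gl}_N)$ so that $\check U_q(\mathfrak{sl}_N)$ sits inside and (\ref{lambdaprime}) can be invoked cleanly.
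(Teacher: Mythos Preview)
Your proposal is correct and follows essentially the same approach as the paper: reduce the second equality and (\ref{equality2}) and the final sentence to the first equality of (\ref{equality1}) via (\ref{Komegaformula}), the tensor decomposition, and semisimplicity of the extra Cartan elements, and then prove the first equality by working in ${\rm gr}_{\mathcal G}$ and characterizing which $K_\beta$ are ad-finite, lifting back by (reverse) induction on degree. One simplification worth noting: the paper invokes \cite{JL}, Lemma 6.1 directly, which says $K_\beta$ is ad-finite iff $(\beta,\alpha_i)$ is a nonpositive even integer for every $i$; this immediately gives $\beta\in\{2\lambda+s\hat\omega_N:\lambda\in w_0\Lambda_N^+,\ s\in\mathbb Z\}$ without any passage through $\check U_q(\mathfrak{gl}_N)$ or half-integer bookkeeping, so the obstacle you anticipated does not actually arise.
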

\begin{proof} 
Note that (\ref{equality2}) follows directly from (\ref{equality1}).  Hence we focus on proving the first equality (\ref{equality1}).  
Since $K_{\hat{\omega}_N}$ is central, the adjoint action respects the direct sum decomposition in  (\ref{ddsdecomp}). Hence
\begin{align}\label{Komegaformula2}\mathcal{F}(\check U_q(\mathfrak{gl}_N))=\bigoplus_{s\in \mathbb{Z}}\mathcal{F}(\check U_q(\mathfrak{sl}_N))K_{\hat{\omega}_N/N}^s
\end{align}
Thus (\ref{equality1}) follows from (\ref{Fdecomp}),(\ref{Komegaformula}), and (\ref{Komegaformula2}).

The final assertion follows from the facts that  the adjoint action of additional elements of the form $K_{\mu}$ in these Hopf algebras is semisimple with the same eigenspaces as that of the original Cartan subalgebra of $U_q(\mathfrak{sl}_N)$.  Thus the action of these extra elements preserve the decomposition into $({\rm ad}\ U_q(\mathfrak{sl}_N))$-modules.  \end{proof}

\subsection{The  ordinary enveloping algebra case}
\label{ordinary}
We use Theorem \ref{local-finite} in order to understand the locally finite part of the ordinary enveloping algebra $U_q(\mathfrak{gl}_N)$ and not just its simply connected version. By \cite{JL}, Lemma 6.1, $K_{\beta}$ admits a locally finite action if and only if $(\beta, \alpha_i)$ is a nonpositive even integer for $i=1, \dots, N-1$.  (Here, we are taking into account the slightly different definition of the quantized enveloping algebra used in \cite{JL}).  For the $\check U_q(\mathfrak{sl}_N)$ setting, this criteria translates to $K_{2\lambda}\in \mathcal{F}(\check U_q(\mathfrak{sl}_N))$ if and only if $\lambda \in P^+_N$.

Recall that $\hat\Lambda^+_N$ equals  the $\mathbb{N}$-linear span of the first $N-1$ partitions $\hat{\omega}_1,\dots, \hat{\omega}_{N-1}$ (see Section \ref{section:roots-and-weights})
and that $\hat\Lambda^+_N + \mathbb{N}\hat{\omega}_N=\Lambda^+_N$. Similarly, since $w_0\hat{\omega}_N=\hat{\omega}_N$, we have $w_0\hat\Lambda^+_N + \mathbb{N}\hat{\omega}_N=w_0\Lambda^+_N$.  Moreover, both of these can be viewed as  direct sums since $\hat{\omega}_N$ is linearly independent with the basis for $\hat{\Lambda}^+_N$.
We have
\begin{align*}
K_{\beta} \in \mathcal{F}(U_q(\mathfrak{gl}_N))
\end{align*}
and so \begin{align*}({\rm ad}\ U_q(\mathfrak{gl}_N))\cdot K_{\beta}\subset \mathcal{F}(U_q(\mathfrak{gl}_N))
\end{align*}
if and only if $\beta=2\gamma+s\hat{\omega}_N$ for some $\gamma  \in w_0{\hat{\Lambda}}^+_N$ and $s\in \mathbb{Z}$.  Here, we use $\mathbb{Z}$ instead of $\mathbb{N}$ since $K_{\tilde\omega_n}$ and its inverse are both in $U_q(\mathfrak{gl}_N)$.

Consider $\lambda =\sum_{i=1}^{N-1}\lambda_iw_0\omega_i$ and $\lambda'=\sum_{i=1}^{N-1}\lambda_iw_0\hat{\omega}_i$. We have the following $({\rm ad}\ U_q(\mathfrak{sl}_N))$-module isomorphism
\begin{align}\label{lambdaprime}
({\rm ad}\ U_q(\mathfrak{sl}_N))\cdot K_{2\lambda} \cong ({\rm ad}\ U_q(\mathfrak{sl}_N))\cdot K_{2\lambda'}
\end{align}
via the map sending $({\rm ad}\ a)\cdot K_{2\lambda}$ to $({\rm ad}\ a)\cdot K_{2\lambda'}$ for all $a\in U_q(\mathfrak{sl}_N)$. Note that both $K_{2\lambda}$ and $K_{2\lambda'}$ are elements of $\check U_q(\mathfrak{gl}_N)$ but they are not equal.  Indeed, they differ by a power of $K_{\hat{\omega}_N/N}$ which is a central element.  Thus we can ignore this difference when analyzing the adjoint module structure.  
Hence  the adjoint action of $U_q(\mathfrak{sl}_N)$ on $K_{2\lambda}$ for $\lambda = \sum_{i=1}^{N-1}\lambda_iw_0\omega_i \in w_0P^+_N$  agrees with the adjoint action of $U_q(\mathfrak{sl}_N)$ on $K_{\lambda'}$   for $\lambda' =\sum_{i=1}^{N-1}\lambda_iw_0\hat{\omega}_i\in w_0\hat{\Lambda}_N$.

   Using (\ref{lambdaprime}), we get an  isomorphism of $({\rm ad}\ U_q(\mathfrak{sl}_N))$-modules
\begin{align}\label{Fdecomp10}
 \bigoplus_{\lambda \in w_0P_N^+}({\rm ad}\ U_q(\mathfrak{sl}_N))\cdot K_{2\lambda}\cong  \bigoplus_{\lambda' \in -w_0\hat{\Lambda}_N^+}({\rm ad}\ U_q(\mathfrak{sl}_N))\cdot K_{2\lambda'}.
\end{align} 
By (\ref{Fdecomp}), the left hand side is just $\mathcal{F}(\check U_q(\mathfrak{sl}_N))$. On the other hand, the right hand side is contained in $\mathcal{F}(U_q(\mathfrak{gl}_N))$. (Moreover, this equality holds for  $({\rm ad}\ U_q(\mathfrak{sl}_N))$ replaced by $({\rm ad}\ U_q(\mathfrak{gl}_N))$ in (\ref{Fdecomp10}) since the result is the same vector space.) We can further enlarge the right hand side so that  it is isomorphic to $\mathcal{F}(U_q(\mathfrak{gl}_N))$. 
This uses the fact that  
$U^0(\mathfrak{gl}_N)$ is a free module over $\mathbb{C}(q)[K_{\lambda}^{\pm 1}|\ \lambda\in  w_0\hat{\Lambda}_N^+]$ with basis $K_{\hat\omega_N}^s, s\in\mathbb{N}$,
along with the  basic facts about the adjoint submodule $({\rm ad}\ U_q(\mathfrak{sl}_N))\cdot K_{2\lambda}$ of the locally finite part of $U_q(\mathfrak{sl}_N)$.

\begin{theorem}\label{ordin}
The locally finite subalgebra of $U_q(\mathfrak{gl}_N)$ admits the following decomposition into a direct sum of $({\rm ad}\ U_q(\mathfrak{gl}_N))$-modules
 \begin{align}\label{equalityoc}\mathcal{F}( U_q(\mathfrak{gl}_N)) &=\bigoplus_{\mu\in\check{\mathcal M}_N}({\rm ad}\ U_q(\mathfrak{gl}_N))\cdot K_{\mu}=\bigoplus_{\lambda \in w_0\hat{\Lambda}_N^+,\ s\in \mathbb{Z}}[({\rm ad}\ U_q(\mathfrak{gl}_N))\cdot K_{2\lambda}]K^s_{\hat{\omega}_N}.
 \end{align}
 Similarly, the locally finite subalgebra of $ U_q(\mathfrak{gl}_N\oplus\mathfrak{gl}_N)$ can be written as 
\begin{align}\label{equalityoc2}
\mathcal{F}(U_q(\mathfrak{gl}_N\oplus\mathfrak{gl}_N))&=\bigoplus_{\lambda  \in (w_0\hat{\Lambda}_N^+)
\times  (w_0\hat{\Lambda}_N^+), s,s'\in \mathbb{Z}}
 ({\rm ad}\ U_q(\mathfrak{gl}_N\oplus\mathfrak{gl}_N))\cdot K_{2\lambda+s\hat{\omega}_N+s'{\hat{\omega}_{2N}}}
\cr&=\bigoplus_{\lambda  \in (w_0\hat{\Lambda}_N^+)
\times  (w_0\hat{\Lambda}_N^+), s,s'\in \mathbb{Z}}
 [({\rm ad}\ U_q(\mathfrak{gl}_N\oplus\mathfrak{gl}_N))\cdot K_{2\lambda}]K^s_{\hat{\omega}_N}K^{s'}_{\hat{\omega}_{2N}}
\end{align}

\end{theorem}

\begin{proof} Note that (\ref{equalityoc2}) follows directly from (\ref{equalityoc}). Moreover, the third (rightmost) equality in (\ref{equalityoc}) follows from (\ref{Komegaformula}).
Hence, we establish the theorem by proving the first equality of (\ref{equalityoc}).

Since $U_q(\mathfrak{gl}_N)$ is a Hopf subalgebra of $\check U_q(\mathfrak{gl}_N)$, it follows that \begin{align*}\mathcal{F}(U_q(\mathfrak{gl}_N))=U_q(\mathfrak{gl}_N)\cap 
\mathcal{F}(\check U_q(\mathfrak{gl}_N)).\end{align*} Using formula (\ref{omega}), for $\lambda\in w_0P^+_N$, we have   \begin{align*}
\lambda = \sum_{i=1}^{N-1}\lambda_iw_0\omega_i =\sum_{i=1}^{N-1}\lambda_iw_0\omega_i-(1/N)\hat{\omega}_N\end{align*}
Thus (\ref{equality1}) of Theorem  (\ref{local-finite}) is equivalent to 
\begin{align*}
\mathcal{F}(\check U_q(\mathfrak{gl}_N))&=\bigoplus_{\lambda \in w_0\hat{\Lambda}^+_N,\ s\in \mathbb{Z}}({\rm ad}\ U_q(\mathfrak{gl}_N))\cdot K_{2\lambda+(s/N)\hat{\omega}_N}.
\end{align*}
If $(s/N)\in \mathbb{Z}$, then $K_{2\lambda+(s/N) \hat{\omega}_N}\in U_q(\mathfrak{gl}_N)$ and hence $({\rm ad}\ U_q(\mathfrak{gl}_N))\cdot K_{2\lambda+(s/N)\hat{\omega}_N}\subseteq\mathcal{F}(U_q(\mathfrak{gl}_N).$ Thus \begin{align*} \bigoplus_{\lambda  \in w_0\hat{\Lambda}_N^+, s\in \mathbb{Z}}({\rm ad}\ U_q(\mathfrak{sl}_N))\cdot K_{2\lambda+s\hat{\omega}_N}\subseteq \mathcal{F}(U_q(\mathfrak{gl}_N)).\end{align*}  On the other hand, if $(s/N)\notin \mathbb{Z}$ then $K_{2\lambda+(s/N) \hat{\omega}_N}\notin U_q(\mathfrak{gl}_N).$ Therefore, we have a strict inclusion
\begin{align}\label{inclusion}({\rm ad}\ U_q(\mathfrak{gl}_N))\cdot K_{2\lambda+(s/N)\hat{\omega}_N}\cap \mathcal{F}(U_q(\mathfrak{gl}_N))\subsetneq ({\rm ad}\ U_q(\mathfrak{gl}_N))\cdot K_{2\lambda+(s/N)\hat{\omega}_N}.
\end{align}
As explained in  \cite{L2002} (see \cite{HK}, Theorem 3.9 and Corollary 3.10), $ ({\rm ad}\ U_q(\mathfrak{sl}_N))\cdot K_{2\lambda}$ is simple as an ad-invariant  left coideal for each $\lambda\in P^+_N$ and hence so is $ ({\rm ad}\ U_q(\mathfrak{gl}_N))\cdot K_{2\lambda}$.  By (\ref{lambdaprime}) and (\ref{Komegaformula}), $ ({\rm ad}\ U_q(\mathfrak{gl}_N))\cdot K_{2\lambda'+(s/N)\hat{\omega}_N}$ is isomorphic to $ ({\rm ad}\ U_q(\mathfrak{gl}_N))\cdot K_{2\lambda}$ as an ad-invariant left coideal where $\lambda =\sum_{i=1}^{N-1}\lambda_iw_0\omega_i$ and $\lambda' =\sum_{i=1}^{N-1}\lambda_iw_0\hat{\omega}_i$. In particular, for each $\lambda'\in w_0\hat{\Lambda}_N^+$ and $s\in \mathbb{Z}$, $ ({\rm ad}\ U_q(\mathfrak{gl}_N))\cdot K_{2\lambda'+(s/N)\hat{\omega}_N}$ is also a simple ad-invariant left coideal.  Since these ad-invariant left coideals are simple and holds for $s/N\notin {\mathbb{Z}}$, this  guarantees  that  the left hand side of (\ref{inclusion}) is equal to zero.  The theorem follows.
\end{proof}

\subsection{A special subalgebra}\label{subsection:special}

Let $U^2_q(\mathfrak{gl}_N)$ denote the subalgebra of $U_q(\mathfrak{gl}_N)$ generated by $$E_i, F_iK_i,K_i^{2} {\rm \ for\ } i=1, \dots, N-1 {\rm\  and\ }K_{2w_0\hat{\omega}_j}
{\rm \ for\ } j=1, \dots, N.$$ Recall (Section \ref{section:roots-and-weights}) that $Q^+_N$ is the $\mathbb{N}$-linear span of the positive simple roots. Note that 
 $E_iF_iK_i - q^{-2}F_iK_iE_i $ is a scalar multiple of $(K_i^2-1)$. Set $U^0_2(\mathfrak{gl}_N)=U^2_q(\mathfrak{gl}_N)\cap U^0(\mathfrak{gl}_N)$.  It follows that $U_2^0(\mathfrak{gl}_N)$ equals
the polynomial  ring \begin{align}\label{A2cartan}U^0_2(\mathfrak{gl}_N)=\mathbb{C}(q)[K_i^2, K_{2w_0\hat{\omega}_j}|\  i=1,\dots, N-1,j=1,\dots, N]=\mathbb{C}(q)[K_{2\lambda}|\ \lambda\in Q^+_N+w_0{\Lambda}^+_N]\end{align}  
 One checks from the formulas for the comultiplication $\Delta$ given in Section \ref{section:qea} that $U^2_q(\mathfrak{gl}_N)$ is a left coideal subalgebra of $U_q(\mathfrak{gl}_N)$.
Moreover, the formulas for the adjoint action (Section \ref{section:lfs}) ensure that $U^2_q(\mathfrak{gl}_N)$ is an $({\rm ad}\ U_q(\mathfrak{gl}_N))$-submodule of $U_q(\mathfrak{gl}_N)$.   It further follows that 
\begin{align} \label{u2expansion} U^2_q(\mathfrak{gl}_N) \subseteq \bigoplus_{\lambda\in Q^+_N+w_0\Lambda_N^+}U^+G^-K_{2\lambda}
\end{align}
where $G^-$ is the subalgebra of $U_q(\mathfrak{gl}_N)$ generated by $F_iK_i$ for $ i=1,\dots, N-1$.  

Since $Q_N$ is the root lattice, it has a nontrivial intersection with the dominant integral weights $P^+_N$, as well as their image, $w_0P^+_N$, under $w_0$.  However, the same does not hold for the intersection with $w_0\Lambda^+_N$.  Indeed, for $\lambda\in Q_N\cap w_0P^+_N$, we can write $\lambda = \lambda' + s{\hat{\omega}}_N$ for an appropriate $\lambda'\in w_0\hat{\Lambda}^+_N$ and $s\in {(1/N)}\mathbb{N}$ based on equation (\ref{omega}).  Multiplying $\lambda$ by $N$, we see that $N\lambda \in w_0\Lambda^+_N$ but $\hat{\omega}_N$ is not in $Q_N$.  Thus  $Q_N\cap w_0\Lambda^+_N=0$.

Set $\mathcal{F}(U^2_q(\mathfrak{gl}_N))$ equal to the locally finite part of $U^2_q(\mathfrak{gl}_N)$.

\begin{lemma}\label{lemma:U2result} The space $\mathcal{F}(U^2_q(\mathfrak{gl}_N))$ equals the intersection $U^2_q(\mathfrak{gl}_N)\cap \mathcal{F}(U_q(\mathfrak{gl}_N))$.  Hence  $\mathcal{F}(U^2_q(\mathfrak{gl}_N))$ is a  left coideal subalgebra of $\mathcal{F}(U_q(\mathfrak{gl}_N))$ and has the direct sum decomposition
\begin{align}\label{form1}
\mathcal{F}( U^2_q(\mathfrak{gl}_N))&=\bigoplus_{\lambda \in w_0{\Lambda}^+_N}({\rm ad}\ U_q(\mathfrak{gl}_N))\cdot K_{2\lambda} = \bigoplus_{\lambda \in w_0{\hat\Lambda}^+_N, s\in {\mathbb{N}}}({\rm ad}\ U_q(\mathfrak{gl}_N))\cdot K_{2\lambda +2s\hat{\omega}_N}.
\end{align}
\end{lemma}
\begin{proof}
Since $U^2_q(\mathfrak{gl}_N)$ is a subalgebra of $U_q(\mathfrak{gl}_N)$, it follows that the locally finite part of  $U^2_q(\mathfrak{gl}_N)$ is the intersection of $U^2_q(\mathfrak{gl}_N)$ with the locally finite part of  $U_q(\mathfrak{gl}_N)$. This proves the first assertion. The second equality follows from the fact that $w_0\Lambda_N^+= w_0\hat{\Lambda}_N^+ + 2\mathbb{N}\hat{\omega}_N$.  

Since both  $U^2_q(\mathfrak{gl}_N)$ and $ \mathcal{F}( U_q(\mathfrak{gl}_N))$ are left coideal subalgebras of  $U_q(\mathfrak{gl}_N)$, so is their intersection.  Hence we can write $ \mathcal{F}( U^2_q(\mathfrak{gl}_N))=U^2_q(\mathfrak{gl}_N)\cap \mathcal{F}( U_q(\mathfrak{gl}_N))$  as a direct sum of simple ad-invariant left coideals.  As explained in the proof of Theorem \ref{ordin}, these 
simple ad-invariant left  coideals take the form $$({\rm ad}\ U_q(\mathfrak{gl}_N))K_{2\lambda+s\hat{\omega}_N}$$ where $\lambda \in \hat{\Lambda}_N^+$ and $s\in \mathbb{N}$. Moreover,
arguing as in the proof of Theorem \ref{ordin},  
\begin{align*}
({\rm ad}\ U_q(\mathfrak{gl}_N))K_{2\lambda+s\hat{\omega}_N}\subset \mathcal{F}(U^2_q(\mathfrak{gl}_N))
\end{align*} if and only if $K_{2\lambda+s\hat{\omega}_N}\in \mathcal{F}(U^2_q(\mathfrak{gl}_N)).$ Thus $K_{2\lambda+s\hat{\omega}_N}$ must be an element of $U^2_q(\mathfrak{gl}_N)$ and generates a locally finite ad-invariant simple module  by applying $({\rm ad}\ U_q(\mathfrak{gl}_N))$ as explained at the beginning of Section \ref{ordinary}. It follows that $2\lambda +s\hat{\omega}_N \in  2w_0\hat{\Lambda}_N^++2\mathbb{N}\hat{\omega}_N=2w_0\Lambda^+_N$. \end{proof}

 Define $U^2_q(\mathfrak{gl}_N\oplus \mathfrak{gl}_N)$ as the subalgebra of $U_q(\mathfrak{gl}_N\oplus \mathfrak{gl}_N)$ generated by $E_i, F_iK_i, K_i^2,$ for $ i=1,\dots, N-1$ and 
$ i=N+1, \cdots,  2N-1, K_{2\hat{\omega}_N},$
and $K_{2\hat{\omega}_{2N}}$. Note that $U^2_q(\mathfrak{gl}_N\oplus \mathfrak{gl}_N)$ can be identified with $U^2_q(\mathfrak{gl}_N)\otimes U^2_q( \mathfrak{gl}_N).$ Using this identification, Lemma \ref{lemma:U2result} ensures that  analogous results holds for $U^2_q(\mathfrak{gl}_N\oplus \mathfrak{gl}_N)$.

\subsection{Mapping to the quantum Weyl algebra}\label{subsection:mapping}

Set $U^2_q(\mathfrak{g})$ equal to $U^2_q(\mathfrak{gl}_n)$ in Type AI, 
$U^2_q(\mathfrak{gl}_{2n})$ in Type AII, 
and $U^2_q(\mathfrak{gl}_n\oplus \mathfrak{gl}_n)$ in the diagonal case. For Type AII, note that $K_{2\omega_N}= K_{4\eta_N}$.  This would suggest that we need a slightly larger algebra in order to  get the correct image under the restricted Harish-Chandra map in Section \ref{section:center}.   However, the arguments in this case show that 
$K_{2\eta_N}=K_{\omega_N}$ is also in this algebra. 
By Lemma \ref{lemma:U2result} and subsequent discussion, we have that $\mathcal{F}(U^2_q(\mathfrak{g}))= \mathcal{F}(U_q(\mathfrak{g}))\cap U^2_q(\mathfrak{g})$
in all three cases.

By Proposition \ref{prop:orthogonal}, $\mathscr{P}_{\theta}$ is a faithful $\mathscr{PD}_{\theta}$-module.  It follows that $\phi$ is an injective algebra map and so as algebras, $\mathscr{PD}_{\theta}$ is isomorphic to 
$\phi(\mathscr{PD}_{\theta})$.  This allows us to define an algebra map directly from $\mathcal{F}(U^2_q(\mathfrak{g}))$ to $\mathscr{PD}_{\theta}$ that is compatible with the action on $\mathscr{P}_{\theta}$.  
In the discussion below, we directly identify the image under $\phi$ with an element of $\mathscr{PD}_{\theta}$, thus dropping the notation $\phi$ going forward.

\begin{theorem} \label{theorem:lfp}The image $\psi(\mathcal{F}(U^2_q(\mathfrak{g}))$ in ${\rm End}\ \mathscr{P}_{\theta}$ is an  $({\rm ad}\ U_q(\mathfrak{g}))$-submodule algebra of $\mathscr{PD}_{\theta}$.
\end{theorem}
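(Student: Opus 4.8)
The plan is to deduce the statement from two facts already in hand, glued together by an elementary observation about the two action maps. The first fact, from Proposition \ref{prop:locally-finite}, is that $\phi(\mathscr{PD}_{\theta})$ is simultaneously a subalgebra and an $({\rm ad}\ U_q(\mathfrak{g}))$-submodule of ${\rm End}\ \mathscr{P}_{\theta}$. The second, from Proposition \ref{prop:K_epsilon}, is that $\psi(K_{2\epsilon_r+\cdots+2\epsilon_N})\in\phi(\mathscr{PD}_{\theta})$ for the specified ranges of $r$ and $N$. The observation that connects them is that $\psi\colon U_q(\mathfrak{g})\to{\rm End}\ \mathscr{P}_{\theta}$ is at once an algebra homomorphism and a map of $({\rm ad}\ U_q(\mathfrak{g}))$-modules: since the adjoint action on ${\rm End}\ \mathscr{P}_{\theta}$ is $({\rm ad}\ u)\cdot b=\sum \psi(u_{(1)})\,b\,\psi(S(u_{(2)}))$ (Section \ref{section:action}), multiplicativity of $\psi$ gives $({\rm ad}\ u)\cdot\psi(v)=\psi\bigl(\sum u_{(1)}vS(u_{(2)})\bigr)=\psi(({\rm ad}\ u)\cdot v)$ for all $u,v\in U_q(\mathfrak{g})$.

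First I would identify the Cartan generators of $\mathcal{F}_2(U_q(\mathfrak{g}))$ with the elements covered by Proposition \ref{prop:K_epsilon}. In each of the three cases the weights $2\lambda$ with $\lambda\in w_0\Lambda^+_N$ (respectively $\lambda\in w_0\Lambda^+_n\times w_0\Lambda^+_n$ in the diagonal case) are exactly the $\mathbb{N}$-linear combinations of the weights $2w_0\hat{\omega}_i$, and $2w_0\hat{\omega}_i=2\epsilon_{N-i+1}+\cdots+2\epsilon_N$ is precisely of the form occurring in Proposition \ref{prop:K_epsilon}; in the diagonal case the generators split along the two copies of $\mathfrak{gl}_n$, matching the ranges $r=1,\dots,n$ with $N=n$ and $r=n+1,\dots,2n$ with $N=2n$ in that proposition. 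Since $\psi$ is an algebra homomorphism and $\phi(\mathscr{PD}_{\theta})$ is a subalgebra of ${\rm End}\ \mathscr{P}_{\theta}$, factoring $K_{2\lambda}=\prod_i\bigl(K_{2w_0\hat{\omega}_i}\bigr)^{m_i}$ (with $2\lambda=\sum_i2m_iw_0\hat{\omega}_i$) yields $\psi(K_{2\lambda})=\prod_i\psi\bigl(K_{2w_0\hat{\omega}_i}\bigr)^{m_i}\in\phi(\mathscr{PD}_{\theta})$ for every such $\lambda$.

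Next, by the definition of $\mathcal{F}_2(U_q(\mathfrak{g}))$ it is the $({\rm ad}\ U_q(\mathfrak{g}))$-module spanned by the elements $({\rm ad}\ u)\cdot K_{2\lambda}$. Applying the intertwining identity, $\psi\bigl(({\rm ad}\ u)\cdot K_{2\lambda}\bigr)=({\rm ad}\ u)\cdot\psi(K_{2\lambda})$, and the right-hand side lies in $({\rm ad}\ u)\cdot\phi(\mathscr{PD}_{\theta})\subseteq\phi(\mathscr{PD}_{\theta})$ because $\phi(\mathscr{PD}_{\theta})$ is an $({\rm ad}\ U_q(\mathfrak{g}))$-submodule. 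Hence $\psi(\mathcal{F}_2(U_q(\mathfrak{g})))\subseteq\phi(\mathscr{PD}_{\theta})$. Finally, $\mathcal{F}_2(U_q(\mathfrak{g}))$ is a subalgebra of $U_q(\mathfrak{g})$ by Lemma \ref{lemma:U2result} and is an $({\rm ad}\ U_q(\mathfrak{g}))$-submodule by its very definition; since $\psi$ respects both structures, $\psi(\mathcal{F}_2(U_q(\mathfrak{g})))$ is a subalgebra and an $({\rm ad}\ U_q(\mathfrak{g}))$-submodule of ${\rm End}\ \mathscr{P}_{\theta}$, and therefore of $\phi(\mathscr{PD}_{\theta})$.

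The only substantive input is Proposition \ref{prop:K_epsilon}, whose proof already carried out the real computation; the remaining work is purely organizational. The point I would be most careful about is the bookkeeping that distinguishes the restricted fundamental weights $\hat{\eta}_i$ from the unrestricted $\hat{\omega}_i$ across the three families, and the verification that the weights $2w_0\hat{\omega}_i$ genuinely $\mathbb{N}$-span $2w_0\Lambda^+_N$ (and that in the diagonal case these split cleanly over the two copies so as to match the two ranges in Proposition \ref{prop:K_epsilon}). Beyond that I do not anticipate any obstacle.
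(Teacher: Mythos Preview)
Your proposal is correct and follows essentially the same route as the paper's proof: use Proposition \ref{prop:K_epsilon} to place $\psi(K_{2w_0\hat\omega_i})$ inside $\phi(\mathscr{PD}_\theta)$, then propagate via the algebra and $({\rm ad}\ U_q(\mathfrak g))$-module structures using Proposition \ref{prop:locally-finite} and Lemma \ref{lemma:U2result}. Your write-up is in fact slightly more explicit than the paper's---you spell out the intertwining identity $\psi(({\rm ad}\ u)\cdot v)=({\rm ad}\ u)\cdot\psi(v)$ and the factorization $K_{2\lambda}=\prod_i K_{2w_0\hat\omega_i}^{m_i}$---but these are exactly the steps the paper compresses into one sentence.
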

\begin{proof} By Lemma \ref{lemma:epsilon} and Lemma \ref{lemma:K_epsilon}, $\psi(K)\in \mathscr{PD}_{\theta}$ for $K = K_{2\epsilon_i+\cdots +2\epsilon_N}$  for $i=1, \dots, N$ where $N=n$  in Type AI and $N=2n$  in Type AII.  For the diagonal 
case, $\psi(K)\in \mathscr{PD}_{\theta}$
 for $K = K_{2\epsilon_i+\cdots +2\epsilon_n}$ and $K=K_{2\epsilon_{i+n}+\cdots + 2\epsilon_{2n}}$ for $i=1, \dots, n$.  The proof now follows from the fact  that for each of the 
three families, 
$\mathcal{F}(U^2_q(\mathfrak{g}))$ is an $({\rm ad}\ U_q(\mathfrak{g}))$-module generated by these elements, $\mathcal{F}(U^2_q(\mathfrak{g}))$ is  an algebra (this is just Lemma \ref{lemma:U2result}), that $\psi$ is an $({\rm ad}\ U_q(\mathfrak{g}))$-module algebra map, and that, by Proposition \ref{prop:locally-finite}, $\mathscr{PD}_{\theta}$ is an $({\rm ad}\ U_q(\mathfrak{g}))$-submodule algebra of ${\rm End}\ \mathscr{P}_{\theta}$. 
\end{proof}

We have the following consequence of the previous theorem which relies on Proposition \ref{prop:locally-finite} relating $U_q(\mathfrak{g})$-module structures.

\begin{corollary} \label{corollary:upsilon} There is a unique $U_q(\mathfrak{g})$-module algebra homomorphism $\Upsilon$ from $\mathcal{F}(U^2_q(\mathfrak{g}))$ to $\mathscr{PD}_{\theta}$ such that $$\Upsilon(a) = \psi(a){\quad {\rm and}\quad }\Upsilon(({\rm ad}\ u)\cdot a) = u\cdot \Upsilon(a)$$ for all $a\in  \mathcal{F}(U^2_q(\mathfrak{g}))$ and $u\in U_q(\mathfrak{g})$
where the module structure on $\mathcal{F}(U^2_q(\mathfrak{g}))$ is defined by the (left) adjoint action and the module structure on $\mathscr{PD}_{\theta}$ comes from the left action.
\end{corollary}

\section{The center of  $U_q(\mathfrak{g})$ and related algebras}
\label{section:center}
\subsection{Basis for the center}\label{section:basis-center}
We recall here basic properties of  $U_q(\mathfrak{sl}_N)$ and its center (a good reference is \cite{Jo}) and then transfer these results to other settings of interest. 
 By Section 7.1 of (\cite{Jo}), each $({\rm ad}\ U_q(\mathfrak{sl}_N))$-module of the form
$({\rm ad}\ U_q(\mathfrak{sl}_N))\cdot K_{2\mu}$ for $\mu\in -P_N^+$ contains a unique (up to nonzero scalar multiple) central element which we denote by $z_{2\mu}$.  Moreover, 
the set $\{z_{2\mu}|\ \mu\in -P^+_N\}$ forms a basis for the center $Z(\check U_q(\mathfrak{sl}_N))$ of $\check U_q(\mathfrak{sl}_N)$.  This extends easily to 
$\check U_q(\mathfrak{sl}_N\oplus \mathfrak{sl}_N)$ with basis for the center equal to  $\{z_{2\mu}|\ \mu\in -(P^+_N\times P^+_N)\}$.

The arguments in \cite{Jo} also apply to the $({\rm ad}\ U_q(\mathfrak{sl}_N))$-modules of the form
$({\rm ad}\ U_q(\mathfrak{sl}_N))\cdot K_{2\mu+c(\hat{\omega}_N/N)}$ where $\mu\in w_0\Lambda^+_N$ and $c\in \mathbb{Z}$. In particular, the $({\rm ad}\ U_q(\mathfrak{sl}_N))$-module 
$({\rm ad}\ U_q(\mathfrak{sl}_N))\cdot K_{2\mu+c\hat{\omega}_N/N}$ for $\mu\in w_0\Lambda^+_N$ and $c\in \mathbb{Z}$ contains a unique (up to nonzero scalar multiple) central element of $\check U_q(\mathfrak{gl}_N)$ which we denote by $z_{2\mu+c(1/N)\hat{\omega}_N}$.
  Moreover, it follows from (\ref{Komegaformula}) that 
  \begin{align*}z_{2\mu+c(1/N)\hat{\omega}_N} = z_{2\mu}K_{(\hat{\omega}_N)/N}^c
  \end{align*}
  for all $\mu\in w_0\Lambda_N^+$ and $c\in \mathbb{Z}$.   
  Hence,  the decomposition of the locally finite subalgebra in Theorem \ref{local-finite} ensures that the set $\{z_{2\mu}K_{(\hat{\omega}_N)/N}^c|\ \mu\in w_0\Lambda^+_N, c\in \mathbb{Z}\}$ forms a basis for the center, $Z(\check U_q(\mathfrak{gl}_N))$, of $\check U_q(\mathfrak{gl}_N).$

Recall the decomposition of the locally finite part of $U^2_q(\mathfrak{g})$ given in Lemma \ref{lemma:U2result}. The arguments in \cite{Jo} also apply to the $({\rm ad}\ U_q(\mathfrak{sl}_N))$-modules of the form
$({\rm ad}\ U_q(\mathfrak{sl}_N))\cdot K_{2\mu+2c\hat{\omega}_N}$ where $\mu\in w_0\hat{\Lambda}^+_N$ and $c\in \mathbb{N}$. In particular, the $({\rm ad}\ U_q(\mathfrak{sl}_N))$-module 
$({\rm ad}\ U_q(\mathfrak{sl}_N))\cdot K_{2\mu+2c\hat{\omega}_N}$ for $\mu\in w_0\hat{\Lambda}^+_N$ and $c\in \mathbb{N}$ contains a unique (up to nonzero scalar multiple) central element of $U_q(\mathfrak{gl}_N)$ which we denote by $z_{2\mu+2c\hat{\omega}_N}$.
  Moreover, it follows from (\ref{Komegaformula}) that 
  \begin{align*}z_{2\mu+2c\hat{\omega}_N} = z_{2\mu}K_{\hat{\omega}_N}^{2c}
  \end{align*}
  for all $\mu\in w_0\hat{\Lambda}_N^+$ and $c\in \mathbb{N}$.   
  Hence,  the decomposition of the locally finite subalgebra of $U^2_q(\mathfrak{g})$ in Lemma \ref{lemma:U2result} ensures that the set $\{z_{2\mu}K_{\hat{\omega}_N}^{2c}|\ \mu\in w_0\hat{\Lambda}^+_N, c\in \mathbb{N}\}$ forms a basis for the center, $Z(U^2_q(\mathfrak{g}))$, of $U^2_q(\mathfrak{g})$ in Types AI and AII.  In the diagonal setting, the basis looks like 
$\{z_{2\mu}K_{\hat{\omega}_n}^{2c}K_{\hat{\omega}_{2n}}^{2c'}\ \mu\in (w_0\hat{\Lambda}^+_N)\times (w_0\hat{\Lambda}^+_N), c,c'\in \mathbb{N}\}$.  We focus here on Types AI and AII and use these observations to establish the same results in the diagonal case.     Indeed, the results described so far in this section extend in a straightforward manner to the center of $U_q(\mathfrak{gl}_N\oplus \mathfrak{gl}_N)$
  with $P^+_N$ replaced by $P^+_N\times P^+_N$, $w_0\hat{\Lambda}^+_N$ replaced by $w_0\hat{\Lambda}^+_N\times w_0\hat{\Lambda}^+_N$, $c$ replaced by a pair of
  natural numbers
   $c$ and $c'$, and $K_{\hat{\omega}_N}^c$ replaced by 
  $K_{\hat{\omega}_N}^cK_{\hat{\omega}_{2N}}^{c'}$.

The description of the basis for the center of $\check U_q(\mathfrak{gl}_N)$  combined with  Lemma \ref{lemma:U2result} and the subsequent discussion implies that 
$Z(U^2_q(\mathfrak{g})) = \sum_{\lambda} \mathbb{C}(q)z_{2\lambda}$
where $\lambda$ runs over elements in $w_0\Lambda^+_n$ in Type AI,   $w_0\Lambda^+_{2n}$ in Type AII and 
$w_0\Lambda^+_n\times w_0\Lambda^+_n $ in the diagonal setting.

\subsection{Harish-Chandra maps}\label{section:HCMs}
We start with the   Harish-Chandra map defined for the simply connected quantized enveloping algebra $\check{U}_q(\mathfrak{sl}_N)$, a projection map based on a direct sum decomposition  in \cite{L}, Chapter 3.  In particular,  the Harish-Chandra map,  $\varphi_{HC}$ of the simply connected version $\check U_q(\mathfrak{sl}_N)$ 
is the projection onto the first component  $\check{U}^0(\mathfrak{sl}_N)$ of the direct sum decomposition 
\begin{align}\label{HC1} \check{U}_q(\mathfrak{sl}_N) = \check{U}^0(\mathfrak{sl}_N)\oplus (G^-_+\check{U}_q(\mathfrak{sl}_N) + \check{U}_q(\mathfrak{sl}_N)U^+_+).
\end{align}
where $\check U^0(\mathfrak{sl}_N), U^+_+$ are defined in Section \ref{section:qea} and $G^-$ is the subalgebra  generated by $F_iK_i, i=1,\dots, N-1$ (with  $G^-_+$ its augmentation ideal)   as defined in Section \ref{subsection:special}.
(This is just \cite{L}, (3.3) where the map $\varphi_{HC}$ is called $\mathcal{P}$.)

Using the  simply connected version of $U_q(\mathfrak{gl}_N$)   introduced in Section \ref{subsection:simply-conn}, we can extend the above decomposition to 
\begin{align}\label{HC2} \check{U}_q(\mathfrak{gl}_N) = \check{U}^0(\mathfrak{gl}_N)\oplus (G^-_+\check{U}_q(\mathfrak{gl}_N) + \check{U}_q(\mathfrak{gl}_N)U^+_+)
\end{align} where $\check{U}^0(\mathfrak{gl}_N)$ is equal to the Laurent polynomial ring $ \mathbb{C}(q)[(K_{\hat{\omega}_i}K^{-1}_{(i/N)\hat{\omega}_N})^{\pm 1}|\ i=1,\cdots, N-1]$.
 Note that this direct sum decomposition restricts to a direct sum decomposition on subalgebras of $\check U_q(\mathfrak{gl}_N)$ including ordinary quantized enveloping algebra $U_q(\mathfrak{gl}_N)$, and  more importantly, the special algebra  $U^2_q(\mathfrak{g})$  introduced in Section \ref{subsection:special}:
 \begin{align}\label{decomp7} {U}^2_q(\mathfrak{g}) = {U}_2^0(\mathfrak{g})\oplus (G^-_+U^2_q(\mathfrak{g}) + {U}^2_q(\mathfrak{g})U^+_+).
\end{align} As explained in Section \ref{subsection:special},  ${U}_2^0(\mathfrak{g})=\mathbb{C}(q)[K_{2\lambda}|\ \lambda\in Q^+_N+w_0{\Lambda}^+_N]$.  Note also that it is straightforward to write similar direct sum decompositions for the analogous algebras in the diagonal setting.

In each of the  decompositions (\ref{HC1}), (\ref{HC2}), (\ref{decomp7}) for the algebra on the left side, we call the first summand its Cartan subalgebra.
 We will denote the projection onto the Cartan subalgebra for each decomposition as the Harish-Chandra map $\varphi_{HC}$. We are using the same notation as for the first projection for  the well studied $\check U_q(\mathfrak{sl}_N)$ because all these decompositions lead to compatible projection maps due to obvious inclusions. 
 
Recall that the restricted root system $\Sigma$ is the root system with set of simple roots $\alpha_1^{\Sigma},\dots, \alpha^{\Sigma}_{n-1}$  and fundamental weights  $\eta_1,\dots, \eta_{n-1}$ as described in detail in Section \ref{section:restricted-root-system}.  
Set $$\check{\mathcal{A} }= \{K_{u}|\mu \in P_{\Sigma}\}.$$ 
This group is defined in Section 3 of \cite{L} (see the top of page 24 
in \cite{L}).  Write $\mathbb{C}(q)[\check {\mathcal{A}}]$
 for the group algebra of $\check{\mathcal{A}}$. 
The Cartan subalgebra $\check{U}^0_q(\mathfrak{sl}_N)$ of $\check U_q(\mathfrak{sl}_N)$ is a subset of the following direct sum decomposition (\cite{L}, (3.5))
\begin{align}\label{rhcmap}
\check{U}^0_q(\mathfrak{sl}_N)\subseteq \mathbb{C}(q)[\check{\mathcal{A}}]\oplus \mathbb{C}(q)[\check{\mathcal{A}}]\mathbb{C}(q)[\check{T}_{\theta}]_+
\end{align}
where $\mathbb{C}(q)[\check{T}_{\theta}]$ is the group algebra associated to the group $\{K_{(\mu +\theta(\mu))/2)}|\ \mu\in P_N\}$. As in \cite{L} (right after (3.5)), let $\tilde{\mathcal{P}} $ be the projection of $\check{U}^0_q(\mathfrak{sl}_N)$ onto  $\mathbb{C}(q)[\check{\mathcal{A}}]$.  

Note that this projection map can be described in the following 
 alternative way:
 \begin{align} \label{tilde-mu}\sum_{\mu}c_{\mu}K_{\mu} \mapsto \sum_{\mu}c_{\mu}K_{\tilde{\mu}}
 \end{align}
 where $\mu$ are elements  in $P^+_N$, the $c_{\mu}$ are scalars and $\tilde{\mu}$ is the restricted weight defined by $\mu$. 
 
 Let $\tilde{\varphi}_{HC}$ denote the projection of $\check U_q(\mathfrak{sl}_N)$ onto $\mathbb{C}(q)[\check{\mathcal{A}}]$ defined by taking the composition of the Harish-Chandra map $\varphi_{HC}$ with  the projection $\tilde{\mathcal{P}}$.  
Note that $\tilde{\varphi}_{HC}$ corresponds to $\tilde{\mathcal{P}}\circ\mathcal{P}$ of \cite{L}  (see beginning of \cite{L} Section 6, especially Lemma 6.1).    The map $\mathcal{P}$ is the Harish-Chandra map for 
the simply connected quantized enveloping algebra $\check{U}_q(\mathfrak{sl}_N)$ in this reference.

It should also be noted that a more general restricted Harish-Chandra map is defined in \cite{L} using a quantum analog of the Iwasawa decomposition.  Upon restriction to the center, this more general restricted Harish-Chandra map agrees with $\tilde{\varphi}_{HC}$ (see \cite{L}, Lemma 6.1).  Since we are only worried about the image of central elements under the restricted Harish-Chandra map, we just use $\tilde{\varphi}_{HC}$ and do not consider the more general version.

Using the  simply connected version of $U_q(\mathfrak{gl}_N$)   introduced in Section \ref{subsection:simply-conn}, we can extend the above decomposition to 
\begin{align}\label{rhcmap5}
\check{U}^0_q(\mathfrak{gl}_N)\subseteq  
\mathbb{C}(q)[\check{\mathcal{A}}(\mathfrak{gl}_N)]
\oplus \mathbb{C}(q)[\check{\mathcal{A}}(\mathfrak{gl}_N)]
\mathbb{C}(q)[\check{T}(\mathfrak{gl}_N)_{\theta}]_+
\end{align}
where $\check{\mathcal{A}}(\mathfrak{gl}_N)$ equals the group generated by 
\begin{itemize} 
\item  $K_{\hat{\eta}}$ for $\eta\in \hat{\Lambda}^+_{\Sigma}$ and $K_{(1/N)\hat{\eta}_n}$ for Type AI 
\item $K_{\hat{\eta}}$ for $\eta\in \hat{\Lambda}^+_{\Sigma}$ and $K_{(1/N)\hat{\eta}_{2n}}$ for Type AII
 \item $K_{\hat{\eta}}$ for $\eta\in \hat{\Lambda}^+_{\Sigma}$, $K_{(1/N)\hat{\eta}_n}$,  $K_{(1/N)\hat{\eta}_{2n}}$ in the diagonal case.
 \end{itemize}
 Also $\check{T}(\mathfrak{gl}_N)_{\theta}=\{K_{(\mu+\theta(\mu))/2}| \mu\in Q^++w_0\Lambda_N^+\}$.  
Let $\tilde{\mathcal{P}}$ denote the projection of the Cartan subalgebra $ \check{U}^0_q(\mathfrak{gl}_N)$ onto $\mathbb{C}(q)[\check{\mathcal{A}}]$ using the direct sum decomposition (\ref{rhcmap5}) and set $\tilde{\varphi}_{HC}=\tilde{\mathcal{P}}\circ \varphi_{HC}$.  Recall that $\check U_q(\mathfrak{gl}_N)$ is a free module over $\check U_q(\mathfrak{sl}_N)$ (see Section \ref{subsection:simply-conn}) and the analogous result holds in the diagonal case.  So $\tilde{\mathcal{P}}$, $\varphi_{HC}$, and $\tilde{\varphi}_{HC}$ restrict to projections by the same name for  $\check U_q(\mathfrak{sl}_N)$ and $\check U_q(\mathfrak{sl}_N\times \mathfrak{sl}_N)$.

We can define a monoid that leads to a new polynomial ring. It is derived from the Cartan subalgebra of $U^2_q(\mathfrak{g})$ 
(notation  from the beginning of Section \ref{subsection:mapping}). 
This algebra will be the main focus of the current section.  Set 
\begin{align*}\mathcal{A}_2=\{K_{2\tilde\lambda}|\  \lambda \in Q^+_N+w_0\Lambda_N^+\}
\end{align*}
in Types AI and AII and 
\begin{align*}\mathcal{A}_2=\{K_{2\tilde\lambda}|\ \lambda \in (Q^+_N\times Q^+_N)+(w_0\Lambda_N^+\times w_0\Lambda_N^+)\}
\end{align*}
in the diagonal case.
 Another way to look at $\mathcal{A}_2$ is to view it as a monoid generated by restricted root system partitions  and the images of the $K_i^2$ under restriction as well.  This leads to the polynomial ring $\mathbb{C}(q)[\mathcal{A}_2]$, which by the description of the restricted weights for each type in Section \ref{section:restricted-root-system}, satisfies
 \begin{itemize} 
\item $\mathbb{C}(q)[\mathcal{A}_2]=\mathbb{C}(q)[K_{2w_0\hat{\eta}_i},K_{2\tilde{\alpha}_i}|\  i=1,\dots, n-1]$ for Type AI.
 \item $\mathbb{C}(q)[\mathcal{A}_2]=\mathbb{C}(q)[K_{2w_0\hat{\eta}_i},K_{2\tilde{\alpha}_{2i}}|\ i=1,\dots, n-1]$ for Type AII. 
 \item  $\mathbb{C}(q)[\mathcal{A}_2]=\mathbb{C}(q)[K_{2w_0\hat{\eta}_i},K_{2\tilde{\alpha}_i}|\ i=1,\dots, n-1]$ in the diagonal case.
\end{itemize}

Set $(T_{\theta})_2 =\{K_{\lambda+\theta(\lambda)}|\ \lambda \in Q^+_N+w_0\Lambda_N^+\} $ in Type AI and Type AII and 
$(T_{\theta})_2 =\{K_{\lambda+\theta(\lambda)}|\  \lambda \in (Q^+_N\times Q^+_N)+(w_0\Lambda_N^+\times w_0\Lambda_N^+)\} $ in the diagonal case.
Recall that $\tilde{\lambda}=(\lambda-\theta(\lambda))/2$ and so $K_{2\tilde{\lambda}}K_{(\lambda+\theta(\lambda))}=K_{2\lambda}$.  It follows that 
$K_{2\lambda}\in \mathcal{A}_2(T_{\theta})_2$. Hence, we have the following inclusion 
\begin{align}
\label{rhcmap13}
{U}^0_q(\mathfrak{g})\subseteq \mathbb{C}(q)[\mathcal{A}_2]\oplus \mathbb{C}(q)[\mathcal{A}_2]\mathbb{C}(q)[({T}_{\theta})_2]_+.
\end{align}
We define versions of  $\tilde{\mathcal{P}}$
 and the restricted Harish-Chandra map $\tilde{\varphi}_{HC}$ associated to $U^2_q(\mathfrak{g})$, keeping the notation  from the simply connected versions above.   
   In particular, set $\tilde{\mathcal{P}}$ equal to the projection of ${U}^0_q(\mathfrak{g})$ onto $\mathbb{C}(q)[\mathcal{A}_2]$ using (\ref{rhcmap13}).  Just as for $ \check U_q(\mathfrak{sl}_N)$, let $\tilde{\varphi}_{HC}$ denote the projection of $U_q^2(\mathfrak{g})$ onto $\mathbb{C}(q)[{\mathcal{A}}_2]$ defined by taking the composition of the Harish-Chandra map $\varphi_{HC}$ with  the projection $\tilde{\mathcal{P}}$.  Once again, we can describe $\tilde{\mathcal{P}}$ using a version of (\ref{tilde-mu}) where in this case, $\mu$ runs over elements in $Q^+_N+w_0\Lambda_N^+$ for Types AI and AII and in $(Q^+_N\times Q^+_N)+(w_0\Lambda_N^+\times w_0\Lambda_N^+)$ for the diagonal setting.

 Recall the description of the center of $U^2_q(\mathfrak{g})$ given at the end of Section \ref{section:basis-center}. It follows that its image in relation to the  ordinary Harish-Chandra map is determined by \begin{align}\label{HC-equality0}{\varphi}_{HC}(z_{2\mu} )={\varphi}_{HC}(z_{2\mu'})K_{\hat{{\omega}}_N}^{2s} 
\end{align}
 and, similarly,  for the restricted Harish-Chandra map, 
\begin{align}\label{HC-equality} \tilde{\varphi}_{HC}(z_{2\mu} )= \tilde{\varphi}_{HC}(z_{2\mu'})K_{\hat{{\omega}}_N}^{2s} 
= \tilde{\varphi}_{HC}(z_{2\mu'})K_{\hat{\eta}_N}^{2ms}
\end{align}
where $\mu\in -P^+_N$ and $\mu'\in w_0\hat{\Lambda}^+_N$ with $2\mu =2 \mu'+2s\hat{\omega}_N$ and $s\in\mathbb{N}$.
Moreover, the final equality in (\ref{HC-equality}) follows from the equality on roots $m\hat{\eta}_N = \hat{\omega}_N$ where $m=1$ in  Type AI  and $m=2$ for Type AII (see Section \ref{section:restricted-root-system}). 

Similar results holds in the diagonal setting.   In particular, we have
\begin{align}\label{HC-equality-diag0} \varphi_{HC}(z_{2\mu} )= \varphi_{HC}(z_{2\mu'})K_{\hat{\omega}_{n}}^{2s} K_{\hat{\omega}_{2n}}^{2s'}
\end{align}
and 
\begin{align}\label{HC-equality-diag} \tilde{\varphi}_{HC}(z_{2\mu} )=
 \tilde{\varphi}_{HC}(z_{2\mu'})
\tilde{\mathcal{P}}(K_{\hat{\omega}_{n}}^{2s}
 K_{\hat{\omega}_{2n}}^{2s'})=
 \tilde{\varphi}_{HC}(z_{2\mu'})K_{\hat{\eta}_n}^{2(s+s')}
\end{align}
where $\mu\in -P_N^+\times P^+_N$, $\mu'\in w_0\Lambda^+_N\times w_0\Lambda^+_N$ with $2\mu =2 \mu'+s\hat{\omega}_n+s'\hat{\omega}_n$. 
The final equality uses the  fact that the restricted weights corresponding to $\hat{\omega}_n$ and $\hat{\omega}_{2n}$  are both equal to $\hat{\eta}_n$.

Note that for $z\in Z(U_q(\mathfrak{g}))$ we actually have $z\in U^0(\mathfrak{g})\oplus U^-_+U^0(\mathfrak{g})U^+_+$ and so $z\cdot v=\varphi_{HC}(z)\cdot v$ whenever $v$ is a highest weight vector.  
Now consider a highest weight generating vector $v_{2\beta}$ for the simple module $L(2\beta)$ where $\beta\in \Lambda_{\Sigma}^+$.  Since $\beta$ is a restricted weight, it follows that $K_{2\mu}\cdot v_{2\beta} = K_{2\tilde{\mu}}\cdot v_{2\beta}$ for all weights $\mu$. Hence  
\begin{align*}
z\cdot v_{2\beta} =\varphi_{HC}(z)\cdot v_{2\beta} =  \tilde{\varphi}_{HC}(z)\cdot v_{2\beta}
\end{align*}
for all $z\in Z(U_q(\mathfrak{g}))$.  Since $H_{2\beta}\in \mathscr{P}_{\theta}$ is a highest weight vector of weight $2\beta$ that generates a $U_q(\mathfrak{g})$-module isomorphic to $L(2\beta)$, we also have 
\begin{align}\label{zvbeta}
z\cdot H_{2\beta} = \tilde{\varphi}_{HC}(z)\cdot H_{2\beta}.
\end{align}
Since central elements act as scalars on all finite-dimensional simple $U_q(\mathfrak{g})$-modules, it follows that the restricted Harish-Chandra map can be used to determine the eigenvalues with respect to the action of $Z(U_q(\mathfrak{g}))$ on $\mathscr{P}_{\theta}$.
 
\subsection{Dotted Weyl group invariance} 

Let $\rho$ denote the half sum of the positive roots for the root system associated to $\mathfrak{sl}_N$ and let $W$ denote the Weyl group for this root system.  Define a dotted Weyl group action on the Cartan subalgebra $\check U^0(\mathfrak{sl}_N)$ of $\check U_q(\mathfrak{sl}_N)$ by (\cite{L},  Chapter 3, (3.1):
 \begin{align*} w\circ q^{(\rho, \mu)}K_{\mu} = q^{(\rho,w\mu)}K_{w\mu}
 \end{align*} 
Recall the following well-known result on the image of the center of $\check{U}_q(\mathfrak{sl}_N)$ under the ordinary Harish-Chandra map $\varphi_{HC}$:
\begin{theorem}\label{L3.1}(\cite{L}, Theorem 3.1, see also \cite{Jo}, Lemma 7.17 and 7.1.25) The ordinary Harish-Chandra map $\varphi_{HC}$ defines an isomorphism from $Z(\check{U}_q(\mathfrak{sl}_N))$ 
onto $\mathbb{C}(q)[K_{2\lambda} |\lambda \in P_N]^{W\circ}$.
\end{theorem}

Note that terms of the form 
 \begin{align}\label{mlambda}m_{2\lambda}=
\sum_{w\in W} q^{(\rho,2w\lambda)}K_{2w\lambda}
\end{align}
for $\lambda \in P_N^+$
form a basis for the dotted Weyl invariant elements in  $\check U^0(\mathfrak{sl}_N)$.
There will be times that it is useful to rewrite the above formula using the lowest weight term $w_0\lambda$. In particular, the above formula is equal to 
\begin{align*} m_{2\lambda}=m_{2w_0\lambda}=\sum_{w\in W}q^{(\rho, ww_0\lambda)}K_{2ww_0\lambda}.
\end{align*} 
Now the diagonal case is not discussed in \cite{L}.  However, 
it is well-known and straightforward to check that Theorem \ref{L3.1} holds for the simply connected quantized enveloping algebras of semisimple Lie algebras such as $\check{U}_q(\mathfrak{sl}_n\oplus \mathfrak{sl}_n).$  In this case, the basis of dotted Weyl invariant elements takes the same form as above with only difference being $\lambda\in P^+_N\times P^+_N$.

 Set $\tilde{\rho}=(\rho-\theta(\rho))/2$, the restricted weight associated to $\rho$. The dotted action of $W_{\Sigma}$ on elements of 
$\mathbb{C}(q)[\check{\mathcal{A}}]$ is given by
the following formula from \cite{L}, p.24 of Chapter 3:
 \begin{align}\label{dottedaction}
 w\bullet q^{(\tilde{\rho}, \mu)}K_{2\gamma} = q^{(\tilde{\rho},w\gamma)}K_{2w\gamma}
 \end{align}
 for all $w\in W_{\Sigma}$ and $\gamma\in P_{\Sigma}$. Moreover, by \cite{L} Lemma 3.2, given an element $\tilde{w}\in W_{\Sigma}$, there exists $w\in W$ so that the restriction of $w$  to $\Sigma$ equals $\tilde{w}$.
Thus terms of the form \begin{align*} 
m^{\Sigma}_{2\lambda}= \sum_{w\in W_{\Sigma}}q^{(\tilde{\rho},2w\lambda)}K_{2w\lambda}
\end{align*} with $\lambda\in w_0P^+_{\Sigma}$ are dotted $W_{\Sigma}$ invariants.

Noting that elements of the center $Z(\check{U}_q(\mathfrak{sl}_N))$  are invariant under the ordinary dotted Weyl group action (Theorem \ref{L3.1} above) yields the following version of \cite{L}, Chapter 3, Theorem 3.3. Recall the definition of  $\check{\mathcal{A}}$ (see Section \ref{section:HCMs}) and define the related group $\mathcal{A}$ by  $$\mathcal{A} = \{K_{2u}|\mu \in P_{\Sigma}\}$$ (see \cite{L}, middle of page 25).
\begin{theorem}\label{L3.3}Given $f\in Z(\check{U}_q(\mathfrak{sl}_N))$, 
the element $\tilde{\varphi}_{HC}(f)$
is a dotted $W_{\Sigma}$ invariant element of
 $\mathbb{C}(q)[\check{\mathcal{A}}]$.  Moreover, $\tilde{\varphi}_{HC}(Z(\check{U}_q(\mathfrak{sl}_N))$ is a subring of  $\mathbb{C}(q)[ \mathcal{A}]^{W_{\Sigma}\bullet}.$
\end{theorem}

Theorem \ref{L3.3} also extends to the diagonal case. 
First note that the root system in this case consists of the disjoint union of the root systems for each copy of $\mathfrak{sl}_N$.  This leads to two half sums of the positive roots: $\rho_1$ for the first copy and $\rho_2$ for the second.  
 It is also straightforward to check that elements of $W_{\Sigma}$ lift to elements of $ W\times W$, the Weyl group for the root system of $\mathfrak{sl}_N\oplus \mathfrak{sl}_N$.  In particular, we have
\begin{align*}
w_{\tilde{\alpha}_i} 
=w_{\alpha_i}w_{\alpha_{n+i}}{\rm \ and \ }w_{\tilde{\alpha}_{n+i}} =w_{\alpha_{n+i}}w_{\alpha_{i}}
\end{align*}
 for each reflection associated to the simple restricted roots $\tilde{\alpha}_i $ or $\tilde{\alpha}_{n+i}$, for $  i=1,\dots, n-1$ in both cases.

It follows from (\ref{mlambda}) that sums  taken over $\lambda=(\lambda_1,\lambda_2)\in P^+_N\times P^+_N$ 
  \begin{align*}m_{(2\lambda_1, 2\lambda_2)}&= 
\sum_{(w_1,w_2)\in W\times W} q^{((\rho_1,\rho_2),(2w_1\lambda_1,2w_2\lambda_2)}K_{(2w_1\lambda_1,2w_2\lambda_2)}
\end{align*}
form a basis for the dotted invariant elements for images of the center of $\check U_q(\mathfrak{sl}_n\oplus \mathfrak{sl}_n)$ with respect to the ordinary Harish-Chandra map. Moreover, we can write this as a product with two factors:
\begin{align*}m_{(2\lambda_1,2\lambda_2)}=m_{2\lambda_1}m_{2\lambda_2}&= 
(\sum_{w_1\in W} q^{(\rho_1,2w_1\lambda_1)}K_{2w_1\lambda_1})(\sum_{w_2\in W} q^{(\rho_2,2w_2\lambda_2)}K_{2w_2\lambda_2})
\end{align*} Note that $w_1$ is a product of reflections with respect to the roots $\alpha_1,\dots, \alpha_{n-1}$ and $w_2$ is a product of reflections with respect to $\alpha_{n+1}, \dots, \alpha_{2n-1}$.  Set $\tilde{w}_1$ equal to the product where each $w_{\alpha_i}$ is replaced by $w_{\tilde{\alpha}_i}\in W_{\Sigma}$.  It follows that $\tilde{w}_1\lambda_1 = w_1\lambda_1.$  We can define $\tilde{w}_2$ in a similar fashion. Thus we can rewrite the formula for $m_{(2\lambda_1,2\lambda_2)}$ as  
\begin{align*}m_{2\lambda_1}m_{2\lambda_2}&= 
(\sum_{\tilde{w}_1\in W_{\Sigma}} q^{(\rho_1,2\tilde{w}_1\lambda_1)}K_{2\tilde{w}_1\lambda_1})(\sum_{\tilde{w}_2\in W_{\Sigma}} q^{(\rho_2, 2\tilde{w}_2\lambda_2)}K_{2\tilde{w}_2\lambda_2})
\end{align*} 

Recall that the inner product on the restricted root system in the diagonal case satisfies $(\cdot, \cdot)_{\Sigma}=2(\cdot,\cdot).$ 
On the other hand,  $\tilde{\rho}_1=\tilde{\rho}_2 =(\rho_1+\rho_2)/2 = \rho_{\Sigma}$. 
For $i=1,\dots,n-1$, we have $w_{\tilde{\alpha}_i}\lambda_1=w_{{\alpha}_i}\lambda_1$ and $w_{\tilde{\alpha}_i}\theta(\lambda_1)=w_{\alpha_{i+n}}\theta(\lambda_1)
=\theta(w_{\alpha_i}\lambda_1)=\theta(w_{\tilde{\alpha}_i}\lambda_1).
$ Hence $\tilde{w}_1\theta(\lambda_1)= \theta(\tilde{w}_1\lambda_1)$.  Note that this guarantees that $(\rho_1,  \theta(\tilde{w}_1\lambda_1))=0$ 
since $\tilde{w}_1\lambda_1$ is in the first copy of $P_N$ and so 
$\theta(\tilde{w}_1\lambda_1)$ must be in the second.  
Hence $$(\rho_1,2\tilde{w}_1{\lambda}_1)= (\rho_1, 2(\tilde{w}_1\lambda_1
-\theta(\tilde{w}_1\lambda_1))=(\rho_1,4\tilde{w}_1\tilde{\lambda}_1)=(\tilde{\rho}_1,4\tilde{w}_1\tilde{\lambda}_1)=(\rho_{\Sigma}, 2\tilde{w}_1\tilde{\lambda}_1)_{\Sigma}.
$$ 
Thus we can express $m_{2\lambda_1}$ as 
\begin{align*}m_{2\lambda_1}&= 
(\sum_{\tilde{w}_1\in W_{\Sigma}} q^{(\tilde{\rho}_1,2\tilde{w}_1\tilde{\lambda}_1)_{\Sigma}}K_{2(\tilde{w}_1\tilde{\lambda}_1}K_{(2\tilde{w}_1\lambda_1+2\theta(\tilde{w}\lambda_1))/2)})
\cr&=(\sum_{\tilde{w}_1\in W_{\Sigma}} q^{\rho_{\Sigma},2\tilde{w}_1\tilde{\lambda}_1)_{\Sigma}}
K_{2(\tilde{w}_1\tilde{\lambda}_1)}
(K_{(2\tilde{w}_1\lambda_1+2\theta(\tilde{w_1}\lambda_1))/2}-1)+(\sum_{\tilde{w}_1\in W_{\Sigma}} q^{({\rho}_{\Sigma},2\tilde{w}_1\tilde{\lambda}_1)_{\Sigma}}
K_{2(\tilde{w}_1\tilde{\lambda}_1)}).
\end{align*} 
Hence, $\tilde{\mathcal{P}}(m_{\lambda_1}) = \sum_{\tilde{w}_1\in W_{\Sigma}} q^{(\tilde{\rho}_1,2\tilde{w}_1\tilde{\lambda}_1)_{\Sigma}}K_{2(\tilde{w}_1\tilde{\lambda}_1)}$ and a similar result holds for $\tilde{\mathcal{P}}(m_{\lambda_2})$.  Both of these formulas are dotted $W_{\Sigma}$ invariant and hence the diagonal case also satisfies the conclusion of Theorem \ref{L3.3}.

We can translate Theorem \ref{L3.1} and Theorem \ref{L3.3} to the setting of $U^2_q(\mathfrak{g})$ where $\mathfrak{sl}$ is replaced by $\mathfrak{gl}$ everywhere as follows. 
    Recall the isomorphism of $({\rm ad}\ U_q(\mathfrak{sl}_N))$-modules, the first generated by $K_{ 2\lambda}$ and the second by $K_{2\lambda'}$ where
  $\lambda = \sum_{i=1}^{N-1}\lambda_iw_0\omega_i$ and $\lambda'=\sum_{i=1}^{N-1}\lambda_iw_0\hat{\omega}_i$ 
 as  given in (\ref{lambdaprime}).  From the description of the isomorphism in (\ref{Fdecomp10}) including the paragraph below this formula,  we see that  $K_{2\lambda}K_{2\lambda'}^{-1} $ is in the center of $\check U_q(\mathfrak{gl}_N)$.
The diagonal case is very similar where here we express $\lambda=\lambda_{(n)}+\lambda_{(2n)}$ where $\lambda_{(n)} =\sum_{i=1}^{n-1}\lambda_iw_0\omega_i$ and $\lambda_{(2n)} =\sum_{i=1}^{n-1}\lambda_{i+n}w_0\omega_{i+n}$.  Similarly define $\lambda'$ with $w_0\omega_i$ replaced by $w_0\hat{\omega}_i$ for each $i$.%

\begin{theorem}\label{L3.3v2}We have the following analogs of Theorem \ref{L3.1} and Theorem \ref{L3.3} for the algebra $U^2_q(\mathfrak{g})$:
\begin{itemize}
\item[(i)]The ordinary Harish-Chandra map $\varphi_{HC}$ defines an isomorphism from $Z({U}^2_q(\mathfrak{g}))$ 
onto the dotted $W$ invariants $\mathbb{C}(q)[K_{2\lambda}|\  \lambda\in Q^+_N+w_0{\Lambda}^+_N]^{W\circ}$ in Types AI and AII and onto the dotted $W$ invariants
$\mathbb{C}(q)[K_{2\lambda}|\ \lambda\in (Q^+_N\times Q^+_N)+(w_0{\Lambda}^+_N\times w_0{\Lambda}^+_N)]^{W\circ}$ in the diagonal setting.
\item[(ii)]Given $f\in Z({U}^2_q(\mathfrak{g}))$, 
the element $\tilde{\varphi}_{HC}(f)$
is a dotted $W_{\Sigma}$ invariant element of
 $\mathbb{C}(q)[K_{2\lambda}|\ \lambda\in Q^+_{\Sigma}+w_0{\Lambda}^+_{\Sigma}].$ Moreover, $\tilde{\varphi}_{HC}(Z({U}^2_q(\mathfrak{g}))$ is a subring of  $\mathbb{C}(q)[\mathcal{A}_2]^{W_{\Sigma}\bullet}.$
 \end{itemize}
\end{theorem}
\begin{proof} 
Recall that the ordinary Harish-Chandra map is defined via (\ref{decomp7}) as the projection onto the Cartan subalgebra of $U_q^2(\mathfrak{g})$. This Cartan subalgebra is equal to 
$\mathbb{C}(q)[K_{2\lambda}|\ \lambda\in Q^+_N+w_0{\Lambda}^+_N]$ in Types AI and AII and $\mathbb{C}(q)[K_{2\lambda}|\ \lambda\in (Q^+_N\times Q^+_N)+(w_0{\Lambda}^+_N\times w_0{\Lambda}^+_N)]$ in the diagonal setting. Note that the diagonal case follows from the other two so our focus is entirely on the singleton setting. 

Observe that $(G^-_+U^2_q(\mathfrak{g}) + {U}^2_q(\mathfrak{g})U^+_+)$ is a two-sided ideal in $U^2_q(\mathfrak{g})$ since ${U}_2^0(\mathfrak{g})G^-_+=G^-_+{U}_2^0(\mathfrak{g})$ and $U^+_+{U}_2^0(\mathfrak{g})={U}_2^0(\mathfrak{g})U^+_+$.  It follows that the ordinary Harish-Chandra map defines an algebra homomorphism onto ${U}_2^0(\mathfrak{g})$.  Therefore $\varphi_{HC}$ restricts to an algebra homomorphism on the subalgebra $Z(U^2_q(\mathfrak{g}))$.  We show below that this map is injective and hence an isomorphism as stated in 
Theorem \ref{L3.3v2} (i).

 Now $\mathcal{F}(\check U_q(\mathfrak{sl}_N))$  can be written as a direct sum of simple ad-invariant left coideals of the form $({\rm ad}\ U_q(\mathfrak{sl}_N) )K_{2\lambda}$ where $\lambda\in w_0P^+_N$ (see (\ref{Fdecomp})). On the other hand, the same type of 
decomposition for $U^2_q(\mathfrak{g})$ in Theorem \ref{lemma:U2result} gives us 
\begin{align*}\bigoplus_{\lambda'\in w_0\hat{\Lambda}^+_N}({\rm ad}\ U_q(\mathfrak{gl}_N))\cdot K_{2\lambda'}\subset \mathcal{F}(U^2_q(\mathfrak{g}))
=\bigoplus_{\lambda'\in w_0\hat{\Lambda}^+_N, 
s\in\mathbb{N}}
({\rm ad}\ U_q(\mathfrak{gl}_N))\cdot K_{2\lambda'+2s\hat{\omega}_N}.
\end{align*}
(Note that $w_0\hat{\Lambda}_N^+
+ \mathbb{N}
\hat{\omega}_N
=w_0\Lambda_N^+$.)
 As explained right before this theorem, $({\rm ad}\ U_q(\mathfrak{gl}_N))(K_{2\lambda})$ is isomorphic to $({\rm ad}\ U_q(\mathfrak{gl}_N))(K_{2\lambda'})$ via the map sending $({\rm ad}\ a)\cdot (K_{2\lambda})$ to $({\rm ad}\ a)\cdot (K_{2\lambda'})$ for all $a\in U_q(\mathfrak{gl}_N)$.  With respect to this map, the central element $z_{2\lambda}$ of $({\rm ad}\ U_q(\mathfrak{gl}_N))(K_{2\lambda})$   is  sent to the central element $z_{2\lambda'}$ of $({\rm ad}\ U_q(\mathfrak{gl}_N))(K_{2\lambda})$.
 Now $\varphi_{HC}(z_{2\lambda})$ is dotted Weyl invariant.  It follows that $\varphi_{HC}(z_{2\lambda'})$ is also dotted Weyl invariant.

 Note that $\varphi_{HC}(z_{2\lambda'})$ is an element of the Cartan subalgebra $\mathbb{C}(q)[K_{2(\lambda'+\gamma)}|\ \gamma\in Q^+_N {\rm \ and \ }\lambda'\in w_0\hat{\Lambda}^+_N].$ Moreover, $Q^+_N$ does not contain any anti-dominant integral weights and so the only anti-dominant weights come from $w_0\hat{\Lambda}_N^+$. By Theorem \ref{L3.1}, $\{\varphi_{HC}(z_{2\lambda})|\ \lambda \in w_0P^+_N\}$ forms a basis for the dotted invariants of $\mathbb{C}(q)[K_{2\gamma} |\gamma \in P_N]$.
Hence, $\{\varphi_{HC}(z_{2\lambda'})|\ \lambda' \in w_0\hat\Lambda^+_N\}$ forms a basis for the dotted invariants of $\mathbb{C}(q)[K_{2\gamma} |\gamma \in Q^+_N+w_0\hat{\Lambda}^+_N]^{W\circ}$.  Since $\varphi_{HC}(z_{2\lambda'})$ is an element of $({\rm ad}\ U_q(\mathfrak{gl}_N))\cdot K_{2w_0\lambda'}$, the only possible dotted Weyl invariant element (up to nonzero scalar) is
\begin{align*} m_{2\lambda'}=m_{2w_0\lambda'}=\sum_{w\in W}q^{(\rho, 2ww_0\lambda')}K_{2ww_0\lambda'}.
\end{align*} We get a similar equality for $m_{2w_0\lambda'+2s\hat{w}_N}$
\begin{align*} m_{2w_0\lambda'+2s\hat{\omega}_N}=\sum_{w\in W}q^{(\rho, 2ww_0\lambda')}K_{2ww_0\lambda'}K_{\hat{\omega}_N}^{2s}
\end{align*}  for $w_0\lambda'\in w_0\hat{\Lambda}^+_N$ and $ s\in\mathbb{N}.$
Note that these elements span  the image of the center of $U^2_q(\mathfrak{g})$.  Moreover, these terms are linearly independent since they each take the form
\begin{align*} m_{2\lambda'+2s\hat{\omega}_N}\in(q^{(\rho, 2w_0\lambda')}K_{2w_0\lambda'}+\sum_{\beta>w_0\lambda'}\mathbb{C}(q)K_{2\beta})(K_{2\hat{\omega}_N})^s.
\end{align*}  Here the inequality below the summation sign refers to the partial order defined by $\beta>\gamma$ provided $\beta-\gamma\in Q^+_N$.  This finishes the proof for Theorem \ref{L3.3v2} (i).  

The proof of (ii) is similar to that of (i).  In fact,
$\tilde{\mathcal{P}}$ is a projection onto   $\mathbb{C}(q)[\mathcal{A}_2]$ with kernel  the two-sided ideal $\mathbb{C}(q)[\mathcal{A}_2]\mathbb{C}(q)[({T}_{\theta})_2]_+$ inside 
the right hand side of  (\ref{rhcmap13}).  Hence the restricted Harish-Chandra map is an algebra homomorphism of $\mathbb{C}(q)[\mathcal{A}_2]$ onto itself and
$\tilde{\varphi}_{HC}(Z(U^2_q(\mathfrak{g})))$ is a subalgebra of $ \mathbb{C}(q)[\mathcal{A}_2]$. By the discussion preceding the lemma relating $\lambda$ and $\lambda'$ and by Theorem \ref{L3.1}, this image of $Z(U_q^2(\mathfrak{g}))$  under $\tilde{\varphi}_{HC}$ is invariant under the dotted action of $W_{\Sigma}$.  
 \end{proof}
  
  \subsection{Central generators}\label{section:centralgen}
  In \cite{L}, it is shown that $\tilde{\varphi}_{HC}(Z(\check U_q(\mathfrak{sl}_N))$ is  isomorphic to the entire ring of invariants $\mathbb{C}(q)[\mathcal{A}]^{\bullet}$. Indeed we have the following version of \cite{L}, Theorem 8.1.
  \begin{theorem}\label{Theorem:centralgen} Given a symmetric pair of Type AI,  Type AII, or of diagonal type, the image under the restricted Harish-Chandra map   $\tilde{\varphi}_{HC}(Z(U_q(\mathfrak{g}))$
  is isomorphic to the dotted $W_{\Sigma}$ invariants, $\mathbb{C}(q)[\mathcal{A}]^{\bullet}$, of  $\mathbb{C}(q)[\mathcal{A}]$.
  \end{theorem}
  Recall that $\mathcal{A} = \{K_{2\mu}|\ \mu\in P_{\Sigma}\}$.  Hence $\mathbb{C}(q)[\mathcal{A}]^{\bullet}$ is a polynomial ring in variables
  $m^{\Sigma}_{2w_0\eta_i}$, $i=1,\dots, n-1$. Thus we must show that $\tilde{\varphi}_{HC}(Z(U_q(\mathfrak{g}))$
contains these generators.  We start with an overview of the proof and then fill in more of the details below. Eventually we will obtain the analog result  for $Z(U^2_q(\mathfrak{g}))$.  Consider central elements  defined by 
  $$z_i = z_{2w_0\omega_i} {\rm \ for\  }i=1, \dots, n-1$$ 
where $z_{2w_0\omega_j}$ is the central element of $({\rm ad}\ U_q(\mathfrak{sl}_N))\cdot K_{2w_0\omega_j}$ for
Types AI and AII. For the diagonal case, set 
$$z_i = z_{2w_0\omega_i} {\rm \ and \ }z_{i+n}=z_{2w_0\omega_i}  {\rm \ for\  }i=1, \dots, n-1.$$ In this case  the  $z_i$  is the central element of $({\rm ad}\ U_q(\mathfrak{sl}_n\oplus \mathfrak{sl}_n))\cdot K_{2w_0\omega_i}$ and $z_{i+n}$ is the central element of $({\rm ad}\ U_q(\mathfrak{sl}_n\oplus \mathfrak{sl}_n))\cdot K_{2w_0\omega_{i+n}}.$ The argument for Theorem 
\ref{Theorem:centralgen} in Type AI simply shows that $\tilde{\varphi}_{HC}(z_i)=m^{\Sigma}_{2w_0\eta_i}$ for $i=1,\dots, n-1$.  The diagonal case is similar.  On the other hand,
the proof for Type AII is more difficult. It involves an inductive argument that first  establishes $m^{\Sigma}_{2w_0\eta_j}\in \tilde{\varphi}_{HC}(Z(U_q(\mathfrak{g}))$ for all $j<k$ and then  realizes $m^{\Sigma}_{2w\eta_k}$ as a  linear combination of $\tilde{\varphi}_{HC}(z_k))$ plus   products $m^{\Sigma}_{2w_0\eta_j}m^{\Sigma}_{2w_0\eta_i}$ for $j< k$ and $i<k$.

\medskip
\noindent
{\bf Proof of Theorem \ref{Theorem:centralgen}, Type AI and Diagonal Type:} Note that $\tilde{\omega}_i=\eta_i$, the fundamental restricted weight, for $i=1,\dots, n-1$ in both Types.  (We can use either $z_i$ or $z_{n+i}$ in the diagonal case.) Since the restricted root system is of Type $A_{n-1}$, the fundamental weights $\eta_1, \dots, \eta_{n-1}$ are minuscule. 
 In other words,  $\eta_j\not>_{\Sigma}\eta_k$ for any pair $j,k$ and, in addition, $\eta_j\not>_{\Sigma}0$ for any $j$ where the inequalities are defined via the partial  order:  $\beta>_{\Sigma}\gamma$ provided $\beta-\gamma\in Q^+_{\Sigma}$ for $\beta, \gamma\in P^+_{\Sigma}$.
Recall that $z_{2\mu}$ is the unique up to nonzero scalar central element in $({\rm ad}\ U_q(\mathfrak{g})) K_{2\mu}$ for $\mu\in w_0P^+_{\Sigma}$.  
When $\mu=w_0{\omega}_i$ for the two families Type AI and diagonal type under consideration, the image of $\tilde{\varphi}_{HC}(z_{2w_0\omega_i})$ is in \begin{align}\label{Komegai}K_{2w_0\tilde{\omega}_i}+\sum_{\beta\in Q^+_{\Sigma}}\mathbb{C}(q)K_{2w_0\tilde{\omega}_i+2\beta}
=K_{2w_0\eta_i}+\sum_{\beta\in Q^+_{\Sigma}}\mathbb{C}(q)K_{2w_0\eta_i+2\beta}
\end{align} 
up to a nonzero scalar.  
 Since this element is dotted invariant with respect to the restricted root system, it follows that $\tilde{\varphi}_{HC}(z_{2w_0\omega_i})={m}^{\Sigma}_{2w_0\eta_i}$ for each $i$.  Thus $\tilde{\varphi}_{HC}(z_i)$, $i=1,\dots, n-1$ are a set of generators for  $\mathbb{C}(q)[\mathcal{A}]^{\bullet}$. $\qed$

  \medskip
  \noindent
  {\bf Proof of Theorem \ref{Theorem:centralgen}, Type AII:} 
  Note 
  that in Section \ref{section:restricted-root-system} in  the final sentence on Type AII, we see that  $  \tilde{\omega}_1 =\eta_1$.
  This is the same as part of \cite{L}, Lemma 2.4.
  \begin{lemma}(\cite{L}, Lemma 2.4 (i) applied to Type AII) The first fundamental weight  $\omega_1$ in $P^+_N$ restricts to the first fundamental  restricted weight $\tilde{\omega}_1 = \eta_1 $. \end{lemma} 
 
\noindent
 Since $\eta_1$ is minuscule and $\tilde{\omega}_1=\eta_1$, we can use the same argument as used for minuscule fundamental restricted weights for Type AI and the diagonal type.  In particular $\tilde{\varphi}_{HC}(z_{2w_0\omega_1})=m^{\Sigma}_{2w_0\eta_1}$ and so $m^{\Sigma}_{2w_0\eta_1}\in \mathbb{C}(q)[\mathcal{A}]^{\bullet}.$
 
 The next lemma is key in establishing  $m^{\Sigma}_{2w_0\eta_k}\in \mathbb{C}(q)[\mathcal{A}]^{\bullet}$ for $k>1$.
   
  \begin{lemma}\label{L8.8} (\cite{L}, Lemma 8.8) Assume Type AII with $\Sigma$ of Type $A_{n-1}$.
  \begin{itemize} 
  \item[(i)] Let $k$ be a positive integer such that $1\leq 2k\leq n$.  Then $m^{\Sigma}_{2w_0\eta_{2k}}$ is in the span of the set $\{\tilde{\varphi}(z_{2w_0\omega_{2k}})\}\cup \{m^{\Sigma}_{2w_0\eta_{k-j}}m^{\Sigma}_{2w_0\eta_{k+j}}|\   0\leq j<k\}$.
  \item[(ii)] Let $k$ be a positive integer such that $1\leq 2k+1\leq n$.  Then $m^{\Sigma}_{2w_0\eta_{2k+1}}$ is in the span of the set $\{\tilde{\varphi}(z_{2w_0\omega_{2k+1}})\}\cup \{m^{\Sigma}_{2w_0\eta_{k-j}}m^{\Sigma}_{2w_0\eta_{k+1
  +j}}|\  0\leq j< k\}$.
 \end{itemize} \end{lemma}
 
 \noindent
We follow the proof of \cite{L}, Theorem 8.9 which is the Type AII part of Theorem \ref{Theorem:centralgen} of this paper.  Set $R=\tilde{\varphi}_{HC}(Z(\check U_q(\mathfrak{sl}_N))$.
 We already showed that $m^{\Sigma}_{2w_0\eta_1}=\tilde{\varphi}_{HC}(z_{2w_0\eta_1})$ is in $R$.  The strategy is to use induction based on the assumption that $m^{\Sigma}_{2w_0\eta_i}\in R$ for $1\leq i<j$.  Assume first that $j$ is even, say $j=2k$.  By the inductive hypothesis, both $m^{\Sigma}_{2w_0\eta_{k-i}}$ and $m^{\Sigma}_{2w_0\eta_{k+i}}$ are in $R$ for $1\leq i<k$.  Hence $R$ contains all the products  $m^{\Sigma}_{2w_0\eta_{k-i}}m^{\Sigma}_{2w_0\eta_{k+i}}$ for $1\leq i<k$.  By Lemma \ref{L8.8} (i), $m^{\Sigma}_{2w_0\eta_{2k}}$ is in $R$.  A similar induction argument using (ii) applies to $j=2k+1$ and shows that $m^{\Sigma}_{2w_0\eta_{2k+1}}\in R$.  Thus by induction, $m^{\Sigma}_{2w_0\eta_j}$ is in $R$ for $j=1$ (the minuscule element), $j$ takes on all even values between $2$ and $n-1$ by (i), and $j$ takes on all odd integers between $3$ and $n-1$ by (ii).  
  $\qed$

\medskip
Note that a consequence of the above result is that $\mathbb{C}(q)[\mathcal{A}]^{W_{\Sigma}\bullet}$ is a polynomial ring in the variables  $\tilde{\varphi}_{HC}(z_1), \dots, \tilde{\varphi}_{HC}(z_{n-1}).$   The next theorem obtains  similar results for  $Z(U^2_q(\mathfrak{g}))$.  First, we need to define analogs of the $z_i$ that rely on partitions in $\hat{\Lambda}^+_N$ instead of the weight lattice $P^+_N$.  In particular, given $w_0\omega_i\in w_0P^+_N$ with $z_i=z_{2w_0\omega_i}$, set $\hat{z}_i = z_{2w_0\hat{\omega}_i}$.  In other words, the central elements listed above in (i), (ii), (iii) are converted to elements $\hat{z}_1, \dots, \hat{z}_{n-1}$.  There is also an extra generator corresponding to the element 
$z_{2w_0\hat{\omega}_n}=z_{2\hat{\omega}_n}$.  Explicitly, define the central elements $\hat{z}_1, \dots, \hat{z}_{n}$ by 
  \begin{itemize}
  \item[(i)] $\hat{z}_i = z_{2w_0\hat{\omega}_i}=z_{2(\epsilon_{n+1-i}+\cdots + \epsilon_n)}$ for $i=1, \dots, n$ in Type AI and the diagonal case.
  \item[(ii)] $\hat{z}_i = z_{2w_0\hat{\omega}_i}=z_{2(\epsilon_{2n+1-i}+\cdots + \epsilon_{2n})}$ for $i=1, \dots, n$ in Type AII. 
  \item[(iii)] $\hat{z}_i = z_{2w_0\hat{\omega}_i}=z_{2(\epsilon_{n+1-i}+\cdots + \epsilon_n)}$ and $\hat{z}_{i+n}=z_{2(\epsilon_{2n+1-i}+\cdots + \epsilon_{2n})}$ for $i=1, \dots, n$ in the diagonal type.\end{itemize}
 
 \medskip
Recall that  $Q_N\cap w_0\Lambda^+_N=0$ (see Section \ref{subsection:special}).  The same holds for  restricted root systems.  In particular, $Q_{\Sigma}\cap w_0\Lambda^+_{\Sigma}=0$.
  
  \begin{theorem}\label{theorem:dotted_Weyl} The algebra $\tilde{\varphi}_{HC}(Z(U^2_q(\mathfrak{g})))$ equals $\mathbb{C}(q)[\mathcal{A}_2]^{W_{\Sigma}\bullet}$ 
   and is the polynomial ring on the $n$ variables $\tilde{\varphi}_{HC}(\hat{z}_1), \dots, \tilde{\varphi}_{HC}(\hat{z}_{n}).$
     \end{theorem}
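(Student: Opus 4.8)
The plan is to establish the theorem in two stages: first show the inclusion $\tilde\varphi_{HC}(Z(U_2))\subseteq\mathbb C(q)[\mathcal A_\ge]^{W_\Sigma\bullet}$ (already done in Lemma~\ref{lemma:U2center}) is in fact an equality, and second show that the specific generators $\tilde\varphi_{HC}(z_1),\dots,\tilde\varphi_{HC}(z_n)$ are algebraically independent and generate the whole thing, hence $\tilde\varphi_{HC}(Z(U_2))$ is a polynomial ring on these $n$ elements. For the reverse inclusion, I would invoke the description from \cite{L} recalled at the start of Section~\ref{section:HCMs} to Section~\ref{section:center}: the image of $Z(\check U_q(\mathfrak{sl}))$ under $\tilde\varphi_{HC}$ is \emph{all} of $\mathbb C(q)[\tau(2\gamma)\mid\gamma\in P^+_\Sigma]^{W_\Sigma\bullet}$, whose generators are the $m_{2w_0\eta_i}$ for $i=1,\dots,n-1$. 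Translating to the $\mathfrak{gl}$-setting via \eqref{HC-equality}, \eqref{HC-equality-diag} and multiplying by suitable powers of $K_{2\hat\eta_n}$ (which lies in $\tilde\varphi_{HC}(Z(U_2))$ since $z_n$ maps, up to a power and scalar, to $K_{2\hat\eta_n}^{\pm}$ — note $\hat\eta_n$ is $W_\Sigma$-fixed), one lifts each $m_{2w_0\eta_i}$ to an $m_{2w_0\hat\eta_i}\in\tilde\varphi_{HC}(Z(U_2))$. Combined with $K_{2\hat\eta_n}^{\pm1}$ these are exactly the stated generators of $\mathbb C(q)[\mathcal A_\ge]^{W_\Sigma\bullet}$, giving the equality.

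Next I would identify the generators concretely. The central element $z_i$ sits in $({\rm ad}\,U_q(\mathfrak g))\cdot K_{2\lambda_i}$ with $\lambda_i=\epsilon_{N+1-i}+\cdots+\epsilon_N=w_0\hat\omega_i$ (with the appropriate $N$ and, in the diagonal case, the doubled/summed convention), so by the leading-term analysis used in Lemma~\ref{lemma:U2center} one has $\varphi_{HC}(z_i)\in K_{2\lambda_i}+\sum_{\gamma\in Q^+\setminus\{0\}}\mathbb C(q)K_{2\lambda_i+2\gamma}$, hence $\tilde\varphi_{HC}(z_i)\in K_{2\widetilde{\lambda_i}}+(\text{lower terms})$. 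Using Section~\ref{section:restricted-root-system} to compute $\widetilde{\lambda_i}$: in each of the three families $\widetilde{w_0\hat\omega_i}$ (suitably normalized) equals $w_0\hat\eta_i=\epsilon^\Sigma_{n-i+1}+\cdots+\epsilon^\Sigma_n$. Since $z_i$ is central, $\tilde\varphi_{HC}(z_i)$ is $W_\Sigma\bullet$-invariant, so its leading term $q^{(\tilde\rho,2w_0\hat\eta_i)}K_{2w_0\hat\eta_i}$ forces $\tilde\varphi_{HC}(z_i)$ to be, up to a nonzero scalar, $m_{2w_0\hat\eta_i}$ for $i<n$ (by the unitriangular description of the $W_\Sigma\bullet$-invariants in the Bruhat order recalled before Lemma~\ref{lemma:U2center}) and $\tilde\varphi_{HC}(z_n)$ a scalar multiple of $K_{2w_0\hat\eta_n}=K_{2\hat\eta_n}$. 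Therefore the $\tilde\varphi_{HC}(z_i)$ coincide, up to scalars, with the free polynomial generators of $\mathbb C(q)[\mathcal A_\ge]^{W_\Sigma\bullet}$, which is a polynomial ring in $n$ variables; this yields both that they generate $\tilde\varphi_{HC}(Z(U_2))$ and that they are algebraically independent.

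I expect the main obstacle to be the bookkeeping in the diagonal case: there $z_i=z_{2(\epsilon_{n+1-i}+\cdots+\epsilon_n)}$ is a central element of $U_q(\mathfrak{gl}_n\oplus\mathfrak{gl}_n)$ built from the \emph{first} copy only, and one must check that $\tilde\varphi_{HC}$ sends it to the right thing given that the restricted weight identifications send both $\hat\omega_j$ and $\hat\omega_{n+j}$ to $\hat\eta_j$, and that the relevant decomposition \eqref{HC-equality-diag} involves two integer parameters $s,s'$. One must confirm that $z_i$ indeed lies in $U_2=U^2_q(\mathfrak{gl}_n\oplus\mathfrak{gl}_n)$ — this is the remark right after the definition of the $z_i$ — and that its restricted Harish-Chandra image has leading term $K_{2w_0\hat\eta_i}$ rather than something twisted by the second factor; this follows because $w_0\hat\omega_i$ on the first $\mathfrak{gl}_n$ restricts to $w_0\hat\eta_i$ and the second factor contributes trivially. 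A secondary technical point is making sure the ``lower-order terms'' in $\tilde\varphi_{HC}(z_i)$ genuinely lie in $\mathbb C(q)[\mathcal A_\ge]$ (not merely $\mathbb C(q)[\mathcal A]$), which is precisely the content of relation \eqref{nreln} translated to $\Sigma$ as in the proof of Lemma~\ref{lemma:U2center}; I would simply cite that argument. Everything else is a routine unwinding of the triangularity of central elements with respect to the Bruhat order together with the known polynomiality of $W_\Sigma\bullet$-invariants in type $A_{n-1}$.
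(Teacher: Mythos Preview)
Your approach is essentially correct for Types AI and diagonal, and it matches the paper's argument there: one uses the minuscule property of the fundamental restricted weights to conclude, via the leading-term analysis, that $\tilde\varphi_{HC}(z_i)$ is a nonzero scalar multiple of $m_{2w_0\hat\eta_i}$, and these are exactly the polynomial generators of $\mathbb C(q)[\mathcal A_\ge]^{W_\Sigma\bullet}$.

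However, there is a genuine gap in Type AII. Your claim that ``in each of the three families $\widetilde{w_0\hat\omega_i}$ (suitably normalized) equals $w_0\hat\eta_i$'' is false for Type AII. There $z_i=z_{2w_0\hat\omega_i}$ with $w_0\hat\omega_i=\epsilon_{2n-i+1}+\cdots+\epsilon_{2n}$ in the $\mathfrak{gl}_{2n}$ root system, and using $\tilde\epsilon_{2j-1}=\tilde\epsilon_{2j}=\epsilon^\Sigma_j$ from Section~\ref{section:restricted-root-system} one computes, for instance, $\widetilde{w_0\hat\omega_2}=2\epsilon^\Sigma_n=2w_0\hat\eta_1$, not $w_0\hat\eta_2$; more generally $\widetilde{w_0\hat\omega_{2k}}=2w_0\hat\eta_k$ and $\widetilde{w_0\hat\omega_{2k-1}}=w_0\hat\eta_k+w_0\hat\eta_{k-1}$. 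So the leading term of $\tilde\varphi_{HC}(z_i)$ is not $K_{2w_0\hat\eta_i}$, and your unitriangular argument cannot identify $\tilde\varphi_{HC}(z_i)$ with $m_{2w_0\hat\eta_i}$ up to scalar. Indeed, in Type AII the $\tilde\varphi_{HC}(z_i)$ are \emph{not} scalar multiples of the $m_{2w_0\hat\eta_i}$.

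The paper handles Type AII by a different route: it invokes \cite{L}, Theorem~8.9, which gives a recursive relation expressing $m_{2w_0\eta_k}$ as a linear combination of $\tilde\varphi_{HC}(z_{2w_0\omega_k})$ together with products $m_{2w_0\eta_s}m_{2w_0\eta_{k-s}}$ for $1\le s<k$. After translating from $\omega,\eta$ to $\hat\omega,\hat\eta$ via the central shift by powers of $K_{\hat\omega_{2n}}$ (using $2\hat\eta_n=\hat\omega_{2n}$), one obtains the same recursion with hats, and then induction on $k$ shows that each $m_{2w_0\hat\eta_k}$ lies in the subalgebra generated by $\tilde\varphi_{HC}(z_1),\dots,\tilde\varphi_{HC}(z_n)$. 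Your outline would need this extra ingredient to close the argument in Type AII.
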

\begin{proof} 
Consider Type AI. Recall that $z_{2\mu}$ is the unique (up to nonzero scalar) central element in $({\rm ad}\ U_q(\mathfrak{g})) K_{2\mu}$.  When $\mu=\hat{\omega}_n$, the element $K_{2\hat{\omega}_n}$ is a central element.  Thus we have $z_{2\hat{\omega}_n} = K_{2\hat{\omega}_n}$. Hence $\tilde{\varphi}_{HC}(z_{2\hat{\omega}_n}) =\tilde{\mathcal{P}}\circ {\varphi}_{HC}(z_{2\hat{\omega}_n}) =\sum_{w\in W}q^{(\rho, w\hat{\eta})}K_{2w\hat{\eta}_n}=q^{(\rho, w\hat{\eta})}|W| K_{2\hat{\eta}_n}$ in Type AI. Thus the algebra generated by $\tilde{\varphi}_{HC}(\hat{z}_i),$ $i=1, \dots, n$ in Type AI contains $K_{2\hat{\eta}_n}$. 
A similar statement holds  in the diagonal case.  In this case, we can use either the algebra generated by   $\tilde{\varphi}_{HC}(\hat{z}_i)$ for $i=1,\dots, n$ or the algebra generated by  $\tilde\varphi_{HC}(\hat{z}_{i+n})$ for $i=1, \dots, n$. Both produce the same algebra and that algebra contains $K_{2\hat{\eta}_n}$. 

As explained above, the fundamental weights $\eta_1,\dots, \eta_{n-1}$ are minuscule.  Note that the  same is true for  the fundamental partitions $\hat{\eta}_1,\dots, \hat{\eta}_{n-1}$. 
Moreover, we have the analog of (\ref{Komegai}) in the partition setting.  Namely,
as in the proof of Theorem \ref{Theorem:centralgen} for Type AI and the diagonal type, 
$\tilde{\varphi}_{HC}(z_{2w_0\hat{\omega}_i})$
is an element of 
\begin{align}\label{K2w}K_{2w_0\hat{\eta}_i}+\sum_{\beta\in Q^+_{\Sigma}}\mathbb{C}(q)K_{2w_0{\hat\eta}_i+2\beta_i}
=K_{2w_0\hat{\eta}_i}+\sum_{\beta\in Q^+_{\Sigma}}\mathbb{C}(q)K_{2w_0\eta_i+2\beta}
\end{align} up to a nonzero scalar.   By the discussion preceding the theorem, $Q^+_{\Sigma}\cap w_0\Lambda^+_{\Sigma}=0$. 
 Since $z_{2w_0\hat{\omega}_i}$ is central, its image under $\tilde{\varphi}_{HC}$ must be dotted invariant with respect to $W_{\Sigma}$.  By (\ref{K2w}), the only possible dotted invariant element is $m^{\Sigma}_{2w_0\hat{\eta}_i}$ up to a nonzero scalar.  Hence the theorem holds in Type AI and the diagonal setting.

Now consider Type AII.  As explained in the proof of Theorem \ref{Theorem:centralgen} for Type AII, the first fundamental restricted root $\eta_1$ is minuscule.  Hence, arguing as above, we have
 $\tilde{\varphi}_{HC}(\hat{z}_1)
 =m^{\Sigma}_{2w_0\hat{\eta}_1}.$

By Lemma \ref{L8.8}, $m^{\Sigma}_{2w_0\eta_{2k}}$ can be written as a linear combination of elements in the set 
\begin{align*}\{ \tilde{\varphi}_{HC}(z_{2w_0\omega_{2k}}), m^{\Sigma}_{2w_0\eta_{k-j}}m^{\Sigma}_{2w_0\eta_{k+j}}|\ 1\leq j<k\} 
\end{align*}
for $1<2k\leq n$.  Similarly, $m^{\Sigma}_{2w_0\eta_{2k+1}}$ can be written as a linear combination of elements in the set
\begin{align*}\{ \tilde{\varphi}_{HC}(z_{2w_0\omega_{2k+1}}), m^{\Sigma}_{2w_0\eta_{k+1-j}}m^{\Sigma}_{2w_0\eta_{k+j}}|\ 1\leq j<k\} 
\end{align*}
for $1<2k+1\leq n$.

We recall here some formulas involving  restricted weights from Section \ref{section:restricted-root-system}. 
 In particular,  
 $w_0\omega_{j}=w_0\hat{\omega}_{j}-(j/2n)\hat{\omega}_{2n}$.  As explained in Section \ref{section:restricted-root-system},  $2\hat{\eta}_n=\hat{\omega}_{2n}$ and so $w_0\eta_j  = w_0\hat{\eta}_j-(j/n)\hat{\eta}_{n}$ for $j=1, \dots, n-1$. 
Hence 
\begin{align*}
\tilde{\varphi}_{HC}(z_{2w_0\omega_{j}}) = \tilde{\varphi}_{HC}(z_{2w_0\hat{\omega}_j})K_{(-j/n)\hat{\omega}_{2n}}
\end{align*}
for each $j=1,\dots,n-1$.  
Thus we also have \begin{align*}m^{\Sigma}_{2w_0\eta_{s}}=m^{\Sigma}_{2w_0\hat{\eta}_{s}}K_{(-s/n)2\hat{\eta}_{2n}}=m^{\Sigma}_{2w_0\hat{\eta}_{s}}K_{(-s/n)\hat{\omega}_{2n}}
\end{align*}
for each $s=1,\dots,n-1$ and so
\begin{align}\label{kevenagain}m^{\Sigma}_{2w_0{\eta}_{k-j}}m^{\Sigma}_{2w_0{\eta}_{k+j}}&=m^{\Sigma}_{2w_0\hat{\eta}_{k-j}}m^{\Sigma}_{2w_0\hat{\eta}_{k+j}}K_{((-k+j)/n)\hat{\omega}_{2n}}K_{((-k-j)/n)\hat{\omega}_{2n}}\cr&=m^{\Sigma}_{2w_0\hat{\eta}_{k-j}}m^{\Sigma}_{2w_0\hat{\eta}_{k+j}}K_{(-2k/n)\hat{\omega}_{2n}}
\end{align}
for each $k$ and each $j$ with $1\leq j<k$.
Similarly, \begin{align}\label{koddagain}m^{\Sigma}_{2w_0{\eta}_{k-j}}m^{\Sigma}_{2w_0{\eta}_{k+1+j}}&=m^{\Sigma}_{2w_0\hat{\eta}_{k-j}}m^{\Sigma}_{2w_0\hat{\eta}_{k+1+j}}K_{((-k+j)/n)\hat{\omega}_{2n}}K_{((-k-1-j)/n)\hat{\omega}_{2n}}\cr&=m^{\Sigma}_{2w_0\hat{\eta}_{k-j}}m^{\Sigma}_{2w_0\hat{\eta}_{k+1+j}}K_{((-2k-1)/n)\hat{\omega}_{2n}}
\end{align}
for each $j$ with $1\leq j<k$.  
 Thus a relation of the form 
\begin{align*}
m^{\Sigma}_{2w_0\eta_{2k}} = a_0\tilde{\varphi}_{HC}(z_{2w_0\omega_{2k}}) +\sum_{j=1}^{k-1} a_jm^{\Sigma}_{2w_0\eta_{k-j}}m^{\Sigma}_{2w_0\eta_{k+j}}
\end{align*}
becomes
\begin{align*}m^{\Sigma}_{2w_0\hat{\eta}_{2k}}K_{(-2k/{n})\hat{\eta}_{2n}}
 = a_0\tilde{\varphi}_{HC}(z_{2w_0\hat{\omega}_{2k}})
 K_{(-2k/n)\hat{\eta}_{2n}} 
  +\sum_{j=1}^{l-1} a_jm^{\Sigma}_{2w_0\hat{\eta}_{k-j}}m^{\Sigma}_{2w_0\hat{\eta}_{k+j}}K_{({-2k}/n)\hat{\eta}_{2n}}.
\end{align*}
Multiplying both sides by $K_{({2k/n)\hat{\eta}_{2n}}}$ yields 
\begin{align*}
m^{\Sigma}_{2w_0\hat{\eta}_{2k} }= a_0\tilde{\varphi}_{HC}(z_{2w_0\hat{\omega}_{2k}})  +\sum_{s=1}^{k-1} a_jm^{\Sigma}_{2w_0\hat{\eta}_{k-j}}m^{\Sigma}_{2w_0\hat{\eta}_{k+j}}.\end{align*}
 Hence
$m^{\Sigma}_{2w_0\hat{\eta}_{2k}}$ can be written as a linear combination of elements in the set 
\begin{align}\label{keven2}\{ \tilde{\varphi}_{HC}(z_{2w_0\hat{\omega}_{2k}}), m^{\Sigma}_{2w_0\hat{\eta}_{k-j}}m^{\Sigma}_{2w_0\hat{\eta}_{k+j}}|\ 1\leq j<k\} 
\end{align}
for $1<2k\leq n$.
A similar argument shows that 
$m^{\Sigma}_{2w_0\hat{\eta}_{2k+1}}$ can be written as a linear combination of elements in the set 
\begin{align}\label{kodd}\{ \tilde{\varphi}_{HC}(z_{2w_0\hat{\omega}_{2k+1}}), m^{\Sigma}_{2w_0\hat{\eta}_{k-j}}m^{\Sigma}_{2w_0\hat{\eta}_{k+1+j}}|\ 1\leq j<k\} 
\end{align}
for $1<2k+1\leq n$.

Note that when either $2k=n$ or $2k+1=n$ we see that $m^{\Sigma}_{2w_0\hat{\eta}_{n}}$ is a linear combination of elements in the set (\ref{keven2}) or (\ref{kodd}) depending on the parity of $n$.
Thus  arguing by induction as in the proof of Theorem \ref{Theorem:centralgen}, Type AII,  the algebra generated by  $\tilde{\varphi}_{HC}(\hat{z}_i),$ $i=1, \dots, n$ contains the elements $m^{\Sigma}_{2w_0\hat{\eta}_{j}}$, $j=1, \dots, n$.  The theorem follows from the facts that 
 $w_0\hat{\eta}_n = \hat{\eta}_n$, $K_{2\hat{\eta}_n}$ is invariant with respect to the dotted action of $W_{\Sigma}$ and, thus,  $m_{2w_0\hat{\eta}_{n}}$ is a nonzero scalar multiple of $K_{2\hat{\eta}_n}$.
\end{proof}

Let $Z$ denote the subring of $Z(U^2_q(\mathfrak{g}))$ generated by the elements $\hat{z}_1, \dots, \hat{z}_n$. By  the previous theorem, Theorem \ref{theorem:dotted_Weyl}, the image under the restricted Harish-Chandra map $\tilde{\varphi}_{HC}$ of the algebra generated by $\hat{z}_1, \dots,\hat{z}_n$ is a polynomial ring with these variables.  Hence, since $Z$ is commutative, $\hat{z}_1, \dots, \hat{z}_n$ must also generate a polynomial ring of rank $n$.  We see that the same is true for the image of $Z$ with respect to $\Upsilon$ of 
 Corollary \ref{corollary:upsilon}.

\begin{corollary}\label{corollary:HC}  The algebra $\Upsilon(Z)$  is a polynomial subring of ${\rm End}\ \mathscr{P}_{\theta}$  with variables $\Upsilon(\hat{z}_i), i=1, \dots, n$.  Moreover,  each  $\Upsilon(\hat{z}_r)$ is an element in $\mathscr{PD}_{\theta}$ of degree   less than or equal to $2r$.
\end{corollary}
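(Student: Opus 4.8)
The plan is to combine the three main structural results that have been assembled: Theorem \ref{theorem:dotted_Weyl}, which identifies $\tilde\varphi_{HC}(Z)$ with the rank-$n$ polynomial ring $\mathbb{C}(q)[\mathcal{A}_{\geq}]^{W_\Sigma\bullet}$ and exhibits $\tilde\varphi_{HC}(z_1),\dots,\tilde\varphi_{HC}(z_n)$ as a free polynomial generating set; Corollary \ref{corollary:upsilon}, which gives the $U_q(\mathfrak g)$-module algebra map $\Upsilon\colon\mathcal F_2(U_q(\mathfrak g))\to\phi(\mathscr{PD}_\theta)$ satisfying $\phi(\Upsilon(a))=\psi(a)$; and the degree estimate of Proposition \ref{prop:K_epsilon}. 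First I would note that $Z\subseteq Z(U_2)\subseteq U_2\subseteq \mathcal F_2(U_q(\mathfrak g))$ (using Lemma \ref{lemma:U2result} and the definition of $U_2$), so $\Upsilon$ is defined on all of $Z$ and $\Upsilon(Z)$ is a commutative subalgebra of $\phi(\mathscr{PD}_\theta)\subseteq{\rm End}\,\mathscr P_\theta$ generated by $\Upsilon(z_1),\dots,\Upsilon(z_n)$.

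The key step is algebraic independence of the $\Upsilon(z_i)$. Since $\phi(\Upsilon(z_i))=\psi(z_i)$ and $\phi$ is injective (by Proposition \ref{prop:orthogonal}, $\mathscr P_\theta$ is a faithful $\mathscr{PD}_\theta$-module, so $\Upsilon(z_i)$ is determined by $\psi(z_i)\in{\rm End}\,\mathscr P_\theta$), it suffices to show that $\psi(z_1),\dots,\psi(z_n)$ are algebraically independent endomorphisms of $\mathscr P_\theta$. For this I would use (\ref{zvbeta}): for each $\beta\in\Lambda^+_\Sigma$, the central element $z$ acts on the isotypic component $U_q(\mathfrak g)\cdot H_{2\beta}\cong L(2\beta)$ of $\mathscr P_\theta$ as the scalar obtained by evaluating $\tilde\varphi_{HC}(z)$ at $q^{2\beta}$ (in the notation of Section \ref{section:dd}). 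Thus a polynomial relation $P(\psi(z_1),\dots,\psi(z_n))=0$ in ${\rm End}\,\mathscr P_\theta$ forces $P(\tilde\varphi_{HC}(z_1),\dots,\tilde\varphi_{HC}(z_n))$ to vanish at $q^{2\beta}$ for every $\beta\in\Lambda^+_\Sigma$; since the $q^{2\beta}$ form a Zariski-dense subset of the relevant torus, $P(\tilde\varphi_{HC}(z_1),\dots,\tilde\varphi_{HC}(z_n))=0$, contradicting Theorem \ref{theorem:dotted_Weyl} unless $P=0$. Hence $\Upsilon(z_1),\dots,\Upsilon(z_n)$ generate a polynomial ring of rank $n$; combined with commutativity of $Z$, this also re-proves that $z_1,\dots,z_n$ generate a rank-$n$ polynomial ring, as stated.

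The degree bound is the easier part: by construction $\Upsilon(z_r)\in\mathscr{PD}_\theta$ (under the identification of $\phi(\mathscr{PD}_\theta)$ with $\mathscr{PD}_\theta$ afforded by injectivity of $\phi$), and $z_r=z_{2(\epsilon_{N+1-r}+\cdots+\epsilon_N)}\in({\rm ad}\,U_q(\mathfrak g))\cdot K_{2\epsilon_{N+1-r}+\cdots+2\epsilon_N}$ by Section \ref{section:basis-center}. Since $\Upsilon$ is a $U_q(\mathfrak g)$-module map, $\Upsilon(z_r)\in({\rm ad}\,U_q(\mathfrak g))\cdot\Upsilon(K_{2\epsilon_{N+1-r}+\cdots+2\epsilon_N})$, which lies in $({\rm ad}\,U_q(\mathfrak g))\cdot a_r$ for the element $a_r$ of Proposition \ref{prop:K_epsilon} (here I use that $\psi(K_{2\epsilon_{N+1-r}+\cdots+2\epsilon_N})=\phi_{a_r}$, hence $\Upsilon(K_{2\epsilon_{N+1-r}+\cdots+2\epsilon_N})=a_r$ by faithfulness). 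Because the filtration $\mathcal J$ on $\mathscr{PD}_\theta$ is preserved by the action of $U_q(\mathfrak g)$, we get $\deg\Upsilon(z_r)\leq\deg a_r$. Proposition \ref{prop:K_epsilon} gives $\deg a_r\leq 2(N-(N+1-r)+1)=2r$ in Types AI, AII and, for the diagonal case, the analogous bound $2r$ from applying Proposition \ref{prop:K_epsilon} to $K_{2\epsilon_{n+1-r}+\cdots+2\epsilon_n}$ (equivalently to the $N=2n$ version with index shifted by $n$). This yields $\deg\Upsilon(z_r)\leq 2r$ and completes the proof. The main obstacle I anticipate is being careful about the diagonal case in the degree computation — one must match the index range $r=1,\dots,n$ for $Z$ against the two parallel families of Cartan elements in Proposition \ref{prop:K_epsilon} and check that the bound $2r$ is the one that comes out, rather than $2(N-r+1)$ with $N=2n$.
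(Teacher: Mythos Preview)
Your proposal is correct and follows essentially the same approach as the paper: both argue injectivity of $\Upsilon|_Z$ by passing through $\tilde\varphi_{HC}$ via (\ref{zvbeta}) and then invoking Theorem \ref{theorem:dotted_Weyl}, and both derive the degree bound from Proposition \ref{prop:K_epsilon} together with $U_q(\mathfrak g)$-equivariance of $\Upsilon$ and $\mathcal J$-invariance of the $U_q(\mathfrak g)$-action. The only cosmetic difference is that where you invoke Zariski density of the points $\{q^{m\beta}:\beta\in\Lambda^+_\Sigma\}$, the paper supplies this explicitly via a leading-$q$-term argument (choosing $\beta$ so that one $m_{2\lambda}$ dominates); your anticipated worry about the diagonal case is handled exactly as you suggest, since $z_r$ lies in the $({\rm ad}\,U_q(\mathfrak g))$-module generated by $K_{2\epsilon_{n+1-r}+\cdots+2\epsilon_n}$ and Proposition \ref{prop:K_epsilon} with $N=n$ gives the bound $2r$.
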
 
\begin{proof}  Recall that $m^{\Sigma}_{2\lambda} = \sum_{w\in W_{\Sigma}}q^{(\tilde{\rho}, 2w\lambda)}K_{2w\lambda}$ for each $\lambda\in \Lambda_{\Sigma}^+.$  Note that $\tilde{\gamma}\in Q^+$ for all $\gamma\in Q^+$.  Hence, given $\lambda\in \Lambda_{\Sigma}^+$ and $w\in W_{\Sigma}$, we have  $w\lambda\in \lambda -Q^+$.  Since $(\rho, \gamma)$ is a positive integer for $\gamma\in Q^+$ so is $(\tilde{\rho}, \tilde{\gamma}) = (\rho, \tilde{\gamma})$.   Hence, for each $\lambda\in \Lambda_{\Sigma}^+$, we have
\begin{align*} m^{\Sigma}_{2\lambda}\in q^{(\tilde{\rho}, 2\lambda)}K_{2\lambda} + \sum_{\gamma\in Q^+}q^{{(\tilde{\rho}, 2\lambda)}-2}\mathbb{C}[q^{-2}] K_{2\lambda-2\gamma}.
\end{align*}
Let $v_{2\beta}$ be a highest weight generating vector  for  $L(2\beta)$ where $\beta\in \Lambda_{\Sigma}^+$.  
It follows that 
\begin{align}\label{q-inequality}  m^{\Sigma}_{2\lambda} \cdot v_{2\beta}\in \left(q^{(\tilde{\rho}+2\beta, 2\lambda)} (1+  q^{-2}\mathbb{C}[q^{-2}])\right)v_{2\beta}.
\end{align}
Note that any $a\in  \mathbb{C}(q)[\mathcal{A}_2]^{W_{\Sigma}\bullet}$ can be expressed as a linear combination $a_1m^{\Sigma}_{2\lambda_1}+\cdots + a_sm^{\Sigma}_{2\lambda_s}$ where each $\lambda_s\in \Lambda_{\Sigma}^+$.
We argue that there exists $\beta\in \Lambda_{\Sigma}^+$ so that $a\cdot v_{2\beta}\neq 0$.  Reordering and multiplying by a nonzero element of $\mathbb{C}[q]$ if necessary, we may assume that $|\lambda_1|\geq |\lambda_j|$ for each $2\leq j\leq s$, $a_i\in \mathbb{C}[q]$ for each $1\leq i\leq s$, and $a_1 = q^d+ $ terms of lower degree in $q$.
It is straightforward to check that there exists $\beta_1\in \Lambda_{\Sigma}^+$ such that $(\lambda_1,\beta_1) >(\lambda,\beta_1)$ for all $\lambda\neq \lambda_1$ satisfying $|\lambda|\leq |\lambda_1|$.  Thus by (\ref{q-inequality}), for a large enough positive integer $r$, $a\cdot v_{2r\beta} = q^{d+  (\tilde{\rho}+2r\beta_1,2\lambda_1)} + $ terms of degree strictly less than $d+  (\tilde{\rho}+2r\beta_1,2\lambda_1)$.   Thus $a\cdot v_{2\beta}\neq 0$ for $\beta =r\beta_1$.

Recall that $H_{2\beta}$ is a highest weight vector in the $U_q(\mathfrak{g})$-module $\mathscr{P}_{\theta}$ for each $\beta\in \Lambda_{\Sigma}^+$.  By (\ref{zvbeta}),  $z\cdot H_{2\beta} = \tilde{\varphi}_{HC}(z)\cdot H_{2\beta}$  for all $z\in Z(U_q(\mathfrak{g}))$.  By Theorem \ref{theorem:dotted_Weyl}, $\tilde{\varphi}_{HC}$ defines an isomorphism from $Z$ onto $\mathbb{C}(q)[\mathcal{A}_2]^{W_{\Sigma}\bullet}$. Hence, by the previous paragraph, given $z\in Z$, we can find $\beta\in 
\Lambda_{\Sigma}^+$ so that $\tilde{\varphi}_{HC}(z)\cdot H_{2\beta}\neq 0$.  It follows that $z\cdot H_{2\beta}\neq 0$.  By Corollary \ref{corollary:upsilon}, $\Upsilon(z) \cdot H_{2\beta} = z\cdot H_{2\beta}$ and so $\Upsilon(z) \neq 0$.  In other words, $\Upsilon$ is injective upon restriction to $Z$. This proves the first assertion of this corollary.

 For the second assertion, note that 
$\hat{z}_{r}$ is in the $({\rm ad}\ U_q(\mathfrak{g}))$-module  generated by $K_{2\epsilon_{N+1-r}+\cdots + 2\epsilon_{N}}$ where $N=n$ in Type AI and the diagonal case and $N=2n$ in Type AII.  Hence, by Proposition \ref{prop:K_epsilon}, $\Upsilon(\hat{z}_{r})$ has degree  less than or equal to 
$2r$. \end{proof}

We return to this degree computation  in Section \ref{subsection:centerandCapelli}.  Indeed, Theorem \ref{theorem:center_and_capelli} establishes the equality $\deg(\Upsilon(\hat{z}_{r}) )=2r$.

\section{Quantum Capelli operators}

\subsection{Definition and description}\label{section:defn-and-desc}
The decompositions in Section \ref{subsection:explicit-module} combined together yield the following module decomposition and related isomorphisms of left $U_q(\mathfrak{g})$-modules:
  \begin{align}\label{decompU-inv2}
\mathscr{PD}_{\theta}=\bigoplus_{\mu,\xi\in \Lambda^+_{\Sigma}} \left(U_q(\mathfrak{g})\cdot H_{2\mu}\right)\otimes \left(U_q(\mathfrak{g})\cdot  H^*_{2\xi}\right)\cong \bigoplus_{\mu,\xi\in \Lambda^+_{\Sigma}} L(2\mu)\otimes L^*(2\xi)
 \end{align}
 where $L^*(2\xi)$ is the  left $U_q(\mathfrak{g})$-module dual of $L(2\xi)$.
This last isomorphism can be made concrete by sending $H_{2\mu}$ to a pre-chosen highest weight generating vector $v_{2\mu}$ of $L(2\mu)$.  Similarly, $H^*_{2\xi}$ is sent to
a nonzero scalar multiple of $v_{2\xi}^*$,
the lowest weight generating vector for  $L^*(2\xi)$ satisfying  $ v^*_{2\xi} (v_{2\xi}) = 1$.  This nonzero scalar is determined  in the next lemma using the bilinear form  $\langle \cdot, \cdot \rangle$ defined by (\ref{inner}) in Section \ref{section:action}.

  Recall that there is a natural isomorphism from $(L(2\mu)\otimes L^*(2\mu))$ to ${\rm End}\ L(2\mu)$ as left $U_q(\mathfrak{g})$-modules and so $ (L(2\mu)\otimes L^*(2\mu))^{U_q(\mathfrak{g})}$ is the one-dimensional subspace consisting of the scalars.  Furthermore, $ (L(2\mu)\otimes L^*(2\xi))^{U_q(\mathfrak{g})} =0$ for $\mu\neq \xi$.
 Hence, the left $U_q(\mathfrak{g})$-module invariants of $ \mathscr{PD}_{\theta}$ satisfy
  \begin{align*}
\mathscr{PD}_{\theta}^{U_q(\mathfrak{g})}\cong \bigoplus_{\mu,\xi\in \Lambda_{\Sigma}^+} (L(2\mu)\otimes L^*(2\xi))^{U_q(\mathfrak{g})} 
= \bigoplus_{\mu\in \Lambda_{\Sigma}^+} (L(2\mu)\otimes L^*(2\mu))^{U_q(\mathfrak{g})}.
 \end{align*}
Let $C_{\mu}$ be the  basis vector for the space $ \left(U_q(\mathfrak{g})\cdot H_{2\mu}\right)\otimes \left(U_q(\mathfrak{g})\cdot  H^*_{2\mu}\right)^{U_q(\mathfrak{g})}$ corresponding to the identity element in 
${\rm End}\ L(2\mu)$ via the isomorphism between $ \left(U_q(\mathfrak{g})\cdot H_{2\mu}\right)\otimes \left(U_q(\mathfrak{g})\cdot  H^*_{2\mu}\right)$ and $L(2\mu)\otimes L^*(2\mu)$ described above.
We refer to the set 
 $\{C_{\mu}|\ \mu\in \Lambda_{\Sigma}^+\}$ as the Capelli operators.  Note that the Capelli operators form a basis for the $U_q(\mathfrak{g})$
 invariant subspace of $\mathscr{PD}_{\theta}$.  
 Since $C_{\mu}\in  \left((U_q(\mathfrak{g})\cdot H_{2\mu})\otimes (U_q(\mathfrak{g})\cdot  H^*_{2\mu})\right)$, it follows that $C_{\mu}$ has degree $2|\mu|$ in terms of the filtration $\mathcal{J}$.  In the lemma below, we drop the tensor product notation and simply right $H_{2\mu}H^*_{2\mu}$.


\begin{lemma}\label{Capelliexpression} The Capelli operator $C_{\mu}$ for  $\mu\in \Lambda^+_{\Sigma}$ lies in
\begin{align}\label{gammaeqn}   \langle H_{2\mu}^*, H_{2\mu}\rangle^{-1} H_{2\mu}H^*_{2\mu}+ (U^-_+\cdot H_{2\mu})(U^+_+\cdot H_{2\mu}^*).
\end{align}
  Moreover $C_{\mu}\cdot H_{2\mu} = H_{2\mu}$  and $C_{\mu}\cdot H_{2{\lambda}} = 0$ for $|\mu|\geq |\lambda|$ and $\mu\neq \lambda$.
\end{lemma}
\begin{proof} Recall that $H_{2\mu}$ is a highest weight vector of weight $2\mu$ and $H_{2\mu}^*$ is a lowest weight vector 
of weight $-2\mu$.  Hence   $$C_{\mu} \in U_q(\mathfrak{g})\cdot (H_{2\mu}H^*_{2\mu} )\subseteq (U^-\cdot H_{2\mu})(U^+\cdot H_{2\mu}^*).$$
Using the augmentation ideals $U^-_+$ and $U^+_+$, this simplifies to 
\begin{align*}  C_{\mu}\in \gamma H_{2\mu}H_{2\mu}^* + (U^-_+\cdot H_{2\mu})(U^+_+\cdot H_{2\mu}^*) 
\end{align*}  for some scalar $\gamma$.  It follows that (\ref{gammaeqn}) is true up to the scalar in front of $H_{2\mu}H^*_{2\mu}$.

The  projection map $\pi$ defined in Section \ref{section:action} can be used to  understand the action of  $C_{\mu}$ on $H_{2\lambda}$ 
In particular we have
\begin{align}\label{mulam} C_{\mu} \cdot H_{\lambda}= \gamma H_{2\mu}\pi (H_{2\mu}^*H_{2\lambda})_0 +(U^-_+\cdot H_{2\mu})\pi ((U^+_+\cdot H_{2\mu}^*H_{2\lambda})_0.
 \end{align} 
 More generally 
 \begin{align}\label{mulam2} C_{\mu} \cdot(u\cdot H_{2\lambda})=\gamma H_{2\mu}\pi (H_{2\mu}^*(u\cdot H_{2\lambda}))_0 +\sum_{u\in U^-_+}(u\cdot H_{2\mu})\pi ((U^+_+\cdot H_{2\mu}^*)(u\cdot H_{2\lambda}))_0 
 \end{align}  where the sum is over weight vectors $u$ in $U^-_+$.
By Lemma \ref{lemma:mugamma}, if $|\mu| \geq |\lambda|$ and $\mu\neq \lambda$ then  $\pi (H_{2\mu}^*H_{2\lambda})=\pi ((U^+_+\cdot H_{2\mu}^*)H_{2\lambda})=0$.
 Hence $C_{\mu}\cdot H_{2\lambda} = 0$ for   $|\mu| \geq |\lambda|$ and $\mu\neq \lambda$ as desired.

Recall that $C_{\mu}$ acts as the identity on $U_q(\mathfrak{g})\cdot H_{2\mu} $. Hence $C_{\mu}\cdot H_{2\mu} = H_{2\mu}$. 
Hence by (\ref{mulam}), $\gamma H_{2\mu}(\pi(H_{2\mu}^*H_{2\mu}))_0=H_{2\mu}$.  (Note here we are taking into account that $U^-_+\cdot H_{2\mu}$ is a sum of terms of  weight strictly less than $2\mu$ so have no contribution to $C_{\lambda}\cdot H_{2\mu})$.
 Recall the definition of the bilinear form
$\langle \cdot, \cdot \rangle $ right before Lemma \ref{lemma:inv}.  In particular, this is a bilinear form on $\mathcal{D}_{\theta}\times \mathcal{P}_{\theta}$ defined by 
$\langle d,p\rangle = \pi(dp)_0$.
  Hence 
$C_{\mu}\cdot H_{2\mu} = H_{2\mu} = \gamma H_{2\mu}(\langle H_{2m}^*, H_{2\mu}\rangle))$.   Thus $\gamma= \langle H_{2\mu}^*, H_{2\mu}\rangle^{-1}$.

\end{proof}

\subsection{ Realization as polynomials}
We start with the twisting relation between elements in $\mathscr{P}_{\theta}$ and $\mathscr{D}_{\theta}$. These relationships will be key to taking products of Capelli operators. Much of the computations involve vector subspaces of $\mathscr{P}_{\theta}$  and vector subspaces of $\mathscr{D}_{\theta}$ of the form 
\begin{align*}\sum_{\substack{\nu<\mu\\ |\nu|=|\mu|}}(\mathscr{P}_{\theta})_{\nu}{\rm \quad and \quad }\sum_{\substack{\nu<\mu\\ |\nu|=|\mu|}}(\mathscr{D}_{\theta})_{-\nu}\end{align*}  Here, the inequality $\nu<\mu$ means that $\mu-\nu\in Q^+_N$.  An equality such as $|\nu|=|\mu|$ ensures that the entire subspace of $\mathscr{P}_{\theta}$ is of degree $|\mu|$ and thus sits inside $\mathcal{J}_{|\mu|}(\mathscr{P}_{\theta})$.  A similar result holds for the subspaces of $\mathscr{D}_{\theta}$.  

Recall  the relation described in Theorem \ref{theorem:relnsWeyl}.
The next lemma provides a version of this relation using subspaces as described above.
\begin{lemma}\label{twistnew}For all $a,b,e,$ and $f$, the relation from Theorem \ref{theorem:relnsWeyl} satisfies
\begin{align}\label{twist2}
d_{ab}x_{ef} - q^{(\epsilon_a+\epsilon_b,\epsilon_e+\epsilon_f )} x_{ef}d_{ab}\in\  q^{-\delta_{ef}}\delta_{ae}\delta_{bf} +\sum_{\substack{\nu<\epsilon_{e}+\epsilon_f\\ |\nu|=2}}\sum_{\substack{\nu'<\epsilon_{a}+\epsilon_b\\ |\nu'|=2}}(\mathscr{P}_{\theta})_{\nu}(\mathscr{D}_{\theta})_{-\nu'}.
\end{align}
\end{lemma}
\begin{proof} Note that the weight of $d_{ab}$ is $-\epsilon_{a}-\epsilon_{b}$ and the weight of $x_{ef}$ is $\epsilon_e+\epsilon_f$.  Hence the exponent of $q$ preceding $x_{ef}d_{ab}$ satisfies  $\delta_{af}+\delta_{ae}+\delta_{bf} +\delta_{be}=(\epsilon_{a}+\epsilon_{b},\epsilon_e+\epsilon_f).$

By \cite{LSS}, Lemma 5.4,
\begin{align*}
F_r\cdot x_{ij}&= \delta_{ir}q^{-\delta_{rj}+\delta_{r,j-1}}x_{i+j,j}+\delta_{jr}x_{i,j+1}\cr
E_r\cdot d_{ij}&=-(q^{-1}\delta_{ir}d_{i+1,j}+q^{1-\delta_{ri}+\delta_{r,i-1}}\delta_{jr}d_{i,j+1}).
\end{align*}
Note that if $F_r\cdot x_{ij}\neq 0$, then the subscripts of  $x_{ij}$  increase upon application of $F_r$.  Similarly, if $E_r\cdot d_{ij}\neq 0$,  the subscripts of  $d_{ij}$ increase upon application of $E_r$.  Therefore \begin{align*}
d_{ab}x_{ef} - q^{(\epsilon_a+\epsilon_b,\epsilon_e+\epsilon_f )} x_{ef}d_{ab}\in\  q^{-\delta_{ef}}\delta_{ae}\delta_{bf} +\sum_{\substack{\nu<\epsilon_{e}+\epsilon_f\\ |\nu|=2}}\sum_{\substack{\nu'<\epsilon_{a}+\epsilon_b\\ |\nu'|=2}}(\mathscr{P}_{\theta})_{\nu}(\mathscr{D}_{\theta})_{-\nu'}.
\end{align*}
as desired.
\end{proof} 

The next lemma gives applications of the relation in Lemma \ref{twistnew}.

\begin{lemma}\label{lemma:XD} Given a weight vector  $X_{2\gamma}$ of weight $2\gamma$ in $\mathscr{P}_{\theta}$  and a weight vector $D_{-2\mu}$ of weight $-2\mu$ in $\mathscr{D}_{\theta}$, we have 
 \begin{align*}{D}_{-2\mu}{X}_{2\gamma} -q^{(2\mu,2\gamma)}{X}_{2\gamma}{D}_{-2\mu}\in \sum_{\substack{\nu<\gamma\\ |\nu'|=|\gamma|}}\  \sum_{\substack{\nu'<2\mu \\|\nu'|=|2\mu|}}(\mathscr{P}_{\theta})_{\nu}(\mathscr{D}_{\theta})_{-\nu'}+\mathcal{J}_{|2\mu|+|2\gamma|-2}(\mathscr{PD}_{\theta}).\end{align*}
Moreover,  \begin{align*}\sum_{\substack{\nu<2\mu \\|\nu|=|2\mu|}}
(\mathscr{D}_{\theta})_{-\nu}\sum_{\substack{\nu'<2\gamma\\|\nu'|=|2\gamma|}}
(\mathscr{P}_{\theta})_{\nu'} \subseteq \sum_{\substack{\nu'<2\gamma\\|\nu'|=|2\gamma|}}
(\mathscr{P}_{\theta})_{\nu'}\sum_{\substack{\nu<2\mu\\|\nu|=|2\mu|}}
(\mathscr{D}_{\theta})_{-\nu} + \mathcal{J}_{|2\mu|+|2\gamma|-2}(\mathscr{PD}_{\theta})
\end{align*}
\end{lemma}
\begin{proof}
Consider  first a term of the form $d_{e_s,f_s}x_{g_1,h_1}x_{g_{2},h_2}$.  Using (\ref{twist2}) as the term $d_{e_s,f_s}$ is moved to the right gives us
\begin{align*}d_{e_s,f_s}x_{g_1,j_1}x_{g_{2},h_2}&=q^{(\epsilon_{e_s}+\epsilon_{f_s}, \epsilon_{g_1}+\epsilon_{g_2})} x_{g_1,h_1}d_{e_s,f_s}x_{g_{2},h_{2}}
\cr&\quad\quad+
\sum_{\substack{\nu<\epsilon_{g_1}+\epsilon_{h_1}\\ |\nu|=2}}\sum_{\substack{\nu'<\epsilon_{e_s}+\epsilon_{f_s}\\ |\nu'|=2}}(\mathscr{P}_{\theta})_{\nu}(\mathscr{D}_{\theta})_{-\nu'}x_{g_{2},h_2}+\mathbb{C}(q)x_{g_2,h_2}
\cr&=
q^{(\epsilon_{e_s}+\epsilon_{f_s}, \epsilon_{g_1}+\epsilon_{h_1}+\epsilon_{g_2}+\epsilon_{h_2})} x_{g_1,h_1}x_{g_{2},h_{2}}d_{e_s,f_s}
\cr&\quad\quad+\sum_{\substack{\nu<\epsilon_{g_2}+\epsilon_{h_2}\\ |\nu|=2}}\sum_{\substack{\nu'<\epsilon_{e_s}+\epsilon_{f_s}\\ |\nu'|=2}}x_{g_1,h_1}(\mathscr{P}_{\theta})_{\nu}(\mathscr{D}_{\theta})_{-\nu'}\cr&\quad\quad+\sum_{\substack{\nu<\epsilon_{g_1}+\epsilon_{h_1}\\ |\nu|=2}}\sum_{\substack{\nu'<\epsilon_{e_s}+\epsilon_{f_s}\\ |\nu'|=2}}(\mathscr{P}_{\theta})_{\nu}(\mathscr{D}_{\theta})_{-\nu'}x_{g_{2},h_2}+\mathbb{C}(q)x_{g_2,h_2}
+\mathbb{C}(q)x_{g_1,h_1}\end{align*} 
 From the relations satisfied by the generators for $\mathscr{P}_{\theta}$ (see Theorem \ref{theorem:polypart} and formula (\ref{quadformula})) we see that $(\mathscr{P}_{\theta})_{2\gamma}(\mathscr{P}_{\theta})_{2\lambda}=(\mathscr{P}_{\theta})_{2\gamma+2\lambda}$.  Similarly $(\mathscr{D}_{\theta})_{-2\gamma}(\mathscr{D}_{\theta})_{-2\lambda}=(\mathscr{D}_{\theta})_{-2\gamma-2\lambda'}$.  Hence 
 \begin{align*}\sum_{\substack{\nu<\epsilon_{g_2}+\epsilon_{h_2}\\ |\nu|=2}}x_{g_1,h_1}(\mathscr{P}_{\theta})_{\nu}\subseteq \sum_{\substack{\nu<\epsilon_{g_1}+\epsilon_{h_1}+\epsilon_{g_2}+\epsilon_{h_2}\\ |\nu|=4}}(\mathscr{P}_{\theta})_{\nu}\ \end{align*}
On the other hand, 
\begin{align*}\sum_{\substack{\nu'<\epsilon_{e_s}+\epsilon_{f_s}\\ |\nu'|=2}}(\mathscr{D}_{\theta})_{-\nu'}
\end{align*}
is spanned by linearly independent vectors of the form $d_{ef}$ with weight $\nu'=\epsilon_{e}+\epsilon_f$ which is strictly less than  $\epsilon_{e_s}+\epsilon_{f_s}$.  Applying 
(\ref{twist2}) results in \begin{align*}\sum_{\substack{\nu'<\epsilon_{e_s}+\epsilon_{f_s}\\ |\nu'|=2}}(\mathscr{D}_{\theta})_{-\nu'}x_{g_2,h_2} \subseteq \sum_{\substack{\gamma\leq\epsilon_{g_2}+\epsilon_{h_2}\\ |\gamma|=2}}\sum_{\substack{\nu'<\epsilon_{e_s}+\epsilon_{f_s}\\ |\nu'|=2}}(\mathscr{P}_{\theta})_{-\gamma}(\mathscr{D}_{\theta})_{-\nu'}+\mathbb{C}(q).
\end{align*}
Hence \begin{align*}\sum_{\substack{\nu<\epsilon_{g_1}+\epsilon_{h_1}\\ |\nu|=2}}(\mathscr{P}_{\theta})_{\nu}\sum_{\substack{\nu'<\epsilon_{e_s}+\epsilon_{f_s}\\ |\nu'|=2}}(\mathscr{D}_{\theta})_{\nu'}x_{g_2,h_2} \subseteq \sum_{\substack{\nu\leq\epsilon_{g_1}+\epsilon_{h_1}+\epsilon_{g_2}+\epsilon_{h_2}\\ |\nu|=4}}\sum_{\substack{\nu'<\epsilon_{e_s}+\epsilon_{f_s}\\ |\nu'|=2}}(\mathscr{P}_{\theta})_{\nu}(\mathscr{D}_{\theta})_{\nu'}+\sum_{\substack{\nu<\epsilon_{g_1}+\epsilon_{h_1}\\ |\nu|=2}}(\mathscr{P}_{\theta})_{\nu}.
\end{align*}
Therefore
\begin{align*}&d_{e_s,f_s}x_{g_1,h_1}x_{g_{2},h_2}-
q^{(\epsilon_{e_s}+\epsilon_{f_s}, \epsilon_{g_1}+\epsilon_{h_1}+\epsilon_{g_2}+\epsilon_{h_2})} x_{g_1,h_1}x_{g_{2},h_{2}}d_{e_s,f_s}
\cr&\subseteq 
\sum_{\substack{\nu\leq\epsilon_{g_1}+\epsilon_{h_1}+\epsilon_{g_2}+\epsilon_{h_2}\\ |\nu|=4}}\sum_{\substack{\nu'<\epsilon_{e_s}+\epsilon_{f_s}\\ |\nu'|=2}}(\mathscr{P}_{\theta})_{\nu}(\mathscr{D}_{\theta})_{\nu'}+\sum_{\substack{\nu<\epsilon_{g_1}+\epsilon_{h_1}\\ |\nu|=2}}(\mathscr{P}_{\theta})_{\nu}.
\end{align*} Note that this final term is in $\mathcal{J}_1(\mathscr{PD}_{\theta})$ while the other terms are in $\mathcal{J}_3(\mathscr{PD}_{\theta})$.  

We now turn our attention in applying such computations to the main assertion of the lemma.  Since $D_{-2\mu}$ is a weight vector, it can be written as a sum of products $d_{e_1,f_1}d_{e_2,f_2}\dots d_{e_s,f_s}$ and each summand has the same terms with the only difference being reordering.  Moreover, since the weight of $d_{ab}$ is $-\epsilon_{a}-\epsilon_{b}$, the weight $-2\mu$ of $D_{-2\mu}$ equals $\sum_i^s{-\epsilon_ {e_i}-\epsilon_{f_i}}.$ A  similar analysis applies to $X_{2\gamma}$ using elements $x_{ab}$ instead of $d_{ab}$.
Repeated applications of (\ref{twistnew}), moving one term of the form $d_{e_i,f_i}$ to the right after the previous one, yields the desired formula.


The argument for the ``moreover" part  is similar.  Start with a weight vector  $D$ of weight $-\nu$ in  the vector space  $\sum_{\substack{\nu<\mu \\|\nu|=|2\mu|}}
(\mathscr{D}_{\theta})_{-\nu}$ and a weight vector $X$ of weight $\nu'$ in the vector space $\sum_{\substack{\nu'<2\gamma \\|\nu'|=|2\gamma|}}
(\mathscr{P}_{\theta})_{\nu'}$. Repeated applications of (\ref{twist2}) yields
\begin{align*}DX-q^{(2\nu,2\nu')}XD \in\sum_{\substack{\lambda'<\nu'<2\gamma\\|\nu'|=|2\gamma|}}
(\mathscr{P}_{\theta})_{\lambda'}\sum_{\substack{\lambda<\nu<2\mu\\|\nu|=|2\mu|}}
(\mathscr{D}_{\theta})_{-\lambda} + \mathcal{J}_{|2\mu|+|2\gamma|-2}(\mathscr{PD}_{\theta}) \end{align*}
Since \begin{align*}XD\in \sum_{\substack{\nu'<2\gamma \\|\nu'|=|2\gamma|}}
(\mathscr{P}_{\theta})_{\nu'}\sum_{\substack{\nu<2\mu \\|\nu|=|2\mu|}}
(\mathscr{D}_{\theta})_{-\nu}\end{align*}
it follows that 
\begin{align*}
DX\in \sum_{\substack{\nu'<2\gamma\\|\nu'|=|2\gamma|}}
(\mathscr{P}_{\theta})_{\nu'}\sum_{\substack{\nu<2\mu\\|\nu|=|2\mu|}}
(\mathscr{D}_{\theta})_{-\nu} + \mathcal{J}_{|2\mu|+|2\gamma|-2}(\mathscr{PD}_{\theta})
\end{align*}
This holds for all weight vectors $D\in \sum_{\substack{\nu<2\mu \\|\nu|=|2\mu|}}
(\mathscr{D}_{\theta})_{-\nu}$ and $X\in \sum_{\substack{\nu'<2\gamma \\|\nu'|=|2\gamma|}}
(\mathscr{P}_{\theta})_{\nu'}$.
\end{proof}

Note that an obvious application of Lemma \ref{lemma:XD} is to $D_{-2\mu} = H^*_{2\mu}$ and $X_{2\gamma}=H_{2\gamma}$.  In this case, we get 
\begin{align}
\label{Hmueqn}
{H}_{2\mu}^*{H}_{2\gamma} -q^{(2\mu,2\gamma)}{H}_{2\gamma}{H}_{2\mu}^*\in \sum_{\substack{\nu<2\gamma\\ |\nu'|=|2\gamma|}}\  \sum_{\substack{\nu'<2\mu \\|\nu'|=|2\mu|}}(\mathscr{P}_{\theta})_{\nu}(\mathscr{D}_{\theta})_{-\nu'}+\mathcal{J}_{|2\mu|+|2\gamma|-2}(\mathscr{PD}_{\theta}).
\end{align}

The next lemma gives another formulation for the Capelli operators described in Lemma \ref{Capelliexpression}.
\begin{lemma}\label{lemma:Cmu}
The Capelli operator $C_{\mu}$ for  $\mu\in \Lambda^+_{\Sigma}$ satisfies
\begin{align*}C_{\mu}-\langle H_{2\mu}^*, H_{2\mu}\rangle^{-1} H_{2\mu}H^*_{2\mu}\in\sum_{\substack{\nu<2\mu\\ |\nu|=|2\mu|}}(\mathscr{P}_{\theta})_{\nu} (\mathscr{D}_{\theta})_{-\nu}
 \end{align*}
\end{lemma}
\begin{proof} It follows from Lemma \ref{Capelliexpression} that the vector $C_{\mu}$ is an element of 
 \begin{align*} \langle H_{2\mu}^*, H_{2\mu}\rangle^{-1} H_{2\mu}H^*_{2\mu}+ (U^-_+\cdot H_{2\mu})(U^+_+\cdot H_{2\mu}^*)
 \end{align*} 
  Recall that ${H}_{2\mu}^*$  is a lowest weight vector and equals a sum of products   $d_{e_1,f_1}d_{e_2,f_2}\dots d_{e_s,f_s}$.  Moreover,  each summand has the same terms with the only difference being reordering. Note that the weight of $d_{ab}$ is $-\epsilon_{a}-\epsilon_{b}$. Hence, the weight $-2\mu$ of  ${H}_{2\mu}^*$ is equal to 
$-2\mu_1\hat{\eta}_1-\cdots-2\mu_j\hat{\eta}_j=
\sum_i^s{-\epsilon_ {e_i}-\epsilon_{f_i}}.$ Similarly, ${H}_{2\mu}$ is a highest weight vector equal to a  a sum of the same products except that each $d_{e_i,f_i}$ is replaced with   $x_{e_i,f_i}$ for $ i=1,\dots, s$ and so the weight is $2\mu$ instead of $-2\mu$. 

By definition of $H_{2\mu}$  we must have $\mu\in \Lambda^+_{\Sigma}$. Recall that $\nu<2\mu$ means that $2\mu-\nu\in Q^+_N$.  Taking the restricted version of $2\mu-\nu$ means that
$2\mu-\tilde{\nu}\in Q^+_{\Sigma}$. As explained in Section \ref{subsection:special}, $Q_{\Sigma}\cap w_0\Lambda^+_{\Sigma}=0$.  Since $w_0Q_{\Sigma} = Q_{\Sigma}$, we also have $Q_{\Sigma}\cap \Lambda^+_\Sigma =0$.  Hence 
\begin{align*} H_{2\mu}H^*_{2\lambda}\notin \sum_{\nu<2\mu, \nu'<2\lambda}(\mathscr{P}_{\theta})_{\nu} (\mathscr{D}_{\theta})_{-\nu'}.
\end{align*}
for any $\mu,\lambda\in \Lambda^+_{\Sigma}$. 

Contributions to $(U^-_+\cdot H_{2\mu})$ take the form of repeated applications of generators $F_1,\dots, F_n$ to $H_{2\mu}$ as described in Lemma \ref{lemma:XD}.
Since $H_{2\mu}\in (\mathscr{P}_{\theta})_{2\mu}$ we see that $(U^-_+\cdot H_{2\mu})\subseteq \sum_{\nu<2\mu}(\mathscr{P}_{\theta})_{\nu}$.  
Note that all these contributions must lie in degree $|2\mu|$ since the action of $U_q(\mathfrak{g})$ on $\mathscr{P}_{\theta}$  preserves degree. Hence 
$$(U^-_+\cdot H_{2\mu})\subseteq \sum_{\substack{\nu<2\mu\\ |\nu|=|2\mu|}}(\mathscr{P}_{\theta})_{\nu}.$$ 
The same reasoning yields 
 \begin{align*}(U^+_+ \cdot H^*_{2\mu})\subseteq \sum_{\substack{\nu<2\mu\\ |\nu|=|2\mu|}} (\mathscr{D}_{\theta})_{-\nu}.
 \end{align*}  
Putting these two inclusions together  yields 
\begin{align*}(U^-_+\cdot H_{2\mu})(U^+_+\cdot H_{2\mu}^*)\subset \sum_{\substack{\nu<2\mu\\ |\nu|=|2\mu|}}\sum_{\substack{\nu'<2\mu\\ |\nu'|=|2\mu|}}(\mathscr{P}_{\theta})_{\nu} (\mathscr{D}_{\theta})_{-\nu'}.
\end{align*} Since $C_{\mu}\in (\mathscr{PD}_{\theta})^{U_q(\mathfrak{g})}$, we can just consider those summands with  $\nu=\nu'$ in the above formula. The lemma follows.
\end{proof}

 It is worth noting that the intersection of $\mathscr{PD}^{U_q(\mathfrak{g})}$ with the space $ \left(U^-\cdot H_{2\mu}\right) \left(U^+\cdot  H^*_{2\mu}\right)$ is one-dimensional. 
Indeed, by Lemma \ref{Capelliexpression} the Capelli operator  $C_{\mu}$ is a basis vector for this space.   As a consequence, we have
  \begin{align}\label{Jinv}
 \mathcal{J}_{2|\mu|}(\mathscr{PD}^{U_q(\mathfrak{g})})\subseteq \mathbb{C}(q)C_{\mu}+\sum_{\substack{\mu'\neq 2\mu \\ |\mu'|=|2\mu|}}\mathbb{C}(q)C_{\mu'}+\mathcal{J}_{2|\mu|-2}(\mathscr{PD}^{U_q(\mathfrak{g})}). \end{align} 
 
\begin{proposition}\label{prop:capelli} Consider a Capelli operator $C_{\mu}$ of degree $2\mu$ where \begin{align*}\mu=m_1\hat{\eta}_1 +\cdots +m_j\hat{\eta}_j
 \end{align*}
 with $|\mu| =\sum_{i=1}^jm_jj$.  It follows that 
 \begin{align*}C_{\mu}=a_{\mu,\mu}C_{\hat{\eta}_1}^{\mu_1}\cdots C_{\hat{\eta}_n}^{\mu_n} +
\sum_{|\mu'|<|\mu|}a_{\mu'}C_{\hat{\eta}_1}^{\mu'_1}\cdots C_{\hat{\eta}_j}^{\mu'_j}+a_0.
 \end{align*}
 where  
 $a_{\mu,\mu} =q^{(2\mu,2\mu)}\left(\prod_{i=1}^j \langle H_{2\hat{\eta}_i},H_{2\hat{\eta}_i}^*\rangle^{-\mu'_i}\right) $, $\mu'=\mu'_1\hat{\eta}_1 +\cdots +\mu'\hat{\eta}_j $ and every $a_{\mu'}$ and $a_0$ are scalars \end{proposition}
\begin{proof} 

Set  $c_{\mu,\gamma} = \langle H_{2\mu}^*, H_{2\mu}\rangle^{-1}  \langle H_{2\gamma}^*, H_{2\gamma}\rangle^{-1}$.
Using (\ref{Hmueqn}) we see that  that \begin{align}\label{CmuCgamma}
C_{\mu}C_{\gamma}&- c_{\mu,\gamma}
{H}_{2\mu}H_{2\mu}^*{H}_{2\gamma}{H}_{2\gamma}^*\in \sum_{\substack{\nu<2\mu\\ |\nu|=|2\mu|}}(\mathscr{P}_{\theta})_{\nu} (\mathscr{D}_{\theta})_{-\nu}\sum_{\substack{\lambda<2\gamma\\ |\lambda|=|2\gamma|}}(\mathscr{P}_{\theta})_{\lambda} (\mathscr{D}_{\theta})_{-\lambda}.
\end{align}
Switching the order of $H_{2\mu}^*$ and $H_{2\gamma}$ using (\ref{Hmueqn}) gives us \begin{align*}
C_{\mu}C_{\gamma}&- c_{\mu,\gamma}
q^{(2\mu,2\gamma)} 
{H}_{2\mu}{H}_{2\gamma}H_{2\mu}^*{H}_{2\gamma}^*\in  {H}_{2\mu}\left(\sum_{\substack{\nu<2\gamma\\ |\nu'|=|2\gamma|}}\  \sum_{\substack{\nu'<2\mu \\|\nu'|=|2\mu|}}(\mathscr{P}_{\theta})_{\nu}(\mathscr{D}_{\theta})_{-\nu'}\right){H}_{2\gamma}^*+{H}_{2\mu}(\mathcal{J}_{|2\mu|+|2\gamma|-2}(\mathscr{PD}_{\theta})){H}_{2\gamma}^*\cr&+\sum_{\substack{\nu<2\mu\\ |\nu|=|2\mu|}}(\mathscr{P}_{\theta})_{\nu} (\mathscr{D}_{\theta})_{-\nu}\sum_{\substack{\lambda<2\mu\\ |\lambda|=|2\mu|}}(\mathscr{P}_{\theta})_{\lambda} (\mathscr{D}_{\theta})_{-\lambda}.
\end{align*}
Note that ${H}_{2\mu}(\mathcal{J}_{|2\mu|+|2\gamma|-2}(\mathscr{PD}_{\theta})){H}_{2\gamma}^*\subseteq \mathcal{J}_{2|2\mu|+2|2\gamma|-2}.$
Also
\begin{align*}H_{2\mu}
 \sum_{\substack{\lambda<2\gamma\\|\lambda|=|2\gamma|}}(\mathscr{P}_{\theta})_{\lambda}
 \subseteq \sum_{\substack{\nu+\nu'<2\mu+2\gamma \\|\nu+\nu'|=|2\mu+2\gamma|}}(\mathscr{P}_{\theta})_{\nu+\nu'}
 \quad{\rm and}\quad
 \sum_{\substack{\lambda'<2\mu \\ |\lambda'|=|2\mu|}}
 (\mathscr{D}_{\theta})_{-\lambda'}
 H_{2\gamma}^*
 \subseteq \sum_{\substack{\nu+\nu'<2\mu+2\gamma \\|\nu+\nu'|=|2\mu+2\gamma|}}(\mathscr{D}_{\theta})_{-\nu-\nu'}.
 \end{align*}
 Hence  
 \begin{align}\label{H2mu} H_{2\mu}
( \sum_{\substack{\nu<2\gamma \\|\nu|=|2\gamma|}}\sum_{\substack{\nu'<2\mu \\|\nu'|=|2\mu|}}(\mathscr{P}_{\theta})_{\nu}
(\mathscr{D}_{\theta})_{-\nu'})
H_{2\gamma}^*\subseteq  \sum_{\substack{\nu+\nu'<2\mu+2\gamma \\|\nu+\nu'|=|2\mu+2\gamma|}}(\mathscr{P}_{\theta})_{\nu+\nu'}(\mathscr{D}_{\theta})_{-\nu-\nu'}.
\end{align}

We now look at the other terms that show up in the formula for $C_{\mu}C_{\gamma}$.  By Lemma \ref{lemma:XD} we have
 \begin{align*}\sum_{\substack{\nu<2\mu \\|\nu|=|2\mu|}}
(\mathscr{D}_{\theta})_{-\nu}\sum_{\substack{\nu'<2\gamma\\|\nu'|=|2\gamma|}}
(\mathscr{P}_{\theta})_{\nu'} = \sum_{\substack{\nu'<2\gamma\\|\nu'|=|2\gamma|}}
(\mathscr{P}_{\theta})_{\nu'}\sum_{\substack{\nu<2\mu\\|\nu|=|2\mu|}}
(\mathscr{D}_{\theta})_{-\nu} + \mathcal{J}_{|2\mu|+|2\gamma|-2}(\mathscr{PD}_{\theta}).
\end{align*}
Therefore
 \begin{align*}&
(\sum_{\substack{\nu<2\mu \\|\nu|=|2\mu|}}(\mathscr{P}_{\theta})_{\nu}
(\mathscr{D}_{\theta})_{-\nu})
(\sum_{\substack{\nu'<2\gamma\\|\nu'|=|\gamma|}}
(\mathscr{P}_{\theta})_{\nu'}(\mathscr{D}_{\theta})_{-\nu'}) \cr&\subseteq \sum_{\substack{\nu<2\mu \\|\nu|=|2\mu|}}\sum_{\substack{\nu'<2\gamma\\|\nu'|=|2\gamma|}}(\mathscr{P}_{\theta})_{\nu}(\mathscr{P}_{\theta})_{\nu'}(\mathscr{D}_{\theta})_{-\nu}(\mathscr{D}_{\theta})_{-\nu'} + \mathcal{J}_{2|2\mu|+2|2\gamma|-2}(\mathscr{PD}_{\theta}).
 \end{align*}
 From the relations satisfied by the generators for $\mathscr{P}_{\theta}$ (see Theorem \ref{theorem:polypart} and formula (\ref{quadformula})) we see that $(\mathscr{P}_{\theta})_{\nu}(\mathscr{P}_{\theta})_{\nu'}=(\mathscr{P}_{\theta})_{\nu+\nu'}$.  Similarly $(\mathscr{D}_{\theta})_{-\nu}(\mathscr{D}_{\theta})_{-\nu'}=(\mathscr{D}_{\theta})_{-\nu-\nu'}$.  Hence 
  \begin{align*}&
 \sum_{\substack{\nu<\mu \\|\nu|=|\mu|}}\sum_{\substack{\nu'<\gamma\\|\nu'|=|\gamma|}}(\mathscr{P}_{\theta})_{2\nu}(\mathscr{P}_{\theta})_{2\nu'}(\mathscr{D}_{\theta})_{-2\nu}(\mathscr{D}_{\theta})_{-2\nu'} + \mathcal{J}_{|\mu|+|\gamma|-2}(\mathscr{PD}_{\theta})\cr&\subseteq \sum_{\substack{\nu+\nu'<\mu +\gamma\\|\nu+\nu'|=|\mu+\gamma|}}(\mathscr{P}_{\theta})_{2\nu+2\nu'}(\mathscr{D}_{\theta})_{-2\nu-2\nu'} +\mathcal{J}_{2|2\mu|+2|2\gamma|-2}(\mathscr{PD}_{\theta}).
 \end{align*}
This formula combined with (\ref{CmuCgamma}) and  (\ref{H2mu}) gives us
\begin{align*}C_{\mu}C_{\gamma}&- c_{\mu,\gamma}q^{(2\mu,2\gamma)} ({H}_{2\mu}{H}_{2\gamma})({H}^*_{2\mu}{H}_{2\gamma}^*)\in \sum_{\substack{\nu+\nu'<\mu+\gamma \\|\nu+\nu'|=|\mu+\gamma|}}(\mathscr{P}_{\theta})_{2\nu+2\nu'}(\mathscr{D}_{\theta})_{-2\nu-2\nu'}+ \mathcal{J}_{2|2\mu|+2|2\gamma|-2}(\mathscr{PD}_{\theta}).\end{align*}

By Proposition \ref{prop:weights}, the elements $H_{2\mu}$ and $  H_{2\mu'}$ commute with each other and the same holds for $H_{2\mu}^*$ and $H_{2\gamma}^*$ where $\mu,\gamma$ are arbitrary weights. Moreover, by weight considerations discussed immediately following Proposition \ref{prop:weights}, $H_{2\mu}H_{2\gamma}=H_{2\mu+2\gamma}$ and  $H^*_{2\mu}H_{2\gamma}^*=H^*_{2\mu+2\gamma}$.  By Lemma \ref{lemma:Cmu}, $C_{\mu+\gamma}$ is the unique (up to nonzero scalar) element of 
\begin{align*}
H_{2\mu+2\gamma}H^*_{2\mu+2\gamma}+
 \sum_{\substack{\nu+\nu'<\mu+\gamma \\|\nu+\nu'|=|\mu+\gamma|}}(\mathscr{P}_{\theta})_{2\nu+2\nu'}(\mathscr{D}_{\theta})_{-2\nu-2\nu'}.
\end{align*}
Hence 
\begin{align*}C_{\mu}C_{\gamma}-  c_{\mu,\gamma}q^{(2\mu,2\gamma)}C_{\mu+\gamma}\in\mathcal{J}_{2|2\mu|+2|2\gamma|-2}(\mathscr{PD}_{\theta}).\end{align*}
Restricting our attention to $U_q(\mathfrak{g})$-invariant elements gives us 
\begin{align}\label{productformula}C_{\mu}C_{\gamma}-  c_{\mu,\gamma}q^{(2\mu,2\gamma)}C_{\mu+\gamma}\in\mathcal{J}_{2|2\mu|+2|2\gamma|-2}(\mathscr{PD}_{\theta}^{(U_q(\mathfrak{g})}).\end{align}

Set $\mu=m_1\hat{\eta}_1 +\cdots +m_j\hat{\eta}_j$ and $\mu'=m'_1\hat{\eta}_1 +\cdots +m'_j\hat{\eta}_j$ for $\mu'\neq \mu$ and $|\mu'|\leq |\mu|$. Using the above analysis and the fact that $(\mathscr{PD}_{\theta})^{U_q(\mathfrak{g})}$ is just the sum of one-dimensional subspaces of Capelli operators, we get 
\begin{align*}
C_{\mu}-a_{\mu,\mu}C_{\hat{\eta}_1}^{\mu_1}\cdots C_{\hat{\eta}_j}^{\mu_j}\in
\sum_{|\mu'|<|\mu|}\mathbb{C}(q)C_{\mu'} 
\end{align*} where  $a_{\mu,\mu} =q^{(2\mu,2\mu)}\left(\prod_{i=1}^j \langle H_{2\hat{\eta}_i},H_{2\hat{\eta}_i}^*\rangle^{-\mu'_i}\right) $.
 Indeed the right hand side is the contribution to the lower degree terms in $\mathcal{J}_{2|2\mu|+2|2\gamma|-2}(\mathscr{PD}_{\theta}^{U_q(\mathfrak{g})})$.
Continuing this process yields \begin{align*}C_{\mu}=a_{\mu,\mu}C_{\hat{\eta}_1}^{\mu_1}\cdots C_{\hat{\eta}_j}^{\mu_j}+\sum_
{|\mu'|<|\mu|} a_{\mu'}C_{\hat{\eta}_1}^{\mu'_1}\cdots C_{\hat{\eta}_j}^{\mu'_j}+a_0
 \end{align*}
 where each  $a_{\mu'}$ and $a_0$ are scalars. Thus the proof of the proposition is complete.
\end{proof}

Since $c_{\mu,\gamma} = c_{\gamma,\mu}$ and $q^{(2\mu,2\gamma)}=q^{(2\gamma,2\mu)}$, the reader can follow the calculations in the previous proposition to conclude that $C_{\mu}C_{\gamma}=C_{\gamma}C_{\mu}$ for all choices of $\mu$ and $\gamma$,  Hence the subalgebra generated by $C_{\hat{\eta}_1},\dots,C_{\hat{\eta}_j}$ is commutative.

 \subsection{The center and Capelli operators} \label{subsection:centerandCapelli}

 Recall the definition of the subring $Z$ generated by the central elements $\hat{z}_1, \dots, \hat{z}_n$ living inside $U^2_q(\mathfrak{g})$ from Section \ref{section:centralgen}.  
 The next theorem relates this subalgebra $Z$ of the center to the algebra of Capelli operators via the mapping $\Upsilon$ of Corollary \ref{corollary:upsilon}.
 
 \begin{theorem}\label{theorem:center_and_capelli}  The $U_q(\mathfrak{g})$-module algebra map $\Upsilon$ defines an algebra isomorphism from the polynomial subring  $Z$ of $Z(U^2_q(\mathfrak{g}))$ to the algebra coinciding with the vector space spanned by the Capelli operators.  Moreover, $\deg \Upsilon(\hat{z}_r)=2r$ for $r=1,\dots, n$.  \end{theorem}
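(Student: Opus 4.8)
The plan is to show that $\Upsilon$ maps $Z$ isomorphically onto the Capelli algebra $\mathscr{PD}_\theta^{U_q(\mathfrak g)}$ by comparing the two sides degree by degree, using the fact that both are polynomial rings in $n$ generators whose generators have matching degrees and whose actions on the highest weight vectors $H_{2\beta}$ coincide. First I would recall the structural facts already in hand: by Corollary \ref{corollary:HC}, $\Upsilon$ restricted to $Z$ is an injective algebra homomorphism into $\phi(\mathscr{PD}_\theta)\subseteq\operatorname{End}\mathscr P_\theta$, with $\deg\Upsilon(z_r)\le 2r$; by the Proposition in Section \ref{section:realization}, the Capelli algebra is a polynomial ring on $C_{\hat\eta_1},\dots,C_{\hat\eta_n}$ with $\deg C_{\hat\eta_i}=2i$; and by Corollary \ref{corollary:upsilon}, $\phi(\Upsilon(z))=\psi(z)$, so $\Upsilon(z)\cdot H_{2\beta}=z\cdot H_{2\beta}$ for all $\beta\in\Lambda^+_\Sigma$.

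The key step is to show that each $\Upsilon(z_r)$ lies in $\mathscr{PD}_\theta^{U_q(\mathfrak g)}$, i.e. is $U_q(\mathfrak g)$-invariant. This follows because $z_r$ is a \emph{central} element of $U_q(\mathfrak g)$ (it lies in $Z(U_2)\subseteq Z(U_q(\mathfrak g))$), hence $(\operatorname{ad}u)\cdot z_r=\epsilon(u)z_r$ for all $u$; since $\Upsilon$ is a $U_q(\mathfrak g)$-module map (Corollary \ref{corollary:upsilon}), we get $u\cdot\Upsilon(z_r)=\Upsilon((\operatorname{ad}u)\cdot z_r)=\epsilon(u)\Upsilon(z_r)$, so $\Upsilon(z_r)\in\mathscr{PD}_\theta^{U_q(\mathfrak g)}$. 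Thus $\Upsilon(Z)$ is a subalgebra of the Capelli algebra. Conversely, I would argue that $\Upsilon(Z)$ exhausts the Capelli algebra: since $\Upsilon$ is injective on $Z$ (Corollary \ref{corollary:HC}) and $Z$ is a polynomial ring on $n$ generators, $\Upsilon(Z)$ is a polynomial ring on $n$ generators sitting inside the polynomial ring $\mathscr{PD}_\theta^{U_q(\mathfrak g)}$ on $n$ generators; one then needs that the inclusion is onto. For this I would use the degree bounds together with the action on highest weight vectors: $\Upsilon(z_r)$ acts on $H_{2\beta}$ as the scalar $z_r\cdot H_{2\beta}=\tilde\varphi_{HC}(z_r)\cdot H_{2\beta}$ by \eqref{zvbeta}, while $C_{\hat\eta_r}$ acts on $H_{2\beta}$ in the controlled way given by Lemma \ref{Capelliexpression}. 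Writing $\Upsilon(z_r)$ in the Capelli basis $\{C_\mu\}$, the degree bound $\deg\Upsilon(z_r)\le 2r=\deg C_{\hat\eta_r}$ forces $\Upsilon(z_r)$ to be a polynomial in $C_{\hat\eta_1},\dots,C_{\hat\eta_r}$, and by an induction on $r$ one shows the coefficient of $C_{\hat\eta_r}$ is nonzero — the point being that $\tilde\varphi_{HC}(z_r)$ contains the ``leading'' monomial $K_{2w_0\hat\eta_r}$ (up to scalar, by Theorem \ref{theorem:dotted_Weyl} and the formula $m_{2w_0\hat\eta_r}$), which by \eqref{q-inequality} produces eigenvalues on suitable $H_{2r\beta_1}$ of strictly higher $q$-degree than any product of $C_{\hat\eta_i}$ with $i<r$ can match. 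Hence $\Upsilon(z_1),\dots,\Upsilon(z_n)$ generate the same polynomial ring as $C_{\hat\eta_1},\dots,C_{\hat\eta_n}$.

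An alternative, cleaner route for the surjectivity half — which I expect to be the one actually used — is to pass to the associated graded algebra $\mathcal G_{\rm fin}$ (or $\mathcal G$) of Section \ref{section:realization}: in $\mathcal G_{\rm fin}$ the top-degree part of $C_{\hat\eta_r}$ is a nonzero multiple of $\bar H_{2\hat\eta_r}\bar H^*_{2\hat\eta_r}$, and these are algebraically independent. It then suffices to show that the top-degree part of $\Upsilon(z_r)$ (with respect to $\mathcal J$) is also a nonzero multiple of $\bar H_{2\hat\eta_r}\bar H^*_{2\hat\eta_r}$, or at least that the $\operatorname{gr}$ of $\Upsilon(z_1),\dots,\Upsilon(z_n)$ are algebraically independent; since they all lie in $\operatorname{gr}$ of the Capelli algebra, which is a polynomial ring on $n$ generators of degrees $2,4,\dots,2n$, and since $\deg\Upsilon(z_r)\le 2r$, a triangularity argument (leading terms of the right degree, nonzero by the eigenvalue computation above) yields that they generate the whole thing. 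Combined with injectivity of $\Upsilon|_Z$, this gives the isomorphism. The main obstacle is pinning down that the coefficient of the top Capelli generator $C_{\hat\eta_r}$ in $\Upsilon(z_r)$ is genuinely nonzero — i.e.\ that the degree bound $\deg\Upsilon(z_r)\le 2r$ is attained and not strict; this is where the careful bookkeeping of Lemma \ref{lemma:K_epsilon}, Proposition \ref{prop:K_epsilon}, and the leading-term analysis of $\tilde\varphi_{HC}(z_r)$ via \eqref{q-inequality} has to be assembled, and it is the only place where something could in principle fail.
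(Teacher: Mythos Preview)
Your proposal is correct and follows essentially the same architecture as the paper: show $\Upsilon(Z)\subseteq\mathscr{PD}_\theta^{U_q(\mathfrak g)}$ via the $U_q(\mathfrak g)$-equivariance of $\Upsilon$ and centrality of $z_r$, then use the degree bound $\deg\Upsilon(z_r)\le 2r$ together with the polynomial-ring structure of the Capelli algebra to conclude $\Upsilon(z_r)\in\mathbb C(q)[C_{\hat\eta_1},\dots,C_{\hat\eta_r}]$, and finish by an induction on $r$ showing the coefficient of $C_{\hat\eta_r}$ is nonzero.

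The one place you work harder than necessary is the ``main obstacle'' you flag at the end. You propose to verify the nonvanishing of the $C_{\hat\eta_r}$-coefficient via eigenvalue asymptotics (\eqref{q-inequality}) or a graded leading-term computation. The paper's argument is shorter and purely algebraic: by Corollary~\ref{corollary:HC}, the elements $c_1:=\Upsilon(z_1),\dots,c_n:=\Upsilon(z_n)$ are already known to be \emph{algebraically independent}. By the inductive hypothesis $\mathbb C(q)[C_{\hat\eta_1},\dots,C_{\hat\eta_{r-1}}]=\mathbb C(q)[c_1,\dots,c_{r-1}]$, so if the $C_{\hat\eta_r}$-coefficient in $c_r$ vanished we would have $c_r\in\mathbb C(q)[c_1,\dots,c_{r-1}]$, a contradiction. (That the only monomial in $C_{\hat\eta_1},\dots,C_{\hat\eta_r}$ of degree $\le 2r$ actually involving $C_{\hat\eta_r}$ is $C_{\hat\eta_r}$ itself uses the algebraic independence of the top-degree parts of the $C_{\hat\eta_i}$, noted at the end of Section~\ref{section:realization}.) So no separate analysis of $\tilde\varphi_{HC}(z_r)$ or of $\bar H_{2\hat\eta_r}\bar H^*_{2\hat\eta_r}$ is needed here; the injectivity of $\Upsilon|_Z$ already does that work for you.
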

 \begin{proof}

Set $c_r = \Upsilon(\hat{z}_r)$ for $r=1, \dots, n$.  By Corollary \ref{corollary:HC}, the $c_1,\dots, c_n$ generate the polynomial ring  in $\mathscr{PD}_{\theta}$ isomorphic to $Z$ via $\Upsilon$.  Also, $\deg c_r\leq 2r$ for each $r$.  In terms of the degree filtration $\mathcal{J}$, this means that $c_r\in \mathcal{J}_{2r} (\mathscr{PD}^{U_q(\mathfrak{g})})$. 

 We argue by induction on $j$ that 
 \begin{align}\label{inductivestatement}
 \mathbb{C}(q)[c_1, \dots, c_j] = \mathbb{C}(q)[C_{\hat{\eta}_1}, \dots, C_{\hat{\eta}_j}]  \end{align}
for $j=0,\dots,n$ and also show that $\deg c_j=2j$ for $j=1,\dots, n$.
  Note that the equality of algebras holds for  $j=0$ simply because both sides of (\ref{inductivestatement}) are just  the  scalars. Now assume that (\ref{inductivestatement}) is true for some $j$ satisfying $j>0$.  By Corollary \ref{corollary:HC},   $\mathbb{C}(q)[c_1, \dots, c_n]$ is a polynomial ring with $n$ variables. In particular,  the elements $c_1,\dots, c_n$ are algebraically independent.  Hence 
$c_{j+1}\notin  \mathbb{C}(q)[c_1, \dots, c_j]$.

Recall that the elements $ \hat{z}_r, r=1,\dots, n$ are in the center of $U_q(\mathfrak{g})$. Also,  by the definition of the Capelli operators (see the discussion preceding Lemma \ref{Capelliexpression}), $\deg C_{\hat{\eta}_{j+1}}=2|\hat{\eta}_{j+1}|=2j+2.$    By Corollary \ref{corollary:upsilon}, $c_r\in \mathscr{PD}_{\theta}^{U_q(\mathfrak{g})}$
for each $r$.  It follows that $c_{j+1}$ is an element of 
 \begin{align}\label{formula:relating0}
 c_{j+1}\in \mathbb{C}(q)C_{\hat{\eta}_{j+1}} +\sum_{|\mu|=j+1, \mu\neq  \hat{\eta}_{j+1}}\mathbb{C}(q)C_{\mu}+ \mathcal{J}_{2j}(\mathscr{PD}_{\theta}^{U_q(\mathfrak{g})}).\end{align}  By Lemma \ref{Capelliexpression},  each of the Capelli operators in the sum $\mathbb{C}(q)C_{\hat{\eta}_{j+1}} +\sum_{|\mu|=j+1, \mu\neq  \hat{\eta}_{j+1}}\mathbb{C}(q)C_{\mu}$ has degree $2\mu=2j+2$.  Furthermore, this is also true for the entire sum because each Capelli operator $C_{\mu}$   belongs to a subspace of the form  $(U^-H_{\mu})(U^+H_{\mu}^*)$ of degree $2\mu$, for distinct values of $\mu$. Hence $\deg(c_{j+1})=2(j+1)$.

 Consider a term of the form $C_{\mu}$ where $|\mu|=2j+2, \mu\neq  \hat{\eta}_{j+1}$. Since $C_{\mu}$ has degree $2j+2$ but $\mu\neq \hat{\eta}_{j+1}$, we must have
 \begin{align*}\mu=m_1\hat{\eta}_1 +\cdots +m_j\hat{\eta}_j
 \end{align*}
 with $\sum_{i=1}^jm_ii=j+1$.  In particular, the coefficient of $\hat{\eta}_{j+1}$ in $\mu$ is zero.  Moreover the same is true for $\hat{\eta}_s$ for $s>j+1$ because, with this assumption, the degree of $C_{\hat{\eta}_s}=2s>2(j+1).$ 
Using (\ref{formula:relating0}) we see that
 \begin{align}\label{formula:relating} c_{j +1}- a C_{\hat{\eta}_{j+1}} \in \mathbb{C}(q)[C_{\hat{\eta}_1}, \dots, C_{\hat{\eta}_{j}}]=\mathbb{C}(q)[c_1, \dots, c_{j}]
 \end{align} for some nonzero scalar $a$.  
 Assertion (\ref{inductivestatement}) now holds for $j+1$ replacing $j$. 
By induction,  this is true for $j=n$. Hence   the elements $C_{\hat{\eta}_1}, \dots, C_{\hat{\eta}_{n}}$
 are algebraically independent. 

 Now consider $C_{\lambda}$ with $\lambda$ arbitrary.  Since $\hat{\eta}_1\dots, \hat{\eta}_n$ form a basis for $\Lambda^+_{\Sigma}$, we can write 
$\lambda=\lambda_1\hat{\eta}_1+\lambda_2\hat{\eta}_2+\cdots \lambda_n\hat{\eta}_n$. By Proposition \ref{prop:capelli},
  \begin{align*}C_{\lambda}=a_{\lambda,\lambda}C_{\hat{\eta}_1}^{\lambda_1}\cdots C_{\hat{\eta}_n}^{\lambda_n} +
\sum_{|\lambda'|<|\lambda|}a_{\lambda'}C_{\hat{\eta}_1}^{\lambda'_1}\cdots C_{\hat{\eta}_j}^{\lambda'_j}+a_0.
 \end{align*} Hence $C_{\lambda}\in  \mathbb{C}(q)[C_{\hat{\eta}_1}, \dots, C_{\hat{\eta}_n}].$  Thus the vector space spanned by the Capelli operators equals the polynomial algebra $\mathbb{C}(q)[C_{\hat{\eta}_1}, \dots, C_{\hat{\eta}_n}].$   The theorem follows from the fact that this polynomial algebra is equal to  $\mathbb{C}(q)[c_1,\dots, c_n]$. 
   \end{proof}
  
 
   
   \section{Eigenvalues of Capelli operators}
   
  \subsection{Definition and degree}\label{section:dd}

We will be using a polynomial algebra in the restricted root setting  to study the eigenvalues of the Capelli operators.  This will help us in realizing the eigenvalues from two perspectives: as dotted Weyl invariants and as symmetric polynomials.   The second point of view helps in the identification of the eigenvalues with Knop-Sahi interpolation polynomials in Section \ref{section:KS}.

Descriptions of the restricted weights  can be found in Section \ref{section:restricted-root-system}.  The restricted root system for each family is of type $A_{n-1}$ and the  elements $\epsilon^{\Sigma}_i$ for $i=1,\dots, n$ form a fixed orthonormal basis for this system.  Hence passing to the corresponding elements in the Cartan subalgebra, we see that  $K_{2\epsilon^{\Sigma}_i}$ for $i=1,\dots,n$ are algebraically independent.  
Therefore, \begin{align}\label{Cappolyformula}\mathbb{C}(q)[K_{2\epsilon^{\Sigma}_1}, K_{2\epsilon^{\Sigma}_2}, \dots, K_{2\epsilon^{\Sigma}_n}]\end{align} is a polynomial ring with   variables $K_{2\epsilon^{\Sigma}_1},\dots, K_{2\epsilon^{\Sigma}_n}$.
Moreover, symmetric polynomials in \begin{align*}\mathbb{C}(q)[g^{-1}K_{2\epsilon^{\Sigma}_1}, g^{-2}K_{2\epsilon^{\Sigma}_2}, \dots, g^{-n}K_{2\epsilon^{\Sigma}_n}]\end{align*} can be identified with $\mathbb{C}(q)[\mathcal{A}_2]^{W_{\Sigma}\bullet}$ for an appropriate choose of $g$. 
Indeed, we have the following lemma.  The proof involves explicit descriptions of the elements $2\epsilon^{\Sigma}_i$.  In particular,
by Section \ref{section:restricted-root-system}, we have $\epsilon^{\Sigma}_i=\tilde{\epsilon}_i=\epsilon
_i$ in Type AI, $\epsilon^{\Sigma}_i=\tilde{\epsilon}_{2i} = (\epsilon_{2i-1}+\epsilon_{2i})/2$ in Type AII and  $\epsilon^{\Sigma}_i=\tilde{\epsilon}_i=(\epsilon
_i+\epsilon_{n+i})/2$ in the diagonal case.

Set $g^{-1} = q^{-2}$ in Type AI and the diagonal case, and set $g^{-1} =q^{-4}$ in Type AII.  Let $x_1,\dots, x_n$ be indeterminates and consider the algebra isomorphism $\kappa$ from $\mathbb{C}(q)[x_1,\dots, x_n]$ to  $\mathbb{C}(q)[g^{-1}K_{2\epsilon^{\Sigma}_1}, \dots, g^{-n}K_{2\epsilon^{\Sigma}_{n}}]$ defined by $\kappa(x_j)=g^{-j}K_{2\epsilon^{\Sigma}_j}$ for all $j=1,\dots, n$. Let $\mathcal{S}_n$ is the symmetric group acting on the polynomial ring
$\mathbb{C}(q)[x_1,\dots, x_n]$ by permuting the elements $x_1,\dots, x_n$.

\begin{lemma}\label{lemma:explicit}  We have
\begin{align*}\kappa(\mathbb{C}(q)[x_1,\dots, x_n]^{\mathcal{S}_n})=\mathbb{C}(q)[\mathcal{A}_2]^{\mathcal{W}_{\theta}\bullet}.
\end{align*}
\end{lemma}
\begin{proof}Recall that $\rho$ is the half sum of the positive roots for the root system of $\mathfrak{gl}_N$.  Note that for each fixed $i$, the set of roots $\alpha$ such that $(\alpha,\epsilon_i)\neq 0$ can be partitioned into two sets
$\{\alpha_j+\cdots + \alpha_{i-1}|j\leq i-1\}$ and $\{\alpha_i+\cdots +\alpha_j|\ j\geq i\}$.  For elements $\alpha$ in the first set $(\alpha,\epsilon_i) = -1$ and for elements $\alpha$ in the second set, $(\alpha,\epsilon_i) = 1$.  Since the first set has $i-1$ elements and the second set has $N-i$ elements, we have that $(\rho, \epsilon_i) = (N-2i-1)/2$.  Now returning to the restricted root cases under consideration, we set $N=n$. For $i=1, \dots, n$, we have 
\begin{align*}q^{(\tilde{\rho},2{\epsilon}^{\Sigma}_i)}K_{2{\epsilon^{\Sigma}_i}} = q^{(\tilde{\rho},2\tilde{\epsilon}_i)}K_{2\tilde{\epsilon_i}} =q^{(\rho,2{\epsilon}_i)}K_{2{\epsilon_i}} =q^{n-2i-1} K_{2{\epsilon}_i}= q^{n-2i-1} K_{2\epsilon^{\Sigma}_i} =q^{n-1}(q^{-2i} K_{2\epsilon^{\Sigma}_i}) .
\end{align*}
for Type AI,
\begin{align*}q^{(\tilde{\rho},2{\epsilon}^{\Sigma}_i)}K_{2\epsilon^{\Sigma}_i} = q^{(\tilde{\rho},2\tilde{\epsilon}_i)}K_{2\tilde{\epsilon}_i} =q^{(\rho_1,{\epsilon}_i)+(\rho_2,{\epsilon}_{i+n})}K_{2{\epsilon}^{\Sigma}_i} =q^{n-2i-1} K_{2{\epsilon}^{\Sigma}_i}=q^{n-1}(q^{-2i} K_{2{\epsilon}^{\Sigma}_i})
\end{align*} in the diagonal case and 
\begin{align*}
q^{(\tilde{\rho},2{\epsilon}^{\Sigma}_{i})}K_{2\epsilon^{\Sigma}_{i}} = q^{(\rho,2\tilde{\epsilon}_{2i})}K_{2{\epsilon}^{\Sigma}_{i}} 
=q^{(\rho,\epsilon_{2i-1}+\epsilon_{2i})}K_{2{\epsilon}^{\Sigma}_{i}} =q^{2n-4i} K_{2{\epsilon}^{\Sigma}_{i}} =q^{2n}(q^{-4i}K_{2{\epsilon}^{\Sigma}_{i}} )
\end{align*} in Type AII. 

Set $g^{-1} = q^{-2}$ in Type AI and the diagonal type. Set $g^{-1}= q^{-4}$ in Type AII.
It follows that $\mathbb{C}(q)[q^{-2i}K_{2\epsilon^{\Sigma}_i}|\  i=1, \dots, n]=\mathbb{C}(q)[g^{-i}K_{2\epsilon^{\Sigma}_i}|\  i=1, \dots, n] $ in Type AI and the diagonal case. In Type AII, we have $\mathbb{C}(q)[q^{-4i}K_{2\epsilon^{\Sigma}_{i}}|\ i=1, \dots, n]= \mathbb{C}(q)[g^{-i}K_{2\epsilon^{\Sigma}_i}|\  i=1, \dots, n]$.  Thus we are interested in symmetric polynomials in the terms $g^{-i}K_{2\epsilon^{\Sigma}_i}$ for $i=1, \dots, n$.

As explained in the overview part of Section \ref{section:restricted-root-system}, $w_0\hat{\eta}_i=\epsilon^{\Sigma}_{n-i+1}+\epsilon^{\Sigma}_{n-i+2}+\cdots +\epsilon^{\Sigma}_n$.  Hence $K_{2w_0\hat{\eta}_i}=K_{2\epsilon^{\Sigma}_{n-i+1}}K_{2\epsilon^{\Sigma}_{n-i+2}}\cdots K_{2 \epsilon^{\Sigma}_n}$ which is an element of the polynomial ring described by (\ref{Cappolyformula}).  Now consider a partition $\mu_1\geq \mu_2\geq \cdots \geq\mu_n\geq 0$ and the corresponding weight $2w_0\mu=2\mu_nw_0\hat{\eta}_n+ \cdots + 2\mu_1w_0\hat{\eta}_1$.  It follows that 
$K_{2w_0\mu}=K_{2w_0\hat{\eta}_n}^{\mu_n}K_{2w_0\hat{\eta}_{n-1}}^{\mu_{n-1}}\cdots K_{w_0\hat{\eta}_1}^{\mu_1}$ is also an element of the same polynomial ring.  Note that 
\begin{align*}q^{(\rho, 2w_0\mu)} = \left\{\begin{matrix}q^{n-1}(q^{-2\mu_n}q^{-2\mu_{n-1}}\cdots q^{-2\mu_1})& {\rm \ in \ Type\ AI\ and\ the\  diagonal \ type}\\
q^{2n}(q^{-4\mu_n}q^{-4\mu_{n-1}}\cdots q^{-4\mu_1})& {\rm \ in \ Type\ AII}
\end{matrix}\right.
\end{align*}
On the other hand, we have 
\begin{align*}q^{(\rho, 2ww_0\mu)} = \left\{\begin{matrix}q^{n-1}(q^{-2\mu_{w(n)}}q^{-2\mu_{w(n-1)}}\cdots q^{-2\mu_{w(1)}})& {\rm \ in \ Type\ AI\ and\ the\  diagonal \ type}\\
q^{2n}(q^{-4\mu_{w(n)}}q^{-4\mu_{w(n-1)}}\cdots q^{-4\mu_{w(1)}})& {\rm \ in \ Type\ AII}
\end{matrix}\right.
\end{align*}
where $w$ is in the symmetric group $\mathcal{S}_n$ on $n$ symbols.  Hence a basis for symmetric polynomials in $$\mathbb{C}(q)[g^{-1}K_{2\epsilon^{\Sigma}_1}, \cdots, g^{-n}K_{2\epsilon^{\Sigma}_n}]$$ takes the form 
\begin{align*}\sum_{w\in \mathcal{S}_n}q^{(\rho, 2ww_0\mu)}K_{2ww_0\mu}
\end{align*}
as $\mu$ runs over partitions.

Recall the definition of the positive root $\beta_{j,k}= \alpha_j+\alpha_{j+1} +\cdots +\alpha_{k}$ in Section \ref{subsection:Relationships}.  Let $\tilde{\beta}_{j,k}$ denote a restricted root version. Thus $\tilde{\beta}_{j,k}=\alpha^{\Sigma}_j+\alpha^{\Sigma}_{j+1}+\cdots +\alpha^{\Sigma}_k=\epsilon^{\Sigma}_j-\epsilon^{\Sigma}_{k+1}$.   Hence the reflection $s_{\tilde{\beta}_{j,k}}$ in $W_{\Sigma}$ corresponding to $\tilde{\beta}_{j,k}$ gives us $s_{\tilde{\beta}_{j,k}}w_0\hat{\eta}_i=w_0\hat{\eta}_i$ if both $j$ and  $k$ are strictly less than $n-i+1$ or strictly greater than $n-i+1$.  If $j<n-i+1\leq k<n$, then 
\begin{align}\label{sbeta} s_{\tilde{\beta}_{j,k}}w_0\hat{\eta}_i &= s_{\tilde{\beta}_{j,k}}(\epsilon^{\Sigma}_{n-i+1}+\epsilon^{\Sigma}_{n-i+2}+\cdots +\epsilon^{\Sigma}_n)\cr&= \epsilon^{\Sigma}_j+
\epsilon^{\Sigma}_{n-i+1}+\epsilon^{\Sigma}_{n-i+2}+\cdots +\epsilon^{\Sigma}_{k}+(\epsilon^{\Sigma}_{k+1}-\epsilon^{\Sigma}_{k+1})+\epsilon^{\Sigma}_{k+2}+\cdots+\epsilon^{\Sigma}_n
\end{align}
It follows that $s_{\tilde{\beta}_{j,k}}$ in $W_{\Sigma}$ corresponds to the transposition $(j,k)$ in the symmetric group $\mathcal{S}_n$ on $n$ letters. 
This gives us a way to translate between the two isomorphic groups $W_{\Sigma}$ and $\mathcal{S}_n$.  Thus \begin{align*}\sum_{w\in \mathcal{S}_n}q^{(\tilde{\rho}, 2ww_0\hat{\eta}_i)} K_{2ww_0\hat{\eta}_i} =\sum_{w\in W_{\Sigma}}q^{(\tilde{\rho}, 2ww_0\hat{\eta}_i)} K_{2ww_0\hat{\eta}_i}.
\end{align*}
Hence the vector space of symmetric polynomials in $\mathbb{C}(q)[g^{-1}K_{2\epsilon^{\Sigma}_1}, \cdots, g^{-n}K_{2\epsilon^{\Sigma}_n}]$ is equal to the basis of the dotted $W_{\Sigma}$-invariants  $\mathbb{C}(q)[\mathcal{A}_2]^{\mathcal{W}_{\theta}\bullet}$.
\end{proof}

Let $B\in \sum_{\mu}\mathbb{C}(q)C_{\mu}=\mathscr{PD}_{\theta}^{U_q(\mathfrak{g})}$ and $z\in Z$ such that $B= \Upsilon(z)$.  Set $\mathcal{E}(B) = \tilde{\varphi}_{HC}(z)$.  By   Theorem \ref{theorem:dotted_Weyl},  $\mathcal{E}(B)$ is an element of $\mathbb{C}(q)[\mathcal{A}_2]^{W_{\Sigma}\bullet}$. Alternatively, we can use Lemma \ref{lemma:explicit} with its identification of  $\mathbb{C}(q)[\mathcal{A}_2]^{W_{\Sigma}\bullet}$ with the algebra of symmetric polynomials inside of $\mathbb{C}(q)[g^{-i}K_{2\epsilon^{\Sigma}_i}|\  i=1, \dots, n] $.  
   It follows from Theorem \ref{theorem:dotted_Weyl}, Corollary \ref{corollary:HC}, Theorem \ref{theorem:center_and_capelli} and Lemma \ref{lemma:explicit} that $\mathcal{E}$ defines an algebra isomorphism from $ \mathscr{PD}_{\theta
 }^{U_q(\mathfrak{g})}$ to 
$\mathbb{C}(q)[\mathcal{A}_2]^{W_{\Sigma}\bullet}=\mathbb{C}(q)[g^{-i}K_{2\epsilon^{\Sigma}_i}|\  i=1, \dots, n]^{\mathcal{S}_n}$.  
Recall that as  a vector space, $$\mathbb{C}(q)[\mathcal{A}_2]^{W_{\Sigma}\bullet}=\sum_{\lambda\in \Lambda^+_{\Sigma}}\mathbb{C}(q)m_{2w_0\lambda}^{\Sigma}$$
Set ${\rm Stab}(w_0\hat{\eta}_i)=\{w\in W_{\Sigma}|\ ww_0\hat{\eta}_i=w_0\hat{\eta}_i\}$.
Given $w_0\lambda=m_1w_0\hat{\eta}_1+\cdots + m_nw_0\hat{\eta}_n\in w_0 \Lambda_{\Sigma}^+$ 
set  $a_{\lambda}=\prod_{i=1}^na^{m_i}_{w_0\hat{\eta}_i}$ where $a_{w_0\hat{\eta}_i} =  q^{(\tilde{\rho}, 2w_0\hat{\eta}_i)}| {\rm Stab}(w_0\hat{\eta}_i)|$
 The next lemma takes a closer look at this ring of dotted $W_{\Sigma}$.

 \begin{lemma} \label{lemma:Kalgebra}There is an algebra isomorphism $\mathcal{V}$ from $ \mathbb{C}(q)[\mathcal{A}_2]^{W_{\Sigma}\bullet}$ to
$\mathbb{C}(q)[K_{2w_0\lambda}| \ \lambda \in \Lambda_{\Sigma}^+] $ 
which sends 
$$b(m^{\Sigma}_{2w_0\hat{\eta}_1})^{m_1}\cdots(m^{\Sigma}_ {2w_0\hat{\eta}_n})^{m_n}{\rm \quad to\quad }
ba_{\lambda}K_{2w_0\hat{\eta}_1}^{m_1}
\cdots K_{2w_0\hat{\eta}_n}^{m_n}$$ for each $w_0\lambda=m_1w_0\hat{\eta}_1+\cdots + m_nw_0\hat{\eta}_n\in \Lambda_{\Sigma}^+$ and all scalars $b\in \mathbb{C}(q)$.  
  \end{lemma}
\begin{proof}  As explained
in Section \ref{section:HCMs} (see the discussion preceding the inclusion (\ref{rhcmap13})), $$\mathbb{C}(q)[K_{2w_0\hat{\eta}_1},\dots, K_{2w_0\hat{\eta}_n}]\subset\mathbb{C}(q)[\mathcal{A}_2].$$  
 By Lemma \ref{lemma:explicit} and its proof,
$K_{2w_0\hat{\eta}_i}=K_{2\epsilon^{\Sigma}_{n-i+1}}K_{2\epsilon^{\Sigma}_{n-i+2}}\cdots K_{2 \epsilon^{\Sigma}_n}$. It follows that \begin{align*}K_{2w_0\hat{\eta}_i}\in \mathbb{C}(q)[K_{2\epsilon^{\Sigma}_{n-i+1}}|\ i=1,\dots n] {\rm \quad but\quad} K_{2w_0\hat{\eta}_i}\notin  \mathbb{C}(q)[K_{2\epsilon^{\Sigma}_{n-i+1}}|\ i=2,\dots,  n]
\end{align*}
Hence, the elements $K_{2w_0\hat{\eta}_i}$ for $i=1,\dots, n$ are algebraically independent and $\mathbb{C}(q)[K_{2w_0\hat{\eta}_i}|\ i=1,\dots,n]$ is a polynomial ring in $n$ variables 
 $K_{2w_0\hat{\eta}_1},\dots,K_{2w_0\hat{\eta}_n} $.  
 Moreover, 
$$\mathbb{C}(q)[K_{2w_0\lambda}| \ w_0\lambda \in w_0\Lambda_{\Sigma}^+] = \mathbb{C}(q)[K_{2w_0\hat{\eta}_1},\dots K_{2w_0\hat{\eta}_n}].$$

By the proof of Corollary \ref{corollary:HC}, 
\begin{align*}m^{\Sigma}_{2w_0\hat{\eta}_i}&=
\sum_{w\in W_{\Sigma}}q^{(\tilde{\rho},2ww_0\hat{\eta}_i)}K_{2ww_0\hat{\eta}_i}
\in a_{w_0\hat{\eta}_i}K_{2w_0\hat{\eta}_i}+\sum_{\beta\in Q^+_{\Sigma}}\mathbb{C}(q)K_{2w_0\hat{\eta}_i+2\beta}.
\end{align*} 

As explained in Lemma \ref{L8.8}, $Q_{\Sigma}\cap w_0\Lambda^+_{\Sigma}=0$.  Hence
\begin{align*}\left( \sum_{\beta\in Q^+_{\Sigma}}\mathbb{C}(q)K_{2w_0\hat{\eta}_i+2\beta}\right)\cap \mathbb{C}(q)[K_{2w_0\lambda}| \ \lambda \in \Lambda_{\Sigma}^+]=0.
\end{align*}
Now consider $\lambda=m_1w_0\hat{\eta}_1+\cdots + m_nw_0\hat{\eta}_n\in \Lambda_{\Sigma}^+$. Note that $\prod_{i=1}^n K_{2w_0\hat{\eta}_i}^{m_i} = K_{2w_0\lambda}$.  On the other hand, 
\begin{align*}\prod_{i=1}^n \prod_{s=1}^{m_i}(K_{2w_0\hat{\eta}_i+\beta_i^s})^{m_i}=K_{2w_0\lambda + \beta}
\end{align*} where at least one of the $\beta_i^s\in Q^+_{\Sigma}$ on the left hand side and $\beta\in Q^+_{\Sigma}$ on the right hand side.
 It follows that \begin{align*}(m^{\Sigma}_{2w_0\hat{\eta}_1})^{m_1}\cdots(m^{\Sigma}_ {2w_0\hat{\eta}_n})^{m_n}=a_{\lambda} K_{2w_0\lambda}+\sum_{\beta\in Q^+_{\Sigma}}\mathbb{C}(q)K_{2w_0\lambda + \beta}
\end{align*}
In other words, $w_0\lambda=m_1w_0\hat{\eta}_1+\cdots + m_nw_0\hat{\eta}_n$ is the unique partition   in $w_0\Lambda^+_{\Sigma}$ that appears in the expanded expression for $(m^{\Sigma}_{2w_0\hat{\eta}_1})^{m_1}\cdots(m^{\Sigma}_ {2w_0\hat{\eta}_n})^{m_n}$ as given above.
Hence there is a vector space isomorphism, which we refer to as $\mathcal{V}$,  from $ \mathbb{C}(q)[\mathcal{A}_2]^{W_{\Sigma}\bullet}$ to
$\mathbb{C}(q)[K_{2w_0\lambda}| \ \lambda \in \Lambda_{\Sigma}^+] $ which sends $bm^{\Sigma}_{2w_0\lambda}$ to $ba_{\lambda}K_{2w_0\lambda}$ for each $\lambda\in \Lambda^+_{\Sigma}$.
We prove that the map $\mathcal{V}$  also defines an algebra isomorphism. To see this, we consider the product  \begin{align*}
\left((m^{\Sigma}_{2w_0\hat{\eta}_1})^{m_1}\cdots(m^{\Sigma}_ {2w_0\hat{\eta}_n})^{m_n}\right)\left((m^{\Sigma}_{2w_0\hat{\eta}_1})^{\mu_1}\cdots(m^{\Sigma}_ {2w_0\hat{\eta}_n})^{\mu_n}\right)
 \end{align*} where $\sum_{i=1}^nm_i\hat{\eta}_i = \lambda$ and $\sum_{i=1}^n\mu_i\hat{\eta}_i =\mu$. This product equals
\begin{align*} (m^{\Sigma}_{2w_0\hat{\eta}_1})^{m_1+\mu_1}\cdots(m^{\Sigma}_ {2w_0\hat{\eta}_n})^{m_n+\mu_n}
\end{align*}
Now \begin{align*}\mathcal{V}\left((m^{\Sigma}_{2w_0\hat{\eta}_1})^{m_1}\cdots(m^{\Sigma}_ {2w_0\hat{\eta}_n})^{m_n}\right)&\mathcal{V}\left((m^{\Sigma}_{2w_0\hat{\eta}_1})^{\mu_1}\cdots(m^{\Sigma}_ {2w_0\hat{\eta}_n})^{\mu_n}\right)=a_{\lambda}K_{2w_0\lambda}a_{\mu}K_{2w_0\mu}\cr&=a_{\lambda}a_{\mu}K_{2w_0\lambda}K_{w_0\mu}
\cr&=(\prod_{i=1}^{n}a^{m_i}_{w_0\hat{\eta}_i})(\prod_{i=1}^{n}a^{\mu_i}_{w_0\hat{\eta}_i})K_{2w_0(\lambda+\mu)}=(\prod_{i=1}^{n}a^{m_i+\mu_i}_{w_0\hat{\eta}_i})K_{2w_0(\lambda+\mu)}\cr&=\mathcal{V}((m^{\Sigma}_{2w_0\hat{\eta}_1})^{m_1+\mu_1}\cdots(m^{\Sigma}_ {2w_0\hat{\eta}_n})^{m_n+\mu_n}).
\end{align*} Hence the map $\mathcal{V}$ is a vector space isomorphism that also preserves multiplication.  Thus $\mathcal{V}$ defines an algebra isomorphism from  $ \mathbb{C}(q)[\mathcal{A}_2]^{W_{\Sigma}\bullet}$ to
$\mathbb{C}(q)[K_{2w_0\lambda}| \ \lambda \in \Lambda_{\Sigma}^+] $ as stated in the lemma.
\end{proof}


Recall there is a filtration $\mathcal{J}$ on $\mathscr{PD}_{\theta}$. It is inherited by $\mathscr{PD}_{\theta}^{U_q(\mathfrak{g})}$ and thus carried over by the isomorphism $\mathcal{E}$ to $\mathbb{C}(q)[g^{-1}K_{2\epsilon^{\Sigma}_1}, \dots, g^{-n}K_{2\epsilon^{\Sigma}_{n}}]$.
The induced filtration produces the ``filtration degree" which we  denote by $\mathcal{J}{\rm deg}$.  Note that an element $y$ in  $ \mathscr{PD}_{\theta}^{U_q(\mathfrak{g})}$ has filtration degree $t$ provided 
\begin{align*}y\in \mathcal{J}_t( \mathscr{PD}_{\theta}^{U_q(\mathfrak{g})}){\rm \quad\ and\quad}y\notin \mathcal{J}_{t-1}( \mathscr{PD}_{\theta}^{U_q(\mathfrak{g})})\end{align*}
 By Lemma \ref{lemma:epsilon} and Corollary \ref{corollary:upsilon},  $\Upsilon(K_{2\epsilon^{\Sigma}_n}) \in \sum_{ijkl}\mathbb{C}(q)x_{ij}d_{kl}$ and hence  $ \mathcal{J}{\rm deg}K_{2\epsilon^{\Sigma}_n}=2$.  Moreover, applying the reflection $s_{\tilde{\beta}_{in}}$ as in the previous lemma, we see that $K_{2\epsilon^{\Sigma}_i} = s_{\tilde{\beta}_{in}}K_{2\epsilon^{\Sigma}_n}$. Since the action of $U_q(\mathfrak{g})$ on $\mathscr{PD}_{\theta}$ preserves the $\mathcal{J}{\rm deg}$, we get  $\mathcal{J}{\rm deg}K_{2\epsilon^{\Sigma}_i}=2$ all $i$.


Given $\lambda\in \Lambda_{\Sigma}^+$,  let $\mathcal{E}_{\lambda}$ be the polynomial in $\mathbb{C}(q)[x_1,\dots,x_n]$ such that 
 \begin{align}\label{Capkap}\kappa(\mathcal{E}_{\lambda})
 =\mathcal{E}(C_{\lambda}).
 \end{align}   We claim that $\mathcal{E}_{\lambda}$ is a symmetric polynomial. Indeed $C_{\lambda}\in (\mathscr{PD}_{\theta})^{U_q(\mathfrak{g})}$.  Thus 
$ \mathcal{E}(C_{\lambda})\in \mathbb{C}(q)[\mathcal{A}_2]^{W_{\Sigma}\bullet}.$ By (\ref{Capkap}), 
$\kappa(\mathcal{E}_{\lambda})\in \mathbb{C}(q)[\mathcal{A}_2]^{W_{\Sigma}\bullet}.$ Now Lemma \ref{lemma:explicit} tells us that 
\begin{align*}
\mathcal{E}_{\lambda}\in \mathbb{C}(q)[x_1,\dots, x_n]^{\mathcal{S}_n}.
\end{align*}

Note that it makes sense to talk about the degree of $\mathcal{E}_{\lambda}$ using the standard  total degree function for  $\mathbb{C}(q)[x_1,\dots,x_n]$. (We emphasize that  $\deg x_i = 1$ for all $i$.) We refer to this degree  as the ``polynomial degree" and denote it by ${\rm pdeg}$. 

Since $\mathcal{J}{\rm deg}(m^{\Sigma}_{2w_0\hat{\eta}_1})^{m_1}\cdots(m^{\Sigma}_ {2w_0\hat{\eta}_n})^{m_n})= \mathcal{J}{\rm deg}(K_{2w_0\hat{\eta}_1}^{m_1}
\cdots K_{2w_0\hat{\eta}_n}^{m_n})$  it follows that the algebra isomorphism $\mathcal{V}$ of Lemma \ref{lemma:Kalgebra} preserves the filtration degree.   The same holds for the polynomial degree.


  \begin{lemma}\label{lemma:degree} For each $\lambda \in \Lambda_{\Sigma}^+$, the polynomial degree of $\mathcal{E}_{\lambda}$ is $|\lambda|$. 
  \end{lemma}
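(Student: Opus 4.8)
The plan is to compute the degree of $\mathcal{E}_\lambda$ by relating it, via the chain of isomorphisms established in the previous sections, back to the degree of the Capelli operator $C_\lambda$ inside $\mathscr{PD}_\theta$. First I would recall that $\mathcal{E}_\lambda(K_{2\epsilon^\Sigma_1},\dots,K_{2\epsilon^\Sigma_n}) = \mathcal{E}(C_\lambda) = \tilde\varphi_{HC}(z)$ where $C_\lambda = \Upsilon(z)$ with $z\in Z$. Writing $\lambda = \sum_i m_i\hat\eta_i$, the Proposition of Section~\ref{section:realization} gives $C_\lambda = $ (nonzero scalar)$\cdot C_{\hat\eta_1}^{m_1}\cdots C_{\hat\eta_n}^{m_n}$ modulo lower-order terms, and the remark after Theorem~\ref{theorem:center_and_capelli} shows $\deg C_{\hat\eta_i} = 2i$ with algebraically independent top-degree parts with respect to $\mathcal{J}$. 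Hence $\deg_{\mathcal{J}} C_\lambda = \sum_i 2i\,m_i = 2|\lambda|$, using that $|\lambda| = \sum_i i\,m_i$ as noted in Proposition~\ref{prop:weights} and Section~\ref{section:defn-and-desc}.

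Next I would transfer this to the Harish-Chandra side. Since $\Upsilon(z) = C_\lambda$ and, by the remark after Theorem~\ref{theorem:center_and_capelli}, $z = $ (scalar)$\cdot z_1^{m_1}\cdots z_n^{m_n}$ modulo terms mapping to strictly lower $\mathcal{J}$-degree elements, I can apply $\tilde\varphi_{HC}$ and use that $\tilde\varphi_{HC}(z_i)$ is (up to scalar) $m_{2w_0\hat\eta_i}$ together with $K_{2\hat\eta_n}$-type corrections, as established in the proof of Theorem~\ref{theorem:dotted_Weyl}. The key bookkeeping fact is that $\deg m_{2w_0\hat\eta_i} = i$ in $\mathbb{C}(q)[\mathcal{A}_\geq]$ (stated right before this lemma), since $w_0\hat\eta_i = \epsilon^\Sigma_{n-i+1}+\cdots+\epsilon^\Sigma_n$ has $\deg = i$ under $\deg K_{2\epsilon^\Sigma_j} = 1$, and the lower-order terms in the $W_\Sigma$-orbit sum have degree $\le i$. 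So $\tilde\varphi_{HC}(z_1^{m_1}\cdots z_n^{m_n})$ has degree $\sum_i i\,m_i = |\lambda|$ in $\mathbb{C}(q)[\mathcal{A}_\geq]$, i.e. $\mathcal{E}_\lambda$ has degree $|\lambda|$ as a polynomial in $x_1,\dots,x_n$.

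The remaining point — which I expect to be the main obstacle — is showing the top-degree term does not cancel, i.e. that $\deg \mathcal{E}_\lambda$ is exactly $|\lambda|$ and not strictly smaller. For this I would argue that $\mathcal{E}$ is an algebra isomorphism from $\mathscr{PD}_\theta^{U_q(\mathfrak g)}$ onto $\mathbb{C}(q)[\mathcal{A}_\geq]^{W_\Sigma\bullet}$ (stated after the definition of $\mathcal{E}$, combining Theorems~\ref{theorem:dotted_Weyl} and \ref{theorem:center_and_capelli} and Corollary~\ref{corollary:HC}), and that it respects the relevant filtrations: the monomials $m_{2w_0\hat\eta_1}^{m_1}\cdots m_{2w_0\hat\eta_n}^{m_n}$ are algebraically independent in $\mathbb{C}(q)[\mathcal{A}_\geq]$, so their product has a genuine top-degree term of degree exactly $\sum_i i\,m_i$. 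Since $\mathcal{E}$ carries $C_{\hat\eta_i}$ to a scalar multiple of $m_{2w_0\hat\eta_i}$ plus strictly lower-degree terms (via $\mathcal{E}(C_{\hat\eta_i}) = \tilde\varphi_{HC}(z_i)$ and the identifications in the proof of Theorem~\ref{theorem:dotted_Weyl}), multiplicativity of $\mathcal{E}$ forces $\mathcal{E}(C_\lambda)$ to have top-degree term a nonzero scalar multiple of $\prod_i m_{2w_0\hat\eta_i}^{m_i}$, which is nonzero. Therefore $\deg\mathcal{E}_\lambda = \sum_i i\,m_i = |\lambda|$, as claimed.
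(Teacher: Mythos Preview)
Your overall strategy---reduce to the generators $C_{\hat\eta_i}$, compute $\deg\mathcal{E}(C_{\hat\eta_i})$, then use multiplicativity and algebraic independence---is the same as the paper's. However, there are two genuine gaps in your execution.

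First, you write $\mathcal{E}(C_{\hat\eta_i}) = \tilde\varphi_{HC}(z_i)$. This is false: by definition $\mathcal{E}(C) = \tilde\varphi_{HC}(\Upsilon^{-1}(C))$, and $\Upsilon(z_i) = c_i$, not $C_{\hat\eta_i}$. What the proof of Theorem~\ref{theorem:center_and_capelli} gives is $C_{\hat\eta_i} = a\,c_i + p_i(C_{\hat\eta_1},\dots,C_{\hat\eta_{i-1}})$ for a nonzero scalar $a$ and some polynomial $p_i$, so $\mathcal{E}(C_{\hat\eta_i}) = a\,\tilde\varphi_{HC}(z_i) + \mathcal{E}(p_i(\dots))$. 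To bound the degree of the second summand you need to already know $\deg\mathcal{E}(C_{\hat\eta_j}) = j$ for $j<i$, together with the constraint $\sum 2jm_j \le 2i$ on the monomials appearing in $p_i$. This forces an induction on $i$, which the paper sets up explicitly and you do not.

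Second, your claim that $\tilde\varphi_{HC}(z_i)$ is a scalar multiple of $m_{2w_0\hat\eta_i}$ plus strictly lower-degree terms is only valid in Type~AI and the diagonal case. In Type~AII the proof of Theorem~\ref{theorem:dotted_Weyl} shows instead that $\tilde\varphi_{HC}(z_i)$ is a linear combination of $m_{2w_0\hat\eta_i}$ and the products $m_{2w_0\hat\eta_s}m_{2w_0\hat\eta_{i-s}}$ for $1\le s<i$, all of which have degree exactly $i$. So the top-degree part of $\mathcal{E}(C_{\hat\eta_i})$ is not simply a scalar times $m_{2w_0\hat\eta_i}$, and your non-cancellation argument for the product $\prod_i m_{2w_0\hat\eta_i}^{m_i}$ does not directly apply. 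The paper handles Type~AII by a separate argument, observing that the elements in that set remain algebraically independent upon passage to the graded setting, so $\deg\tilde\varphi_{HC}(z_i)=i$ exactly.
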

  
  \begin{proof} 
  We begin by showing ${\rm pdeg}\  \mathcal{E}_{\hat{\eta}_i} = i$ for $i=1,\dots, n$.  
  The reader should observe that we already know that $ |\hat{\eta}_i| = i$.  This will establish a special case of the lemma.  
  By Theorem \ref{theorem:center_and_capelli} and its proof, we have a concurrence of  algebras   \begin{align*}
 \mathbb{C}(q)[c_1, \dots, c_j] = \mathbb{C}(q)[C_{\hat{\eta}_1}, \dots, C_{\hat{\eta}_j}]  \end{align*}
 for $j=1,\dots, n$.  
We also have  (see Theorem \ref{theorem:center_and_capelli}, formula (\ref{formula:relating})) \begin{align}\label{formCap}
c_{j }-a_j C_{\hat{\eta}_{j}} \in \mathbb{C}(q)[C_{\hat{\eta}_1}, \dots, C_{\hat{\eta}_{j-1}}]=\mathbb{C}(q)[c_1, \dots, c_{j-1}]
 \end{align} for nonzero scalars $a_{j}$.  

Recall that $\mathcal{E}$ defines an algebra isomorphism from $(\mathscr{PD}_{\theta})^ {U_q(\mathfrak{g})}$ to  the ring of dotted invariants $\mathbb{C}(q)[\mathcal{A}_2]^{\mathcal{W}_{\theta}\bullet}$ which equals symmetric polynomials in  $\mathbb{C}(q)[g^{-i}K_{2\epsilon^{\Sigma}_i}|\  i=1, \dots, n] $.
   Applying the algebra isomorphism $\mathcal{E}$ to  (\ref{formCap}) yields 
   \begin{align*}
\mathcal{E}(c_j)- a_j \mathcal{E}(C_{\hat{\eta}_j})\in\mathbb{C}(q)[\mathcal{E}(c_1),\dots, \mathcal{E}(c_{j-1})].
  \end{align*} 
  We show 
 \begin{align}\label{polyequal}
  \mathbb{C}(q)[\mathcal{E}(c_1),\dots, \mathcal{E}(c_s)]
  =\mathbb{C}(q)[m^{\Sigma}_{2w_0\hat{\eta}_{1}},m^{\Sigma}_{2w_0\hat{\eta}_{2}},\dots, m^{\Sigma}_{2w_0\hat{\eta}_{s}}]
  \end{align}
 for all $1\leq s\leq n$.

Consider  Type AI and the diagonal case. As explained in the proof of Theorem \ref{theorem:dotted_Weyl}, $\mathcal{E}(c_i)=\tilde{\varphi}_{HC}(\hat{z}_{i}) = m^{\Sigma}_{2w_0\hat{\eta}_{i}}$ (up to a nonzero scalar multiple) for $i=1,\dots,n$. This proves (\ref{polyequal}) for these two types.
For Type AII, first note that $\hat{z}_s=z_{2w_0\hat{\omega}_s}$ for $s=1,\dots, n$. The proof of Theorem \ref{theorem:dotted_Weyl} shows that $m^{\Sigma}_{2w_0\hat{\eta}_{2k}}$
  is a (nonzero) linear combination of  elements in the set 
\begin{align*}
\{ \tilde{\varphi}_{HC}(\hat{z}_{2k}), m^{\Sigma}_{2w_0\hat{\eta}_{k-j}}m^{\Sigma}_{2w_0\hat{\eta}_{k+j}}| 1\leq j<k\}
\end{align*} for $1\leq 2k\leq n$.  Similarly, $m^{\Sigma}_{2w_0\hat{\omega}_{2k+1}}$
  is a linear combination of  elements in the set 
\begin{align*}
\{ \tilde{\varphi}_{HC}(\hat{z}_{2k+1} ), m^{\Sigma}_{2w_0\hat{\eta}_{k-j}}m^{\Sigma}_{2w_0\hat{\eta}_{k+1+j}}| 1\leq j<k\}
\end{align*} for $1\leq 2k+1\leq n$.
Hence 
\begin{align*}
t_{2k}\mathcal{E}(c_k)-\sum_{1\leq j<k}b_{2k}m^{\Sigma}_{2w_0\hat{\eta}_{k-j}}m^{\Sigma}_{2w_0\hat{\eta}_{k+j}} =m^{\Sigma}_{2w_0\hat{\eta}_{2k}}
\end{align*}
for scalars $t_{2k}$ and $b_{2k}$
and \begin{align*}
t_{2k+1}\mathcal{E}(c_{2k+1})-\sum_{1\leq j<k}b_{2k+1}m^{\Sigma}_{2w_0\hat{\eta}_{k-j}}m^{\Sigma}_{2w_0\hat{\eta}_{k+1+j}} =m^{\Sigma}_{2w_0\hat{\eta}_{2k+1}}
\end{align*}
for scalars $t_{2k+1}$ and $b_{2k+1}$.  
Note that here we are using the facts that 
$m^{\Sigma}_{2w_0\hat{\eta}_{k-j}}$ and $m^{\Sigma}_{2w_0\hat{\eta}_{k+j}}$ are elements of 
\begin{align*}\mathbb{C}(q)[m^{\Sigma}_{2w_0\hat{\eta}_{1}},m^{\Sigma}_{2w_0\hat{\eta}_{2}},\dots, m^{\Sigma}_{2w_0\hat{\eta}_{2k-1}}]
\end{align*} and, similarly, $m^{\Sigma}_{2w_0\hat{\eta}_{k-j}}$ and $m^{\Sigma}_{2w_0\hat{\eta}_{k+1+j}}$ are elements of 
\begin{align*}\mathbb{C}(q)[m^{\Sigma}_{2w_0\hat{\eta}_{1}},m^{\Sigma}_{2w_0\hat{\eta}_{2}},\dots, m^{\Sigma}_{2w_0\hat{\eta}_{2k}}]
\end{align*} for $1\leq j<k$. Since $m^{\Sigma}_{2w_0\hat{\eta}_{s}}$ is algebraically independent with  elements in the  polynomial ring $\mathbb{C}(q)m^{\Sigma}_{2w_0\hat{\eta}_{1}},
 m^{\Sigma}_{2w_0\hat{\eta}_{2}},\cdots,
  m^{\Sigma}
  _{2w_0\hat{\eta}_{s-1}}]$, the coefficient $a_s$ must be nonzero. 
This proves  (\ref{polyequal}) in Type AII.

Note that by the above analysis, there exists a polynomial $f_{s}$ in $s-1$ variables and a nonzero scalar $t_s$ such that
$$\mathcal{E} (c_s)-f_{s}(m^{\Sigma}_{2w_0\hat{\eta}_{1}},\dots,m^{\Sigma}_{2w_0\hat{\eta}_{s-1}})=t_sm^{\Sigma}_{2w_0\hat{\eta}_{s}}$$
for all $s=1,\dots,n$. For type AI and the diagonal case, $f_s$ is identically equal to zero. In Type AII, $f_{s}((\mathcal{E}(c_1),\dots, \mathcal{E}(c_{s-1}))$ is a sum of products of the form 
\begin{itemize}
\item $m^{\Sigma}_{2w_0\hat{\eta}_{k-j}}m^{\Sigma}_{2w_0\hat{\eta}_{k+j}}$
for $s=2k$ and $1\leq j<k$
\item
$m^{\Sigma}_{2w_0\hat{\eta}_{k-j}}m^{\Sigma}_{2w_0\hat{\eta}_{k+1+j}}$ 
 for $s=2k+1$ and $1\leq j<k$. 
 \end{itemize}
 Each of these products has polynomial degree less $s$ and filtration degree less than $2s$. We have
\begin{align*}t_sm^{\Sigma}_{2w_0\hat{\eta}_{s}}&-a_s\mathcal{E}(C_{w_0\hat{\eta}_s})=\mathcal{E}(c_s)-f_s(m^{\Sigma}_{2w_0\hat{\eta}_{1}},\dots, m^{\Sigma}_{2w_0\hat{\eta}_{s-1}})-a_s\mathcal{E}(C_{w_0\hat{\eta}_s})\cr&+ p_s(m^{\Sigma}_{2w_0\hat{\eta}_{1}},m^{\Sigma}_{2w_0\hat{\eta}_{2}},\dots, m^{\Sigma}_{2w_0\hat{\eta}_{s}})
\end{align*} for some polynomial $p_s(m^{\Sigma}_{2w_0\hat{\eta}_{1}},\dots, m^{\Sigma}_{2w_0\hat{\eta}_{s-1}})
\in \mathbb{C}(q)[m^{\Sigma}_{2w_0\hat{\eta}_{1}},m^{\Sigma}_{2w_0\hat{\eta}_{2}},\dots, m^{\Sigma}_{2w_0\hat{\eta}_{s-1}}].
$  
 It follows that  \begin{align*}a_s\mathcal{E}(C_{w_0\hat{\eta}_s})=t_sm^{\Sigma}_{2w_0\hat{\eta}_{s}}
- p_s(m^{\Sigma}_{2w_0\hat{\eta}_{1}},m^{\Sigma}_{2w_0\hat{\eta}_{2}},\dots, m^{\Sigma}_{2w_0\hat{\eta}_{s-1}}).\end{align*}
Since \begin{align*}\mathcal{J}{\rm deg}(m^{\Sigma}_{2w_0\hat{\eta}_s})=2s=\mathcal{J}{\rm deg}(C_{w_0\hat{\eta}_s})
\end{align*} we must have\begin{align}\label{Jdegbound}\mathcal{J}{\rm deg}(p_s(m^{\Sigma}_{2w_0\hat{\eta}_{1}},\dots, m^{\Sigma}_{2w_0\hat{\eta}_{s-1}}))\leq 2s.\end{align}

We can write $p_s(x_1,\dots, x_{s-1})$ as a sum $\sum_{\mu} a_{\mu}x_1^{\mu_1}\cdots x_{s-1}^{\mu_{s-1}}$.  Hence 
\begin{align*}a_s\mathcal{E}(C_{w_0\hat{\eta}_s})=t_sm^{\Sigma}_{2w_0\hat{\eta}_{s}}-\sum_{\mu}a_{\mu}(m^{\Sigma}_{2w_0\hat{\eta}_{1}})^{\mu_1}\cdots (m^{\Sigma}_{2w_0\hat{\eta}_{s-1}})^{\mu_{s-1}}.
\end{align*} 
By Lemma \ref{lemma:Kalgebra}, the algebra isomorphism  $\mathcal{V}$ applied to the above yields 
\begin{align*}\mathcal{V}(a_s\mathcal{E}(C_{w_0\hat{\eta}_s}))&=\mathcal{V}(t_sm^{\Sigma}_{2w_0\hat{\eta}_{s}})-\sum_{\mu}a_{\mu}\mathcal{V}((m^{\Sigma}_{2w_0\hat{\eta}_{1}})^{\mu_1}\cdots (m^{\Sigma}_{2w_0\hat{\eta}_{s-1}})^{\mu_{s-1}})\cr&=t_sK_{2w_0\hat{\eta}_{s}}-\sum_{\mu}a_{\mu}(K_{2w_0\hat{\eta}_{1}})^{\mu_1}\cdots (K_{2w_0\hat{\eta}_{s-1}})^{\mu_{s-1}}
\end{align*}   Moreover, as discussed before this lemma, $\mathcal{V}$ preserves both the filtration and the polynomial degree.  Hence
\begin{align*}\mathcal{J}{\rm deg}(\sum_{\mu}a_{\mu}(m^{\Sigma}_{2w_0\hat{\eta}_{1}})^{\mu_1}\cdots (m^{\Sigma}_{2w_0\hat{\eta}_{s-1}})^{\mu_{s-1}})&=\mathcal{J}{\rm deg}\sum_{\mu}a_{\mu}(K_{2w_0\hat{\eta}_{1}})^{\mu_1}\cdots (K_{2w_0\hat{\eta}_{s-1}})^{\mu_{s-1}}\cr&={\rm max}_{\mu}\mathcal{J}{\rm deg}((K_{2w_0\hat{\eta}_{1}})^{\mu_1}\cdots (K_{2w_0\hat{\eta}_{s-1}})^{\mu_{s-1}}).
\end{align*}
Now each term  $(K_{2w_0\hat{\eta}_{1}})^{\mu_1}\cdots (K_{2w_0\hat{\eta}_{s-1}})^{\mu_{s-1}}$ has filtration degree 
$$2\mu_1+4\mu_2+\cdots +2(s-1)\mu_{s-1}$$ which must be less than or equal to $2s$ by (\ref{Jdegbound}).  Similarly, the polynomial degree of $(K_{2w_0\hat{\eta}_{1}})^{\mu_1}\cdots (K_{2w_0\hat{\eta}_{s-1}})^{\mu_{s-1}}$ is $$\mu_1+2\mu_2+\cdots +(s-1)\mu_{s-1}$$ which is less than or equal to $s$ by the previous computation. 
Hence 
\begin{align*}{\rm pdeg} (p_s(m^{\Sigma}_{2w_0\hat{\eta}_{1}},m^{\Sigma}_{2w_0\hat{\eta}_{2}},\dots, m^{\Sigma}_{2w_0\hat{\eta}_{s-1}}))
={\rm max}_{\mu}(\mu_1+2\mu_2+\cdots +(s-1)\mu_{s-1})\leq 2s
\end{align*} Since ${\rm pdeg}(m^{\Sigma}_{2w_0\hat{\eta}_{s}})=s$,  it follows that 
\begin{align*}{\rm pdeg}{\mathcal{E}(C_{w_0\hat{\eta}_s})}=s. 
\end{align*} In other words, ${\rm pdeg}{\mathcal{E}(C_{w_0\hat{\eta}_i})}=i $ for all $i=1,\dots, n$ as desired.

  We now turn to arbitrary $\lambda$ and determine the polynomial degree of $\mathcal{E}(C_{\lambda})$.  By Proposition \ref{prop:capelli} and Theorem \ref{theorem:center_and_capelli}, \begin{align*}C_{\lambda}=a_{\lambda,\lambda}C_{\hat{\eta}_1}^{\lambda_1}\cdots C_{\hat{\eta}_n}^{\lambda_n} +\sum_{|\lambda'|<|\lambda|}a_{\lambda'}C_{\hat{\eta}_1}^{\lambda'_1}\cdots C_{\hat{\eta}_j}^{\lambda'_j}+a_0
 \end{align*} 
 where $\lambda= \lambda_1\hat{\eta}_1+\lambda_2\hat{\eta}_2+\cdots \lambda_n\hat{\eta}_n$ and $a_{\lambda,\lambda}, a_{\lambda'},$ and $ a_0$ are all scalars with $a_{\lambda,\lambda}\neq 0.$  Since it is a product and the polynomial degree of a product is a sum of the degree of the factors, we have \begin{align*}
 {\rm pdeg}(C_{\hat{\eta}_1}^{\lambda_1}\cdots C_{\hat{\eta}_n}^{\lambda_n} )=
 {\rm pdeg}\ C_{\hat{\eta}_1}^{\lambda_1}+\cdots + {\rm pdeg}\ C_{\hat{\eta}_n}^{\lambda_n} =\sum_{i=1}^n\lambda_i|\hat{\eta}_i| = \sum_{i=1}^n\lambda_ii = |\lambda|.
\end{align*}
A similar argument applied to each summand in $\sum_{|\lambda'|<|\lambda|}a_{\lambda'}C_{\hat{\eta}_1}^{\lambda'_1}\cdots C_{\hat{\eta}_j}^{\lambda'_j}$ yields $$ {\rm pdeg} (\sum_{|\lambda'|<|\lambda|}a_{\lambda'}C_{\hat{\eta}_1}^{\lambda'_1}\cdots C_{\hat{\eta}_j}^{\lambda'_j})<|\lambda|.$$  Also, $ {\rm pdeg}\ a_0 = 0$.
 Hence the polynomial degree of $\mathcal{E}_{\lambda}=\mathcal{E}(C_{\lambda})$ satisfies
\begin{align*}      {\rm pdeg}\ \mathcal{E}(C_{\lambda}) = {\rm pdeg}(C_{\hat{\eta}_1}^{\lambda_1}\cdots C_{\hat{\eta}_n}^{\lambda_n} )=|\lambda|.
  \end{align*}
 
   \end{proof}  
   
   \subsection{Vanishing and non-vanishing properties}\label{section:van}

By Proposition \ref{prop:weights} and Theorem \ref{theorem:explicit-module}, $H_{2\mu}$ has weight $2\mu$ with $\mu\in\Lambda^+_{\Sigma}$ as a left $U_q(\mathfrak{g})$-module.
Hence 
\begin{align*}
K_{\lambda}\cdot H_{2\mu} = q^{(\lambda,2\mu)}H_{2\mu}
\end{align*}
for all $K_{\lambda}$ in the Cartan subalgebra of $U_q(\mathfrak{g})$. We expand $\mu$ as a sum of the form  $\mu= (\mu_1\epsilon_1^{\Sigma}+\cdots +  \mu_n\epsilon_n^{\Sigma})$. 
Now consider the action of $K_{2\epsilon_i^{\Sigma}}$ on $H_{2\mu}$ where $2\epsilon_i^{\Sigma}$ is the weight described in Section \ref{section:restricted-root-system} for all three types of symmetric pairs.  We have 
\begin{align*}
K_{2\epsilon_i^{\Sigma}}\cdot H_{2\mu} = q^{(2\epsilon^{\Sigma}_i,2\mu)}H_{2\mu}
\end{align*}
Now expand $\mu$ as a sum of the form  $\mu= (\mu_1\epsilon_1^{\Sigma}+\cdots +  \mu_n\epsilon_n^{\Sigma})$.
It follows that 
\begin{align*}
K_{2\epsilon_i^{\Sigma}}\cdot H_{2\mu} = q^{\mu_i(2\epsilon^{\Sigma}_i,2\epsilon^{\Sigma}_i)}H_{2\mu}=q^{t\mu_i}
\end{align*}
where   $t=4$ in Type AI, and $t=2$ in Types AII and the diagonal case.
Hence
\begin{align*}\kappa(x_i)\cdot H_{2\mu}= g^{-i}q^{t\mu_i} H_{2\mu}
\end{align*}
Since $\kappa$ is an algebra homomorphism, if $P$ is any polynomial in $\mathbb{C}(q)[x_1,\dots, x_n$ then. 
\begin{align*}
\kappa(P)\cdot H_{2\mu}=P(g^{-1}q^{t\mu_1},\cdots, g^{-n}q^{t\,u_n})H_{2\mu}.
\end{align*} 
Thus
\begin{align*}
\kappa(\mathcal{E}_{\lambda})\cdot H_{2\mu}=\mathcal{E}_{\lambda}(g^{-1}q^{t\mu_1},\cdots, g^{-n}q^{t\,u_n})H_{2\mu}.
\end{align*} 

 
 Given a weight $\mu=(\mu_1\epsilon_1^{\Sigma}+\cdots +  \mu_n\epsilon_n^{\Sigma})$  and a polynomial $P$ in $\mathbb{C}(q)[x_1,\dots,x_n]$, set $P(q^{\mu}) =P(q^{t\mu_1},\dots, q^{t\mu_n})$. 
 By (\ref{Capkap})
 \begin{align*}\kappa(\mathcal{E}_{\lambda})= \mathcal{E}_{\lambda}(g^{-1}K_{2\epsilon^{\Sigma}_1}, \dots, g^{-n}K_{2\epsilon^{\Sigma}_{n}})
 \end{align*} 
for each $\lambda$ in $\Lambda_{\Sigma}^+$.  Set $\mathcal{E}_{\lambda}(q^{t\mu})=  \mathcal{E}_{\lambda}(g^{-1}q^{t\mu_1}, \dots, g^{-n}q^{t\mu_n}).$

Consider $H_{2\mu}\in \mathscr{P}_{\theta}$ for $\mu \in \Lambda_{\Sigma}^+$.  Recall Lemma \ref{Capelliexpression} which describes the action of a Capelli operator, say $C_{\lambda}$ on  $H_{2\mu}$.   Hence  \begin{align*} C_{\lambda}\cdot H_{2\mu} = \mathcal{E}(C_{\lambda})\cdot H_{2\mu} = \mathcal{E}_{\lambda}(q^{t\mu})H_{2\mu}.
\end{align*}

   \begin{proposition} \label{prop:vanishing}For each $\lambda,\mu\in \Lambda_{\Sigma}^+$ , the eigenvalue function $\mathcal{E}_{\lambda}$ satisfy
 \begin{itemize}
  \item[(i)] $\mathcal{E}_{\lambda}(q^{t\mu})=1$ for $\lambda = \mu$
 \item[(ii)] $\mathcal{E}_{\lambda}(q^{t\mu}) = 0$ for $\mu\neq \lambda$ and $|\mu|\leq |\lambda|$
 \end{itemize}
 where $t=4$ in Type AI, and $t=2$ in Types AII and the diagonal case. 
 \end{proposition}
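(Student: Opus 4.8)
The plan is to deduce the vanishing and normalization properties of the polynomials $\mathcal{E}_\lambda$ directly from the action formula for Capelli operators on the highest weight vectors $H_{2\mu}$, which was established in Lemma~\ref{Capelliexpression}. The crucial observation, recorded in the paragraph immediately preceding the proposition, is the identity
\begin{align*}
C_\lambda\cdot H_{2\mu} = \mathcal{E}_\lambda(q^{m\mu})H_{2\mu}
\end{align*}
for all $\lambda,\mu\in\Lambda_\Sigma^+$, where $m=4$ in Type AI and $m=2$ in Types AII and the diagonal case. This identity converts the statement about the scalar $\mathcal{E}_\lambda(q^{m\mu})$ into a statement about the action of $C_\lambda$ on $H_{2\mu}$, which is exactly what Lemma~\ref{Capelliexpression} controls.

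First I would recall from the last sentence of Lemma~\ref{Capelliexpression} that $C_\lambda\cdot H_{2\mu}=H_{2\mu}$ when $\lambda=\mu$, and $C_\lambda\cdot H_{2\mu}=0$ when $\mu\neq\lambda$ and $|\mu|\leq|\lambda|$. Since $H_{2\mu}$ is a nonzero element of $\mathscr{P}_\theta$ (being a product of the algebraically independent highest weight generators $H_1,\dots,H_n$ by Proposition~\ref{prop:weights}), comparing with the eigenvalue formula above forces $\mathcal{E}_\lambda(q^{m\mu})=1$ in the first case and $\mathcal{E}_\lambda(q^{m\mu})=0$ in the second case. This is the entire content of parts (i) and (ii). The only point needing a word of care is that the substitution $q^{m\mu}$ is well-defined and matches the Cartan action: this is precisely the computation $(K_{2\epsilon^\Sigma_i})\cdot H_{2\mu}=q^{(2\mu,2\epsilon^\Sigma_i)}H_{2\mu}=q^{m\mu_i}H_{2\mu}$ displayed just before the proposition, using the normalization of the restricted inner product from Section~\ref{section:restricted-root-system} (which contributes the factor $2$ in Types AII and diagonal, and the factor $4$ in Type AI since there $(\cdot,\cdot)_\Sigma=(\cdot,\cdot)$ but the weights are of the form $2\lambda$).

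There is essentially no obstacle here — the proposition is a direct corollary of Lemma~\ref{Capelliexpression} together with the eigenvalue-function dictionary $\mathcal{E}(C_\lambda)=\tilde\varphi_{HC}(z)$ for $C_\lambda=\Upsilon(z)$, which was set up in Section~\ref{section:dd}. If I wanted to be completely self-contained I would also remark that $\mathcal{E}_\lambda$ is genuinely a polynomial (so that the phrase ``eigenvalue function'' is justified), but that was already arranged in Section~\ref{section:dd} via $\mathcal{E}_\lambda(K_{2\epsilon^\Sigma_1},\dots,K_{2\epsilon^\Sigma_n})=\mathcal{E}(C_\lambda)\in\mathbb{C}(q)[\mathcal{A}_\geq]^{W_\Sigma\bullet}$ and Lemma~\ref{lemma:degree}. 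So the proof is just: invoke the eigenvalue identity, invoke Lemma~\ref{Capelliexpression}, and cancel the nonzero vector $H_{2\mu}$. The real mathematical work — showing $C_\lambda\cdot H_{2\mu}$ vanishes for $\mu\neq\lambda$ with $|\mu|\le|\lambda|$ — has already been done; what remains is bookkeeping to translate it into the language of the polynomials $\mathcal{E}_\lambda$, which together with the extra-vanishing/nondegeneracy these give will feed into the identification with Knop–Sahi interpolation polynomials in Theorem~\ref{thm:KSpoly}.
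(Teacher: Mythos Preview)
Your proposal is correct and follows essentially the same approach as the paper: use the identity $C_\lambda\cdot H_{2\mu}=\mathcal{E}_\lambda(q^{m\mu})H_{2\mu}$ from the discussion preceding the proposition, then invoke Lemma~\ref{Capelliexpression} and cancel the nonzero vector $H_{2\mu}$. The paper's proof is just the terse version of what you wrote; the only (harmless) imprecision in your write-up is the parenthetical explanation of why $m=4$ versus $m=2$, which comes simply from computing $(2\mu,2\epsilon_i^\Sigma)$ in the ambient inner product using the formulas for $\epsilon_i^\Sigma$ in Section~\ref{section:restricted-root-system}.
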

 \begin{proof} It follows from the discussion preceding the proposition that $\mathcal{E}_{\lambda}(q^{t\mu}) H_{2\mu}= C_{\lambda}\cdot H_{2\mu}$ for all choices of $\lambda$ and $\mu$ with $|\lambda|\geq |\mu|$. Hence (i) and (ii) are equivalent to the analogous assertions with $\mathcal{E}_{\lambda}(q^{t\mu})$ replaced by $ C_{\lambda}\cdot H_{2\mu}$.  The proposition now follows from Lemma \ref{Capelliexpression} (with the roles of $\mu$ and $\lambda$ switched).
 \end{proof}

\subsection{Knop-Sahi interpolation polynomials}\label{section:KS}
Let  $\mathbb{C}(a,g)[x_1, \dots, x_n]$ be the polynomial in $n$ variables over the field $\mathbb{C}(a,g)$ where $a$ and $g$ are two independent parameters. Given a partition $\mu=\mu_1\geq \mu_2\geq\mu_n\geq 0$ and a polynomial $P(x_1,\dots, x_n)$ in $\mathbb{C}(a,g)[x_1, \dots, x_n]$, set $P(a^{\mu}) = P(a^{\mu_1}, \dots, a^{\mu_n})$. Knop-Sahi interpolation polynomials introduced in \cite{Kn} and \cite{S}, also called shifted Macdonald polynomials in the later paper \cite{O}. They are a family of polynomials $P^*_{\lambda}(x; a,g)$, indexed by partitions ${\lambda}$. In  addition,  they   satisfy both an invariance condition and a vanishing condition.  
In particular, the element $P^*_{\lambda}( x;a,g)$ in 
 $\mathbb{C}(a,g)[x_1, \dots, x_n]$ is the unique (up to nonzero scalar) polynomial in the $x_1, \dots, x_n$ of degree $|\lambda|$  such that
\begin{itemize}
\item $P^*_{\lambda}( x;a,g)$ is symmetric viewed as a polynomial in the $n$ terms $x_1g^{-1}, \dots, x_ng^{-n}$.
\item $P^*_{\lambda}(a^{\mu};a,g) = 0$  for each partition $\mu\neq \lambda$ with $|\mu|\leq |\lambda|$ and $P^*_{\lambda}(a^{\lambda};a,g) \neq 0$. 
\end{itemize}
Here, we are  following Onkoukov's aproach \cite{O}. In particular, these polynomials are symmetric with respect to the choice of variables $x_ig_i^{-1}, i=1,\cdots, n$ (see page 149 of \cite{O} which is part of the introduction). The polynomials in \cite{S} (defined in the introduction via conditions (1) and (2)) are symmetric in the $x_1, \dots, x_n$. Hence these are recovered from the one's defined here by a change of variables. 
 Also, we are using the simpler vanishing condition of Sahi \cite{S} and Knop \cite{Kn} because it is less complicated. The stronger one, which is the one used by Onkounkov (see   \cite{O} (1.3)) is actually shown to be equivalent to the  simpler vanishing condition given above (See \cite{Kn}, Section 4).


Note that in \cite{O}, the initial formulation for the polynomials above is such that they are unique \emph{up to nonzero scalar multiple}. Later, a normalization is provided (Section 4 of \cite{O}, see equality (4.3)) so that these polynomials can be given precisely without worrying about a scalar factor.  The papers \cite{S} and \cite{K} normalize these polynomials in a different way as compared to what is done in this paper.  Namely, they assume that the  symmetric polynomial $m_{\lambda}$ associated to the partition $\lambda$ appears  in the polynomial at $\lambda$ with  coefficient equal to $1$.  Here, we  normalize by assuming the eigenvalue functions $\mathcal{E}_{\lambda}$  arise from elements corresponding to the identity  in the appropriate $U_q(\mathfrak{g})$-module.  This choice is equivalent to the condition  $\mathcal{E}_{\lambda}(q^{m\lambda}) = 1$ of Proposition \ref{prop:vanishing}.  

\begin{theorem}\label{thm:KSpoly} For each $\lambda$, the polynomial $\mathcal{E}_{\lambda}(x_1, \dots, x_n)$ evaluated at $x_i=K_{2{\epsilon}^{\Sigma}_{i}}$ is equal to $c_{\lambda}P^*_{\lambda}(x;a,g)$ 
 where $c_{\lambda} = P^*_{\lambda}(a^{\lambda}; a,g)^{-1}$ and 
\begin{itemize}
\item $(a,g) = (q^4,q^2)$ in Type AI, 
\item $(a,g) = (q^2,q^4)$ in Type AII, and 
\item $(a,g) = (q^2,q^2)$ in the diagonal type.
\end{itemize}
\end{theorem}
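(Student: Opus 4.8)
The strategy is to identify $\mathcal{E}_\lambda$ with $P^*_\lambda(x;a,g)$ by verifying the three characterizing properties of the interpolation polynomials: the correct degree, the symmetry (invariance) property, and the vanishing property, together with the normalization. Lemma~\ref{lemma:degree} already gives that $\mathcal{E}_\lambda$ has degree $|\lambda|$. Proposition~\ref{prop:vanishing} already gives the vanishing property $\mathcal{E}_\lambda(q^{m\mu})=0$ for $\mu\neq\lambda$, $|\mu|\le|\lambda|$, and the non-vanishing/normalization $\mathcal{E}_\lambda(q^{m\lambda})=1$. So the remaining work is (a) to match the vanishing points $q^{m\mu}$ with the points $a^\mu$ appearing in the definition of $P^*_\lambda$, thereby reading off the value of $a$; and (b) to match the symmetry property of $\mathcal{E}_\lambda$ with the symmetry in the variables $x_ig^{-i}$, thereby reading off $g$.

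\textbf{Step 1: the parameter $a$.} Recall that $\mathcal{E}_\lambda$ is the polynomial in $x_1,\dots,x_n$ with $\mathcal{E}_\lambda(K_{2\epsilon^\Sigma_1},\dots,K_{2\epsilon^\Sigma_n})=\mathcal{E}(C_\lambda)$, and that $K_{2\epsilon^\Sigma_i}\cdot H_{2\mu}=q^{m\mu_i}H_{2\mu}$ where $m=4$ in Type AI and $m=2$ in Types AII and diagonal (as in the discussion preceding Proposition~\ref{prop:vanishing}). Thus the relevant evaluation points for the vanishing property are $x_i=q^{m\mu_i}$, i.e. the $n$-tuple $(q^{m\mu_1},\dots,q^{m\mu_n})$. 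On the other hand, in $P^*_\lambda$ the vanishing happens at $a^\mu=(a^{\hat\omega_1},\dots,a^{\hat\omega_n})$, where $a^{\hat\omega_j}$ has $i$-th coordinate $a^{\mu_i}$ once one writes $\mu=\sum_j\mu_j\hat\omega_j$ — more precisely the $i$-th coordinate of $a^\mu$ is $a$ raised to the $i$-th coordinate of $\mu$ in the $\epsilon$-basis, which is $\mu_i$. Comparing, the vanishing and non-vanishing of Proposition~\ref{prop:vanishing} translate verbatim into the vanishing/non-vanishing conditions defining $P^*_\lambda$ once we set $a=q^m$: that is $a=q^4$ in Type AI and $a=q^2$ in Types AII and diagonal. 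This gives the stated values of $a$.

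\textbf{Step 2: the parameter $g$.} By Theorem~\ref{theorem:dotted_Weyl} and the definition of $\mathcal{E}$, $\mathcal{E}(C_\lambda)$ lies in $\mathbb{C}(q)[\mathcal{A}_\geq]^{W_\Sigma\bullet}$. By Lemma~\ref{lemma:explicit}, this algebra equals the algebra of symmetric polynomials in the shifted variables $q^{-2i}K_{2\tilde\epsilon_i}$ in Type AI and the diagonal case, and in $q^{-4i}K_{2\tilde\epsilon_{2i}}$ in Type AII. Under the substitution $x_i=K_{2\epsilon^\Sigma_i}=K_{2\tilde\epsilon_i}$ (resp. $K_{2\tilde\epsilon_{2i}}$ in Type AII), this says precisely that $\mathcal{E}_\lambda(x_1,\dots,x_n)$ is symmetric in the variables $q^{-2i}x_i$ in Type AI and the diagonal case, and in $q^{-4i}x_i$ in Type AII. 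Matching with the requirement that $P^*_\lambda$ be symmetric in $x_ig^{-i}$ forces $g^{-i}=q^{-2i}$, i.e. $g=q^2$, in Type AI and the diagonal case, and $g^{-i}=q^{-4i}$, i.e. $g=q^4$, in Type AII. This supplies the stated values of $g$.

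\textbf{Step 3: uniqueness and normalization.} The three properties — degree $|\lambda|$, symmetry in $x_ig^{-i}$, and vanishing at $a^\mu$ for $\mu\neq\lambda$, $|\mu|\le|\lambda|$ with non-vanishing at $a^\lambda$ — characterize $P^*_\lambda(x;a,g)$ up to a nonzero scalar. Hence $\mathcal{E}_\lambda$ is a scalar multiple of $P^*_\lambda(x;a,g)$ with the parameters found above. To pin down the scalar, evaluate at $x=a^\lambda=q^{m\lambda}$: Proposition~\ref{prop:vanishing}(i) gives $\mathcal{E}_\lambda(q^{m\lambda})=1$, so $\mathcal{E}_\lambda=c_\lambda P^*_\lambda(x;a,g)$ with $c_\lambda=P^*_\lambda(a^\lambda;a,g)^{-1}$, as claimed. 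The main obstacle is Step 2: one must carefully reconcile the normalization conventions — the $\tilde\rho$-shift appearing in the dotted $W_\Sigma$-action, the different scalings of the inner product $(\cdot,\cdot)_\Sigma=2(\cdot,\cdot)$ in Types AII and diagonal, and the precise indexing $q^{-2i}$ vs $q^{-4i}$ from Lemma~\ref{lemma:explicit} — against the $g^{-i}$ bookkeeping in \cite{O}, and to check that these are consistent with the value of $a$ from Step 1 so that the two substitutions $x_i\mapsto K_{2\epsilon^\Sigma_i}$ used for the vanishing points and for the symmetry are literally the same substitution. Once the conventions are aligned, everything else is a direct appeal to the already-established Lemma~\ref{lemma:degree}, Proposition~\ref{prop:vanishing}, Lemma~\ref{lemma:explicit}, and Theorem~\ref{theorem:dotted_Weyl}.
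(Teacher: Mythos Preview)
Your proposal is correct and follows essentially the same approach as the paper's proof: both verify degree via Lemma~\ref{lemma:degree}, symmetry via Theorem~\ref{theorem:dotted_Weyl} and Lemma~\ref{lemma:explicit}, vanishing via Proposition~\ref{prop:vanishing}(ii), and normalization via Proposition~\ref{prop:vanishing}(i), then invoke the uniqueness characterization of $P^*_\lambda$. The only cosmetic difference is that the paper fixes $g$ first and then $a$, whereas you do the reverse, and the paper also cites Theorem~\ref{theorem:center_and_capelli} alongside Theorem~\ref{theorem:dotted_Weyl} when asserting $\mathcal{E}_\lambda\in\mathbb{C}(q)[\mathcal{A}_{\geq}]^{W_\Sigma\bullet}$.
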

\begin{proof}
Set $g=q^2$ in Type AI and the diagonal case and set $g=q^4$ in Type AII. By Theorem \ref{theorem:dotted_Weyl} and Theorem \ref{theorem:center_and_capelli}, $\mathcal{E}_{\lambda}(K_{2{\epsilon}^{\Sigma}_{1}}, \dots, K_{2{\epsilon}^{\Sigma}_{n}})$ is in the dotted Weyl group invariants of 
 $\mathbb{C}(q)[\mathcal{A}_2]$. By Lemma \ref{lemma:explicit}, the dotted invariants in $\mathbb{C}(q)[\mathcal{A}_2]$ equals the  symmetric polynomials in $\mathbb{C}(q)[g^{-1}K_{2\epsilon^{\Sigma}_i}|\  i=1, \dots, n] $.  It follows that $\mathcal{E}_{\lambda}(K_{2{\epsilon}^{\Sigma}_{1}}, \dots,K_{2{\epsilon}^{\Sigma}_{n}})$ is a symmetric polynomial in the 
 variables $$g^{-1}K_{2{\epsilon}^{\Sigma}_{1}}, \dots, g^{-n}K_{2{\epsilon}^{\Sigma}_{n}}.$$ The same claim is true when we replace each $K_{2{\epsilon}^{\Sigma}_{i}}$ with $x_i$ and so $\mathcal{E}_{\lambda}(x_1, \dots, x_n)$ satisfies the same invariance property as $P^*_{\lambda}(x; a,g)=P^*_{\lambda}(x_1,\dots, x_n)$ with this particular choice of $g$.
 
 Now set $a=q^4$ in Type AI and $a=q^2$ in Type AII and the diagonal case.  By Proposition \ref{prop:vanishing} (ii),  
 $\mathcal{E}_{\lambda}(a^{\mu}) =0$ for each partition $\mu\neq \lambda$ with $|\mu|\leq |\lambda|$.   Hence $\mathcal{E}_{\lambda}$ satisfies 
 the same vanishing property as $P^*_{\lambda}(x;a,g)$.  By Lemma \ref{lemma:degree}, the degree of $\mathcal{E}_{\lambda}$ is $|\lambda|$.  Thus $\mathcal{E}_{\lambda}$ must be a nonzero scalar multiple of $P^*_{\lambda}(x;a,g)$ for each $\lambda$.  The fact that this nonzero scalar is $c_{\lambda}$ follows from the fact that   $\mathcal{E}_{\lambda}(a^{\lambda})=1$ which is just Proposition \ref{prop:vanishing} (i).\end{proof}

\begin{remark} \label{remark:finalremark}   Recall that $\mathcal{O}_q(SL_N)$ can be obtained from $\mathcal{O}_q({\rm Mat}_N)$ by modding out by $\det_q-1$. Let $\chi$ be a Hopf algebra automorphism of $U_q(\mathfrak{g})$.  Set $G=SL_n$ in Type AI, $G=SL_{2n}$ in Type AII, and $G=SL_n\times SL_n$ in the diagonal case. The space of zonal spherical functions $\mathcal{H} := \mathcal{H}_{\chi({\mathcal{B}}_{\theta}),\mathcal{B}_{\theta}}$  associated to the pair 
$\chi(\mathcal{B}_{\theta}), \mathcal{B}_{\theta}$  are the  left $\chi(\mathcal{B}_{\theta})$ invariants and right $\mathcal{B}_{\theta}$ invariants of $\mathcal{O}_q[G]$ (see \cite{L2003}). By \cite{L2003} Theorem 4.2, there is a map from 
$\mathcal{O}_q[G]$ to functions on $U^0$  so that for the correct choice of $\chi$, the image of $\mathcal{H}_{\chi({\mathcal{B}}_{\theta}),\mathcal{B}_{\theta}}$  is a subring of $W_{\Sigma}$ invariants. 
Moreover, for this choice of $\chi$, a special basis  $\varphi_{\lambda}$, $\lambda\in P^+(\Phi)$ can be identified with a family of Macdonald polynomials $P_{\lambda}(x;a,g)$ where both $a$ and $g$ are powers of $q$ (see \cite{L2004}, Appendix A).  The values of $a$ and $g$ for the three families of this paper are
\begin{itemize}
\item $(a,g) = (q^4,q^2)$ in Type AI, 
\item $(a,g) = (q^2,q^4)$ in Type AII, and 
\item $(a,g) = (q^2,q^2)$ in the diagonal type.
\end{itemize}
These are the same parameters that appear in the identification of the quantum Capelli eigenvalues with interpolation polynomials in Theorem \ref{thm:KSpoly}.  This connection between the realization of zonal spherical functions as Macdonald polynomials and quantum Capelli operators as Knop-Sahi interpolation polynomials mirrors the classical situation.  Indeed, these parameters agree because in terms of a particular natural grading, the top degree term of Knop-Sahi interpolation polynomials are precisely Macdonald polynomials with the same parameters (see for example \cite{S}, Theorem 1.1).  \end{remark}

\small
\section{Appendix: Commonly used notation}
We list here commonly used symbols and notation along with the first section (post the introduction) in which each item appears.

\medskip
\noindent
{\bf Section 2.1:} $\epsilon_i$, $\alpha_i$, $\omega_i$, $\Phi_N$, $P_N$, $Q_N$, $P_N^+$, $Q_N^+$, $\Lambda_N$, $\Lambda_N^+$, $\hat{\Lambda}_N^+$, $\hat{\omega}_i$, $w_0$, $w_oP^+_N$, $w_0\Lambda_N^+$, $w_0\hat{\Lambda}^+_N$, $\mathfrak{gl}_N$, $\mathfrak{sl}_N$, $(\cdot,\cdot)$, $\gamma\oplus \gamma'$

\medskip
\noindent
{\bf Section 2.2:} $e_i,f_i, h_{\epsilon_i}$, $K_{\epsilon_j}$, $U_q(\mathfrak{gl}_N)$, $K_i, E_i,F_i$, $U_q(\mathfrak{sl}_N)$, $K_{\beta}$, $\Delta$, $\epsilon$, $S$, $U^0(\mathfrak{gl}_N)$, $U^0(\mathfrak{gl}_N\oplus \mathfrak{gl}_N)$, $U^0(\mathfrak{sl}_N),$ $U^+(\mathfrak{gl}_N)$, $U^+(\mathfrak{gl}_N\oplus \mathfrak{gl}_N)$, $U^+$, $U^-$, $\check{U}_q(\mathfrak{sl}_N)$, $U^0(\mathfrak{gl}_N)$, $\check U^0(\mathfrak{sl}_N)$, $U^+_+$,  $L(\lambda)$

\medskip
\noindent
{\bf Section 2.3:} $({\rm ad}\ a)$, $\mathcal{F}(M)$, $({\rm ad}\ E_i)$, $({\rm ad}\ F_i)$, $({\rm ad}\ K)$

\medskip
\noindent
{\bf Section 2.4:} $\mathfrak{g}, \theta, \mathfrak{k}$, $\mathcal{B}_{\theta}$, $J$, $R_{\mathfrak{g}}$, $R_{\mathfrak{g}}^{t_1}$, $R$, $B_i$, Type AI, Type AII, diagonal case 

\medskip
\noindent
{\bf Section 2.5:} $\Phi$, $\tilde{\beta}$, $\Sigma$, $\alpha_i^{\Sigma}$, $\epsilon_i^{\Sigma}$, $\eta_i$, $\hat{\eta}_i$, $w_0\hat{\eta}_i$, $P_{\Sigma}$, $P^+_{\Sigma}$, $\Lambda^+_{\Sigma}$, 
$\hat{\Lambda}^+_{\Sigma}$, $W_{\Sigma}$, $(\cdot, \cdot)_{\Sigma}$, spherical

\medskip
\noindent
{\bf Section 3.1:} ${\rm Mat}_N$, $\mathcal{O}_q({\rm Mat}_N)$,   $\mathcal{O}_q({\rm Mat}_N)^{op}$, $e_{ij}, r^{ij}_{kl}$, $R^{t_s}$, $t_{ij}$, $\partial_{ij}$, $\iota$, $t_{i+N, j+N}$, $T_1$, $T_2$, $\mathscr{P}, \mathscr{D}$

\medskip
\noindent
{\bf Section 3.2:} $J_{r,s}$, $x_{ij}, d_{ij}$, $\mathscr{P}^{\mathcal{B}_{\theta}}$, $\mathscr{P}_{\theta}$, $\mathscr{P}_{\theta}^{op}$, $\mathscr{D}_{\theta}$, $\mathcal{J}_r$,$\mathscr{P}_{\theta}^r$, $\mathscr{D}_{\theta}^r$

\medskip
\noindent
{\bf Section 4.1:} $\det_q(T)$, $\iota(\det_q(T)$, $\det_q(T')$, $H_n$

\medskip
\noindent
{\bf Section 4.2:} $\mathfrak{g}_r$, $U_q(\mathfrak{g}_r)$, $\mathscr{P}(\mathfrak{g}_r)$, $\mathcal{B}_{\theta}^r$, $\mathscr{P}_{\theta}(\mathfrak{g}_r)$, $x(r)_{ij}$,  $\psi_{r,s}$

\medskip
\noindent
{\bf Section 4.3:} $\hat{H}_r$, $H_i$, $H_{2\mu}$

\medskip
\noindent
{\bf Section 4.4:} $H^*_{2\mu}$

\medskip
\noindent
{\bf Section 5.1:} $\mathscr{PD}_q({\rm Mat}_N)$, $\mathscr{PD}_{\theta}$, $\mathcal{J}_r(\mathscr{PD}_{\theta})$

\medskip
\noindent
{\bf Section 5.2:} $\mathcal{L}$, $\pi$, $(b)_0$, $\langle \cdot, \cdot \rangle$,   $\phi$, $\phi_a$

\medskip
\noindent
{\bf Section 5.4:} $|\mu|$

\medskip
\noindent
{\bf Section 6.2:} $T_s$, $\beta_{s,t}$

\medskip
\noindent
{\bf Section 6.3:} $\psi$

\medskip
\noindent
{\bf Section 7.1:} $\check{U}_q(\mathfrak{sl}_N)$, $\check{U}_q(\mathfrak{gl}_N)$, $K_{(\hat{\omega}_N)/N}$, $\zeta$

\medskip
\noindent
{\bf Section 7.2:} $\mathcal{F}(U_q(\mathfrak{gl}_N)$

\medskip
\noindent
{\bf Section 7.3:} $U^2_q(\mathfrak{gl}_N)$, $U^0_2(\mathfrak{gl}_N)$,  $G^-$, $U_q^2(\mathfrak{gl}_N\oplus \mathfrak{gl}_N)$,  $\mathcal{F}(U^2_q(\mathfrak{gl}_N))$,  $\mathcal{F}(U^2_q(\mathfrak{gl}_N \oplus \mathfrak{gl}_N))$

\medskip
\noindent
{\bf Section 7.4:} $U^2_q(\mathfrak{g})$, $\Upsilon$

\medskip
\noindent
{\bf Section 8.1:} $z_{2\mu}$, $Z(\check U_q(\mathfrak{sl}_N))$, $z_{2\mu+2c\hat{\omega}_N}$, $Z( U^2_q(\mathfrak{g}))$

\medskip
\noindent
{\bf Section 8.2:} $\varphi_{HC}$, $G^-$, $\check{\mathcal{A}}$,  $\check{T}_{\theta}$, $(T_{\theta})_2$,  $\tilde{\mathcal{P}}$, $\tilde{\varphi}_{HC}$, $\mathcal{A}_2$

\medskip
\noindent
{\bf Section 8.3:} $\rho$, $w\circ$, $m_{2\lambda}$,  $\tilde{\rho}$, $m_{2w_0\lambda}$, $w\bullet$, $m^{\Sigma}_{2\lambda}$, $\mathcal{A}$

\medskip
\noindent
{\bf Section 8.4:} $\hat{z}_i$, $z_{2w_0\hat{w}}$, $Z$

\medskip
\noindent
{\bf Section 9.1:} $c_i$, $C_{\mu}$

\medskip
\noindent
{\bf Section 9.2:}
 $C_{\hat{\eta}_i}$, $\sum_{\substack{\nu<2\mu \\|\nu|=|2\mu|}}
(\mathscr{D}_{\theta})_{-\nu}$, $\sum_{\substack{\nu<2\mu \\|\nu|=|2\mu|}}
(\mathscr{P}_{\theta})_{\nu}$

\medskip
\noindent
{\bf Section 10.1:} $\mathcal{E}$, $\mathcal{E}_{\lambda}$, $P(q^{\mu})$, $\mathcal{V}$ 

\medskip
\noindent
{\bf Section 10.2:} $\kappa$

\medskip
\noindent
{\bf Section 10.3:} $P_{\lambda}^*(x; a,g)$

\end{document}